\documentclass[10pt,reqno]{amsart}
\usepackage{hyperref}
\usepackage{float}
\usepackage{mathtools}
\usepackage[all]{xy}
\usepackage{amscd,amssymb,amsmath,latexsym,bm}
\usepackage[mathcal,mathscr]{euscript}
\usepackage{lipsum}
\usepackage{amsfonts}
\usepackage{graphicx}
\usepackage{epstopdf}
\usepackage{algorithmic}
\usepackage{hyperref}
\usepackage{xcolor}
\usepackage{upgreek}
\usepackage{enumerate}
\usepackage{ulem}
\usepackage{stmaryrd}
\usepackage{gensymb}			
\usepackage[utf8]{inputenc}
\usepackage[small,nohug]{diagrams}
\diagramstyle[labelstyle=\scriptstyle]
 
\newtheorem{theorem}{Theorem}[section]
\newtheorem{lemma}[theorem]{Lemma}
\newtheorem{corollary}[theorem]{Corollary}
\newtheorem{proposition}[theorem]{Proposition}
\newtheorem{remark}[theorem]{Remark}
\newtheorem{definition}[theorem]{Definition}
\newtheorem{example}[theorem]{Example}

\numberwithin{equation}{section}
\numberwithin{figure}{section}

\newcommand{\BM}{{\mathbb B}}
\newcommand{\CM}{{\mathbb C}}
\newcommand{\NM}{{\mathbb N}}
\newcommand{\QM}{{\mathbb Q}}
\newcommand{\RM}{{\mathbb R}}
\newcommand{\SM}{{\mathbb S}}

\newcommand{\ZM}{{\mathbb Z}}

\newcommand{\KM}{{\mathbb K}}

\newcommand{\Aa}{{\mathcal A}}

\newcommand{\Ii}{{\mathcal I}}
\newcommand{\Pp}{{\mathcal P}}

\newcommand{\Bb}{{\mathcal B}}
\newcommand{\Dd}{{\mathcal D}}
\newcommand{\Ff}{{\mathcal F}}
\newcommand{\Gg}{{\mathcal G}}

\newcommand{\Ss}{{\mathcal S}}

\newcommand{\Tt}{{\mathcal T}}
\newcommand{\Rr}{{\mathcal R}}
\newcommand{\Nn}{{\mathcal N}}
\newcommand{\Mm}{{\mathcal M}}
\newcommand{\Cc}{{\mathcal C}}

\newcommand{\Jj}{{\mathcal J}}

\newcommand{\Ll}{{\mathcal L}}
\newcommand{\Qq}{{\mathcal Q}}
\newcommand{\Kk}{{\mathcal K}}
\newcommand{\Hh}{{\mathcal H}}





\begin{document}
\title{A groupoid approach to interacting fermions}

\author{Bram Mesland}

\address{Mathematisch Instituut \\ Universiteit Leiden  \\ Niels Bohrweg 1 \\ 2333CA Leiden, Netherlands
\\
\href{mailto:b.mesland@math.leidenuniv.nl}{b.mesland@math.leidenuniv.nl}}

\author{Emil Prodan}

\address{Department of Physics and
\\ Department of Mathematical Sciences 
\\Yeshiva University 
\\New York, NY 10016, USA \\
\href{mailto:prodan@yu.edu}{prodan@yu.edu}}

\date{\today}

\begin{abstract} 
We consider the algebra $\dot\Sigma(\Ll)$ generated by the inner-limit derivations over the ${\rm GICAR}$ algebra of a fermion gas populating an aperiodic Delone set $\Ll$. Under standard physical assumptions such as finite interaction range, Galilean invariance of the theories and continuity with respect to the deformations of the aperiodic lattices, we demonstrate that the image of $\dot \Sigma(\Ll)$ through the Fock representation can be completed to a groupoid-solvable pro-$C^\ast$-algebra. Our result is the first step towards unlocking the $K$-theoretic tools available for separable $C^\ast$-algebras for applications in the context of interacting fermions.
\end{abstract}

\thanks{Emil Prodan was supported by the U.S. National Science Foundation through the grants DMR-1823800 and CMMI-2131760.}

\maketitle

{\scriptsize \tableofcontents}

\setcounter{tocdepth}{1}

\section{Introduction and Main Statements}
\label{Sec:Introduction}

The physical observables of a self-interacting Fermi gas populating a uniformly discrete lattice $\Ll \subset \RM^d$ generate the $C^\ast$-algebra of canonical anti-commutation relations (CAR) over the Hilbert space $\ell^2(\Ll)$ \cite{BratteliBook2}, denoted here by ${\rm CAR}(\Ll)$. Our interest is in the dynamics of these observables, formalized as a point-wise continuous one-parameter group 
\begin{equation*}
\bm \alpha : \RM \to {\rm Aut}\big({\rm CAR}(\Ll)\big).
\end{equation*} 
We will refer to $\bm \alpha$ as the time evolution. Let $\Dd_{\bm \alpha}$ be the set of elements $A\in{\rm CAR}(\Ll)$ for which $\bm \alpha_t(A)$ is first order differentiable w.r.t. $t\in \RM$. Then $\Dd_{\bm \alpha}$ is stable under addition and multiplication, hence it is actually a sub-algebra of ${\rm CAR}(\Ll)$. Furthermore,
\begin{equation*}
\delta_{\bm \alpha} (A) : = \lim_{t \rightarrow 0} \tfrac{1}{t}\big (\bm \alpha_t(A)-A \big )
\end{equation*}
defines a ${\rm CAR}(\Ll)$-valued linear map over $\Dd_{\bm \alpha}$, which obeys Leibniz's rule 
\begin{equation*}
\delta_{\bm \alpha}(AA') = \delta_{\bm \alpha}(A) A' + A \delta_{\bm \alpha}(A'), \quad A,A' \in \Dd_{\bm \alpha},
\end{equation*}
hence $\delta_{\bm \alpha}$ is a (possibly unbounded) derivation \cite{BratteliBook1}. We will refer to $\delta_{\bm \alpha}$ as the generator of ${\bm \alpha}$. Consider now a family of time evolutions whose $\Dd$'s contain a fixed dense $\ast$-subalgebra $\Dd(\Ll) \subset {\rm CAR}(\Ll)$, which is invariant under the action of all the generators of the family. Under such conditions, these generators can be composed and they form a sub-algebra of the algebra of linear maps over $\Dd(\Ll)$, which we refer to as the {\it core} algebra associated with the family of time evolutions. In this work, we investigate the core algebra associated with a large family of time evolutions, which we claim it contains {\it all} physically reasonable time evolutions over a Delone set $\Ll$. The goal is to complete this and an associated core algebra of physical Hamiltonians to pro-$C^\ast$-algebras (in the sense of \cite{PhillipsJOT1988}), and to supply fine characterizations of these completions in a manner that unlocks the $K$-theoretic tools available for separable $C^\ast$-algebras. To our knowledge, such tools are not yet available in the context of interacting many-fermion systems.

Before elaborating any further, let us acknowledge that, for the 1-fermion sector, the program outlined above has been in place for almost three decades. Indeed, a fundamental result by Bellissard and Kellendonk \cite{Bellissard1986,Bellissard1995,Bellissard2000,
Bellissard2003,KellendonkRMP95} states that, for Galilean invariant theories, the generators of the 1-fermion dynamics over a uniformly discrete pattern form a separable groupoid $C^\ast$-algebra, canonically associated to the given pattern. By identifying this separable $C^\ast$-algebra, Bellissard and Kellendonk essentially completed the stably-homotopic classification of all the available Galilean invariant gapped 1-fermion Hamiltonians over a given pattern, which now can be enumerated via the $K$-theories of the Bellissard-Kellendonk groupoid $C^\ast$-algebra.  In \cite{BourneRMP2016,ProdanBook} and \cite{BourneAHP2020}, the reader can find models for implementing this program in the context of disordered and, respectively, generic lattices in arbitrary dimensions. At the practical level, the program pioneered by Bellissard and Kellendonk spurred new directions in materials science, resulting in a high-throughput of novel topological materials and meta-materials \cite{ApigoPRM2018,ApigoPRL2019,ChenNatComm2021,ChengPre2020,ChengArxiv2020,
LuxArxiv2021,
NiCP2019,RosaPRL2019,RosaNC2020,XiaPRL2021}, based on the sampling of different classes of Delone sets. The results in the present can facilitate similar
developments for arbitrary $N$-fermion sectors. 

We will be dealing exclusively with time evolutions that conserve the number of fermions, hence with the ones that drop to time evolutions over the ${\rm GICAR}$-subalgebra of gauge invariant physical observables. We will enforce two natural physical conditions: 1) the generators of the time evolutions depend continuously on the pattern $\Ll$ when the latter is deformed inside a fixed class of Delone sets and 2) the theories are Galilean invariant (see section~\ref{SubSec:GInvariance} and below). A large portion of our work is dedicated to formulating these two properties in a rigorous and natural framework and to identifying the most general expression of the generators that deliver such time evolutions. Prototypical examples are supplied by Hamiltonians generated from many-body potentials, such as
\begin{equation}\label{Eq:WW0}
\begin{aligned}
H_\Ll  = \sum_{k \in \NM^\times}\sum_{\{x_i,x'_i\} \in \Ll} \ w_k(x_1,\ldots,x_k;x'_1,\ldots,x'_k)  \, a^\ast_{x_1} \cdots a^\ast_{x_k} \, a_{x'_k} \cdots a_{x'_1},
\end{aligned}
\end{equation}
where the $k$-body potentials $w_k : (\RM^d)^k \times (\RM^d)^k \to \CM$ obey several constraints, such as continuity, anti-symmetry against permutations and invariance under global shifts
$$
(x_1,\ldots,x_k;x'_1,\ldots,x'_k) \mapsto (x_1-x,\ldots,x_k-x;x'_1-x,\ldots,x'_k-x).
$$ 
The $a$'s in Eq.~\eqref{Eq:WW0} are the generators of ${\rm CAR}(\Ll)$. If the $k$-body potentials have finite interaction range, {\it i.e.} $w_k$'s vanish whenever the diameter of the sets $\{x_i\} \cup \{x'_i\}$ exceed a fixed value, then $H_\Ll$ defines a generator for a dynamics, which comes in the form of a closable unbounded derivation
\begin{equation*}
{\rm ad}_{H_\Ll}(A) = \imath [A,H_\Ll], \quad A \in \Dd(\Ll) \subset {\rm CAR}(\Ll),
\end{equation*}
with a fixed dense and invariant domain (see section~\ref{Sec:FIRHam}). Note that a many-body potential defines an entire correspondence $\Ll \mapsto H_\Ll$, which is tacitly assumed to be continuous w.r.t. $\Ll$. Now, if $\Ll$ and $\Ll'$ happen to enter a relation like $\Ll' = \Ll-x$ for some $x \in \RM^d$, then
\begin{equation}\label{Eq:GInv}
{\rm Ad}_{H_{\Ll'}} = S_x \circ {\rm Ad}_{H_\Ll} \circ S_x^{-1},
\end{equation}
where $S_x : {\rm CAR}(\Ll) \to {\rm CAR}(\Ll-x)$ are the obvious $C^{*}$-algebra isomorphisms (which properly map the domains).  Eq.~\eqref{Eq:GInv} reflects the Galilean invariance of the theory, which was already encoded in the many-body potentials. 

At first sight, formalizing such Hamiltonians seem straightforward, but a fundamental difficulty presents itself. As we do not have a topology on the set of our models, the only way to define the continuity w.r.t. $\Ll$ is through the coefficients of the Hamiltonians, such as the $w_k$'s in Eq.~\eqref{Eq:WW0}. Therefore, from both the physical and mathematical points of view, identifying the correct domain of these coefficients as a topological space is paramount. Note that, besides the lattice $\Ll$, the orderings of the sets $\{x_i\}$ and $\{x'_i\}$ in Eq.~\eqref{Eq:WW0} are part of the data defining the coefficients and, for a generic Delone set, there is no canonical choice for these orderings. Furthermore, under continuous cyclic deformations of the pattern $\Ll$, the orderings of the sets $\{x_i\}$ and $\{x'_i\}$ can change and this can happen even when the deformations occur inside the transversal of a single pattern (see Definition~\ref{Def:Transversal}), as it is the case for topological lattice defects \cite{ProdanJPA2021} or for patterns like in our Example~\ref{Ex:CircleXi}. Such phenomena present a challenge because, while the Hamiltonian~\eqref{Eq:WW0} returns to itself under such cyclic deformations, the coefficients {\it do not}.

In Section~\ref{Sec:FermionsDynamics}, we construct the natural domains of the Hamiltonian coefficients as covering spaces of the topological space of all Delone sets. First, we introduce the set of pairs $(\Ll,V)$, where $\Ll$ is a Delone set and $V$ is a subset of $\Ll$ of cardinal $|V|=k$, and we topologize this set such that it becomes an (infinite) cover of the space of Delone sets. Then we introduce a second, finite cover over the topological space of pairs $(\Ll,V)$, which we call the {\it order cover}, such that the group of deck transformations of this covering space is the full permutation group $\Ss_k$ on $k$ elements. A point in this cover is a triple $\xi=(\Ll,V_\xi,\chi_\xi)$, where $\chi_\xi:\{1,\ldots,k\} \to V_\xi$ is a bijection, hence an ordering of the subset $V_\xi \subset \Ll$ (see section~\ref{Sec:Cover}). There is one such space for each $k >1$ and we call $\mathfrak a_k(\xi) :=\Ll$ the $k$-body covering map of the space of Delone sets.

A generic finite interaction range Hamiltonian evaluated at $\Ll$ then takes the form
\begin{equation}\label{Eq:H1}
H_\Ll = \sum_{k \in \NM^\times} \tfrac{1}{k!}  \sum_{\xi,\zeta \in \mathfrak a^{-1}_{k}(\Ll)}
  h_{k} (\xi,\zeta) \, \bm a^\ast(\xi) \bm a(\zeta),
\end{equation}
where
$$
\bm a(\xi) : = a_{\chi_{\xi}(k)} \cdots a_{\chi_{\xi}(1)} \in {\rm CAR}(\Ll),
$$
and the $h_{k}$ are bi-equivariant coefficients w.r.t. to the deck transformations of the order cover, invariant under the simultaneous translation of the arguments, continuous in the arguments and supported on pairs $(\xi,\zeta)$ for which the diameter of $V_\xi \cup V_\zeta$ is smaller than a fixed positive number (see subsection~\ref{Sec:AFRGI} for complete details). The derivations ${\rm ad}_{H_\Ll}$ corresponding to Hamiltonians of the type~\eqref{Eq:H1} generate the core algebra $\dot \Sigma(\Ll)$ we mentioned in the first paragraph.

Of course, the Hamiltonian coefficients can be also defined over a twofold orientation cover of the space of Delone sets. However, it turns out that the full order covers over the space of Delone sets not only supply natural domains for the Hamiltonian coefficients, but also hold the key to our generalization of the Bellissard-Kellendonk groupoid algebras to the $N$-fermion sectors.  As a topological space, for each $N$, the groupoid $\mathcal{G}_{N}$ appears naturally as a transversal to the translation action of $\mathbb{R}^{d}$ on the $N$-order cover. The ordering $\chi_{\xi}$ give us a handle to equip these transversals with the algebraic structure of a groupoid. Indeed, viewing each subset $V_\xi$ as anchored at $\chi_\xi(1)$ allows us to pull $V_\xi$ to the origin of the physical space and also gives us the means to define the set of composable pairs (see section~\ref{SubSec:AlgGN} for details). The permutation group $\mathcal{S}_{N}$ acts on $\mathcal{G}_{N}$ by bisections and this actions are non-trivial as they involve permutations {\it and} translations of the points (see sections \ref{Sec:EquiGg} and \ref{Sec:2Action}).

We now describe the process that enables us to characterize the algebra $\dot\Sigma (\Ll)$. The key observation is that these are inner-limit derivations \cite{BratteliBook1} and such derivations have a special relation with the double sided ideals: They leave them invariant and, as such, the inner-limit derivations descend to derivations on quotients of double sided ideals. The lattice of double sided ideals of ${\rm GICAR}$ algebra was worked out by Bratteli, who uncover the following structure (see section~\ref{Sec:SolvableCAR}):

\begin{theorem}[\cite{BratteliTAMS1972}]\label{Th:1} The ${\rm GICAR}(\Ll)$ algebra is a solvable $C^\ast$-algebra (of infinite length), in the sense of Dynin  \cite{DyninPNAS1978}. Specifically, it accepts a filtration by primitive ideals
\begin{equation}\label{Eq:Fil0}
   \cdots \rightarrowtail {\rm GI}_N(\Ll) \rightarrowtail \cdots \rightarrowtail {\rm GI}_1(\Ll) \rightarrowtail {\rm GI}_0(\Ll) = {\rm GICAR}(\Ll) 
 \end{equation}
 and
\begin{equation}\label{Eq:Quo0}
{\rm GI}_{N}(\Ll) / {\rm GI}_{N+1}(\Ll) \simeq \KM,
\end{equation}
for all $N \in \NM$, where $\KM$ is the algebra of compact operators.
\end{theorem}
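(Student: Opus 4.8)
The statement is classical, so I would reconstruct Bratteli's argument from \cite{BratteliTAMS1972}. The plan is to realize ${\rm GICAR}(\Ll)$ concretely as an AF algebra whose Bratteli diagram is Pascal's triangle, to read its ideal lattice off that diagram, and to identify both the successive subquotients and the primitivity of the ideals through the particle-number representations on fermionic Fock space. For the AF structure: a Delone set is countably infinite, so fix an exhaustion $\Ll=\bigcup_{n}\Ll_{n}$ by finite subsets with $|\Ll_{n}|=n$. Writing ${\rm GICAR}(\Ll)$ for the algebra generated by the gauge-invariant bilinears $a_{x}^{\ast}a_{y}$ (the non-unital normalization; in the $U(1)$-fixed-point normalization one pays only a harmless extra $\CM$ in the top stratum), one has ${\rm GICAR}(\Ll_{n})=\bigoplus_{k=1}^{n}{\rm End}\big(\Lambda^{k}\ell^{2}(\Ll_{n})\big)\cong\bigoplus_{k=1}^{n}M_{\binom{n}{k}}$ and ${\rm GICAR}(\Ll)$ is the closure of the union of these. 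The inclusion at level $n\to n+1$ comes from $\Lambda^{k}\ell^{2}(\Ll_{n+1})\cong\Lambda^{k}\ell^{2}(\Ll_{n})\oplus\Lambda^{k-1}\ell^{2}(\Ll_{n})$, so the diagram has vertices $(n,k)$ with a single edge $(n,k)\to(n+1,k)$ and a single edge $(n,k)\to(n+1,k+1)$ --- this is Pascal's triangle, the block maps being non-unital corner embeddings $a\mapsto{\rm diag}(a,0)$ up to unitary conjugation.

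Next I would invoke the description of the closed two-sided ideals of an AF algebra as the hereditary directed subsets of its diagram. The ``upper'' sets $T_{m}:=\{(n,k):k\ge m\}$ are directed (Pascal's rule) and hereditary (a vertex $(n,j)$ with $j<m$ always has the successor $(n+1,j)\notin T_{m}$, hence is not absorbed), so they yield a decreasing chain of ideals which, relabelled as ${\rm GI}_{N}:=I_{T_{N+1}}$, realizes \eqref{Eq:Fil0} with ${\rm GI}_{0}={\rm GICAR}(\Ll)$. The subquotient ${\rm GI}_{N}/{\rm GI}_{N+1}$ is the AF algebra of the sub-diagram supported on the single column $\{(n,N+1):n\ge N+1\}$; the only edge internal to that column is $(n,N+1)\to(n+1,N+1)$, of multiplicity one, so the subquotient is the direct limit of the $M_{\binom{n}{N+1}}$ along corner embeddings, and since $\binom{n}{N+1}\to\infty$ it is isomorphic to $\KM$, giving \eqref{Eq:Quo0}.

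For primitivity I would bring in the $N$-particle Fock representations $\pi_{N}\colon{\rm GICAR}(\Ll)\to\Bb(\Ff_{N})$, $\Ff_{N}=\Lambda^{N}\ell^{2}(\Ll)$. Restricting $\pi_{N}$ to a finite level $\Ll_{n}$, the Fock space decomposes as a ${\rm GICAR}(\Ll_{n})$-module with the $M_{\binom{n}{k}}$-block appearing (with some multiplicity) exactly for $k\le N$, so $\ker\pi_{N}$ meets ${\rm GICAR}(\Ll_{n})$ in $\bigoplus_{k>N}M_{\binom{n}{k}}$; hence ${\rm GI}_{N}=\ker\pi_{N}$ is a single representation kernel --- in particular a gauge-invariant element killing the $N$-particle sector kills every lower sector, and ${\rm GI}_{N}$ is visibly independent of the exhaustion. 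It remains to check irreducibility of $\pi_{N}$: the operators $\pi_{N}(a_{x}^{\ast}a_{y})$ move a single fermion, and suitable finite products of them carry one fixed occupation configuration to any other while annihilating the rest, so $\pi_{N}({\rm GICAR}(\Ll))\supseteq\KM(\Ff_{N})$, $\pi_{N}$ is irreducible, and ${\rm GI}_{N}=\ker\pi_{N}$ is primitive. Finally $\bigcap_{N}{\rm GI}_{N}=\ker\big(\bigoplus_{N}\pi_{N}\big)=0$ since the full Fock representation of ${\rm CAR}(\Ll)$ is faithful, so the chain exhausts the algebra; since there is a new column at every level, the composition series is infinite. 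Taken together, these facts are precisely the assertion that ${\rm GICAR}(\Ll)$ is solvable of infinite length in Dynin's sense.

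The step needing the most care is the diagram bookkeeping in the second and third paragraphs: pinning down that the ideals ${\rm GI}_{N}$ are exactly the ones arising as kernels $\ker\pi_{N}$, and --- crucially --- that the maps induced on an isolated column are the multiplicity-one corner embeddings (rather than unital, or higher-multiplicity, inclusions) that force the subquotient to be $\KM$. The irreducibility of $\pi_{N}$ is elementary but has to be written out; the $U(1)$-decomposition, Pascal's identity, and the identification $\varinjlim(M_{d_{n}},{\rm corner})\cong\KM$ for $d_{n}\to\infty$ are routine.
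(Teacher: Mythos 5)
Your argument is correct, and it is essentially Bratteli's original route, which the paper cites but does not follow in detail: realize the algebra as an AF limit over an exhaustion, identify the Bratteli diagram as Pascal's triangle, read the ideal lattice off the directed hereditary subsets, compute each subquotient as the limit of a single column of multiplicity-one corner embeddings (hence $\KM$), and certify primitivity through the irreducible $N$-particle Fock representations. The paper's Section~\ref{Sec:SolvableCAR} opens by reviewing exactly this diagrammatic picture (citing \cite{BratteliTAMS1972} and \cite[Thms.~III.4.2, III.4.4]{DavidsonBook}) but then deliberately reproves everything ``by commutation relations'': the product formula \eqref{Eq:Id3} shows that ${\rm GI}_N(\Ll)$ is an ideal (Proposition~\ref{Pro:GICARIdeals}), multiplication by $n_\Gamma$ identifies ${\rm GI}_{N+1}(\Ll)$ with $\ker\pi_\eta^N$, and the explicit assignment $[a_J(\chi_J)^\ast a_{J'}(\chi_{J'})]_N\mapsto |J,\chi_J\rangle\langle J',\chi_{J'}|$ realizes the quotient as $\KM\big(\Ff^{(-)}_N(\Ll)\big)$ (Proposition~\ref{Pro:CARQuote}). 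Your route buys the complete ideal lattice for free (including the anti-Fock ideals ${}_{N}{\rm GI}_{0}$) and a one-line quotient computation; the paper's route buys ordering-free explicit isomorphisms in the symmetric presentation, which is precisely what the later sections need when descending derivations to the quotients (Proposition~\ref{Pro:TwoReps}). Two normalization points to reconcile with the paper: its ${\rm GICAR}(\Ll)$ is the unital fixed-point algebra, so the diagram includes the $k=0$ column and ${\rm GI}_0/{\rm GI}_1\simeq\CM=\KM\big(\Ff^{(-)}_0\big)$; and your ${\rm GI}_N:=I_{T_{N+1}}=\ker\pi_N$ is the paper's ${\rm GI}_{N+1}(\Ll)$, so your subquotients are $\KM\big(\Ff^{(-)}_{N+1}\big)$ rather than $\KM\big(\Ff^{(-)}_N\big)$. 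Both conventions prove the theorem as stated, but the shifted indexing would clash with the definition of ${\rm GI}_N(\Ll)$ in Proposition~\ref{Pro:GICARIdeals} on which the rest of the paper relies.
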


Since the derivations descend on the quotients~\eqref{Eq:Quo0}, we obtain a sequence of representations of $\dot\Sigma(\Ll)$ by  (bounded) derivations on the algebra of compact operators. The bounded derivations over the algebra of compact operators have been already completely characterized, cf. \cite[Example~1.6.4]{BratteliBook1}, specifically, they always take the form of commutators with bounded operators over the underlying Hilbert space, which in the present context happens to be the $N$-fermion sector $\Ff_N^{(-)}(\Ll)$ of the Fock space. As a result, there are sub-algebras ${\mathfrak H}_N$ of the algebras of bounded operators over $\Ff_N^{(-)}(\Ll)$ as well as a tower of representations of $\dot \Sigma(\Ll)$ inside ${\mathfrak H}_N \otimes {\mathfrak H}_N^{\rm op}$. Furthermore, if $\eta$ is the vacuum state and $\pi_\eta$ is the associated Fock representation of ${\rm CAR}(\Ll)$, then the latter induces a representation $\dot \pi_\eta$ of $\dot \Sigma_\Ll$ on a dense linear sub-space of $\Ff_N^{(-)}(\Ll)$ and  $\dot \Sigma(\Ll) / {\rm ker} \, \dot \pi_\eta \simeq \varprojlim \, \dot\Gamma_N(\Ll)$, where $\dot \Gamma_N(\Ll) = \bigoplus_{n=0}^N \mathfrak H_N$. Note that $\dot \Sigma(\Ll) / {\rm ker} \, \dot \pi_\eta$ represents the core algebra of physical Hamiltonians (see section~\ref{SubSec:FockH} for details and justification). 

In a followup step, we compute the $C^\ast$-envelopes of $\mathfrak H_N$'s and promote the tower $\{\dot \Gamma_N(\Ll)\}$ to a projective tower of $C^\ast$-algebras $\{\Gamma_N(\Ll)\}$, whose inverse limit $\mathfrak H(\Ll)$ is the pro-$C^\ast$-algebra mentioned in the first paragraph of the section. This brings us to our first major result, stated in Theorem~\ref{Th:M1}, saying that the core algebra of physical Hamiltonians embeds into the pro-$C^\ast$-algebra $\mathfrak H(\Ll)$. A similar statement holds for the algebra $\dot \Sigma(\Ll)$ (see Theorem~\ref{Th:SigmaFinal}). Our second major result states that $\mathfrak H(\Ll)$ affords the following explicit characterization:

\begin{theorem}\label{Th:0} Let $\mathfrak p_N : {\mathfrak H}(\Ll) \twoheadrightarrow \Gamma_N(\Ll)$ be the epimorphisms associated with the inverse limit and define the closed double sided ideals $\Jj_N(\Ll) := {\rm ker}\, \mathfrak p_N$. Then $\{\Jj_N(\Ll)\}_N$ supplies a filtration of $\mathfrak H(\Ll)$ and
\begin{equation}\label{Eq:FAlg}
\Jj_{N-1}(\Ll)/\Jj_{N}(\Ll) = \pi_\Ll \Big (\Mm \big (C^{\ast}_{r,\Ss_N}(\Gg_N,\CM) \big ) \Big ).
\end{equation}
Here, $\Gg_N$ is the groupoid canonically associated to $\Ll$ generalizing the Bellissard-Kellendonk groupoid from the single to the $N$-fermion sector, $C^{\ast}_{r,\mathcal{S}_{N}}(\Gg_N,\CM)$ is the $\Ss_N$-bi-equivariant groupoid $C^\ast$-algebra, $\Mm$ indicates the essential extension to the multiplier algebra and $\pi_\Ll$ is a left regular representation of the groupoid $C^{*}$-algebra.
\end{theorem}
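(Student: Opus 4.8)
The plan is to split \eqref{Eq:FAlg} into a formal part and a substantive part. The formal part is that $\{\Jj_N(\Ll)\}_N$ is a filtration of $\mathfrak H(\Ll)$ and that the subquotient $\Jj_{N-1}(\Ll)/\Jj_N(\Ll)$ may be identified with the $N$-th summand of $\Gamma_N(\Ll)$, that is, with (the $C^\ast$-envelope of) $\mathfrak H_N$; this follows at once from the inverse-limit construction $\mathfrak H(\Ll)=\varprojlim\Gamma_N(\Ll)$. The substantive part is to identify this summand, as a concrete algebra of operators on the $N$-fermion Fock sector $\Ff_N^{(-)}(\Ll)$, with $\pi_\Ll\big(\Mm(C^\ast_{r,\Ss_N}(\Gg_N,\CM))\big)$. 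This second part is exactly the Bellissard--Kellendonk correspondence between finite-range pattern-equivariant operators and transversal-groupoid $C^\ast$-algebras, now carried out in the $N$-body antisymmetric setting, with the $\Ss_N$-bi-equivariance and the multiplier algebra carefully accounted for.

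\emph{The filtration.} Recall that $\Gamma_N(\Ll)=\bigoplus_{n=0}^N\mathfrak H_n$ (with $C^\ast$-envelopes understood), that the structure maps $\Gamma_N(\Ll)\twoheadrightarrow\Gamma_{N-1}(\Ll)$ are the truncations forgetting the top summand, and that $\mathfrak p_N:\mathfrak H(\Ll)\twoheadrightarrow\Gamma_N(\Ll)$ are the canonical projections out of the inverse limit. Since $\mathfrak p_{N-1}$ factors through $\mathfrak p_N$, the closed two-sided ideals $\Jj_N(\Ll)=\ker\mathfrak p_N$ form a decreasing chain $\cdots\subseteq\Jj_N(\Ll)\subseteq\Jj_{N-1}(\Ll)\subseteq\cdots\subseteq\Jj_0(\Ll)\subseteq\mathfrak H(\Ll)$, the separating property of the inverse limit gives $\bigcap_N\Jj_N(\Ll)=0$, and $\mathfrak H(\Ll)/\Jj_0(\Ll)\cong\mathfrak H_0\cong\CM$ is the vacuum sector; this is the asserted filtration. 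Moreover $\mathfrak p_N$ descends to an isomorphism $\Jj_{N-1}(\Ll)/\Jj_N(\Ll)\cong\ker\big(\Gamma_N(\Ll)\twoheadrightarrow\Gamma_{N-1}(\Ll)\big)=\mathfrak H_N$, so everything reduces to computing the $C^\ast$-envelope of $\mathfrak H_N$.

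\emph{The groupoid model.} Realize $\Ff_N^{(-)}(\Ll)$, up to the $1/\sqrt{N!}$ normalization, as the sign-isotypic subspace of $\ell^2\big(\mathfrak a_N^{-1}(\Ll)\big)$ for the deck $\Ss_N$-action, and identify the latter with $\ell^2$ of the fibre over $\Ll$ of the unit space of $\Gg_N$; under this identification $\pi_\Ll$ is the left regular representation of $\Gg_N$ at $\Ll$, compressed to the sign-isotypic subspace. One then checks by direct computation that the $N$-particle restriction of a finite-interaction-range Hamiltonian \eqref{Eq:H1} --- the full sum over $k\le N$, with the $k<N$ terms acting through $N-k$ spectator particles --- equals $\pi_\Ll$ applied to a function on $\Gg_N$ that is $\Ss_N\times\Ss_N$-equivariant for the sign characters of the two deck actions (forced by fermionic anticommutation, and precisely the bi-equivariance defining $C^\ast_{r,\Ss_N}(\Gg_N,\CM)$), supported on arrows of bounded jump (the finite-range condition), but merely bounded and continuous along the transverse ``configuration-shape'' direction; since that direction is non-compact exactly when $N\ge2$, such a function is a multiplier of $C^\ast_{r,\Ss_N}(\Gg_N,\CM)$ and not in general an element of it --- this is the source of the $\Mm$ in \eqref{Eq:FAlg}, while for $N=1$ the transverse is compact, $\Mm$ is vacuous, and one recovers the Bellissard--Kellendonk algebra. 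Matching the convolution product with the operator product of Hamiltonians, and noting that every compactly supported bi-equivariant function lies in the $\ast$-algebra generated by the images of the $N$-body terms, yields $\pi_\Ll\big(C_{c,\Ss_N}(\Gg_N,\CM)\big)\subseteq\mathfrak H_N\subseteq\pi_\Ll\big(\Mm(C^\ast_{r,\Ss_N}(\Gg_N,\CM))\big)$. Finally, for $\Ll$ with minimal hull the single-unit representation $\pi_\Ll$ is faithful on $C^\ast_{r,\Ss_N}(\Gg_N,\CM)$ and non-degenerate, hence extends to the multiplier algebra with image the idealizer of $\pi_\Ll\big(C^\ast_{r,\Ss_N}(\Gg_N,\CM)\big)$ in $\Bb\big(\Ff_N^{(-)}(\Ll)\big)$; identifying this idealizer with the norm-closure of the bounded-range bi-equivariant operators --- i.e. with the $C^\ast$-envelope of $\mathfrak H_N$ --- and invoking amenability of $\Gg_N$ (a reduction of the transformation groupoid of the amenable group $\RM^d$ on the $N$-order cover, so reduced equals full) gives \eqref{Eq:FAlg}.

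\emph{The main obstacle} is the groupoid-model step, and specifically the passage to the completion. Two features distinguish it from the $1$-fermion case. First, the $\Ss_N$-action on $\Gg_N$ combines permutations with translations of the points, so the equivariance carried by the Hamiltonian coefficients is geometrically twisted and must be matched term-by-term to the sign-isotypic structure of $\Ff_N^{(-)}(\Ll)$ rather than read off from a product decomposition. Second, one must locate the coefficients in the multiplier algebra --- reflecting the non-compactness of the $N$-body transversal for $N \ge 2$ --- and then prove that nothing is lost on passing to the norm-closure, i.e. that the closure of the concrete algebra $\mathfrak H_N$ exhausts $\pi_\Ll\big(\Mm(C^\ast_{r,\Ss_N}(\Gg_N,\CM))\big)$; this is where faithfulness of $\pi_\Ll$, and therefore the minimality hypothesis on the hull of $\Ll$, is indispensable.
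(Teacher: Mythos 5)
Your overall architecture matches the paper's: the filtration and the identification of the top subquotient with the completion of $\mathfrak H_N$ are formal consequences of the inverse-limit construction, and the substantive content is the groupoid model obtained from the left regular representation at $\Ll$. Your description of that model --- realizing $\Ff_N^{(-)}(\Ll)$ inside $\ell^2$ of the fibre of the order cover, matching the top-degree ($k=N$) terms of \eqref{Eq:H1} with compactly supported $\Ss_N$-bi-equivariant functions on $\Gg_N$, and noting that the spectator terms ($k<N$) fall outside $C^\ast_{r,\Ss_N}(\Gg_N,\CM)$ because the $N$-body transversal is non-compact for $N\ge 2$ --- is faithful to Sections~\ref{SubSec:AlgGN}--\ref{Sec:LeftRegRep}.

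The gap is in your final step. You assert that the idealizer of $\pi_\Ll\big(C^\ast_{r,\Ss_N}(\Gg_N,\CM)\big)$ in $\BM\big(\Ff_N^{(-)}(\Ll)\big)$ can be identified with the norm-closure of the bounded-range bi-equivariant operators, i.e.\ that the $C^\ast$-algebra generated by $\mathfrak H_N$ exhausts $\pi_\Ll\big(\Mm(C^\ast_{r,\Ss_N}(\Gg_N,\CM))\big)$. This is not proved, and for $N\ge 2$ it is almost certainly false: the multiplier algebra contains, for instance, the images of the pullback maps $C^\ast_r(\Gg_n)\otimes C^\ast_r(\Gg_m)\to\Mm\big(C^\ast_r(\Gg_{n+m})\big)$ of Remark~\ref{Re:Pullback}, which land in the corona, and the paper explicitly treats the structure of $\Mm\big(C^\ast_r(\Gg_N)\big)$ as an open problem (Remark~\ref{Re:CMor}). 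The paper does not need your claim: it proves only that the span $\dot{\mathfrak G}_N(\Ll)$ of the top-degree terms is a two-sided ideal of $\mathfrak H_N$ whose closure $\mathfrak G_N(\Ll)$ admits the quasi-central approximate unit of Proposition~\ref{Pro:UnitApprox}, whence $\mathfrak H_N$ sits in an essential extension of $\mathfrak G_N(\Ll)$ and so embeds in $\Mm\big(\mathfrak G_N(\Ll)\big)$; it then \emph{defines} the completion $\Gamma_N(\Ll)$ to be $\bigoplus_{n\le N}\Mm\big(\mathfrak G_n(\Ll)\big)$ (Definition~\ref{Def:Multiplier}, Theorem~\ref{Th:M1}), a choice justified on physical grounds in Remark~\ref{Re:Multiplier} rather than computed as a closure. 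With that definition, \eqref{Eq:FAlg} reduces to the single identity $\mathfrak G_N(\Ll)=\pi_\Ll\big(C^\ast_{r,\Ss_N}(\Gg_N,\CM)\big)$, which is the content of Section~\ref{Sec:LeftRegRep}. Two smaller points: your appeal to minimality of the hull and to amenability of $\Gg_N$ is not needed (the paper works with the reduced algebra throughout and with arbitrary Delone sets), and the assertion that a bounded continuous bi-equivariant function of bounded propagation automatically defines a multiplier requires the approximate-unit argument, not just the support estimate.
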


We will show that $\Jj_{N-1}(\Ll)/\Jj_{N}(\Ll)$ are isomorphic to the images of $\mathfrak H(\Ll)$ through its Fock representations on the $N$-fermion sectors. As such, the algebra of Galilean invariant Hamiltonians of $N$ fermions over the lattice $\Ll$ coincides with multiplier algebra $\Mm \big (C^{\ast}_{r,\Ss_N}(\Gg_N,\CM) \big )$. This reduces to the (unital) Bellissard-Kellendonk groupoid algebra for $N=1$. Our third major result states that the groupoid $\Gg_N$ continue to be second countable, Hausdorff and \'etale for $N>1$, hence its groupoid $C^\ast$-algebra is separable (see section~\ref{SubSec:AlgGN}). Furthermore, as stated already, this groupoid accepts a (highly non-trivial) 2-action of the permutation group $\mathcal{S}_{N}$ (see Definition~\ref{Def:2ActionGroupoid} and section~\ref{Sec:2Action}). 

In section~\ref{Sec:SBvsS}, we argue that the $C^\ast$-algebra of the groupoid $\Gg_N$ generates self-binding dynamics, in the sense that under the evolution generated by a Hamiltonian from $C^{\ast}_{r,\Ss_N}(\Gg_N,\CM)$, the $N$ fermions evolve in one single cluster. If the Hamiltonian comes from the corona of the multiplier extension, then the dynamics is scattered, in the sense that the $N$ fermions evolve in two or more separate clusters. When the many-body covers are disconnected, as it is the case for periodic and quasi-periodic lattices, we show in section~\ref{Sec:SpecialCases} that $C^{\ast}_{r,\mathcal{S}_{N}}(\Gg_N,\CM)$ is Morita equivalent to the 1-fermion Bellissard-Kellendonk $C^\ast$-algebra $C^{\ast}_{r}(\Gg_1,\CM)$. Thus, in these special cases, the stably-homotopic classification of the self-binding states of $N$ fermions is no more complicated than that of single fermion states. We point out that this has been already observed in numerical experiments (see \cite{LiuPRB2022}).

\vspace{0.2cm}

We follow up with pointed remarks in order to guide the reader through the significance of the above results, its relation to other works and to highlight important aspects that were left to future investigations:

\begin{remark}\label{Re:AFAlg}{\rm A good portion of our program can be repeated for AF-algebras other than the ${\rm GICAR}$ algebra, provided they have a rich lattice of ideals and the descended derivations on the quotients are by commutators with elements from their multiplier algebras. It is important to note that the pro-$C^\ast$-algebra completions generated by the mechanism explained above are not universal, in the sense that they depend in an essential way on the chosen filtration. In the present context, the filtration mentioned in Theorem~\ref{Th:1} makes the most physical sense and the associated pro-$C^\ast$-algebra gives a satisfactory framework for the dynamics of fermions. Nevertheless, it will be very interesting to investigate the range of pro-$C^\ast$-completions that can be generated from other filtrations or even from other AF-algebras, and to study the relations among these different completions. Lastly, it will be interesting to see how all this fits into or benefits from the existing frameworks on representations of $\ast$-algebras by unbounded operators \cite{MeyerDM2017,SavchukART2013}.
}$\Diamond$
\end{remark}

\begin{remark}{\rm In \cite{AndroulakisJSP2012}, the algebras ${\rm CAR}(\Ll)$ of local observables were formalized as a continuous field of $C^\ast$-algebras over the space of Delone sets. Here, we skip entirely this step and we deal directly with the algebra of derivations.
}$\Diamond$
\end{remark}

\begin{remark}\label{Re:CMor}{\rm The spaces of units of the groupoids $\Gg_N$ are only locally compact, except for $N=1$. Hence, the groupoid algebras for $N>1$ do not have a unit and the extensions to the multiplier algebras are non-trivial. Since $C^\ast_r(\Gg_N)$ does not contain the compacts, neither does its multiplier extension. As such, this extension is not the whole algebra of bounded operators over the $N$-sector of the Fock space and we expect $\Mm\big(C^\ast_r(\Gg_N)\big )$ to have a very interesting structure. A glimpse into this structure is supplied by Proposition~\ref{Pro:Mor10}, where a continuous groupoid morphism $\Gg_{N+M} \to \Gg_N \times \Gg_M$ is defined. This groupoid morphism is neither continuous nor does it preserve the Haar system. Nonetheless, its pullback map supplies a $\CM$-module map $C^\ast_r(\Gg_N) \otimes C^\ast_r(\Gg_M) \to \Mm\big (C^\ast_r(\Gg_{N+M})\big )$, which actually lands in the corona (see Remark~\ref{Re:Pullback}). We conjecture that the multiplier extensions can be fully characterized by such maps.
}$\Diamond$
\end{remark}

\begin{remark}\label{Re:BlowUp}{\rm In Example~2.37 of \cite{WilliamsBook}, Williams describes how to generate new groupoids by blowing up the unit space of an existing groupoid. As we shall see in Sec.~\ref{Sec:BlowUp1}, the groupoids $\Gg_N$, $N>1$, are all blow-ups of $\Gg_1$, which automatically implies that the associated $C^\ast$-algebras are Morita equivalent (see Th.~2.52 in \cite{WilliamsBook}). The $\Ss_N$-bi-equivariant sub-algebras are, however, {\it not} expected to be Morita equivalent in general. In fact, the 2-action of the permutation group does not admit a straightforward interpretation when the groupoids $\Gg_N$ are presented as blow-ups of $\Gg_1$. Nevertheless, as we already announced, there are special yet important cases when the $\Ss_N$-bi-equivariant sub-algebra is in fact Morita equivalent to $C^\ast_r(\Gg_1)$. As a result, for these special cases, we have precise $K$-theoretic statements.
}
\end{remark}

\begin{remark}\label{Re:TDLimit}{\rm In the light of the previous two remarks, we believe that our formalism supplies new tools and a fresh framework for the problem of the thermodynamic limit ($N\to \infty$). These aspects are put in a perspective in Section~\ref{Sec:Discussion}. Let us specify here that the formalism can be straightforwardly adapted to quasi-free states (hence situations with $N=\infty$), by using the Fock and anti-Fock primitive ideals of the ${\rm GICAR}$ algebra (see section~\ref{Sec:SolvableCAR}). The finite $N$-sectors are equally interesting, from both a theoretical and a practical point of view. For example, \cite{LiuPRB2022} supplied numerical evidence that the $K$-theories of the finite $N$-sectors can become extremely complex even for $N=3$. As explained in \cite{LiuPRB2022}, this can be exploited to generate new topological dynamics in meta-materials and new manifestations of the bulk-boundary principle.
}$\Diamond$
\end{remark}

\begin{remark}{\rm Any groupoid-invariant open subset of the unit space generates a two-sided ideal inside the groupoid $C^\ast$-algebra \cite[Sec.~5.1]{WilliamsBook}. In \cite{ProdanJPA2021}, the bulk-defect correspondence principle for the 1-fermion sector was re-formulated in terms of the exact sequences of $C^\ast$-algebras corresponding to such ideals. In the light of this re-formulation, the bulk-boundary correspondence principle can now be straightforwardly studied for any $N$-fermion sector. See also \cite{BourneAHP2020} for equivalent formulations of the bulk-boundary correspondence using groupoids and their $C^\ast$-algebras.
}$\Diamond$
\end{remark}

\begin{remark}{\rm In \cite{Upmeier1991}, Upmeier introduced the notion of a groupoid-solvable $C^\ast$-algebra (see also \cite{UpmeierBook}), which generalizes the notion of solvable $C^\ast$-algebras of Dynin. In the light of Theorem~\ref{Th:0}, we can say that $\mathfrak H$ is essentially a groupoid-solvable pro-$C^\ast$-algebra. Furthermore, the reader can find in \cite{Upmeier1991} (see also \cite{DyninPNAS1978}) models of $K$- and index-theoretic analyses in the context of groupoid-solvable $C^\ast$-algebras, specifically, for multivariable Toeplitz operators on domains of $\CM^n$. We, however, will have to leave the $K$-theoretic aspects to future investigations, although some important remarks will be made in section~\ref{Sec:SpecialCases}.
}$\Diamond$
\end{remark}

\begin{remark}\label{Re:2Act}{\rm To our knowledge, the notion of bi-equivariant groupoid $C^\ast$-algebra has not been used before. We were naturally led to it once the analysis was lifted to the order cover and the Hamiltonians were expressed via bi-equivariant coefficients as in Eq.~\eqref{Eq:H1}. The concept is developed in section~\ref{Sec:EquiGg} and is based on the notion of 2-actions of a group on groupoids and $C^\ast$-algebras. By viewing groupoids as categories, a 2-action is the natural 2-category analogue of the action of a group on a space. Historically, such structures arose under the nomer of crossed module of a pair of groups, originally introduced by Whitehead \cite{Whitehead} in the context of homotopy 2-types. The actions of crossed modules on groupoids and $C^{*}$-algebras are described in detail in \cite[Sections 2 and 3]{BussMeyerZhu}. We believe that the bi-equivariant groupoid $C^\ast$-algebra for spin-less fermions introduced here can be generalized, for example, to generate dynamical models for particles with various statistics, including non-abelian anyons. 
}$\Diamond$
\end{remark}

\begin{remark}{\rm  The CAR algebra itself accepts a presentation as a groupoid $C^\ast$-algebra (see  Example~1.10 in \cite{RenaultBook},  Example~8.3.5 in \cite{SimsSzaboWilliamsBook2020}, or our Remark~\ref{Re:CARGroupoid}). This, in fact, represented the starting point for our investigation but we soon realized that this presentation is too rigid because it assumes a fixed pre-ordering of the lattice points. That, unfortunately, is incompatible with a context where the lattice $\Ll$ can be deformed and the points can be exchanged.
}$\Diamond$
\end{remark}

\begin{remark}\label{Re:Compl}{\rm The completion of the algebra of physical Hamiltonians puts a topology on the physical models. Indeed, as for any pro-$C^\ast$-algebra, the topology is given by the semi-norms supplied by the $C^\ast$-norms of the projective tower. Let us acknowledge that the completion includes physical Hamiltonians with infinite interaction range. As we shall see in Proposition~\ref{Pro:SeedG}, the coefficients $h_k$ in Eq~\eqref{Eq:H1} define compactly supported functions over $\Gg_k$. The passage from finite to infinite interaction range happens when the algebra of functions with compact support on $\Gg_k$ is completed to the groupoid $C^\ast$-algebra $C^\ast_r(\Gg_k)$.
}$\Diamond$
\end{remark}

The paper is organized as follows. Section~\ref{Sec:Gru} introduces the key mathematical concept developed by our work, namely, the bi-equivariant groupoid $C^\ast$-algebras. This section starts with a brisk introduction to \'etale groupoids and their associated $C^\ast$-algebras and continues with a discussion of the group of bisections of a groupoid. Then it supplies our definition of a 2-action of a group on groupoids and defines the bi-equivariant groupoid $C^\ast$-algebras relative to such 2-actions of groups. The use of groupoids and their associated $C^\ast$-algebras is showcased in section~\ref{Sec:SingleFermion}, where the Bellissard-Kellendonk formalism \cite{Bellissard1986,Bellissard1995,KellendonkRMP95} for the single-fermion setting is briefly reviewed. This gives us the opportunity to introduce fundamental concepts related to the space of patterns, such as the hulls and transversals. In addition, our presentation in this section adopts a fresh point of view in which the  Bellissard-Kellendonk groupoid $C^\ast$-algebra is seen as formalizing the inner-limit derivations over a local algebra of observables, which in the 1-fermion setting is just the algebra of compact operators. Then the goal of our work can be understood as generalizing the accomplishments of Bellissard and Kellendonk to the setting where the algebra of compact operators is replaced by the GICAR algebra. Section~\ref{Sec:FermionsCAR} focuses on the ${\rm CAR}$ and ${\rm GICAR}$ algebras over Delone sets, specifically, on a symmetric presentation that avoids the use of any pre-ordering of the points and on the ideal structure, as well as on the symmetric Fock representations. Section~\ref{Sec:FermionsDynamics} analyzes the dynamics of the local degrees of freedom. The first part focuses on the physical aspects of the problem, making sure that our subsequent mathematical analysis covers all physically sound Hamiltonians. After the investigation of specific model Hamiltonians generated with many-body potentials, the section concludes that the Hamiltonian coefficients are naturally defined over many-body covers of the space of Delone sets. The deck transformations for this covers supply the 2-actions that will eventually lead us to a bi-equivariant theory as presented in section~\ref{Sec:Gru}. Section~\ref{Sec:Fermiongroupoid} develops the elements needed in Theorem~\ref{Th:0} and supplies the proof. In the process, the \'etale groupoid $\Gg_N$ associated with the dynamics of $N$-fermions is introduced and its left regular representations are shown to reproduce an essential ideal of $\Jj_{N-1}(\Ll)/\Jj_{N}(\Ll)$. Section~\ref{Sec:SpecialCases} analyses special lattices for which the many-body order covers over the transversals are disconnected. Section~\ref{Sec:Discussion} contains concluding remarks and a list of tools that we see being developed in the future within our formalism.

\section{Bi-Equivariant Groupoid $C^\ast$-Algebras}
\label{Sec:Gru}

This section develops the notion of group 2-actions on groupoids and a bi-equivariant version of the $C^\ast$-algebra associated to a groupoid in the presence of such a group 2-actions. There will be several specialized groupoids appearing in this work and, as we already stated in the introduction, our main results point to a certain groupoid structure in the dynamics of interacting fermions. As such, the reader needs a minimal familiarity with these structures and the first two subsections are intended to supply just that. In parallel, these subsections introduce notation and examples that will be used throughout.

\subsection{Background: \'Etale groupoids}

We follow here and throughout the notations and the conventions from \cite[Ch.~8]{SimsSzaboWilliamsBook2020}. We start with the algebraic structure of a groupoid. Abstractly, a groupoid is a small category in which all morphisms are invertible. Concretely, we have the following:

\begin{definition}\label{Def:groupoid}
A groupoid consists of 
\begin{itemize} 
\item A set $\Gg$ equipped with an inverse map $\alpha \mapsto \alpha^{-1}$;
\item A subset $\Gg^{(2)} \subset \Gg \times \Gg$ equipped with a multiplication map $(\alpha,\beta)\mapsto \alpha \beta \in \Gg$,
\end{itemize} 
such that
\begin{enumerate}[{\rm \ 1)}]
\item $(\alpha^{-1})^{-1} = \alpha$ for all $\alpha \in \Gg$;
\item If $(\alpha,\beta)$, $(\beta,\gamma) \in \Gg^{(2)}$, then $(\alpha \beta,\gamma)$, $(\alpha,\beta \gamma) \in \Gg^{(2)}$;
\item $(\alpha,\alpha^{-1}) \in \Gg^{(2)}$ for all $\alpha \in \Gg$;
\item For all $(\alpha,\beta) \in \Gg^{(2)}$, $ (\alpha\beta)\beta^{-1}=\alpha$ and $\alpha^{-1} (\alpha \beta) = \beta$.
\end{enumerate}
\end{definition}

The set $\Gg^{(2)}$ is said to contain the pairs of composable elements and the set 
$$
\Gg^{(0)} = \{\alpha^{-1} \alpha, \ \alpha \in \Gg\} = \{\alpha \alpha^{-1}, \ \alpha \in \Gg\}
$$ 
is referred to as the space of units. The maps
\begin{equation}\label{Eq:RS}
\mathfrak r, \ \mathfrak s : \Gg \rightarrow \Gg^{(0)}, \quad \mathfrak r(\alpha) := \alpha \alpha^{-1}, \quad \mathfrak s(\alpha) := \alpha^{-1} \alpha
\end{equation}
are called the range and the source maps, respectively. Note that $\Gg^{(2)}$ can be equivalently defined as
$$
\Gg^{(2)} = \big \{(\alpha,\beta) \in \Gg \times \Gg, \ \mathfrak s(\alpha) = \mathfrak r(\beta) \big \}.
$$

\begin{remark}{\rm It follows that 
$$
\mathfrak r(\alpha^{-1}) = \mathfrak s(\alpha), \quad \mathfrak s(\alpha^{-1}) =  \mathfrak r(\alpha).
$$
Hence, $(\alpha,\beta^{-1}) \in \Gg^{(2)}$ iff $\mathfrak s(\alpha) = \mathfrak s(\beta)$. This is relevant for the product \eqref{Eq:Conv} and for the left regular representations \eqref{Eq:PiX}. Many other useful properties and identities can be found in \cite[Ch.~8]{SimsSzaboWilliamsBook2020}.
}$\Diamond$
\end{remark}

Below, we introduce several standard groupoids, which will appear in our discussion of aperiodic physical systems.

\begin{example}\label{Ex:Matgroupoid}
{\rm For an {\it arbitrary} set $X$, one defines the matrix groupoid over $X$ to be the set $\Mm_X = X \times X$. The set $\Mm_X^{(2)} \subset \Mm_X \times \Mm_X$ of composable elements consists of pairs $((x,y),(z,w))$ such that $y=z$ and the composition of two composable elements is $(x,y) \cdot (y,z) = (x,z)$. The inversion map acts as $(x,y)^{-1}=(y,x)$. The space of units $\Mm_X^{(0)}$ consists of $(x,x)$, $x \in X$, hence it can be canonically identified with $X$. As we shall see below, if $X = \{1,2,\ldots,N\}$, this groupoid is related to the standard algebra of $N\times N$ matrices and, if $X = \NM$ or any other discrete and infinite topological space, then the groupoid relates to the algebra of compact operators over the separable Hilbert space $\ell^{2}(X)$. 
}$\Diamond$
\end{example}

\begin{example}\label{Ex:Relgroupoid}
{\rm Let $R$ be an equivalence relation over an arbitrary set $X$. The groupoid associated to $R$ consists of its graph $\Rr =\{(x,y) \in  X \times X, \ x \sim_R y\}$ and the set of composable elements $\Rr^{(2)}$ consists of pairs $((x,y),(z,w))$ such that $y=z$. The composition of two composable elements is $(x,y) \cdot (y,z) = (x,z)$. The inversion map acts as $(x,y)^{-1}=(y,x)$ and the space of units $\Rr^{(0)}$ can be identified again with $X$. The CAR-algebra admits a presentation as the $C^\ast$-algebra associated to a groupoid of this type (see Remark~\ref{Re:CARGroupoid}).
}$\Diamond$
\end{example}

\begin{example}{\rm The Bellissard-Kellendonk groupoid associated to the single fermion dynamics is introduced in Definition~\ref{Def:Groupoid1}. The groupoids associated with the $N$-fermion dynamics are introduced in Definition~\ref{Def:GroupoidN}.
}$\Diamond$
\end{example}
The groupoids we will consider carry extra structure in the form of a topology. 
\begin{definition} 
A groupoid is called a topological groupoid if $\Gg$ is equipped with a locally compact Hausdorff topology such that
\begin{enumerate}[\ \rm 1)]
\item The inversion, source and range maps are all continuous;
\item  The composition is also continuous when $\Gg^{(2)} \subset \Gg \times \Gg$ is equipped with the relative topology. 
\end{enumerate}
\end{definition}
Although the Hausdorff condition in this definition can be weakened (see \cite[Def. 8.3.1]{SimsSzaboWilliamsBook2020}), the groupoids in this paper will always satisfy it.
\begin{example}{\rm \cite[p.~73]{SimsSzaboWilliamsBook2020} Every groupoid is a topological groupoid in the discrete topology. In particular, if the set $X$ is a discrete topological space, then the groupoids constructed in examples~\ref{Ex:Matgroupoid} and \ref{Ex:Relgroupoid} are topological.
}$\Diamond$
\end{example}

\begin{example}{\rm \cite[p.~73]{SimsSzaboWilliamsBook2020} If $X$ in example~\ref{Ex:Relgroupoid} is a second-countable Hausdorff space, then $\Rr$ is a topological groupoid in the relative topology inherited from $X \times X$. The topology of $\Rr$, however, can be finer than the relative one, but never coarser. This is a very important observation because, as we shall see, the compact subsets of the groupoid enter in an essential way in the definition of the groupoid algebra. As such, by adjusting the groupoid's topology, one can drastically alter the character of this algebra (see Remark~\ref{Re:CompactSupp}). This issue will be highlighted at several key points of our narrative (see for example Remark~\ref{Re:Xi1}).
}$\Diamond$
\end{example} 

In this work, we will only consider \'etale groupoids:
\begin{definition}[\cite{SimsSzaboWilliamsBook2020},~Def.~8.4.1] A topological groupoid $\Gg$ is called \'etale if the range map $\mathfrak r : \Gg \rightarrow \Gg^{(0)}$ is a local homeomorphism.
\end{definition}

\begin{remark}{\rm Among all topological groupoids, the \'etale groupoids are the equivalent of the discrete groups within the topological groups. Indeed, an important characteristic of \'etale groupoids is that $\mathfrak r^{-1}(\alpha)$ and $\mathfrak s^{-1}(\alpha)$ are discrete topological spaces for any $\alpha \in \Gg^{(0)}$. Furthermore, any \'etale groupoid admits a Haar system \cite[Def. 2.2]{RenaultBook}, which is supplied by the system of counting measures on each fiber \cite[Def. 2.6, Lemma 2.7]{RenaultBook}. As we will be working solely with \'etale groupoids, we will not need the general theory of Haar systems and we will therefore omit a discussion of them.
}$\Diamond$
\end{remark}

\subsection{Background: Reduced $C^\ast$-algebra of an \'etale groupoid}
We develop this section in a generality that is beyond what is needed for our present work. We do so to facilitate the new concept of bi-equivariant groupoid $C^\ast$-algebras, which will play a central role in our present and possibly future program (per Remark~\ref{Re:2Act}).

Given an \'etale groupoid $\Gg$ and a $C^{*}$-algebra $\Aa$, one defines the space $C_c(\Gg,\Aa)$ of compactly supported $\Aa$-valued continuous functions on $\Gg$ and endows it with the multiplication
\begin{equation}\label{Eq:Conv}
(f_1 \ast f_2)(\alpha) = \sum_{\beta \in \mathfrak s^{-1}( \mathfrak s(\alpha))} f_1(\alpha \beta^{-1}) f_2(\beta)=\sum_{\beta \in \mathfrak r^{-1}(\mathfrak{r}(\alpha))}f_{1}(\beta)f_{2}(\beta^{-1}\alpha), 
\end{equation}
and the $\ast$-operation
\begin{equation}\label{Eq:Inv}
f^\ast(\alpha) = f(\alpha^{-1})^{*}.
\end{equation}
Then  $C_c(\Gg,\Aa)$ becomes a $\ast$-algebra, which can be equipped with a norm such that its completion supplies a $C^\ast$-algebra. The physical implication of this process was already addressed in Remark~\ref{Re:Compl}.

\begin{remark}\label{Re:CompactSupp}{\rm Note that the topology of $\Gg$, specifically the compact subsets of $\Gg$ in this topology, determines which functions are contained in this $C^\ast$-algebra. This somewhat technical point has important physical implications and will become a central point of discussion in the next sections.
}$\Diamond$
\end{remark}

We now return to the task of defining a $C^\ast$-norm and for this we follow the procedure from~\cite{KhoshkamJRAM2002}, where $C_c(\Gg,\Aa)$ is embedded and then completed in the $C^\ast$-algebra of adjointable operators over a natural Hilbert $C^\ast$-module associated with the pair $(\Gg,\Aa)$. We first recall:

\begin{definition}[\cite{BlackadarBook1998,JensenThomsenBook1991,WeggeOlsenBook1993de}] 
A right Hilbert C$^\ast$-module over the $C^\ast$-algebra $\mathcal B$ is a right $\mathcal B$-module equipped with an inner product
$$
\langle \,\cdot \,,\, \cdot\, \rangle \;:\; E_\mathcal B \times E_\mathcal B \rightarrow \mathcal B
\;,
$$
which is linear in the second variable and satisfies the following relations for all $\psi, \psi' \in E_\Bb$ and $b \in \Bb$:
\begin{enumerate}[{\rm \ \ (i)}]

\item $\langle \psi , \psi' b \rangle = \langle \psi , \psi' \rangle b$,

\item $\langle \psi ,\psi' \rangle^\ast = \langle \psi' ,\psi \rangle$,

\item $\langle \psi , \psi \rangle \geq 0$,

\item $\psi \neq 0$ implies $\langle \psi , \psi \rangle \neq 0$.

\end{enumerate}
In addition, $E_\mathcal B$ must be complete in the norm induced by the inner product
$$
\|\psi \| 
\; :=\; 
\| \langle \psi,\psi \rangle \|^\frac{1}{2}_{\mathcal{B}}
\;, 
\qquad \psi \in E_\Bb\;,
$$
where $\| \cdot \|_\Bb$ is the $C^\ast$-norm of $\Bb$.
\end{definition}

\begin{example}\label{Ex-StandardHM1} 
{\rm The standard Hilbert $\mathcal B$-module is defined to be the set
$$
\ell^2(\NM,\mathcal B)
\;=\; 
\Big\{
\{b_n\}_{n\in\NM}\ : \  b_n \in \mathcal B, \ \sum_{n=1}^N b_n^\ast b_n \ \mbox{converges in} \ \mathcal B \ {\rm as} \ N\rightarrow \infty
\Big\}
\;,
$$
endowed with the obvious addition and the $\Bb$-valued inner product
$$
\big\langle \{b_n\}_{n\in\NM}  ,  \{b'_n\}_{n\in\NM}  \big \rangle 
\;=\; \sum_{n \in \mathbb N} b_n^\ast b'_n.
$$
The set $\NM$ can be replaced by any other countable set $X$, in which case we write $\ell^2(X,\mathcal B)$.
}$\Diamond$
\end{example}

\begin{definition}[\cite{BlackadarBook1998,JensenThomsenBook1991,WeggeOlsenBook1993de}]  The space of adjointable operators ${\rm End}^\ast(E_\mathcal B)$ over a Hilbert $\mathcal B$-module consists of the maps $T: E_\mathcal B \rightarrow E_\mathcal B$ for which there exists a linear map $T^\ast : E_\mathcal B \rightarrow E_\mathcal B$ such that

$$
\langle T \psi,\psi' \rangle 
\;=\; 
\langle \psi, T^\ast \psi' \rangle
\;, 
\qquad \psi,\psi' \in E_\mathcal B
\;.
$$

\end{definition}

\begin{remark} 
{\rm 
An adjointable operator is automatically a bounded $\mathcal B$-module map, but the reversed implication is not always true. Hence, the attribute ``adjointable'' is significant. If the operator $T^\ast$ exists, it is unique.$\Diamond$
}
\end{remark}

\begin{proposition}[\cite{JensenThomsenBook1991} p.~4, and \cite{BlackadarBook1998,WeggeOlsenBook1993de}] When endowed with the operator norm,
$$
\|T \| \;=\; \sup \big \{\|T \psi\| \ : \ \psi \in E_\mathcal B, \ \|\psi\| \leq 1 \big \}
\;,
$$
the space ${\rm End}^\ast(E_\Bb)$ becomes a unital C$^\ast$-algebra.
\end{proposition}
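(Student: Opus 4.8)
\emph{Proof proposal.} The plan is to verify the C$^\ast$-algebra axioms one at a time, taking as given the fact recorded in the remark preceding the statement — that every adjointable operator is automatically a bounded $\mathcal B$-module map (a closed-graph argument using the nondegeneracy axiom (iv)) — so that the operator norm is finite to begin with, together with the uniqueness of the adjoint.

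First I would check the $\ast$-algebra structure. Linearity of the inner product in the second slot and conjugate-linearity in the first (a consequence of axioms (i) and (ii)) show that $T+S$, $\lambda T$ and $ST$ are again adjointable, with $(T+S)^\ast = T^\ast + S^\ast$, $(\lambda T)^\ast = \bar\lambda\, T^\ast$ and $(ST)^\ast = T^\ast S^\ast$; applying axiom (ii) twice to the defining relation of the adjoint yields $(T^\ast)^\ast = T$. Uniqueness of the adjoint makes $T \mapsto T^\ast$ well defined, so ${\rm End}^\ast(E_\Bb)$ is a unital $\ast$-algebra with unit $\mathrm{id}_{E_\Bb}$. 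Submultiplicativity $\|ST\| \le \|S\|\,\|T\|$ and $\|\mathrm{id}\| = 1$ are the usual operator-norm estimates, so ${\rm End}^\ast(E_\Bb)$ is at least a unital normed $\ast$-algebra.

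Next comes the crux, the C$^\ast$-identity. Using the Hilbert-module Cauchy--Schwarz inequality $\|\langle \psi, \phi \rangle\|_{\mathcal B} \le \|\psi\|\,\|\phi\|$ (itself a consequence of axioms (i)--(iv)), for $\|\psi\| \le 1$ one obtains
$$
\|T\psi\|^2 \;=\; \|\langle T\psi, T\psi \rangle\|_{\mathcal B} \;=\; \|\langle \psi, T^\ast T\psi \rangle\|_{\mathcal B} \;\le\; \|T^\ast T\psi\| \;\le\; \|T^\ast T\|,
$$
hence $\|T\|^2 \le \|T^\ast T\| \le \|T^\ast\|\,\|T\|$, so $\|T\| \le \|T^\ast\|$. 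Replacing $T$ by $T^\ast$ and using $(T^\ast)^\ast = T$ gives the reverse inequality, whence $\|T^\ast\| = \|T\|$; feeding this back into $\|T\|^2 \le \|T^\ast T\| \le \|T^\ast\|\,\|T\| = \|T\|^2$ forces $\|T^\ast T\| = \|T\|^2$.

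Finally I would prove completeness. Given a Cauchy sequence $\{T_n\}$ in ${\rm End}^\ast(E_\Bb)$, the estimates $\|T_n\psi - T_m\psi\| \le \|T_n - T_m\|\,\|\psi\|$ and the now-established isometry $\|T_n^\ast - T_m^\ast\| = \|T_n - T_m\|$ show that $T\psi := \lim_n T_n\psi$ and $S\psi' := \lim_n T_n^\ast\psi'$ define bounded $\mathcal B$-module maps; continuity of the inner product then gives $\langle T\psi, \psi' \rangle = \lim_n \langle T_n\psi, \psi' \rangle = \lim_n \langle \psi, T_n^\ast\psi' \rangle = \langle \psi, S\psi' \rangle$, so $T$ is adjointable with $T^\ast = S$, and a standard $\varepsilon/2$ argument yields $\|T_n - T\| \to 0$. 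I expect the only genuinely delicate input to be the Cauchy--Schwarz step underpinning the C$^\ast$-identity; everything else is routine bookkeeping once the boundedness of adjointable operators is in hand.
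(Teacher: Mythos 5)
Your proof is correct and is exactly the standard argument given in the cited references (Jensen--Thomsen, Blackadar, Wegge-Olsen); the paper itself quotes this as a background fact without proof. The $\ast$-algebra verifications, the Cauchy--Schwarz route to $\|T^\ast\|=\|T\|$ and the C$^\ast$-identity, and the pointwise-limit argument for completeness are all sound, so nothing further is needed.
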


We now describe the Hilbert $C^\ast$-module associated to the pair $(\Gg,\Aa)$. First, note that the space $C_c(\Gg,\Aa)$ is already a right module over $C_0(\Gg^{(0)},\Aa)$, the $C^\ast$-algebra of $\Aa$-valued continuous and compactly supported functions over $\Gg^{(0)}$, endowed with the sup norm. Indeed, for $f \in C_c(\Gg,\Aa)$ and $\chi \in C_0(\Gg^{(0)},\Aa)$, one defines
\begin{equation*}
(f\cdot \chi)(\alpha) = f(\alpha) \chi(\mathfrak s(\alpha)), \quad \alpha \in \Gg.
\end{equation*}
Since $\Gg^{(0)} \subset \Gg$ is closed, one can also consider the restriction map \[\rho :C_c(\Gg,\Aa) \rightarrow C_0(\Gg^{(0)},\Aa),\] and define a $C_0(\Gg^{(0)},\Aa)$-valued inner product on the right $C_0(\Gg^{(0)},\Aa)$-module $C_c(\Gg,\Aa)$ via
\begin{equation}\label{Eq:InnerProd1}
\langle f_1|f_2 \rangle_{C_0(\Gg^{(0)},\Aa)}(x) :=\rho(f_1^\ast \ast f_2)(x) = \sum_{\alpha \in \mathfrak s^{-1}(x)} f_1(\alpha)^{*} f_2(\alpha).
\end{equation}
We denote by $E_{C_0(\Gg^{(0)},\Aa)}$ the Hilbert $C^\ast$-module completion of $C_c(\Gg,\Aa)$ in this inner product. The convolution from the left supplies a left action of the $\ast$-algebra $C_c(\Gg,\Aa)$ on $E_{C_0(\Gg^{(0)},\Aa)}$ by bounded adjointable endomorphisms, extending the action of $C_c(\Gg,\Aa)$ on itself.
\begin{definition}[\cite{KhoshkamJRAM2002}]
The reduced groupoid $C^\ast$-algebra of $\mathcal{G}$ over the $C^\ast$-algebra $\Aa$ is the completion of $C_c(\Gg,\Aa)$ in the norm inherited from the embedding 
$$C_c(\Gg,\Aa) \rightarrowtail {\rm End}^\ast(E_{C_0(\Gg^{(0)},\Aa)})$$ 
and is denoted $C^\ast_r(\Gg,\Aa)$.
\end{definition}
In case $\Aa=\mathbb{C}$ it is usually suppressed in the notation. We refer to $C^{*}_{r}(\mathcal{G}):=C^{*}_{r}(\mathcal{G},\mathbb{C})$ as the reduced $C^{*}$-algebra of $\mathcal{G}$.

\begin{remark}{\rm For a second countable, Hausdorff, \'etale groupoid $\Gg$, the reduced $C^\ast$-algebra $C^\ast_r(\Gg)$ is separable. In turn, this implies that its $K$-groups are countable \cite{BlackadarBook1998} and this is of fundamental importance for various classifications programs of topological phases of matter, as we already stressed in the introduction. 
}$\Diamond$
\end{remark}
 
\subsection{Bi-equivariant groupoid $C^{*}$-algebras}
\label{Sec:EquiGg}

Let $\mathcal{G}$ be a topological groupoid. The group of bisections $\mathcal{S}(\mathcal{G})$ of $\mathcal{G}$ is the space of continuous maps
\begin{equation*}
\mathcal{S}(\mathcal{G}):=\left\{b:\mathcal{G}^{(0)}\to \mathcal{G}: \mathfrak s\circ b=\textnormal{Id},\quad \mathfrak r\circ b \textnormal{ is a homeomorphism}\right\}.
\end{equation*}
The group structure on $\mathcal{S}(\mathcal{G})$ is given by 
\begin{equation}\label{Eq:Rule1}
b_{1}\cdot b_{2}(\alpha):=b_{1}\big(\mathfrak r\circ b_{2}(\alpha)\big) b_{2}(\alpha),\quad b^{-1}(\alpha):=b\big((\mathfrak r\circ b)^{-1}(\alpha)\big)^{-1},\quad\alpha\in\mathcal{G}^{(0)}.
\end{equation}
Here, $b^{-1}$ is the inverse of $b$ in $\Ss(\Gg)$, $(\mathfrak r\circ b)^{-1}$ denotes the inverse homeomorphism to $\mathfrak r\circ b$, whereas $b(\alpha)^{-1}$ denotes the inverse of $b(\alpha)$ in $\mathcal{G}$. The identity element of $\mathcal{S}(\mathcal{G})$ is the inclusion $i:\mathcal{G}^{(0)}\to \mathcal{G}$. In fact, $\mathcal{S}(\mathcal{G})$ is a locally compact group in the compact open topology \cite[Example 13]{BussMeyerZhu}. It admits continuous commuting left and right actions on $\mathcal{G}$ via
\[b_{1}\cdot\alpha\cdot b_{2}:=b_{1}\big(\mathfrak r(\alpha)\big) \, \alpha \, b_{2}^{-1}\big(\mathfrak s(\alpha)\big)^{-1} ,\quad b\in\mathcal{S}(\mathcal{G}),\quad \alpha\in\mathcal{G}.\]

\begin{lemma} The actions satisfy
\begin{align}\label{eq: actioninversion}(b_{1}\cdot\alpha\cdot b_{2})^{-1} &=b_{2}^{-1}\cdot \alpha^{-1}\cdot b_{1}^{-1},\\
\label{eq: actionrange}\mathfrak r(b_{1}\cdot\alpha\cdot b_{2}) &=b_{1}\cdot \mathfrak r(\alpha)\cdot b_{1}^{-1},\\
\label{eq:actionsource}\mathfrak s(b_{1}\cdot\alpha\cdot b_{2})&=b_{2}^{-1}\cdot \mathfrak s(\alpha)\cdot b_{2}.\end{align}
\end{lemma}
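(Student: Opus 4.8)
The plan is to verify the three identities \eqref{eq: actioninversion}, \eqref{eq: actionrange}, \eqref{eq:actionsource} by direct computation, unpacking the definition
\[
b_{1}\cdot\alpha\cdot b_{2} \;=\; b_{1}\bigl(\mathfrak r(\alpha)\bigr)\,\alpha\,b_{2}^{-1}\bigl(\mathfrak s(\alpha)\bigr)^{-1},
\]
together with the group law \eqref{Eq:Rule1} on $\Ss(\Gg)$ and the groupoid axioms of Definition~\ref{Def:groupoid}. The first bookkeeping step is to record, for any bisection $b$ and any unit $u$, that $\mathfrak r\bigl(b(u)\bigr) = (\mathfrak r\circ b)(u)$ and $\mathfrak s\bigl(b(u)\bigr) = u$, so that the three-fold products above are genuinely composable: $\mathfrak s\bigl(b_{1}(\mathfrak r(\alpha))\bigr) = \mathfrak r(\alpha) = \mathfrak r(\alpha)$ matches the range of $\alpha$, and $\mathfrak r\bigl(b_{2}^{-1}(\mathfrak s(\alpha))^{-1}\bigr) = \mathfrak s\bigl(b_{2}^{-1}(\mathfrak s(\alpha))\bigr) = \mathfrak s(\alpha)$ matches the source of $\alpha$. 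I would also note the auxiliary identity $b^{-1}(\mathfrak r\circ b)(u)^{-1} = b(u)$, equivalently $\mathfrak r\circ b^{-1} = (\mathfrak r\circ b)^{-1}$, which follows by applying \eqref{Eq:Rule1} to $b\cdot b^{-1} = i$ and reading off both components; this is the small lemma that makes the inverses in the action formula behave.

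For \eqref{eq: actioninversion}, I would simply invert the three-fold product using $(xyz)^{-1} = z^{-1}y^{-1}x^{-1}$ (valid in any groupoid for composable triples) to get
\[
(b_{1}\cdot\alpha\cdot b_{2})^{-1} = b_{2}^{-1}\bigl(\mathfrak s(\alpha)\bigr)\,\alpha^{-1}\,b_{1}\bigl(\mathfrak r(\alpha)\bigr)^{-1},
\]
and then recognize the right-hand side as $b_{2}^{-1}\cdot\alpha^{-1}\cdot b_{1}^{-1}$ by matching against the action formula applied to $\alpha^{-1}$, using $\mathfrak r(\alpha^{-1}) = \mathfrak s(\alpha)$, $\mathfrak s(\alpha^{-1}) = \mathfrak r(\alpha)$, and the auxiliary identity $(b_{1}^{-1})^{-1}(u)^{-1} = b_{1}(\mathfrak r\circ b_1^{-1})(u)^{-1}$... here one must be careful that $b_{1}^{-1}$ appears as the right factor so its contribution is $(b_{1}^{-1})^{-1}\bigl(\mathfrak s(\alpha^{-1})\bigr)^{-1} = b_{1}\bigl(\mathfrak r(\alpha)\bigr)^{-1}$, exactly what we have. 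For \eqref{eq: actionrange}, I compute $\mathfrak r(b_{1}\cdot\alpha\cdot b_{2}) = \mathfrak r\bigl(b_{1}(\mathfrak r(\alpha))\bigr) = (\mathfrak r\circ b_{1})(\mathfrak r(\alpha))$ using axiom~4) (the leftmost factor determines the range of a composite), and then I must check this equals $b_{1}\cdot\mathfrak r(\alpha)\cdot b_{1}^{-1} = b_{1}\bigl(\mathfrak r(\alpha)\bigr)\,\mathfrak r(\alpha)\,b_{1}\bigl((\mathfrak r\circ b_1)(\mathfrak r(\alpha))\bigr)^{-1}$; since $\mathfrak r(\alpha)$ is a unit, $b_{1}(\mathfrak r(\alpha))\cdot\mathfrak r(\alpha) = b_{1}(\mathfrak r(\alpha))$, and $b_{1}(\mathfrak r(\alpha))\,b_{1}(\cdots)^{-1}$ simplifies to the unit $\mathfrak r\bigl(b_{1}(\mathfrak r(\alpha))\bigr) = (\mathfrak r\circ b_1)(\mathfrak r(\alpha))$, which matches. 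The source identity \eqref{eq:actionsource} is symmetric: $\mathfrak s(b_{1}\cdot\alpha\cdot b_{2}) = \mathfrak s\bigl(b_{2}^{-1}(\mathfrak s(\alpha))^{-1}\bigr) = \mathfrak r\bigl(b_{2}^{-1}(\mathfrak s(\alpha))\bigr) = (\mathfrak r\circ b_2^{-1})(\mathfrak s(\alpha)) = (\mathfrak r\circ b_2)^{-1}(\mathfrak s(\alpha))$, and one checks this equals $b_{2}^{-1}\cdot\mathfrak s(\alpha)\cdot b_{2}$ by the same unit-absorption argument.

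The main obstacle, such as it is, is purely notational: keeping straight which evaluation point ($\mathfrak r(\alpha)$ versus $\mathfrak s(\alpha)$) each bisection is applied at, and correctly expanding $b_{2}^{-1}$ and $b_{1}^{-1}$ via \eqref{Eq:Rule1} without sign or order errors, since the inverse in $\Ss(\Gg)$ involves both a reparametrization by $(\mathfrak r\circ b)^{-1}$ and a pointwise inversion in $\Gg$. I would manage this by first establishing, as a preliminary displayed line, the compact dictionary
\[
\mathfrak r\circ b^{-1} = (\mathfrak r\circ b)^{-1}, \qquad b^{-1}(u) = b\bigl((\mathfrak r\circ b)^{-1}(u)\bigr)^{-1}, \qquad b\bigl((\mathfrak r\circ b)(u)\bigr)^{-1}\cdot b(u) \text{ undefined},
\]
wait --- I should not assert a false composability; rather the clean facts are $\mathfrak r\circ b^{-1} = (\mathfrak r\circ b)^{-1}$ and the involution $(b^{-1})^{-1} = b$, and the observation that $b(u)^{-1}b(u) = \mathfrak s(b(u)) = u$ while $b(u)b(u)^{-1} = \mathfrak r(b(u)) = (\mathfrak r\circ b)(u)$. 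With that dictionary in hand, each of the three verifications is a two-line computation, and the lemma follows; no topology enters, only the algebraic axioms, so continuity of the actions (already asserted in the text) is not part of what must be proved here.
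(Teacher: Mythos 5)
Your proof is correct and follows essentially the same route as the paper's: a direct unpacking of the definition $b_{1}\cdot\alpha\cdot b_{2}=b_{1}(\mathfrak r(\alpha))\,\alpha\,b_{2}^{-1}(\mathfrak s(\alpha))^{-1}$, inverting the composable triple for \eqref{eq: actioninversion} and reading off the range (resp.\ source) from the leftmost (resp.\ rightmost) factor for \eqref{eq: actionrange} and \eqref{eq:actionsource}, with the unit-absorption identities $b(u)\,u=b(u)$ and $b(u)b(u)^{-1}=(\mathfrak r\circ b)(u)$ closing each verification. The auxiliary dictionary $\mathfrak r\circ b^{-1}=(\mathfrak r\circ b)^{-1}$ is a harmless extra that the paper does not need to invoke explicitly.
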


\proof Indeed:
\begin{align*}(b_{1}\cdot\alpha\cdot b_{2})^{-1}&=b_{2}^{-1}(\mathfrak s(\alpha))\, \alpha^{-1}\, b_{1}(\mathfrak r(\alpha))^{-1}=b_{2}^{-1}\cdot \alpha^{-1}\cdot b_{1}^{-1}\\
\mathfrak r(b_{1}\cdot\alpha\cdot b_{2})&=\mathfrak r\circ b_{1}(\mathfrak r(\alpha))=b_{1}(\mathfrak r(\alpha))\, \mathfrak r(\alpha)\, b_{1}(\mathfrak r(\alpha))^{-1}=b_{1}\cdot \mathfrak r(\alpha)\cdot b_{1}^{-1}\\
\mathfrak s(b_{1}\cdot\alpha\cdot b_{2})&=\mathfrak s\circ b_{2}^{-1}(\mathfrak s(\alpha))^{-1}=b_{2}^{-1}(\mathfrak s(\alpha))\, \mathfrak s(\alpha)\, b_{2}^{-1}(\mathfrak s(\alpha))^{-1}=b_{2}^{-1}\cdot \mathfrak s(\alpha)\cdot b_{2},
\end{align*}
which prove the statements.\qed

\begin{corollary} If $(\alpha,\beta)\in\mathcal{G}^{(2)}$, then $(b_{1}\cdot \alpha \cdot b_{2}^{-1}, b_{2}\cdot \beta \cdot b_{3})\in\mathcal{G}^{(2)}$ and
$$
(b_{1}\cdot \alpha \cdot b_{2}^{-1}) (b_{2}\cdot \beta \cdot b_{3})=b_{1}\cdot \alpha\beta\cdot b_{3}.
$$
\end{corollary}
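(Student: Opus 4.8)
The plan is to unwind both sides of the claimed identity using the definition of the left and right actions together with the algebraic identities for the range and source maps under these actions, which were just established in the previous lemma. First I would check that the pair is composable: by \eqref{eq:actionsource} applied to $\alpha$ with the right bisection $b_2^{-1}$, we get $\mathfrak s(b_1\cdot\alpha\cdot b_2^{-1}) = b_2\cdot \mathfrak s(\alpha)\cdot b_2^{-1}$, and by \eqref{eq: actionrange} applied to $\beta$ with the left bisection $b_2$, we get $\mathfrak r(b_2\cdot\beta\cdot b_3) = b_2\cdot\mathfrak r(\beta)\cdot b_2^{-1}$. Since $(\alpha,\beta)\in\Gg^{(2)}$ means $\mathfrak s(\alpha)=\mathfrak r(\beta)$, these two expressions agree, so the pair is composable.

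Next I would compute the product directly from the definition $b\cdot\gamma\cdot b' = b(\mathfrak r(\gamma))\,\gamma\,b'^{-1}(\mathfrak s(\gamma))^{-1}$. Writing out
\[
(b_1\cdot\alpha\cdot b_2^{-1})(b_2\cdot\beta\cdot b_3) = \big(b_1(\mathfrak r(\alpha))\,\alpha\,b_2(\mathfrak s(\alpha))^{-1}\big)\big(b_2(\mathfrak r(\beta))\,\beta\,b_3^{-1}(\mathfrak s(\beta))^{-1}\big),
\]
the middle two factors are $b_2(\mathfrak s(\alpha))^{-1} b_2(\mathfrak r(\beta)) = b_2(\mathfrak s(\alpha))^{-1} b_2(\mathfrak s(\alpha))$, which is the unit $\mathfrak s(b_2(\mathfrak s(\alpha))) = \mathfrak r(\alpha)^{-1}\cdots$ — more precisely it collapses to the unit $\mathfrak s\big(b_2(\mathfrak s(\alpha))\big)$, and since $\mathfrak s\circ b_2 = \mathrm{Id}$ this equals $\mathfrak s(\alpha) = \mathfrak r(\beta)$, which acts as the identity on $\alpha$ from the right and on $\beta$ from the left. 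Hence the product telescopes to $b_1(\mathfrak r(\alpha))\,\alpha\beta\,b_3^{-1}(\mathfrak s(\beta))^{-1}$. Finally, using $\mathfrak r(\alpha\beta) = \mathfrak r(\alpha)$ and $\mathfrak s(\alpha\beta) = \mathfrak s(\beta)$ (which hold for any composable pair in a groupoid), this is exactly $b_1(\mathfrak r(\alpha\beta))\,\alpha\beta\,b_3^{-1}(\mathfrak s(\alpha\beta))^{-1} = b_1\cdot\alpha\beta\cdot b_3$, as claimed.

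The only genuinely delicate point — and the step I would be most careful about — is the bookkeeping in the cancellation $b_2(\mathfrak s(\alpha))^{-1}b_2(\mathfrak r(\beta))$: one must track carefully that the convention $b\cdot\gamma\cdot b' = b(\mathfrak r(\gamma))\,\gamma\,(b')^{-1}(\mathfrak s(\gamma))^{-1}$ produces $b_2^{-1}$ evaluated as $(b_2^{-1})^{-1} = b_2$ on the left factor, that both $b_2$-evaluations occur at the same unit $\mathfrak s(\alpha) = \mathfrak r(\beta)$, and that these are honest inverse group elements in $\mathcal{S}(\Gg)$ whose product is the identity bisection at that unit. Everything else is the routine associativity of groupoid multiplication and the elementary identities $\mathfrak r(\alpha\beta)=\mathfrak r(\alpha)$, $\mathfrak s(\alpha\beta)=\mathfrak s(\beta)$. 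I would present the computation as a short displayed chain of equalities rather than prose.
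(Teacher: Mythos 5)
Your proof is correct and is precisely the direct computation the paper intends (the corollary is stated without proof as an immediate consequence of the definitions and the preceding lemma): the composability check via \eqref{eq:actionsource} and \eqref{eq: actionrange}, the bookkeeping $(b_2^{-1})^{-1}=b_2$ in the right action, and the key cancellation $b_2(\mathfrak s(\alpha))^{-1}b_2(\mathfrak r(\beta))=\mathfrak s\big(b_2(\mathfrak s(\alpha))\big)=\mathfrak s(\alpha)$ are all handled correctly. One cosmetic remark: the two factors that cancel are mutually inverse elements of the groupoid $\mathcal{G}$ itself rather than of the bisection group $\mathcal{S}(\mathcal{G})$, as your closing paragraph suggests, but this does not affect the argument.
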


\begin{definition}\label{Def:2ActionGroupoid} 
Let $\Gamma$ be a discrete group and $\mathcal{G}$ a locally compact Hausdorff groupoid. A 2-action of $\Gamma$ on $\mathcal{G}$ is a group homomorphism $\tau:\Gamma\to \mathcal{S}(\mathcal{G})$.
\end{definition}  

Given a 2-action of $\Gamma$ on $\mathcal{G}$ we obtain commuting left and right actions by setting
\begin{equation}\label{Eq:2Action1}
\gamma_{1}\cdot\alpha\cdot\gamma_{2}:=\tau_{\gamma_{1}}\cdot\alpha \cdot \tau_{\gamma_{2}}=\tau_{\gamma_{1}}\big(\mathfrak r(\alpha)\big)\cdot \alpha \cdot\tau_{\gamma_{2}^{-1}}\big(\mathfrak s(\alpha)\big)^{-1}.
\end{equation}
It should be noted that these actions {\it do not} determine the full 2-action of the group. 

\begin{remark}{\rm For the specialized groupoids appearing in our work, a 2-action is introduced in Proposition~\ref{Pro:2Action} and the corresponding left and right actions are computed in Proposition~\ref{Pro:LRActions}. 
}$\Diamond$
\end{remark}

For a $C^{*}$-algebra $\Aa$, we denote by $UM(\Aa)$ the unitary group of the multiplier algebra of $\Aa$. A similar notion of 2-action of a group on $C^{*}$-algebras is given in the following definition.
\begin{definition} \label{2actionalgebra}
Let $\Gamma$ be a discrete group and $\Aa$ a $C^{*}$-algebra. A 2-action of $\Gamma$ on $\Aa$ is a group homomorphism $u:\Gamma \to UM(\Aa)$ from $\Gamma$ to the unitary multipliers on $\Aa$. 
\end{definition}

Given a 2-action of $\Gamma$ on $\Aa$ we obtain left and right actions on $\Aa$ by setting
\[\gamma_{1}\cdot a \cdot \gamma_{2}:=u_{\gamma_{1}}au_{\gamma_{2}}^{*}.\]
This in turn induces an action \[\bm \alpha:\Gamma\to \textnormal{Aut}(\Aa),\quad \bm \alpha_{\gamma}(a):=u_{\gamma}a u_{\gamma}^{*},\]
of $\Gamma$ on $\Aa$ by $*$-automorphisms, making $A$ into a $\Gamma$-$C^{*}$-algebra. Note that in case $\Aa$ is commutative, the latter action is trivial but the 2-action is not.

\begin{remark}{\rm In our concrete application, $\Gamma$ is the permutation group $\mathcal{S}_{N}$ and the $C^\ast$-algebra $\Aa$ will be simply $\CM$. The relevant 2-action is given by the sign homomorphism $\tau:\mathcal{S}_{N}\to \{\pm 1\}\subset \mathbb{C}=UM(\mathbb{C})$. We often write this action as $\Ss_N \ni s \mapsto (-1)^s \in UM(\CM)$. 
}$\Diamond$
\end{remark}
 
\begin{remark}{\rm We chose the terminology of 2-actions because,
given a group $\Gamma$, the pair groupoid $\Gamma\times\Gamma$ is an example of a $2$-groupoid (for a comprehensive definition, see \cite[Section 2.2]{Amini}). A 2-groupoid is a 2-category in which all 1- and 2-morphisms are invertible. Somewhat informally, a 2-groupoid is a groupoid such that the unit space carries the structure of a groupoid itself. In case the unit space is a group, we speak of a 2-group. 
}$\Diamond$
\end{remark}

\begin{remark}{\rm For us, the relevant example arises from a discrete group $\Gamma$. The pair groupoid $\Gamma\times \Gamma$ associated to the full equivalence relation on $\Gamma$ as in Example \ref{Ex:Relgroupoid}, carries a natural 2-group structure, since $(\Gamma\times\Gamma)^{(0)}=\Gamma$ is a group.  The notion of 2-action in Definition \ref{Def:2ActionGroupoid} is equivalent to an action of the 2-groupoid $\Gamma\times \Gamma$ on the 1-groupoid $\mathcal{G}$, and the data in Definition \ref{2actionalgebra} is equivalent to an action of the 2-groupoid $\Gamma\times\Gamma$ on the $C^{*}$-algebra $\Aa$. 
}$\Diamond$
\end{remark}

\begin{remark}{\rm Alternatively, a 2-group can be associated to a so called crossed module of a pair of groups, originally introduced by Whitehead \cite{Whitehead} in the context of homotopy 2-types. In our case, the relevant crossed module is given by the group $\Gamma$ acting on itself by conjugation. The equivalence of 2-groups and crossed modules is described in \cite[Section 3]{Noohi}. The actions of crossed modules on groupoids and $C^{*}$-algebras are described in detail in \cite[Sections 2 and 3]{BussMeyerZhu}. Applying those definitions to the special case of the 2-group $\Gamma\times\Gamma$ gives our definitions.
}$\Diamond$
\end{remark}

Given 2-actions of $\Gamma$ on a $C^{*}$-algebra $\Aa$ and on a groupoid $\mathcal{G}$, 
 we write
 \[ C_{c,\Gamma}(\mathcal{G},\Aa):=\Big\{f\in C_{c}(\mathcal{G},\Aa): f(\gamma_{1}\cdot\alpha\cdot\gamma_{2})=\gamma_{1}\cdot f(\alpha)\cdot\gamma_{2}\Big\}\subset C_{c}(\mathcal{G},\Aa), \]
 for the space of $\Gamma$-bi-equivariant maps from $\mathcal{G}$ to $\Aa$. The space $C_{c,\Gamma}(\mathcal{G},\Aa)$ carries the structure of a $*$-algebra:
\begin{lemma}
The subspace $C_{c,\Gamma}(\mathcal{G},\Aa)$ is a $*$-subalgebra of $C_{c}(\mathcal{G},\Aa)$.
\end{lemma}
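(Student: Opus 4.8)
The plan is to verify directly that $C_{c,\Gamma}(\mathcal{G},\Aa)$ is closed under the convolution product \eqref{Eq:Conv} and the involution \eqref{Eq:Inv} inherited from $C_c(\Gg,\Aa)$; since it is already a linear subspace, these two checks suffice. The main tool will be the algebraic identities for the left and right actions established in the preceding Lemma (equations \eqref{eq: actioninversion}--\eqref{eq:actionsource}) and the Corollary, combined with the fact that $u:\Gamma\to UM(\Aa)$ is a homomorphism by unitaries, so that $\gamma\cdot(ab)\cdot\gamma' = u_\gamma ab u_{\gamma'}^* = (u_\gamma a u_{\gamma''}^*)(u_{\gamma''}b u_{\gamma'}^*)$ for any intermediate $\gamma''$, and $(\gamma\cdot a\cdot\gamma')^* = \gamma'\cdot a^*\cdot\gamma$.

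First I would check the involution. For $f\in C_{c,\Gamma}(\mathcal{G},\Aa)$ and $\alpha\in\mathcal{G}$, compute $f^\ast(\gamma_1\cdot\alpha\cdot\gamma_2) = f\big((\gamma_1\cdot\alpha\cdot\gamma_2)^{-1}\big)^* = f(\gamma_2^{-1}\cdot\alpha^{-1}\cdot\gamma_1^{-1})^*$ using \eqref{eq: actioninversion} (promoted to the group action via \eqref{Eq:2Action1}), then apply equivariance of $f$ to get $\big(\gamma_2^{-1}\cdot f(\alpha^{-1})\cdot\gamma_1^{-1}\big)^* = \gamma_1\cdot f(\alpha^{-1})^*\cdot\gamma_2 = \gamma_1\cdot f^\ast(\alpha)\cdot\gamma_2$, using that the $C^\ast$-algebra 2-action intertwines $*$ with the swap-and-invert of the group arguments. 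This is a short, bookkeeping-only computation.

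Next I would check the product. Given $f_1,f_2\in C_{c,\Gamma}(\mathcal{G},\Aa)$, evaluate $(f_1\ast f_2)(\gamma_1\cdot\alpha\cdot\gamma_2) = \sum_{\beta\in\mathfrak{s}^{-1}(\mathfrak{s}(\gamma_1\cdot\alpha\cdot\gamma_2))} f_1\big((\gamma_1\cdot\alpha\cdot\gamma_2)\beta^{-1}\big)\,f_2(\beta)$. The key move is the change of summation variable: by \eqref{eq:actionsource} the source of $\gamma_1\cdot\alpha\cdot\gamma_2$ is $\gamma_2^{-1}\cdot\mathfrak{s}(\alpha)\cdot\gamma_2$, and since the right action of $\gamma_2$ restricts to a bijection between $\mathfrak{s}^{-1}(\mathfrak{s}(\alpha))$ and $\mathfrak{s}^{-1}(\gamma_2^{-1}\cdot\mathfrak{s}(\alpha)\cdot\gamma_2)$, I substitute $\beta = \gamma_2^{-1}\cdot\beta'\cdot\gamma_2$ with $\beta'$ ranging over $\mathfrak{s}^{-1}(\mathfrak{s}(\alpha))$. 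Then the Corollary gives $(\gamma_1\cdot\alpha\cdot\gamma_2)(\gamma_2^{-1}\cdot\beta'^{-1}\cdot\gamma_2)^{-1} = \gamma_1\cdot\alpha\cdot\gamma_2\,(\gamma_2^{-1}\cdot\beta'\cdot\gamma_2)\cdot\gamma_2^{-1}$... more carefully, $(\gamma_1\cdot\alpha\cdot\gamma_2)\beta^{-1}$ with $\beta = \gamma_2^{-1}\cdot\beta'\cdot\gamma_2$ rewrites via the composition formula as $\gamma_1\cdot(\alpha\beta'^{-1})\cdot\gamma_2$, so that the summand becomes $\big(\gamma_1\cdot f_1(\alpha\beta'^{-1})\cdot\gamma_2\big)\big(\gamma_2^{-1}\cdot f_2(\beta')\cdot\gamma_2\big) = u_{\gamma_1} f_1(\alpha\beta'^{-1}) u_{\gamma_2}^* u_{\gamma_2} f_2(\beta') u_{\gamma_2}^*$; the $u_{\gamma_2}^*u_{\gamma_2}$ cancels, leaving $\gamma_1\cdot\big(f_1(\alpha\beta'^{-1})f_2(\beta')\big)\cdot\gamma_2$, and summing over $\beta'$ yields $\gamma_1\cdot(f_1\ast f_2)(\alpha)\cdot\gamma_2$, as desired.

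The only genuinely delicate point — and the one I would be most careful about — is the reindexing of the convolution sum: one must confirm that the right action by $\gamma_2$ gives a well-defined bijection of the source fibers and that it interacts correctly with the composition formula of the Corollary (with the right bookkeeping of $\gamma_2$ versus $\gamma_2^{-1}$). Everything else is formal manipulation with unitary multipliers. I would also remark in passing that compact support is automatically preserved since $C_{c,\Gamma}(\mathcal{G},\Aa)$ is defined as a subspace of $C_c(\mathcal{G},\Aa)$, and equivariance is a closed condition, so no additional analytic argument is needed.
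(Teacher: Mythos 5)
Your proof is essentially the paper's: the involution check is identical, and for the convolution both arguments reindex the fiber sum through a conjugation bijection and let the intermediate unitaries cancel --- you use the source-fiber form of \eqref{Eq:Conv} and conjugate the summation variable by $\gamma_{2}$, where the paper uses the range-fiber form and conjugates by $\gamma_{1}$, a trivial mirror image. The one bookkeeping slip (which the paper's own display shares in mirror image) is that under the literal convention $\gamma_{1}\cdot a\cdot\gamma_{2}=u_{\gamma_{1}}a u_{\gamma_{2}}^{*}$ your middle factor should be $u_{\gamma_{2}^{-1}}f_{2}(\beta')u_{\gamma_{2}}^{*}=u_{\gamma_{2}}^{*}f_{2}(\beta')u_{\gamma_{2}}^{*}$ rather than $u_{\gamma_{2}}f_{2}(\beta')u_{\gamma_{2}}^{*}$; the telescoping you want is correct once the right action of $\gamma$ is read as right multiplication by $u_{\gamma}$ (so that $\cdot\,\gamma^{-1}\cdot\gamma\,\cdot$ cancels), and is in any case immaterial for the intended 2-action $u_{s}=(-1)^{s}$.
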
 
\begin{proof}
The identities \eqref{eq: actioninversion}, \eqref{eq: actionrange} and \eqref{eq:actionsource} allow us to show that convolution and involution are compatible with the $\Gamma$-equivariance:
\begin{align*}
f*g(\gamma_{1}\cdot \beta\cdot \gamma_{2})&=\sum_{\alpha\in\mathfrak{r}^{-1}(\mathfrak{r}(\gamma_{1}\cdot\beta\cdot \gamma_{2}))}f(\alpha)g\big (\alpha^{-1}(\gamma_{1}\cdot\beta\cdot\gamma_{2})\big )\\
&=\sum_{\alpha\in\mathfrak{r}^{-1}(\gamma_{1}\mathfrak{r}(\beta)\gamma_{1}^{-1})} f(\alpha)g\Big(\gamma_{1}\cdot \big((\gamma_{1}^{-1}\cdot \alpha^{-1}\cdot\gamma_{1})\beta\big)\cdot\gamma_{2}\Big)\\
&=\sum_{\alpha\in\mathfrak{r}^{-1}(\mathfrak{r}(\beta))}  f(\gamma_{1}\cdot\alpha\cdot\gamma_{1}^{-1})g(\gamma_{1}\cdot (\alpha^{-1}\beta)\cdot \gamma_{2})\\
&=\sum_{\alpha\in\mathfrak{r}^{-1}(\mathfrak{r}(\beta))}  \gamma_{1}\cdot f(\alpha)\cdot\gamma_{1}^{-1}\cdot\gamma_{1}\cdot g(\alpha^{-1}\beta)\cdot \gamma_{2}\\
&=\sum_{\alpha\in\mathfrak{r}^{-1}(\mathfrak{r}(\beta))}  \gamma_{1}\cdot f(\alpha) g(\alpha^{-1}\beta)\cdot \gamma_{2}=\gamma_{1}\cdot (f * g)(\beta)\cdot \gamma_{2}
\end{align*}
\begin{align*}
f^{*}(\gamma_{1}\cdot \alpha\cdot\gamma_{2})=f((\gamma_{1}\cdot \alpha\cdot\gamma_{2})^{-1})^{*}=f(\gamma_{2}^{-1}\cdot \alpha^{-1}\cdot\gamma_{1}^{-1})^{*}=\gamma_{1}\cdot f^{*}(\alpha)\cdot\gamma_{2}.
\end{align*}
This proves the lemma.
\end{proof}

\begin{definition} Let $\Gamma$ be a discrete group. Suppose that $\mathcal{G}$ is a locally compact Hausdorff \'etale groupoid and $\Aa$ is a $C^{*}$-algebra, both of which carry a 2-action by $\Gamma$.
The $\Gamma$-bi-equivariant reduced $\Aa$-$C^{*}$-algebra $C^{*}_{r,\Gamma}(\mathcal{G},\Aa)$ of $\mathcal{G}$ is the closure of the image of $C_{c,\Gamma}(\mathcal{G},\Aa)$ inside $C^{*}_{r}(\mathcal{G},\Aa)$.
\end{definition}

The following statement supplies a computational tool for projecting onto a $\Gamma$-bi-equivariant sub-algebra when the group is finite. It can be used, for example, to project the range of the $\CM$-module morphisms mentioned in Remark~\ref{Re:CMor}. 

\begin{proposition} Let $\Gamma$ be a finite group and $C^{*}_{r,\Gamma}(\mathcal{G},\Aa)$ a $\Gamma$-bi-equivariant reduced $\Aa$-$C^{*}$-algebra of $\mathcal{G}$. Then the map $E: C^{*}_{r}(\mathcal{G},\Aa) \to C^{*}_{r}(\mathcal{G},\Aa)$,
\begin{equation}\label{Eq:Exp}
\big(E(f)\big)(\alpha) = \frac{1}{|\Gamma|^2} \sum_{\gamma_1,\gamma_2 \in \Gamma} \gamma_1^{-1} \cdot f(\gamma_1 \cdot \alpha \cdot \gamma_2) \cdot \gamma_2^{-1}
\end{equation}
is a conditonal expectation onto $C^{*}_{r,\Gamma}(\mathcal{G},\Aa)$.
\end{proposition}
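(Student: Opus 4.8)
The plan is to recognize $E$ as a norm-one idempotent of $C^{*}_{r}(\mathcal{G},\Aa)$ whose range is the $C^{*}$-subalgebra $C^{*}_{r,\Gamma}(\mathcal{G},\Aa)$, and then to invoke Tomiyama's theorem: every norm-one projection of a $C^{*}$-algebra onto a $C^{*}$-subalgebra is automatically positive, completely positive and a bimodule map over its range, hence a conditional expectation. (If $C^{*}_{r,\Gamma}(\mathcal{G},\Aa)=0$ the statement is vacuous, so assume it is nontrivial.) It therefore suffices to prove three things: that $E$ extends from $C_{c}(\mathcal{G},\Aa)$ to a contraction on $C^{*}_{r}(\mathcal{G},\Aa)$; that $E\big(C^{*}_{r}(\mathcal{G},\Aa)\big)\subseteq C^{*}_{r,\Gamma}(\mathcal{G},\Aa)$; and that $E$ restricts to the identity on $C^{*}_{r,\Gamma}(\mathcal{G},\Aa)$. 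The last two identify the range and give $E^{2}=E$, and together they force $\|E\|=1$.

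For the continuity I would write $E=\frac{1}{|\Gamma|^{2}}\sum_{\gamma_{1},\gamma_{2}\in\Gamma}\Psi_{\gamma_{1},\gamma_{2}}$, where $\big(\Psi_{\gamma_{1},\gamma_{2}}(f)\big)(\alpha)=\gamma_{1}^{-1}\cdot f(\gamma_{1}\cdot\alpha\cdot\gamma_{2})\cdot\gamma_{2}^{-1}$, and factor $\Psi_{\gamma_{1},\gamma_{2}}=L_{\gamma_{1}}\circ R_{\gamma_{2}}$ with
\[
\big(L_{\gamma}(f)\big)(\alpha):=\gamma^{-1}\cdot f(\gamma\cdot\alpha),\qquad \big(R_{\gamma}(f)\big)(\alpha):=f(\alpha\cdot\gamma)\cdot\gamma^{-1}.
\]
Because $\alpha\mapsto\gamma\cdot\alpha$ and $\alpha\mapsto\alpha\cdot\gamma$ are homeomorphisms of $\mathcal{G}$ — translations by the global bisection $\tau_{\gamma}$, with inverses $\tau_{\gamma^{-1}}$ — and $u_{\gamma}\in UM(\Aa)$, both $L_{\gamma}$ and $R_{\gamma}$ preserve $C_{c}(\mathcal{G},\Aa)$. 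The crucial observation is that $L_{\gamma}$ and $R_{\gamma}$ are, respectively, left and right multiplication by unitary multipliers of $C^{*}_{r}(\mathcal{G},\Aa)$: the global bisection $\tau_{\gamma}$ yields a unitary in $\Mm(C^{*}_{r}(\mathcal{G}))$ — its indicator function, which lies in the multiplier algebra rather than in $C_{c}$ because $\tau_{\gamma}(\mathcal{G}^{(0)})$ is a clopen bisection — and combining it with the fibrewise action of $u_{\gamma}$ produces unitary multipliers $n_{\gamma},m_{\gamma}$ of $C^{*}_{r}(\mathcal{G},\Aa)$ with $L_{\gamma}(f)=n_{\gamma}\ast f$ and $R_{\gamma}(f)=f\ast m_{\gamma}$. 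Multiplication by a unitary multiplier is isometric, so each $\Psi_{\gamma_{1},\gamma_{2}}$ is an isometry of $C^{*}_{r}(\mathcal{G},\Aa)$ and $E$ extends to a contraction, $\|E\|\le 1$. (Equivalently, $L_{\gamma}$ and $R_{\gamma}$ are implemented by a unitary adjointable operator on the Hilbert $C^{*}$-module underlying the construction of $C^{*}_{r}(\mathcal{G},\Aa)$, so they preserve the reduced norm.)

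The two algebraic properties are then verified on the dense $\ast$-subalgebra $C_{c}(\mathcal{G},\Aa)$ and extended by continuity. For equivariance one reindexes the double sum: since the commuting left and right $\Gamma$-actions on $\mathcal{G}$ satisfy $\gamma_{1}\cdot(\delta_{1}\cdot\beta\cdot\delta_{2})\cdot\gamma_{2}=(\gamma_{1}\delta_{1})\cdot\beta\cdot(\delta_{2}\gamma_{2})$, substituting $p=\gamma_{1}\delta_{1}$, $q=\delta_{2}\gamma_{2}$ and collecting the unitary factors gives $\big(E(f)\big)(\delta_{1}\cdot\beta\cdot\delta_{2})=u_{\delta_{1}}\,\big(E(f)\big)(\beta)\,u_{\delta_{2}}^{\ast}=\delta_{1}\cdot\big(E(f)\big)(\beta)\cdot\delta_{2}$, so $E\big(C_{c}(\mathcal{G},\Aa)\big)\subseteq C_{c,\Gamma}(\mathcal{G},\Aa)$ and, by continuity, $E\big(C^{*}_{r}(\mathcal{G},\Aa)\big)\subseteq C^{*}_{r,\Gamma}(\mathcal{G},\Aa)$. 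Conversely, if $f\in C_{c,\Gamma}(\mathcal{G},\Aa)$ then $f(\gamma_{1}\cdot\alpha\cdot\gamma_{2})=u_{\gamma_{1}}f(\alpha)u_{\gamma_{2}}^{\ast}$, so every summand of $\big(E(f)\big)(\alpha)$ equals $u_{\gamma_{1}}^{\ast}u_{\gamma_{1}}f(\alpha)u_{\gamma_{2}}^{\ast}u_{\gamma_{2}}=f(\alpha)$ and hence $E(f)=f$; by density $E$ is the identity on $C^{*}_{r,\Gamma}(\mathcal{G},\Aa)$. Thus $E^{2}=E$, the range is exactly $C^{*}_{r,\Gamma}(\mathcal{G},\Aa)$, $\|E\|=1$, and Tomiyama's theorem completes the argument.

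The one step I expect to be a genuine obstacle rather than routine bookkeeping is the contractivity claim — passing from the explicit formula for $E$ on $C_{c}(\mathcal{G},\Aa)$ to a bounded operator on the reduced completion — since it is precisely there that the étale structure is used, via the identification of global bisections of $\mathcal{G}$ with unitary multipliers of $C^{*}_{r}(\mathcal{G})$. Everything else only exploits the left/right actions furnished by the two 2-actions and the lemma preceding the definition of $C^{*}_{r,\Gamma}(\mathcal{G},\Aa)$.
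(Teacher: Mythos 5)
Your proof is correct and follows the same route as the paper: show that $E$ is an idempotent with range $C^{*}_{r,\Gamma}(\mathcal{G},\Aa)$ — by reindexing the double sum for bi-equivariance of $E(f)$ and by direct evaluation on bi-equivariant elements for $E(f)=f$ — and then invoke Tomiyama's theorem. The only difference is that the paper merely asserts contractivity, whereas you justify it by writing $E$ as an average of isometries implemented by unitary multipliers arising from the global bisections $\tau_{\gamma}$ and the unitaries $u_{\gamma}$; this is a worthwhile addition, since that is the one step where the \'etale structure genuinely enters.
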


\proof The map $E$ is a projection. Indeed, for $\theta_1,\theta_2 \in \Gamma$,
$$
\begin{aligned}
\big(E(f)\big)(\theta_1 \cdot \alpha \cdot \theta_2) = \frac{1}{|\Gamma|^2} \sum_{\gamma_1,\gamma_2 \in \Gamma} \theta_1 (\gamma_1 \theta_1)^{-1} \cdot f(\gamma_1 \theta_1 \cdot \alpha \cdot \theta_2 \gamma_2) \cdot (\theta_2 \gamma_2)^{-1} \theta_2,
\end{aligned}
$$
hence $E(f)$ is bi-equivariant. As such, the range of $E$ is contained in $C^{*}_{r,\Gamma}(\mathcal{G},\Aa)$. Furthermore, for $f \in C^{*}_{r,\Gamma}(\mathcal{G},\Aa)$, we can see directly from Eq.~\eqref{Eq:Exp} that $E(f) = f$. Lastly, $E$ is a contractive projection, hence an expectation \cite{TomiyamaPJA1958}.\qed

\subsection{Left regular representations}
\label{Sec:LReps}

After completing the algebra $C_c(\Gg,\Aa)$, the restriction map $\rho :C_c(\Gg,\Aa) \rightarrow C_0(\Gg^{(0)},\Aa)$ extends by continuity to a positive map of $C^\ast$-algebras. By composing $\rho$ with the evaluation maps on $C_0(\Gg^{(0)},\Aa)$, $j_x(\chi) = \chi(x)$ for some $x \in \Gg^{(0)}$, one obtains a family of $\Aa$-valued positive maps $\rho_x = j_x \circ \rho$ on $C^\ast_r(\Gg,\Aa) $, indexed by the space of units $\Gg^{(0)}$. Then, for each $x \in \Gg^{(0)}$, $C^\ast_r(\Gg,\Aa) $ can be endowed with an $\Aa$-valued inner product $\langle f_1,f_2 \rangle_x :=\rho_x(f_1^\ast f_2)$, which makes $C^\ast_r(\Gg,\Aa) $ into a right pre-Hilbert $\Aa$-module. The completion of the latter can be canonically identified with the standard Hilbert module $\ell^2\big(\mathfrak s^{-1}(x),\Aa\big)$. The action from the left of the algebra $C^\ast_r(\Gg,\Aa) $ on itself can be extended to an action by adjointable operators on $\ell^2(\mathfrak s^{-1}(x),\Aa)$. Explicitly,
\begin{equation}\label{Eq:PiX}
[\pi_x(f) \psi](\alpha) = \sum_{\beta \in \mathfrak s^{-1}(x)} f(\alpha\beta^{-1})\psi(\beta), \quad \alpha \in \mathfrak s^{-1}(x), \quad \psi \in \ell^2\big(\mathfrak s^{-1}(x),\Aa\big).
\end{equation}
The representations $\pi_x$, $x \in \Gg^{(0)}$, are called the left regular representations of the reduced groupoid $C^{*}$-algebra $C^\ast_r(\Gg,\Aa) $.

\begin{example}\label{Ex:MatgroupoidRep}{\rm For the matrix groupoid from example~\ref{Ex:Matgroupoid}, $$\mathfrak s^{-1}(x) = \{(y,x), \ y \in X\},$$ and these sets can all be canonically identified with $X$. If $X$ is a discrete topological space, then $\Mm_X$ is \'etale and the regular representations of $C^\ast(\Mm_x)$ are on $\ell^2(X)$ and given by
\begin{equation*}
[\pi_{x}(f) \psi](y) =  \sum_{z \in X} f(y,z) \psi(z).
\end{equation*}
These are all compact operators over $\ell^2(X)$.
}$\Diamond$
\end{example} 

\begin{example}\label{Ex:RelgroupoidRep}{\rm For the groupoid from example~\ref{Ex:Relgroupoid}, 
\begin{equation*}
\mathfrak s^{-1}(x) = \{(y,x) \in X \times X, \ y \sim_R x\}.
\end{equation*} 
If $\Rr$ is \'etale, then the regular representations of $C^\ast(\Rr)$ are on $\ell^2(\mathfrak s^{-1}(x))$ and given by
\begin{equation*}
[\pi_{x}(f) \psi](y) =  \sum_{z \sim_R x} f(y,z) \psi(z).
\end{equation*}
Depending on the topology of $\Rr$, these may or may not be compact operators over $\ell^2(\mathfrak s^{-1}(x))$. An example of the latter is the CAR groupoid, for which all the left regular representations are carried by non-compact operators.
}$\Diamond$
\end{example} 

\section{Single Fermion Dynamics by  groupoid Methods}
\label{Sec:SingleFermion}

Our main goals for this section are to highlight the important role played by groupoid $C^{*}$-algebras in the analysis of the dynamics of a single fermion hopping over an aperiodic lattice and to formulate the problem from a perspective that affords, at the conceptual level, a straightforward generalization to the case of interacting fermions. For this, we start from an algebra of local physical observables available to an experimenter. As we shall see, the time evolutions of these observables are implemented by groups of outer authomorphisms. Then the specialized groupoid algebras constructed by Bellissard \cite{Bellissard1986,Bellissard1995} and Kellendonk \cite{KellendonkRMP95} can be seen as formalizing the generators of these groups of outer automorphisms.

\subsection{The algebra of local observables}
\label{SubSec:LocalAlg0}

The setting is that of a network $\Ll$ of identical quantum resonators, such as atoms or quantum circuits, rendered in the physical space $\RM^d$ and whose identical internal structures have been fixed. When placed next to each other, the resonators couple via potentials that are entirely determined by their internal structure and space arrangement. As such, under the stated constraints, the dynamics of the collective degrees of freedom is {\it fully} determined by the pattern $\Ll \subset \RM^d$. In Fig.~\ref{Fig:LocalAlg1}, we show such a network of quantum resonators, together with an experimenter who excites the resonators, perhaps with a laser beam, and then observes the dynamics of the coupled degrees of freedom. The experimenter can only excite resonators and take measurements on resonators near its position. In other words, the experimenter can only probe the local degrees of freedom. Of course, we are referring to the idealized case of an infinite sample.

\begin{figure}[t]
\center
\includegraphics[width=\textwidth]{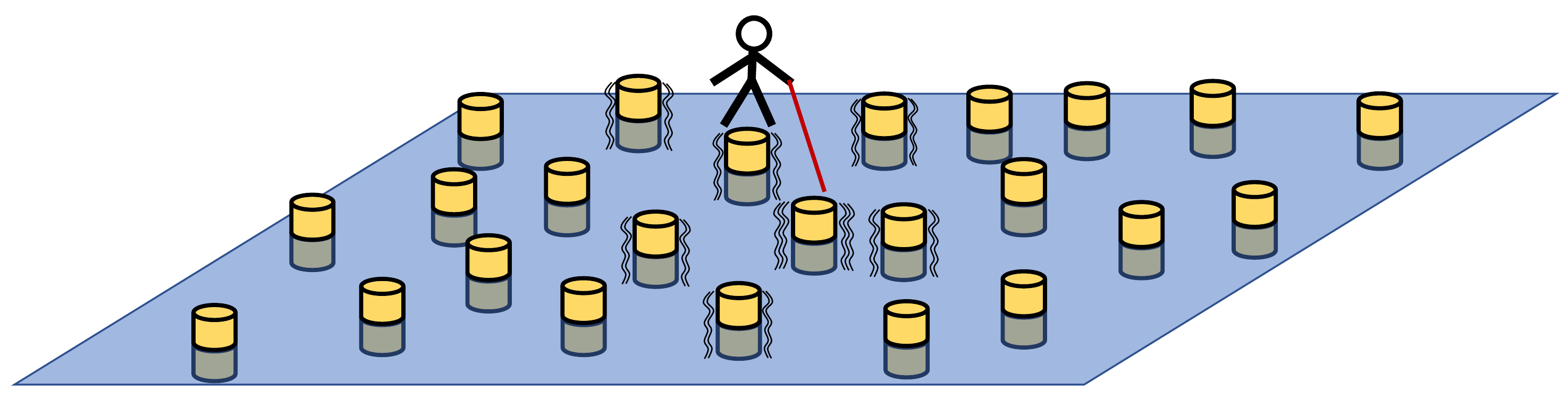}\\
  \caption{\small Schematic of a local observation.
}
 \label{Fig:LocalAlg1}
\end{figure}

In the 1-fermion setting, the algebra of observables available to the observer is that of finite-rank operators over the Hilbert space $\ell^2(\Ll)$. If the resonators have more than one internal degree of freedom, this Hilbert space is tensored by $\CM^N$ but this detail can be dealt with later. By a limiting procedure, one can close this algebra to the $C^\ast$-algebra $\KM\big (\ell^2(\Ll)\big )$  of compact operators over $\ell^2(\Ll)$, which represents the algebra of local observables in the 1-fermion setting. The main interest, however, is not in this algebra but in the time evolution of the observables, which is formalized by a one parameter group of $*$-automorphisms
\begin{equation}
\label{eq: R-action}
\bm \alpha : \RM \to {\rm Aut}\Big(\KM\big (\ell^2(\Ll) \big) \Big ).
\end{equation} 
We assume from the beginning that the time evolution is point-wise continuous, {\it i.e.} the map $t \mapsto \bm \alpha_t(K) \in \KM\big (\ell^2(\Ll) \big)$ is continuous for all $K \in \KM\big (\ell^2(\Ll) \big)$. The generator $\delta_{\bm \alpha}$ of the dynamics is defined on the dense $*$-sub-algebra of elements $K\in\KM\big (\ell^2(\Ll) \big)$ for which $\bm \alpha_t(K)$ is first order differentiable in $t$. The derivations over the algebra of compact operators have been fully characterized in \cite[Example~1.6.4]{BratteliBook1}. In particular, all {\it bounded} derivations come in the form
\begin{equation*}
{\rm ad}_{H} (K) = \imath [K,H], \quad (\imath = \sqrt{-1}),
\end{equation*}
for some $H$ from the algebra $\BM\big ( \ell^2(\Ll)\big )$ of bounded operators over $\ell^2(\Ll)$, which will be referred to as the Hamiltonian.

\subsection{Galilean invariant theories}
\label{Sec:GalInvHam1}

As a bounded operator over $\BM\big ( \ell^2(\Ll)\big )$, the Hamiltonian always affords a presentation of the form
\begin{equation}\label{Eq:DynMat}
H_\Ll = \sum_{x,x' \in \Ll} w_{x,x'}(\Ll) \, |x\rangle \langle  x' |, \quad w_{x,x'}(\Ll) \in \CM,
\end{equation}
with the sum converging in the strong operator topology. The notation in Eq.~\ref{Eq:DynMat} emphasizes that the Hamiltonian coefficients $w_{x,x'}(\Ll)$ are fully determined by the pattern itself. The experimenter can modify the pattern and map again the coefficients, hence, at least in principle, the entire map $\Ll \mapsto \{w_{x,x'}(\Ll)\}$ can be accessed experimentally. Physical considerations enable us to assume that the coefficients vary continuously with $\Ll$, in a sense that will be made precise later.

The Hamiltonians display additional structure steaming from the Galilean invariance of the non-relativistic physical laws. Indeed, in the absence of any background field or potential, suppose that two patterns $\Ll$ and $\Ll'$ enter the relation $\Ll'=\Ll-y$. Then necessarily
\begin{equation}\label{Eq:Equivariance0}
w_{x,x'}(\Ll) = w_{x-y,x'-y}(\Ll - y), \quad x,x' \in \Ll,
\end{equation}
and, certainly, this is what the laboratory measurements will return. Now, we fix a pattern $\Ll_0$ and apply these facts to the corresponding Hamiltonian. We find
\begin{equation*}
H_{\Ll_0} = \sum_{x,x'\in \Ll_0} w_{0,x'-x}(\Ll_0-x) \, |x \rangle \langle x' |,
\end{equation*}
and, if we introduce $y=x'-x \in \Ll_0 -x$ and drop the trivial index, then
\begin{equation*}
H_{\Ll_0} = \sum_{x\in \Ll_0} \sum_{y\in \Ll_0-x} w_{y}(\Ll_0-x) \, |x \rangle \langle x+y |,
\end{equation*}
or
\begin{equation}\label{Eq:CanonicalD}
(H_{\Ll_0} \psi)(x) = \sum_{y\in \Ll_0-x} w_{y}(\Ll_0-x) \, \psi(x+y), \quad \psi \in \ell^2(\Ll_0).
\end{equation}

\begin{remark}\label{Re:Galilean}{\rm We want to stress that the equivariance relation~\eqref{Eq:Equivariance0} is not a statement about the pattern but about the physical processes involved in the coupling of the quantum resonators. It is for this reason that we speak of Galilean invariance as opposed to translational invariance, as the latter is often an attribute of a pattern rather than of a physical law. 
}$\Diamond$
\end{remark}

As it was already pointed out in \cite{BourneJPA2018}, and will be exemplified again in subsection~\ref{SubSec:groupoidAlg1}, the expression in Eq.~\eqref{Eq:CanonicalD} is directly related to the left regular representations of the groupoid algebra canonically attached to the pattern  \cite{Bellissard1986,Bellissard1995,KellendonkRMP95}. Let us state explicitly that operators with the type of action as in Eq.~\ref{Eq:CanonicalD} are never compact operators, hence they do not belong to the algebra of local observations available to an experimenter.

\subsection{Spaces of patterns}
\label{SubSec:Patterns}

The main goal of this section is to supply the definition of the continuous hull and of the transversal of a point pattern. The latter will serve as the space of units for the Bellissard-Kellendonk groupoid.

Since we are dealing with  patterns in $\RM^d$, we recall the metric space $(\Kk(\RM^d),{\rm d}_{\rm H})$ of compact subsets equipped with the Hausdorff metric, as well as the larger space $\Cc(\RM^d)$ of closed subsets of $\RM^d$, topologized as below. Throughout, if $(X,{\rm d})$ is a metric space, $B(x,r)$ and $\mathring B(x,r)$ denote the closed, respectively, open balls centered at $x$ and of radius $r$. Also, $\Kk(X)$ stands for the set of compact subsets of $X$.

\begin{definition}[\cite{ForrestAMS2002,LenzTheta2003}] \label{Def-PatternMetric} Given a closed subset $\Lambda \subset \RM^d$, define
\begin{equation*}
\Lambda[r] = \big (\Lambda \cap B(0,r)\big ) \cup \partial B(0,r).
\end{equation*} 
Then
\begin{equation}\label{Eq:PatternMetric}
D(\Lambda,\Lambda')=\inf \big \{ 1/(1+r) \, | \, {\rm d}_{\rm H} (\Lambda[r],\Lambda'[r])<1/r \big \}
\end{equation}
defines a metric on $\Cc(\RM^d)$. 
\end{definition}

\begin{definition} We call the metric space $\big ( \Cc(\RM^d),D \big )$ the space of patterns in $\RM^d$.
\end{definition}

The space of patterns is bounded, compact and complete. Furthermore, there is a continuous action of $\RM^d$ by translations, that is, a homomorphism $\mathfrak{t}$ between topological groups,
\begin{equation}\label{Eq:RDGroupAction}
\mathfrak t : \RM^d \rightarrow {\rm Homeo}\big ( \Cc(\RM^d),D \big ), \quad \mathfrak t_{x}(\Lambda) = \Lambda - x.
\end{equation} 

\begin{remark}{\rm The topology of $\big ( \Cc(\RM^d),D\big )$ can be conveniently described in the following way. The space $\RM^d$ can be canonically embedded in its one-point compactification, the $d$-dimensional sphere $\SM^d$. As such, there exists a canonical embedding $\mathfrak j : \Cc(\RM^d) \to \Kk(\SM^d)$ and the latter can be topologized using the Hausdorff distance. The pullback topology through $\mathfrak j$ defines a topology on $\Cc(\RM^d)$ that is equivalent with the topology of $\big ( \Cc(\RM^d),D\big )$ \cite[p.~17]{ForrestAMS2002}.
}$\Diamond$
\end{remark} 

\begin{definition} Let $\Ll \in \RM^d$ be discrete and infinite and fix $0<r<R$.
\begin{enumerate}[\rm \ (1)] 
\item $\Ll$ is $r$-uniformly discrete if $|B(x,r) \cap \Ll| \leq 1$ for all $x \in \RM^d$.
\item $\Ll$ is $R$-relatively dense if $|B(x,R) \cap \Ll | \geq 1$ for all $x \in \RM^d$.
\end{enumerate}
An $r$-uniform discrete and $R$-relatively dense set $\Ll$ is called an $(r,R)$-Delone set.
\end{definition}

\begin{remark}{\rm Throughout, if $S$ is a set then $|S|$ denotes its cardinal.
}$\Diamond$
\end{remark}

\begin{proposition} The set of $(r,R)$-Delone sets in $\RM^d$, ${\rm Del}_{(r,R)}(\RM^d)$, is a compact subset of $(\Cc(\RM^d),D)$. A basis for its topology is supplied by the family of sub-sets
\begin{equation}\label{Eq:TopoBase2}
U_M^\epsilon(\Ll) = \big \{\mathcal{L}'\in {\rm Del}_{(r,R)}(\RM^d), \quad \mathrm{d}_{H}(\mathcal{L}[M],\mathcal{L}'[M])<\epsilon \big \}, \ M,\epsilon >0.
\end{equation}
\end{proposition}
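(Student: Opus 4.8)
The plan is to establish the two assertions separately: first compactness of $\mathrm{Del}_{(r,R)}(\RM^d)$ as a subset of $(\Cc(\RM^d),D)$, and then that the sets $U_M^\epsilon(\Ll)$ form a basis for the subspace topology. For compactness, since $(\Cc(\RM^d),D)$ is itself compact (as stated above), it suffices to show that $\mathrm{Del}_{(r,R)}(\RM^d)$ is closed. First I would take a sequence $\Ll_n \to \Lambda$ in $(\Cc(\RM^d),D)$ with each $\Ll_n$ an $(r,R)$-Delone set, and show $\Lambda$ is again $(r,R)$-Delone. The key is to translate the two defining conditions into statements about the truncations $\Lambda[M]$ appearing in the metric. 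For $r$-uniform discreteness: if $\Lambda$ failed it, there would be $x$ with two distinct points of $\Lambda$ in $B(x,r)$; choosing $M$ large and using $\mathrm{d}_{\rm H}(\Ll_n[M],\Lambda[M]) \to 0$, one finds two nearby points of $\Ll_n$ inside a slightly enlarged ball, contradicting $r$-uniformity of $\Ll_n$ for large $n$ (a small $\epsilon$-loss is absorbed because the condition is an open one on separation — actually one argues with a fixed gap). For $R$-relative density: if some $B(x,R)$ missed $\Lambda$ entirely, then for $M \gg |x|+R$ the truncation $\Lambda[M]$ has no point in $B(x,R)$, so for large $n$ the set $\Ll_n[M]$ has no point in a slightly shrunk ball $B(x,R-\delta)$, again contradicting relative density of $\Ll_n$. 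Care is needed because $\Lambda[M]$ includes the artificial boundary sphere $\partial B(0,M)$, so all the ball arguments must be carried out well inside $B(0,M)$, which is harmless since $x$ is fixed and $M$ is sent to infinity.

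For the basis claim, I would first observe that each $U_M^\epsilon(\Ll)$ is open in $\mathrm{Del}_{(r,R)}(\RM^d)$: the map $\Lambda' \mapsto \mathrm{d}_{\rm H}(\Ll[M],\Lambda'[M])$ needs to be shown continuous, or at least upper semicontinuous enough, with respect to $D$. This follows because the metric $D$ controls $\mathrm{d}_{\rm H}(\Lambda[r],\Lambda'[r])$ at scales $r$ with $1/(1+r)$ small, and on the compact family of Delone sets truncations at different radii are comparable. Then I would show these sets generate the topology: given $\Ll$ and a $D$-ball around it, I must produce $M,\epsilon$ with $U_M^\epsilon(\Ll)$ contained in that $D$-ball. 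Unwinding the definition of $D$: if $\mathrm{d}_{\rm H}(\Ll[M],\Lambda'[M]) < \epsilon$ with $\epsilon < 1/M$, then by Eq.~\eqref{Eq:PatternMetric} we get $D(\Ll,\Lambda') \le 1/(1+M)$, so taking $M$ large and $\epsilon < 1/M$ does the job. Conversely, to see every $U_M^\epsilon(\Ll)$ contains a $D$-ball around $\Ll$, one uses that small $D$-distance forces agreement of truncations at all scales up to roughly $1/D - 1$ with error roughly $D$; restricting attention to Delone sets lets one upgrade truncation-at-large-radius control to truncation-at-radius-$M$ control because points cannot accumulate.

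The main obstacle I anticipate is the bookkeeping around the boundary-sphere term $\partial B(0,r)$ in the definition of $\Lambda[r]$ and the mismatched radii: the metric $D$ compares $\Lambda[r]$ and $\Lambda'[r]$ at a radius $r$ tied to the value $1/(1+r)$, whereas the basis sets fix a radius $M$; reconciling "agreement up to some radius determined by the distance" with "agreement up to the fixed radius $M$" requires the Delone property to prevent points from being lost or created near the truncation boundary, and the estimates must be done carefully so that the $\partial B$ artifact never interferes. A secondary technical point is checking that the relative density condition is genuinely closed — it is, because it can be phrased as $\Lambda \cap B(x,R) \ne \emptyset$ for all $x$, and the limit of nonempty truncated pieces in the Hausdorff metric stays nonempty, but one must ensure the relevant points of $\Ll_n$ stay in a fixed compact region so they have a convergent subsequence whose limit lies in $\Lambda$. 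Once these truncation-radius reconciliations are handled, both parts reduce to routine Hausdorff-metric estimates.
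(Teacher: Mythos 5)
The paper states this proposition without proof (it is imported from the standard literature on pattern spaces), so there is no in-text argument to compare against; your outline follows the standard route --- closedness inside the compact space $(\Cc(\RM^d),D)$ for the first claim, and a two-way comparison between $D$-balls and the sets $U_M^\epsilon(\Ll)$ for the second --- and is structurally sound. One step, however, fails as literally sketched. With the paper's convention that $B(x,r)$ is the \emph{closed} ball, the condition $|B(x,r)\cap\Ll|\le 1$ for all $x$ is equivalent to every pair of distinct points of $\Ll$ having separation \emph{strictly greater} than $2r$, which is an open condition: the lattices $\Ll_n=(2r+\tfrac1n)\ZM\subset\RM$ are each $r$-uniformly discrete, converge in $D$ to $2r\ZM$, and the limit has separation exactly $2r$, hence violates the closed-ball condition. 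Your parenthetical ``one argues with a fixed gap'' does not repair this, because no fixed gap is available. The standard fix is to read uniform discreteness with open balls (equivalently, separation $\ge 2r$), which is a closed condition; you should say this explicitly rather than absorb it into an $\epsilon$-loss. The relative-density half is genuinely closed and your sketch for it (extract a convergent subsequence of witnesses inside the fixed compact ball $B(x,R)$, well inside $B(0,M)$) is correct.

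On the basis claim, you correctly isolate the crux --- converting ``Hausdorff agreement of truncations at the large radius $\rho\approx 1/D$ with error $\approx 1/\rho$'' into ``agreement at the fixed radius $M$ with error $<\epsilon$'' --- but you attribute the resolution to non-accumulation of Delone points. In fact the Delone property plays no role here: the artificial component $\partial B(0,M)$ in the definition of $\Lambda[M]$ is exactly what makes restriction of truncations Hausdorff-continuous. Concretely, if $\mathrm{d}_{\rm H}(\Lambda[\rho],\Lambda'[\rho])=\eta$ with $\rho>M+\eta$, then a point $y\in\Lambda\cap B(0,M)$ has a partner $z\in\Lambda'\cap B(0,\rho)$ with $|z|\le M+\eta$; either $z\in\Lambda'[M]$ or $z$ lies within $\eta$ of the sphere $\partial B(0,M)\subset\Lambda'[M]$, so in all cases $\mathrm{dist}(y,\Lambda'[M])\le 2\eta$, giving $\mathrm{d}_{\rm H}(\Lambda[M],\Lambda'[M])\le 2\eta$ for arbitrary closed sets. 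Combined with your (correct) observation that $\mathrm{d}_{\rm H}(\Ll[M],\Ll'[M])<1/M$ forces $D(\Ll,\Ll')\le 1/(1+M)$, this yields both openness of each $U_M^\epsilon(\Ll)$ and the inclusion of a suitable $U_M^\epsilon(\Ll)$ in any prescribed $D$-ball. With the open-ball correction in the first part and this bookkeeping made explicit in the second, your plan compiles into a complete proof.
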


Let $\Ll_0 \subset \RM^d$ be a fixed Delone point set. 

\begin{definition}\label{Def:Hull} The continuous hull of $\Ll_0$ is the topological dynamical system $(\Omega_{\Ll_0},\mathfrak t,\RM^d)$, where
\begin{equation*}
\Omega_{\Ll_0}= \overline{\{\mathfrak t_a(\Ll_0)=\Ll_0 - a, \ a\in \RM^d\}} \subset \Cc(\RM^d),
\end{equation*}
with the closure in the metric topology induced by \eqref{Eq:PatternMetric}.
\end{definition}

Since $\Omega_{\Ll_0}$ is a closed subset of the space of patterns, every point $\Ll \in \Omega_{\Ll_0}$ defines a closed subset of $\RM^d$. Theorem~2.8 of \cite{Bellissard2000} assures us that all these closed subsets are in fact Delone point sets. 

\begin{remark}{\rm The above dynamical system is always transitive but may fail to be minimal. Indeed, if $\Ll \in \Omega_{\Ll_0}$ belongs to the closure and not to the orbit of $\Ll_0$, then the orbit of $\Ll$ belongs to $\Omega_{\Ll_0}$ but might not be dense in $\Omega_{\Ll_0}$, as it is indeed the case for {\it e.g.} periodic patterns in $\RM$ with one defect \cite{SadunBook}[p.~9] or the disordered pattern (see Example~\ref{Ex:Random}). In fact, the character of the patterns contained inside the hull of a fixed pattern $\Ll_0$ can vary drastically and $(\Omega_{\Ll_0},\mathfrak t,\RM^d)$ can have many minimal components (see {\it e.g.} \cite{ProdanJPA2021}). It is important to acknowledge that, while $\Omega_\Ll$ may not coincide with $\Omega_{\Ll_0}$ for $\Ll \in \Omega_{\Ll_0}$, it is always true that $\Omega_\Ll \subseteq \Omega_{\Ll_0}$. 
}$\Diamond$
\end{remark}

\begin{definition}\label{Def:Transversal} The canonical transversal of a continuous hull $(\Omega_{\Ll_0},\mathfrak t,\RM^d)$ of a Delone set $\Ll_0$ is defined as
\begin{equation*}
\Xi_{\Ll_0} = \{ \Ll \in \Omega_{\Ll_0}, \ 0 \in \Ll\}.
\end{equation*}
The transversal is a compact subspace of $\Cc(\RM^d)$.
\end{definition}

\begin{example}\label{Ex:PointXi}
{\rm
 For a periodic pattern such as $\Ll_0 = \ZM^d \subset \RM^d$, the transversal consists of just one point, the lattice $\Ll_0$ itself.
}$\Diamond$
\end{example}

\begin{example}\label{Ex:Random}
{\rm
For the pattern $\{n + \lambda (\xi_n - \frac{1}{2})\}_{n \in \ZM}$, with the $\xi_n$ entries drawn randomly and distinctly from the interval $[0,1]$ and $\lambda < 1$, the transversal is the Hilbert cube $[0,1]^\ZM$.
}$\Diamond$
\end{example}

\begin{figure}[t]
\center
\includegraphics[width=1\textwidth]{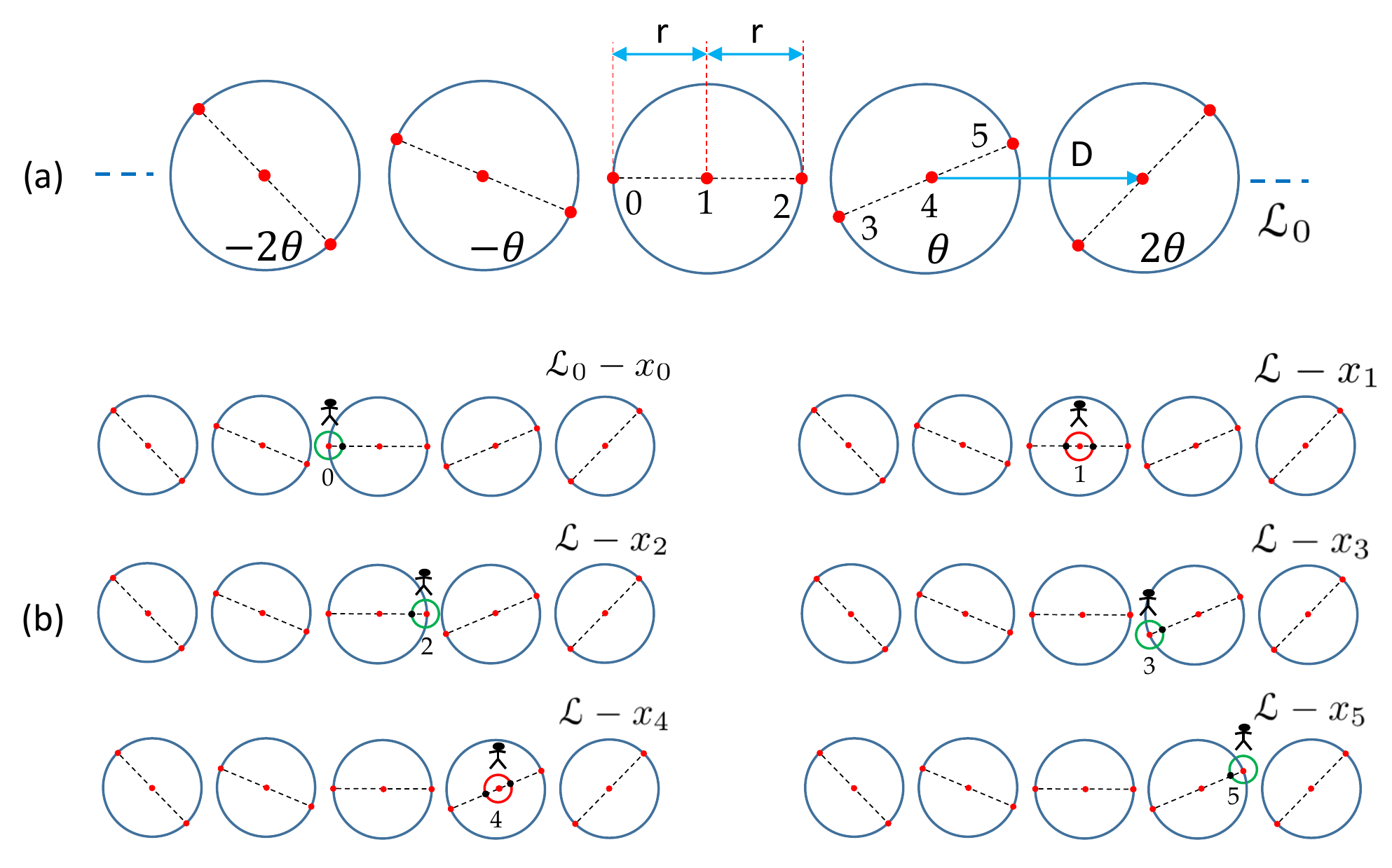}\\
  \caption{\small (a) The geometric algorithm for the pattern from Example~\ref{Ex:CircleXi} consists of a colinear triplet of points separated by a distance $r$, that are iteratively translated horizontally by $D>2r$ and rotated by an angle $\theta \in \RM \setminus \QM$. (b) The practical process of mapping the transversal of the pattern (see Example~\ref{Ex:CircleXi} for details).}
 \label{Fig:Pattern}
\end{figure}

\begin{example}\label{Ex:CircleXi}
{\rm Consider the pattern generated by the geometric algorithm described in Fig.~\ref{Fig:Pattern}(a). To map its transversal, an experimenter rigidly moves the pattern so that each point sits at the origin of the laboratory frame and makes the observations depicted in Fig.~\ref{Fig:Pattern}(b). Specifically, if one of the outer points of the triplets sits at the origin, such as 0, 2, etc., the experimenter centers the green circle at the origin and marks its intersection with the dotted line joining the triplets. If one of the inner points of the triplets sits at the origin, such as 1, 4, etc., the experimenter centers the red circle at the origin and marks its intersections with the dotted line joining the triplets. The experimenter then realizes that these markings are all the information needed to reproduce the entire sequence of translated patterns. Indeed, moving a distance $r$ in the direction indicated by a marked point on the green circle, the experimenter puts down one point and then moves by another distance $r$ and puts down a second point, hence completing the triplet (the origin is also counted in). Then the experimenter translates and rotates the triplet according to the algorithm. In this way, the experimenter reproduces the translated pattern which produced the marked point on the green circle in the first place. On the red circle, the experimenter chooses a pair of opposite markings and walks distances $r$ from the origin in the directions indicated by those markings and puts down two points, hence completing again the triplet. Then the experimenter translates and rotates the triplet according to the algorithm. In this way, the experimenter reproduces the translated pattern which produced the pair of marked points on the red circle in the first place. The conclusion is that the translated patterns $\Ll_0-x$, $x \in \Ll_0$, are in one to one correspondence with the markings on the green circle and with the pairs of markings on the red circle. If $\theta/2\pi$ is an irrational number, these markings densely fill the two circles. Lastly, if the seeding triplet $\{0,1,2\}$ is rotated by an angle $\varphi$ and the algorithm is applied again, the so obtained patterns vary continuously with $\varphi$ inside $\Cc(\RM^2)$. This assures us that the closure of the discrete set of patterns $\{\Ll_0-x, \ x \in \Ll_0\}$ fills entirely the two circles. We can conclude that $\Xi_{\Ll_0}$ consists of the green circle and of the red circle with the opposite points identified (topologically, this is again a circle). Note that moving continuously along the red connected component of the transversal results in the exchange of the outer points of each triplet.
}$\Diamond$
\end{example}

\begin{example}{\rm The reader can find in \cite{ChengArxiv2020} a general algorithm that generates patterns for which $\Xi_{\Ll_0}$ is a torus of arbitrary dimension.
}$\Diamond$
\end{example}

\begin{example}{\rm The transversal of the Fibonacci quasicrystal is a cantorized circle (see {\it e.g.} \cite{KellendonkAHP2019}).
}$\Diamond$
\end{example}

\begin{remark}{\rm In the physics literature, the transversal of a pattern appears under the name of phason space and a point of this space is called a phason. The patterns with transversals $\Xi_{\Ll_0}$ that have non-trivial topology are particularly interesting for practical applications and certainly all the works in materials science cited in our introduction involve such patterns. This is so because the phason can be physically driven inside $\Xi_{\Ll_0}$ to achieve various exquisite effects. This is one of the main reasons why, to be relevant for applications, the framework of interacting fermions must be developed for generic aperiodic patterns.
}$\Diamond$
\end{remark}

\subsection{The groupoid $C^\ast$-algebra of single fermion dynamics}
\label{SubSec:groupoidAlg1}

In this subsection, we review the Bellissard-Kellendonk groupoid associated to the dynamics of a single fermion over a Delone point set. We then show how the left regular representations of the groupoid $C^\ast$-algebra generates all Galilean invariant Hamiltonians over the Delone set. 

\begin{definition}\label{Def:Groupoid1} The Bellissard-Kellendonk topological groupoid associated to a Delone set $\Ll_0$ consists of:
\begin{enumerate}[\ \rm 1.]

\item The set
\begin{equation*}
\Gg_1 :=\{(\Ll,x) \in \Xi_{\Ll_0} \times \RM^d, \ x \in \Ll\}
\end{equation*}
equipped with the inversion map
\begin{equation*}
(\Ll,x)^{-1} = (\Ll-x,-x).
\end{equation*}
\item The subset of $\Gg_1 \times \Gg_1$
\begin{equation*}
 \Gg_1^{(2)} =\{((\Ll,x),(\Ll',y)) \in \Gg_1 \times \Gg_1, \  \Ll'=\Ll-x\}
 \end{equation*}
equipped with the composition 
\begin{equation*}
(\Ll,x) \cdot (\Ll-x,y) = (\Ll,x+y).
\end{equation*}
\end{enumerate} 
The topology on $\Gg_1$ is the relative topology inherited from $\Xi_{\Ll_0} \times \RM^d$.
\end{definition}

\begin{remark}\label{Re:Xi1}{\rm The algebraic structure of $\Gg_1$ can be also described as coming from the equivalence relation on $\Xi_{\Ll_0}$ (see Example~\ref{Ex:Relgroupoid}):
\begin{equation*}
R_{\Xi_{\Ll_0}} = \big \{ (\Ll,\Ll') \in \Xi_{\Ll_0} \times \Xi_{\Ll_0}, \ \Ll' = \Ll -a \ \mbox{for some} \ a \in \RM^d \big \}.
\end{equation*}
Note, however, that the topology of $\Gg_1$ is not the one inherited from $\Xi_{\Ll_0} \times \Xi_{\Ll_0}$.
}$\Diamond$
\end{remark}

\begin{proposition}[\cite{BourneAHP2020},~Prop.~2.16] The set $\Gg_1$ is a second-countable, Hausdorff, \'etale groupoid.
\end{proposition}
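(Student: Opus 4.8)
The plan is to verify directly that $\Gg_1 = \{(\Ll, x) \in \Xi_{\Ll_0} \times \RM^d : x \in \Ll\}$, with the groupoid operations of Definition~\ref{Def:Groupoid1} and the subspace topology from $\Xi_{\Ll_0} \times \RM^d$, satisfies each of the three claimed properties. Second countability is the quickest: $\Xi_{\Ll_0}$ is a subspace of the compact metrizable space $(\Cc(\RM^d),D)$, hence second countable, and $\RM^d$ is second countable, so $\Xi_{\Ll_0} \times \RM^d$ is second countable and the property passes to the subspace $\Gg_1$. Likewise $(\Cc(\RM^d),D)$ is metrizable, hence Hausdorff, so $\Xi_{\Ll_0} \times \RM^d$ and its subspace $\Gg_1$ are Hausdorff. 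The only point worth a word is that $\Gg_1$ is \emph{closed} in $\Xi_{\Ll_0} \times \RM^d$ (so it is locally compact Hausdorff): if $(\Ll_n, x_n) \to (\Ll, x)$ with $x_n \in \Ll_n$, then since convergence in $D$ on a uniformly discrete family forces the points near $x$ to converge, one gets $x \in \Ll$; this uses the $(r,R)$-Delone property, which holds uniformly on $\Xi_{\Ll_0}$ by the earlier Proposition. I would also record here that the range map $\mathfrak r(\Ll, x) = (\Ll, 0)$, which under the identification $\Gg_1^{(0)} \cong \Xi_{\Ll_0}$ reads $\mathfrak r(\Ll,x) = \Ll$, is continuous, and likewise $\mathfrak s(\Ll,x) = \Ll - x$, so $\Gg_1$ is a topological groupoid (continuity of inversion and of composition on $\Gg_1^{(2)}$ with the relative topology are immediate from their formulas, being built from addition/subtraction in $\RM^d$ and the continuous translation action $\mathfrak t$ on $\Cc(\RM^d)$).

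The substantive part is showing $\Gg_1$ is \'etale, i.e. that $\mathfrak r$ is a local homeomorphism. The key geometric input is the $r$-uniform discreteness that holds uniformly across $\Xi_{\Ll_0}$: there is a fixed $r > 0$ such that every $\Ll \in \Omega_{\Ll_0}$ has $|B(x,r) \cap \Ll| \le 1$ for all $x$. Fix $(\Ll, x) \in \Gg_1$ and consider the open set $W = \big(\Xi_{\Ll_0} \times \mathring B(x, r/2)\big) \cap \Gg_1$. I claim $\mathfrak r|_W$ is an open embedding onto an open neighbourhood of $\mathfrak r(\Ll,x) = \Ll$ in $\Xi_{\Ll_0}$. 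Injectivity: if $(\Ll', x'), (\Ll', x'') \in W$ with the same range, then $x', x'' \in \Ll'$ and both lie in $\mathring B(x, r/2)$, so they are within distance $r$ of each other, forcing $x' = x''$ by uniform discreteness. Surjectivity onto a neighbourhood and continuity of the inverse: for $\Ll'$ close enough to $\Ll$ in the metric $D$ (close enough that the portion of $\Ll'$ inside, say, $B(x, r/2)$ Hausdorff-approximates that of $\Ll$ well enough to contain exactly one point, which exists because $x \in \Ll$ and the point count is locally constant under small deformations, again by the Delone bounds), there is a unique $\sigma(\Ll') \in \Ll' \cap \mathring B(x, r/2)$, and $\Ll' \mapsto (\Ll', \sigma(\Ll'))$ is a continuous local section of $\mathfrak r$ inverting $\mathfrak r|_W$. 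Continuity of $\sigma$ follows from the fact that, under the topology generated by the sets $U_M^\epsilon(\Ll)$ of Eq.~\eqref{Eq:TopoBase2}, the location of the unique point of $\Ll'$ in a small ball depends continuously on $\Ll'$. This exhibits $\mathfrak r$ as a local homeomorphism, establishing \'etaleness.

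\textbf{Where the difficulty lies.} The routine parts (second countability, Hausdorffness, continuity of the structure maps) are bookkeeping. The crux is the \'etale claim, and within it the genuinely nontrivial point is the \emph{local constancy of the point count}: one must argue that for $\Ll'$ sufficiently $D$-close to $\Ll$, the set $\Ll' \cap \mathring B(x, r/2)$ is a singleton (not empty, not larger). Emptiness is excluded because $x$ is an interior point of the ball and lies in $\Ll$, so small Hausdorff perturbations keep a point inside; having two or more points is excluded by the uniform $r$-discreteness. Making this precise requires pinning down how $D$-closeness translates into Hausdorff-closeness of the patterns on a fixed ball around $x$ (for which one uses that $x$ is within a fixed radius of the origin, so one may take the truncation parameter $M$ in Eq.~\eqref{Eq:PatternMetric} large enough to see $B(x, r/2)$), and then invoking the Delone constants uniformly over $\Xi_{\Ll_0}$. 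Since this is essentially the argument already given in \cite{BourneAHP2020}~Prop.~2.16, I would cite it for the details and present only the outline above; alternatively, one can deduce \'etaleness abstractly from the fact that $\Gg_1$ is the transversal groupoid of the free, proper translation action groupoid $\Omega_{\Ll_0} \rtimes \RM^d$ restricted to the transversal $\Xi_{\Ll_0}$, which is a standard construction known to yield \'etale groupoids when the transversal meets every orbit in a uniformly discrete set.
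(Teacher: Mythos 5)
Your outline is correct and is essentially the standard argument from the cited reference \cite{BourneAHP2020}; the paper itself supplies no proof here, deferring entirely to that citation. The two load-bearing points — injectivity of $\mathfrak r$ on $\big(\Xi_{\Ll_0}\times \mathring B(x,r/2)\big)\cap\Gg_1$ via the uniform $r$-discreteness, and the local constancy of the point count $|\Ll'\cap \mathring B(x,r/2)|=1$ for $\Ll'$ sufficiently $D$-close to $\Ll$ — are exactly the right ones and are handled correctly, so there is nothing to add.
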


\begin{remark}{\rm The above statement remains true if $\Ll_0$ is just a uniformly discrete pattern. This becomes relevant when lattices with defects are investigated.
}$\Diamond$
\end{remark}

\begin{remark}{\rm It follows from their very definition~\eqref{Eq:RS} that the range and the source maps are
\begin{equation*}
\mathfrak r(\Ll,x) = (\Ll,0), \quad \mathfrak s(\Ll,x) = (\Ll-x,0),
\end{equation*}
hence the space of units $\Gg_1^{(0)}$ can be canonically identified with $\Xi_{\Ll_0}$ via $(\Ll,0) \mapsto \Ll$. Furthermore, we have
\begin{equation}\label{Eq:RSMinus}
\mathfrak r^{-1}\big (\mathfrak r (\Ll,x)\big )=\{(\Ll,y), \, y \in \Ll\}, \quad \mathfrak s^{-1}\big (\mathfrak s(\Ll,x) \big ) = \{(\Ll-x,y-x)^{-1},\ y\in \Ll\},
\end{equation}
hence both spaces $\mathfrak r^{-1}\big (\mathfrak r (\Ll,x)\big )$  and $\mathfrak s^{-1}\big (\mathfrak s(\Ll,x) \big )$ can be canonically identified with the lattice $\Ll$ itself. 
}$\Diamond$
\end{remark}

\begin{remark}{\rm  We recall that all $\Ll \in \Xi_{\Ll_0}$ are Delone point sets, hence infinite and discrete topological spaces. As such, the Bellissard-Kellendonk groupoid has a compact space of units but infinite discrete fibers.
}$\Diamond$
\end{remark}

We now turn our attention to the $C^\ast$-algebra $C_r^\ast(\Gg_1)$ canonically associated to the groupoid $\Gg_1$ and, hence, to the pattern $\Ll_0$. Eqs.~\eqref{Eq:Conv} and \eqref{Eq:RSMinus} give
\begin{equation*}
(f_1 \ast f_2)(\Ll,x) = \sum_{y \in \Ll} f_1(\Ll,y) f_2(\Ll-y,x-y), \quad (\Ll,x) \in \Gg_1.
\end{equation*}
Also, the involution takes the form $f^\ast(\Ll,x) = \overline{ f (\Ll -x,-x)}$. 

\begin{remark}{\rm We stress again that $C_r^\ast(\Gg_1)$ is separable for a Delone and, in fact, for any uniformly discrete pattern. This in turn assures us that its $K$-theory is countable, hence we have a sensible and meaningful classification of the Hamiltonians by stable homotopy. Without the remarkable insight from Bellissard and Kellendonk, the only available option would have been to place the Hamiltonians in the large, non-separable algebra $\BM\big ( \ell^2(\Ll) \big )$, which has trivial $K$-theory.
}$\Diamond$
\end{remark}

As discussed in section~\ref{Sec:LReps}, the left regular representations of $C^\ast_r(\Gg_1)$ are indexed by the points $\Ll \in \Xi_{\Ll_0}$ and are carried by the Hilbert spaces $\ell^2\big(\mathfrak s^{-1}(\Ll)\big )$. In particular, for $\Ll_0$ and $\varphi \in \ell^2\big(\mathfrak s^{-1}(\Ll_0)\big )$,
\begin{equation*}
\begin{aligned}
[\pi_{\Ll_0}(f)\varphi]\big ((\Ll_0,x)^{-1}\big ) =\sum_{y \in \Ll_0-x} f(\Ll_0-x,y) \varphi \big ((\Ll_0,x+y)^{-1}\big).
\end{aligned}
\end{equation*}
We can canonically map $\varphi$ to a function $\psi$ on $\Ll_0$, via $\psi (x) = \varphi\big ((\Ll_0,x)^{-1}\big )$, and define the left-regular representation over $\ell^2(\Ll_0)$,
\begin{equation}\label{Eq:Rep1}
[\pi_{\Ll_0}(f)\psi](x) = \sum_{y \in \Ll_0-x} f(\Ll_0-x,y) \psi(x+y).
\end{equation}
Furthermore, for any $a \in \Ll_0$, we can define the Hilbert space isomorphisms \[T_a:\ell^2(\Ll_0-a) \rightarrow \ell^2(\Ll_0), \quad T_a \psi= \psi \circ \mathfrak t_{a}.\] Then the left regular representations enjoy the following covariant property
\begin{equation}\label{Eq:CovP}
T_a^\ast \pi_{\Ll_0}(f) T_a = \pi_{\Ll_0-a}(f), \quad f\in C_r^\ast(\Gg_1).
\end{equation}

We arrive now to the main conclusion of the section. If we compare the expression \eqref{Eq:Rep1} with the action of Galilean invariant Hamiltonians~\eqref{Eq:CanonicalD}, we see that they are identical once we identify $f(\Ll_0-x,y)$ and $w_y(\Ll_0-x)$. The outstanding conclusion is that all Galilean invariant Hamiltonians over $\Ll_0$ can be generated from the left regular representations of $C_r^\ast(\Gg_1)$, which is the smallest $C^\ast$-algebra with this property.

\section{Interacting Fermions: The Algebra of Local Observables}
\label{Sec:FermionsCAR}

In this section, we first describe the $C^\ast$-algebra ${\rm CAR}(\Ll)$ of local observables for a system of many-fermions hopping on a discrete lattice $\Ll$. As in the previous section, this local algebra plays a similar role as a Hilbert space does when we define operators: It will help us define an algebra of derivations associated with the dynamics of the local observables. While our final goal is to characterize this algebra of derivations, the success of that program rests on a fine characterization of the structure of ${\rm CAR}(\Ll)$, which is supplied in this section. Of particular importance is our choice of presenting the elements in a symmetric manner that treats all possible orderings of the anti-commuting generators on equal footing. This leads to the notion of bi-equivariant coefficients w.r.t. permutation groups, which later will be connected with the material introduced in section~\ref{Sec:EquiGg}. The Fock representation of ${\rm CAR}(\Ll)$ is treated in the same spirit using frames that are stable against the actions of the permutation groups. One of our important observations is that the product of operators continues to manifest itself as a certain convolution of the ``symmetrized'' matrix elements. The reader will also find here the characterization \cite{BratteliTAMS1972} of the lattice of ideals of ${\rm GICAR}(\Ll)$, the sub-algebra of gauge invariant elements. Of essential importance for our program is the observation that ${\rm GICAR}(\Ll)$ admits a filtration by primitive ideals and that the quotient of two consecutive such ideals is isomorphic to the algebra of compact operators. In other words, ${\rm GICAR}(\Ll)$ is a solvable $C^\ast$-algebra in the sense of \cite{DyninPNAS1978}. 

\subsection{The {\rm CAR} and ${\rm GICAR}$ algebras over a lattice}
\label{SubSec:CARandGICAR}

The setting is that of a Delone pattern $\Ll$ whose points are populated by spin-less fermions. The algebra of local observables is supplied by the algebra of canonical anti-commutation relations over $\Ll$, denoted here by ${\rm CAR}(\Ll)$ \cite{BratteliBook2}. It is constructed from a net $\{\Ll_k\}$ of finite subsets of $\Ll$, with $\Ll_k \subset \Ll_{k+1}$ and $\bigcup_{k\in \ZM} \Ll_k = \Ll$. For each of these finite subsets, one defines ${\rm CAR}(\Ll_k)$ as the finite $\ast$-algebra generated by $a_x$, $x \in \Ll_k$, and the relations
\begin{equation}\label{Eq:CAR}
a_x a_{x'} + a_{x'} a_x =0, \quad a_x^\ast a_{x'} + a_{x'}a_x^\ast = \delta_{x,x'}.
\end{equation}

\begin{definition} The algebra ${\rm CAR}(\Ll)$ is the limit of the inductive tower of finite algebras ${\rm CAR}(\Ll_k)$, supplied by the canonical embeddings ${\rm CAR}(\Ll_k) \rightarrowtail {\rm CAR}(\Ll_{k+1})$.
\end{definition}

\begin{remark}{\rm We recall that the universal ${\rm CAR}$-algebra admits a unique faithful tracial state, which will be denoted by $\Tt$. It will play an essential role in section~\ref{Sec:AFRGI} (see Proposition~\ref{Pro:StarH}).
}$\Diamond$
\end{remark}

 The local algebra of observables constructed this way formalizes the local measurements available to an experimenter dealing with a system of fermions over $\Ll$. As in subsection~\ref{SubSec:LocalAlg0}, this experimenter analyses the dynamics of the available local observables and the main task is to figure out the group of automorphisms $\{\bm \alpha_t\}_{t \in \RM}$ that implements the time evolution, which is usually done by experimenting with many such local observables. As in subsection~\ref{SubSec:LocalAlg0}, the experimenter will find that these automorphisms are outer, hence the Hamiltonians generating the dynamics do not belong to the local algebra. This central aspect will be addressed in section~\ref{Sec:FermionsDynamics}.


\subsection{Symmetric presentation}\label{Sec:StandardP} Given the anti-commutation relations~\eqref{Eq:CAR}, a word constructed from the generators has many equivalent presentations. The formalism developed in sections~\ref{Sec:FermionsDynamics} and \ref{Sec:Fermiongroupoid} relies on our specific choice to treat all equivalent word presentations on equal footing. This leads to a specific presentation of ${\rm CAR}(\Ll)$, which we call the symmetric presentation. We describe it here. 

 For a finite subset $J \subset \RM^d$ of cardinality $|J|=N$, we consider the set $\mathcal{S}(\Ii_N,J)$ of bijections from $\Ii_N := \{1,\ldots,N\}$ to $J$. Any such bijection supplies a particular enumeration of the elements of $J$, hence, an order. The group $\mathcal{S}_N$ of ordinary permutations of $N$ objects can and will be identified with $\mathcal{S}(\Ii_N,\Ii_N)$. It has natural left and right actions on $\mathcal{S}(\Ii_N,J)$ via 
\begin{equation*}
s \cdot \chi_J := \chi_J \circ s^{-1}, \quad \chi_J \cdot s := \chi_J \circ s, \quad s\in \mathcal{S}_N,\quad \chi_{J}\in\mathcal{S}(\Ii_N,J),
\end{equation*} 
respectively. Consider now a Delone set $\Ll$. We introduce the following relations
\begin{equation}\label{Eq:Mono1}
 (J,\chi_J) \mapsto a_J(\chi_J):=a_{\chi_J(|J|)} \cdots a_{\chi_J(|1|)} \in {\rm CAR}(\Ll),
\end{equation}
for all $J \in \Kk(\Ll)$ and $\chi_J \in \mathcal{S}(\Ii_{|J|}, J)$. Obviously, 
\begin{equation*}
a_J\big(s \cdot \chi_J\big)=a_J\big(\chi_J\cdot s\big) = (-1)^s a_J\big(\chi_J), \quad \forall \ s \in \mathcal{S}_{|J|},
\end{equation*}
and 
\begin{equation}\label{Eq:Mono3}
a_J(\chi_J)^\ast= a^\ast_{\chi_J(1)} \cdots a^\ast_{\chi_J(|J|)}.
\end{equation}
It is also useful to introduce the special elements
\begin{equation*}
n_J = a^\ast_J(\chi_J) a_J(\chi_J) = \tfrac{1}{|J|!} \sum_{\chi_J} a^\ast_J(\chi_J) a_J(\chi_J).
\end{equation*}
Throughout, we will use the convention $a_\emptyset = a^\ast_\emptyset = 1$, the unit of ${\rm CAR}(\Ll)$.

\begin{remark}{\rm Since $\Ll$ is a discrete set, the compact subsets of $\Ll$ are exactly the finite subsets. Throughout, we will only involve compact subsets, hence the cardinality of the subsets is finite, everywhere in our discussion.
}$\Diamond$
\end{remark}

One could argue that a choice for the enumeration of the subsets of $\Ll$ can be made once and for all. This is certainly possible if the pattern $\Ll$ is fixed. However, we want and, in fact, are forced to allow for deformations of the pattern, {\it e.g.} at least those induced by the simple translations of $\Ll$, already encountered in section~\ref{Sec:SingleFermion}. Under such deformations, two points of the pattern can be exchanged, as it was the case in Example~\ref{Ex:CircleXi}, and the enumeration of the points of a subset containing those two points becomes ambiguous. This example shows that the only acceptable option is to use all the available enumerations on equal footing and this is indeed what we will do when presenting the elements of the ${\rm CAR}$ algebra. However, in doing so, we need to impose restrictions on the coefficients. It is at this point where the material from section~\ref{Sec:EquiGg} makes its entrance.

\begin{definition} For a pair $(J,J')$ of compact subsets of $\Ll$, a bi-equivariant coefficient is a map $c_{J,J'}: \mathcal{S}(\Ii_{|J|},J)\times \mathcal{S}(\Ii_{|J'|},J')\to \mathbb{C}$ such that
\begin{equation*}
c_{J,J'} ( s_1\cdot\chi_J,\chi_{J'} \cdot s_2 ) = (-1)^{s_1}  \ c_{J,J'}(\chi_{J},\chi_{J'}) \ (-1)^{s_2}.
\end{equation*}
\end{definition}

\begin{remark}{\rm Given the particular representations of the permutation groups entering in the above definition, one could argue that just an orientation of the subsets $J$ and $J'$ will suffice. This is indeed the case for the presentation of the CAR algebra. However, the full orders on the sub-sets supply special points, namely $\chi_J(1)$, which will be used in an essential way in our construction of the groupoid associated with the dynamics of the fermions (see section~\ref{SubSec:AlgGN}).
}$\Diamond$
\end{remark}

\begin{proposition} Every element of ${\rm CAR}(\Ll)$ can be uniquely presented as a norm convergent sum
\begin{equation}\label{Eq:StandardP1}
A = \sum_{J,J' \in \Kk(\Ll)} \tfrac{1}{\sqrt{|J|!|J'|!}}\sum_{\chi_J,\chi_J'}  c_{J,J'}\big(\chi_J,\chi_{J'}\big ) a_J(\chi_J)^\ast a_{J'}(\chi_{J'}),
\end{equation}
where it is understood that the coefficients are bi-equivariant and that the second sum runs over the whole set $\mathcal{S}(\Ii_{|J|},J)\times \mathcal{S}(\Ii_{|J'|},J')$.
\end{proposition}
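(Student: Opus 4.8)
The plan is to reduce everything to the classical description of the finite CAR algebras and then transport the resulting basis through the inductive limit by means of the vacuum conditional expectations.

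\textbf{Step 1: a reference ordering and the finite basis.} I would fix once and for all a total order on the countable set $\Ll$; this singles out, for every finite $J\subset\Ll$, the increasing enumeration $\chi_J^0\in\mathcal S(\Ii_{|J|},J)$. For a finite $F\subset\Ll$ one has the standard structure theory of the CAR algebra (see e.g. \cite{BratteliBook2}): ${\rm CAR}(F)\cong M_{2^{|F|}}(\CM)$ and the monomials $\{a_J(\chi_J^0)^\ast a_{J'}(\chi_{J'}^0):J,J'\subseteq F\}$ form a linear basis (there are $4^{|F|}=(2^{|F|})^2$ of them, and their linear span is closed under multiplication by the CAR relations). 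These bases are visibly compatible with the inclusions ${\rm CAR}(F)\hookrightarrow{\rm CAR}(F')$, so $\{a_J(\chi_J^0)^\ast a_{J'}(\chi_{J'}^0):J,J'\in\Kk(\Ll)\}$ is a Hamel basis of the dense local $\ast$-algebra $\bigcup_k{\rm CAR}(\Ll_k)$, with uniqueness of expansion there.

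\textbf{Step 2: the bi-equivariant repackaging.} Using $a_J(s\cdot\chi_J)=(-1)^s a_J(\chi_J)$ and the corresponding right-hand identity, together with the defining relation $c_{J,J'}(s_1\cdot\chi_J,\chi_{J'}\cdot s_2)=(-1)^{s_1}c_{J,J'}(\chi_J,\chi_{J'})(-1)^{s_2}$, the product $c_{J,J'}(\chi_J,\chi_{J'})\,a_J(\chi_J)^\ast a_{J'}(\chi_{J'})$ is unchanged when $\chi_J,\chi_{J'}$ are moved along their $\mathcal S_{|J|}$- and $\mathcal S_{|J'|}$-orbits; since $\mathcal S(\Ii_{|J|},J)$ is a free torsor, summing over all orderings simply multiplies by $|J|!\,|J'|!$:
\begin{align*}
\tfrac{1}{\sqrt{|J|!\,|J'|!}}\sum_{\chi_J,\chi_{J'}}c_{J,J'}(\chi_J,\chi_{J'})\,a_J(\chi_J)^\ast a_{J'}(\chi_{J'})
&=\sqrt{|J|!\,|J'|!}\,\,c_{J,J'}(\chi_J^0,\chi_{J'}^0)\,a_J(\chi_J^0)^\ast a_{J'}(\chi_{J'}^0).
\end{align*}
Conversely, freeness of the torsor action shows that any $\lambda\in\CM$ is the value $c_{J,J'}(\chi_J^0,\chi_{J'}^0)$ of a \emph{unique} bi-equivariant coefficient (the bi-equivariance is exactly the constraint that makes the symmetric expression well defined and with one scalar degree of freedom per pair $(J,J')$). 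Thus, on the local $\ast$-algebra, the symmetric presentation \eqref{Eq:StandardP1} with bi-equivariant coefficients is literally the expansion in the ordered basis of Step 1, and its uniqueness there follows from linear independence.

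\textbf{Step 3: completion via the vacuum expectation.} Writing ${\rm CAR}(\Ll)\cong{\rm CAR}(\Ll_k)\otimes{\rm CAR}(\Ll\setminus\Ll_k)$ and letting $\eta_k^{c}$ be the Fock vacuum state of ${\rm CAR}(\Ll\setminus\Ll_k)$, the slice map $E_k:=\mathrm{id}\otimes\eta_k^{c}$ is a conditional expectation onto ${\rm CAR}(\Ll_k)$ which fixes each ${\rm CAR}(\Ll_m)$, $m\le k$, is contractive, and hence satisfies $\|A-E_k(A)\|\to0$ for every $A\in{\rm CAR}(\Ll)$. The computation I expect to be the crux is that $\eta(a_G(\chi)^\ast a_{G'}(\chi'))=\delta_{G,\emptyset}\delta_{G',\emptyset}$ for the Fock vacuum: splitting a monomial $a_J(\chi_J^0)^\ast a_{J'}(\chi_{J'}^0)$ across the tensor factor after a Jordan--Wigner reordering (which only produces explicit signs), $E_k$ fixes it if $J\cup J'\subseteq\Ll_k$ and \emph{annihilates} it otherwise. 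It is precisely this vacuum slice (rather than the tracial one, which would not kill the off-$\Ll_k$ monomials) that makes the argument work. Expanding $E_k(A)$ in the basis of ${\rm CAR}(\Ll_k)$ and using $E_m\circ E_k=E_m$ then forces the coefficient $\lambda_{J,J'}$ of $a_J(\chi_J^0)^\ast a_{J'}(\chi_{J'}^0)$ to be independent of $k$ once $J\cup J'\subseteq\Ll_k$; pairing $\lambda_{J,J'}$ with its associated bi-equivariant coefficient via Step 2, the partial sums of \eqref{Eq:StandardP1} organized by the level $\min\{k:J\cup J'\subseteq\Ll_k\}$ are exactly the operators $E_k(A)$, which converge in norm to $A$. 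Uniqueness is then immediate: applying the continuous map $E_k$ to a symmetric series that sums to $0$ leaves only the terms with $J\cup J'\subseteq\Ll_k$, and linear independence in ${\rm CAR}(\Ll_k)$ kills their coefficients, for every $k$. The only genuinely computational point is the vacuum-slice identity together with the Jordan--Wigner reordering; the rest is bookkeeping with torsors and conditional expectations.
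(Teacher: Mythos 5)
The paper states this proposition without proof, so there is nothing to compare against; on its own terms your argument is correct and is essentially the canonical way to prove it. Step 1 (the $4^{|F|}$ normal-ordered monomials span a $\ast$-subalgebra containing the generators of ${\rm CAR}(F)\cong M_{2^{|F|}}(\CM)$, hence form a basis by dimension count, compatibly with the inclusions) and Step 2 (the summand in \eqref{Eq:StandardP1} is constant on the free $\mathcal S_{|J|}\times\mathcal S_{|J'|}$-orbit, so the symmetric sum collapses to $\sqrt{|J|!\,|J'|!}\,c_{J,J'}(\chi_J^0,\chi_{J'}^0)\,a_J(\chi_J^0)^\ast a_{J'}(\chi_{J'}^0)$, giving a bijection between bi-equivariant coefficient systems and ordered-basis coefficients) are both clean. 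Step 3 is also correct, and your identification of the vacuum slice as the crucial ingredient is exactly right: $\eta(a_G^\ast a_{G'})=\delta_{G,\emptyset}\delta_{G',\emptyset}$ (this is the content of Proposition~\ref{Pro:NullSpace} together with Eq.~\eqref{Eq:EtaRel1}), so $E_k$ fixes monomials supported in $\Ll_k$ and annihilates the rest, whence $E_k(A)$ is the level-$k$ partial sum and uniqueness follows by applying the contraction $E_k$ termwise to a series summing to zero. The only point I would ask you to make explicit is the construction of $E_k$ itself: ${\rm CAR}(\Ll)$ is not the \emph{ordinary} tensor product ${\rm CAR}(\Ll_k)\otimes{\rm CAR}(\Ll\setminus\Ll_k)$, so the slice map must either be taken on the graded tensor product sliced by the (even) vacuum state, or one first untwists via the parity unitary $\Gamma_k=\prod_{x\in\Ll_k}(1-2a_x^\ast a_x)$, replacing $a_y$ by $b_y=\Gamma_k a_y$ for $y\notin\Ll_k$ to exhibit an honest tensor factorization ${\rm CAR}(\Ll_k)\otimes C^\ast(b_y)$ with $C^\ast(b_y)\cong{\rm CAR}(\Ll\setminus\Ll_k)$; your phrase ``Jordan--Wigner reordering'' is carrying that weight, and one should also check that the induced state on the $b$-algebra still kills the nontrivial monomials (it does, using gauge invariance of $\eta$). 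With that spelled out, contractivity of $E_k$ and the annihilation property are immediate and the proof is complete.
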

We will refer to Eq.~\eqref{Eq:StandardP1} as the symmetric presentation of $A$. The consideration of the combinatorial factors in front of the second sum will be justified in Propositions~\ref{Pro:FockOp} and ~\ref{Pro:CARQuote}.

\begin{remark}{\rm The first sum in~\eqref{Eq:StandardP1} can include $J=\emptyset$, $J'=\emptyset$ or $J=J'=\emptyset$. In these cases, we use our convention $a_\emptyset =1$.
}$\Diamond$
\end{remark}

\begin{remark}{\rm It is important to acknowledge that the coefficients $c_{J,J'}$ in Eq.~\ref{Eq:StandardP1} must display a certain decay as the sets $J$ or $J'$ are pushed to infinity. For example, a formal series with coefficients $c_{J,J}$ that do not decay to zero as the diameter of the $\RM^d$ sub-set $\{0\}\cup J \cup J'$ goes to infinity is not part of ${\rm CAR}(\Ll)$.
}$\Diamond$
\end{remark}

The gauge invariant (GI) elements are, by definition, elements that can be written as in Eq.~\eqref{Eq:StandardP1}, but with the first sum restricted to pairs of subsets with $|J|=|J'|$. Such elements are invariant against the circle action $a_x \mapsto \lambda a_x$, $|\lambda|=1$, hence the name ``gauge invariant''. They form a sub-algebra denoted here by ${\rm GICAR}(\Ll)$. The symmetric presentation of such elements can be organized as
\begin{equation}\label{Eq:StandardGI}
A = \sum_{n \in \NM} \tfrac{1}{n!} \sum_{J,J' \in \Kk_n(\Ll)} \sum_{\chi_J,\chi_J'}  c_{J,J'}\big(\chi_J,\chi_{J'}\big ) a_J(\chi_J)^\ast a_{J'}(\chi_{J'}), 
\end{equation}
where we introduced the new notation $\Kk_n(\Ll)$ for the finite un-ordered sub-sets of $\Ll$ of cardinal $n$.

\begin{remark}\label{Re:CARGroupoid}{\rm A groupoid presentation of the CAR algebra was supplied in Example~1.10 of \cite{RenaultBook}. Although not used in our work, we provide it here for completeness. The construction requires an ordering of the lattice points and it starts from the configuration space $S=\prod_{x \in \Ll} S_x$, where $S_x =\{0,1\}$ and $S$ is equipped with the product topology. A configuration $s=(s_x) \in S$ singles out the sites $x$ with $s_x=1$, which can be thought of as the sites of $\Ll$ populated by fermions. Two configurations are declared equivalent iff they differ at at most a finite number of places. Algebraically, the CAR groupoid is the groupoid $\Cc$ corresponding to this equivalence relation (see Example~\ref{Ex:Relgroupoid}). The \'etale topology on $\Cc$ is strictly finer than the relative topology inherited from $S \times S$. To describe this topology, we follow Example~8.3.5 in \cite{SimsSzaboWilliamsBook2020}. First, one identifies $\Ll$ with $\NM$ by using the predefined order. Then, for $n \in \NM$ and any two finite words $v$ and $w$ from $\{0,1\}^n$, one defines the subset $$Z_n(v,w) = \{(vs,ws) | s \in S\} \subseteq \Cc,$$ where $vs$ means concatenation of the words $v$ and $s$. The collection $$\{Z_n(v,w), \, n \in \NM, \, v,w \in \{0,1\}^n\},$$ is the basis of a topology that makes $\Cc$ into an \'etale groupoid. The associated groupoid algebra $C^\ast_r(\Cc)$ is isomorphic to the CAR algebra. In \cite{RenaultBook}, the isomorphism established by showing that the two algebras display common Elliot invariants. A  direct proof of the isomorphism can be found in \cite[Example 9.2.7 ]{SimsSzaboWilliamsBook2020}. 
}$\Diamond$
\end{remark} 

\subsection{Algebraic relations and notation} The commutation relations~\eqref{Eq:CAR} lead to somewhat complex relations between the monomials. This section lists several key algebraic relations with proofs omitted. They can be stated more effectively once some straightforward but essential notation is introduced. 

\begin{definition}\label{Def:Wedge} For $J,J' \in \Kk(\Ll)$ disjoint subsets and $\chi_{J} \in \mathcal{S}(\Ii_{|J|},J) $, $\chi_{J'} \in \mathcal{S}(\Ii_{|J'|}, J') $, we define $\chi_{J} \vee \chi_{J'} \in \mathcal{S}(\Ii_{|J \cup J'|}, J \cup J')$ by
\begin{equation*}
\chi_{J} \vee \chi_{J'}(n) = \left \{
\begin{array}{ll}
 \chi_{J}(n) \ & {\rm if} \ n \leq |J|, \\
 \chi_{J'}(n-|J|) \ & {\rm if} \ n > |J|.
\end{array}
\right .
\end{equation*}
In words,  $\chi_{J} \vee \chi_{J'}$
 enumerates first the elements of $J$ and then continue with the enumeration of the elements of $J'$, in the order set by $\chi_{J}$ and $\chi_{J'}$.
 \end{definition}
 
With this notation, for example, we have:
\begin{equation}\label{Eq:Id1}
a_{J}(\chi_{J})^\ast \, a_{J'}(\chi_{J'})^\ast = a_{J \cup J'}(\chi_{J} \vee \chi_{J'})^\ast,
\end{equation}
whenever $J \cap J' =\emptyset$. The following relations are useful for reducing products of generic ${\rm CAR}$ elements to the symmetric form and for mapping to the Fock representations.

\begin{proposition} For $J,J' \in \Kk(\Ll)$, the following identity holds:
\begin{equation}\label{Eq:Id3}
\begin{aligned}
a_{J}(\chi_{J}) a^\ast_{J'}(\chi_{J'})  =  (-1)^\sigma \sum_{K \subseteq J \cap J'} (-1)^{|K|} a^{\ast}_{J'\setminus J}(\chi_{J' \setminus J}) \, n_K \, a_{J \setminus J'}(\chi_{J \setminus J'}),
\end{aligned}
\end{equation}
where the orderings  $\chi_{J}$, $\chi_{J'}$, $\chi_{J \setminus J'}$ and $\chi_{J' \setminus J}$ are independent and the sign factor in front is determined by the choice of these orderings (its exact form is not needed and is omitted).
\end{proposition}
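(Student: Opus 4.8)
The statement is a fermionic normal-ordering (Wick-type) identity, and the cleanest route I see is to reduce to a convenient system of orderings, peel off the disjoint blocks, and then settle a purely \emph{diagonal} core case by induction. For fixed $J,J'$, both sides of \eqref{Eq:Id3} are separately antisymmetric in each ordering argument: replacing $\chi_J$ by $\chi_J\circ t$, $t\in\mathcal{S}_{|J|}$, multiplies the left-hand side by $(-1)^t$ by \eqref{Eq:Mono1}, and the right-hand side transforms the same way once the factor $(-1)^\sigma$ is allowed to absorb the change; likewise for $\chi_{J'}$, $\chi_{J\setminus J'}$ and $\chi_{J'\setminus J}$. It therefore suffices to prove the identity for one convenient choice of orderings, any other choice altering $\sigma$ only by a fixed, $K$-independent parity. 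I would set $M:=J\cap J'$, choose an ordering $\chi_M$ of $M$, and take $\chi_J=\chi_{J\setminus J'}\vee\chi_M$ and $\chi_{J'}=\chi_M\vee\chi_{J'\setminus J}$, using the \emph{same} $\chi_M$ in both, with $\vee$ as in Definition~\ref{Def:Wedge}.

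For these orderings, \eqref{Eq:Id1} and its adjoint give $a_J(\chi_J)=a_M(\chi_M)\,a_{J\setminus J'}(\chi_{J\setminus J'})$ and $a^\ast_{J'}(\chi_{J'})=a^\ast_M(\chi_M)\,a^\ast_{J'\setminus J}(\chi_{J'\setminus J})$, so
\[
a_J(\chi_J)\,a^\ast_{J'}(\chi_{J'})=a_M(\chi_M)\,a_{J\setminus J'}(\chi_{J\setminus J'})\,a^\ast_M(\chi_M)\,a^\ast_{J'\setminus J}(\chi_{J'\setminus J}).
\]
Because $J\setminus J'$, $M$ and $J'\setminus J$ are pairwise disjoint, a monomial built from one of these sets anticommutes with one built from another up to the sign $(-1)$ to the product of the cardinalities. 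Commuting $a_{J\setminus J'}(\chi_{J\setminus J'})$ rightward past $a^\ast_M(\chi_M)\,a^\ast_{J'\setminus J}(\chi_{J'\setminus J})$ and then $a^\ast_{J'\setminus J}(\chi_{J'\setminus J})$ leftward past $a_M(\chi_M)\,a^\ast_M(\chi_M)$, the two cross-signs carrying the factor $|M|\,|J'\setminus J|$ cancel, and one is left with
\[
a_J(\chi_J)\,a^\ast_{J'}(\chi_{J'})=(-1)^{|J\setminus J'|\,|J'|}\;a^\ast_{J'\setminus J}(\chi_{J'\setminus J})\,\big(a_M(\chi_M)\,a^\ast_M(\chi_M)\big)\,a_{J\setminus J'}(\chi_{J\setminus J'}).
\]

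It then remains to prove the diagonal identity $a_M(\chi_M)\,a^\ast_M(\chi_M)=\sum_{K\subseteq M}(-1)^{|K|}n_K$, which I would do by induction on $|M|$: the case $|M|=1$ is $a_xa^\ast_x=1-n_{\{x\}}$ from \eqref{Eq:CAR}, and for the step one peels off the innermost pair $a_{\chi_M(1)}a^\ast_{\chi_M(1)}=1-n_{\{\chi_M(1)\}}$ and uses that $n_{\{\chi_M(1)\}}$ commutes with every $a_{\chi_M(i)}$ and $a^\ast_{\chi_M(i)}$ for $i\ge2$ (again by \eqref{Eq:CAR}); the ``$1$''-term reproduces $a_{M'}(\chi_{M'})\,a^\ast_{M'}(\chi_{M'})$ for $M'=M\setminus\{\chi_M(1)\}$, handled by the induction hypothesis, while the ``$-n_{\{\chi_M(1)\}}$''-term contributes the subsets of $M$ containing $\chi_M(1)$, and the two families assemble to the full sum over $K\subseteq M$. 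Substituting this back into the displayed formula and folding $(-1)^{|J\setminus J'|\,|J'|}$ together with the ordering-correction parities into a single $(-1)^\sigma$ yields \eqref{Eq:Id3}. The only genuinely delicate point, and hence the main obstacle, is the sign bookkeeping: one must verify that the cross-signs produced when commuting the $J\setminus J'$ and $J'\setminus J$ blocks through each other and through the $M$-block cancel exactly as claimed, and that both sides of \eqref{Eq:Id3} transform identically under reordering so that one $K$-independent $\sigma$ does the job. Since the statement asserts only the existence of such a $\sigma$, this is careful combinatorics rather than a conceptual difficulty.
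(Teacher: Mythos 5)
Your argument is correct and follows essentially the same route as the paper's (omitted) proof: split $J$ and $J'$ into $J\cap J'$ and the set differences via Eq.~\eqref{Eq:Id1}, shuffle the pairwise disjoint blocks at the cost of a global sign, and evaluate the diagonal core $a_{M}(\chi_M)\,a_M^\ast(\chi_M)=\prod_{x\in M}(1-a_x^\ast a_x)=\sum_{K\subseteq M}(-1)^{|K|}n_K$. Your preliminary reduction to a single convenient choice of orderings and the explicit induction for the diagonal identity are sound and merely make the sign bookkeeping more transparent.
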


We also mention the following identities, which will be instrumental for simplifying and manipulating the Fock representations of the ${\rm CAR}$ elements:

\begin{proposition}\label{Pro:U10} For $J,J',U \in \Kk(\Ll)$, the following identities hold:
\begin{equation*}
\begin{aligned}
n_U \, a_J(\chi_J)^\ast \, a_{J'}(\chi_{J'}) & = a_J(\chi_J)^\ast \, n_{U \setminus J} \, a_{J'}(\chi_{J'}), \\
a_J(\chi_J)^\ast \, a_{J'}(\chi_{J'}) \, n_U & = a_J(\chi_J)^\ast \, n_{U \setminus J'} \, a_{J'}(\chi_{J'}).
\end{aligned}
\end{equation*}
\end{proposition}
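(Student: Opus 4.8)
The strategy is to reduce both identities to single-site facts and then commute the mutually commuting projections $n_x$ through the fermionic monomials. Writing $n_x:=a_x^{\ast}a_x$, one first records the preliminary observation that $n_U=\prod_{x\in U}n_x$, the $n_x$ with $x\in U$ being pairwise commuting self-adjoint projections. From the anticommutation relations~\eqref{Eq:CAR} one then extracts the two elementary facts that do all the work: (i) for $x\neq y$, $n_x$ commutes with both $a_y$ and $a_y^{\ast}$ --- a two-line computation in which the two sign changes incurred by anticommuting $a_x^{\ast}$ and then $a_x$ past $a_y^{(\ast)}$ cancel; and (ii) at a single site $n_x a_x^{\ast}=a_x^{\ast}a_x a_x^{\ast}=a_x^{\ast}(1-a_x^{\ast}a_x)=a_x^{\ast}$, whence, by the $\ast$-operation of ${\rm CAR}(\Ll)$, also $a_x n_x=a_x$ (the companion identities $a_x^{\ast}n_x=0$ and $n_x a_x=0$ are not needed).

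For the first identity I would split $n_U=n_{U\cap J}\,n_{U\setminus J}$, the factors commuting because $U\cap J$ and $U\setminus J$ are disjoint. The factor $n_{U\setminus J}$ has support disjoint from $J$, hence by (i) it commutes past the monomial $a_J(\chi_J)^{\ast}$. For $n_{U\cap J}$ I would iterate (ii): since each $a_x^{\ast}$ with $x\in J$ occurs exactly once in $a_J(\chi_J)^{\ast}$ and the $n_x$ commute among themselves and, by (i), with the remaining creation factors, one gets $n_x\,a_J(\chi_J)^{\ast}=a_J(\chi_J)^{\ast}$ for every $x\in U\cap J$, hence $n_{U\cap J}\,a_J(\chi_J)^{\ast}=a_J(\chi_J)^{\ast}$. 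Combining,
\[
n_U\,a_J(\chi_J)^{\ast}=n_{U\cap J}\,a_J(\chi_J)^{\ast}\,n_{U\setminus J}=a_J(\chi_J)^{\ast}\,n_{U\setminus J},
\]
and right-multiplication by $a_{J'}(\chi_{J'})$ yields $n_U\,a_J(\chi_J)^{\ast}a_{J'}(\chi_{J'})=a_J(\chi_J)^{\ast}\,n_{U\setminus J}\,a_{J'}(\chi_{J'})$.

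The second identity then follows formally: applying the $\ast$-operation to the first identity (valid for all independent choices of orderings, with $n_U$ and $n_{U\setminus J}$ self-adjoint) gives $a_{J'}(\chi_{J'})^{\ast}a_J(\chi_J)\,n_U=a_{J'}(\chi_{J'})^{\ast}\,n_{U\setminus J}\,a_J(\chi_J)$, which is exactly the second identity after relabelling $J\leftrightarrow J'$ and $\chi_J\leftrightarrow\chi_{J'}$. Alternatively one runs the symmetric right-hand argument directly, using $a_{J'}(\chi_{J'})\,n_x=a_{J'}(\chi_{J'})$ for $x\in J'$ --- the dual of (ii) --- together with (i) for the factor $n_{U\setminus J'}$.

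I do not anticipate a genuine obstacle here: the content is entirely bookkeeping. The only point deserving a line of care is the tracking of the sign factors produced when anticommuting a fermionic operator past a monomial, but, because $n_x$ is bilinear (``bosonic''), these signs always come in cancelling pairs, so no real difficulty arises. Conceptually, Proposition~\ref{Pro:U10} is just the operator-algebraic shadow of the obvious statement that $a_J(\chi_J)^{\ast}$ occupies all the sites of $J$, so a number operator supported on those sites acts there as the identity.
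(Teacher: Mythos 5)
Your proof is correct. The paper states Proposition~\ref{Pro:U10} with the proof omitted (the surrounding text says the algebraic relations in that subsection are listed ``with proofs omitted''), so there is nothing to compare against; your argument --- factoring $n_U=n_{U\cap J}\,n_{U\setminus J}$, using that $n_x$ commutes with $a_y^{(\ast)}$ for $x\neq y$ and that $n_x a_x^{\ast}=a_x^{\ast}$, and obtaining the second identity from the first by the $\ast$-operation and relabelling --- is the natural one and all steps check out.
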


\subsection{Fock representation}
\label{Sec:FockRep}
The Fock representation is associated with the canonical vacuum state:

\begin{definition} The vacuum state $\eta:{\rm CAR}(\Ll)\to\mathbb{C}$ is defined by the rule
\begin{equation}\label{Eq:Eta}
\eta(A) = c_{\emptyset,\emptyset}, \quad A \in {\rm CAR}(\Ll),
\end{equation}
where $A$ is assumed to be in its symmetric presentation and $c_{\emptyset,\emptyset}$ is the coefficient corresponding to the unit.
\end{definition}

\begin{proposition} The following relations hold:
\begin{equation}\label{Eq:EtaRel1}
\eta\big (a_J(\chi_J)  a^\ast_{J'}(\chi_{J'})\big ) = (-1)^{\chi_J^{-1} \circ \chi_{J'}} \, \delta_{J,J'}, \quad J,J' \in \Kk(\Ll).
\end{equation}
\end{proposition}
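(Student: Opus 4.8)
The plan is to compute $\eta\big(a_J(\chi_J)a^\ast_{J'}(\chi_{J'})\big)$ by reducing the product $a_J(\chi_J)a^\ast_{J'}(\chi_{J'})$ to the symmetric presentation \eqref{Eq:StandardP1} and then reading off the coefficient $c_{\emptyset,\emptyset}$ of the unit, per the definition \eqref{Eq:Eta} of $\eta$. The key tool is identity \eqref{Eq:Id3}, which expands $a_J(\chi_J)a^\ast_{J'}(\chi_{J'})$ as a sum over $K \subseteq J \cap J'$ of terms of the form $(-1)^{|K|}a^\ast_{J'\setminus J}(\chi_{J'\setminus J})\,n_K\,a_{J\setminus J'}(\chi_{J\setminus J'})$, all multiplied by a global sign $(-1)^\sigma$.

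First I would observe that the only term in \eqref{Eq:Id3} that can contribute to $c_{\emptyset,\emptyset}$ is the one in which both $a^\ast_{J'\setminus J}$ and $a_{J\setminus J'}$ are the empty (identity) operator, i.e. $J' \setminus J = \emptyset$ and $J \setminus J' = \emptyset$; equivalently $J = J' = J \cap J'$, which forces $J = J'$. Hence if $J \neq J'$ the expansion contains no unit component and $\eta$ vanishes, giving the $\delta_{J,J'}$. When $J = J'$, \eqref{Eq:Id3} collapses to $a_J(\chi_J)a^\ast_J(\chi_{J'}) = (-1)^\sigma \sum_{K \subseteq J}(-1)^{|K|} n_K$ with the sign $\sigma$ depending only on the chosen orderings; the unit corresponds to $K = \emptyset$, contributing $(-1)^\sigma$. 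It then remains to pin down that this residual sign equals $(-1)^{\chi_J^{-1}\circ\chi_{J'}}$. The cleanest way to do this without chasing the general sign in \eqref{Eq:Id3} is to argue directly: write $a_J(\chi_J) = (-1)^{s} a_J(\chi_{J'})$ where $s = \chi_{J'}^{-1}\circ\chi_J \in \Ss_{|J|}$ relates the two orderings, so that $a_J(\chi_J)a^\ast_J(\chi_{J'}) = (-1)^{s} a_J(\chi_{J'})a^\ast_J(\chi_{J'})$, and then use the anti-commutation relations \eqref{Eq:CAR} repeatedly to move each $a^\ast_{\chi_{J'}(i)}$ past the $a$'s: since $a_J(\chi_{J'}) = a_{\chi_{J'}(|J|)}\cdots a_{\chi_{J'}(1)}$ and $a^\ast_J(\chi_{J'}) = a^\ast_{\chi_{J'}(1)}\cdots a^\ast_{\chi_{J'}(|J|)}$, one checks that $a_J(\chi_{J'})a^\ast_J(\chi_{J'}) = \prod_{x\in J}(1 - a^\ast_x a_x) = \sum_{K\subseteq J}(-1)^{|K|} n_K$, whose unit coefficient is $1$. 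Thus $c_{\emptyset,\emptyset} = (-1)^s$ with $s = \chi_{J'}^{-1}\circ\chi_J$, and since a permutation and its inverse have the same sign, $(-1)^s = (-1)^{\chi_J^{-1}\circ\chi_{J'}}$, as claimed.

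The main obstacle is bookkeeping of signs: one must be careful that the combinatorial prefactor $\tfrac{1}{\sqrt{|J|!|J'|!}}$ and the double sum over orderings in the symmetric presentation are correctly accounted for, so that extracting "the coefficient $c_{\emptyset,\emptyset}$" genuinely means what we want — but since bi-equivariant coefficients are determined by a single representative ordering, and the unit part $1 = a_\emptyset^\ast a_\emptyset$ carries $|J|=|J'|=0$, there is no such ambiguity and $c_{\emptyset,\emptyset}$ is read off unambiguously. The only genuinely computational point is the verification $a_J(\chi)a^\ast_J(\chi) = \prod_{x\in J}(1-n_x)$, which is a routine induction on $|J|$ using \eqref{Eq:CAR}, or alternatively a direct consequence of \eqref{Eq:Id3} with $J = J'$. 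With these pieces assembled, \eqref{Eq:EtaRel1} follows.
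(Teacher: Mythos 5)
Your proof is correct and follows the same route as the paper, whose entire argument is the one-line remark that the relations are a direct consequence of Eq.~\eqref{Eq:Id3}: you use that identity to see that a unit component can only arise when $J'\setminus J=J\setminus J'=\emptyset$ and $K=\emptyset$, which yields the $\delta_{J,J'}$ factor. Your additional step of pinning down the residual sign by writing $a_J(\chi_J)=(-1)^{s}a_J(\chi_{J'})$ with $s=\chi_{J'}^{-1}\circ\chi_J$ and then verifying $a_J(\chi)a^\ast_J(\chi)=\prod_{x\in J}(1-a_x^\ast a_x)$ is a genuinely necessary supplement, since the paper explicitly declines to record the sign $(-1)^\sigma$ in Eq.~\eqref{Eq:Id3} even though that sign is exactly what produces $(-1)^{\chi_J^{-1}\circ\chi_{J'}}$ here; your direct computation closes that gap cleanly.
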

\proof They are direct consequences of Eq.~\eqref{Eq:Id3}.\qed

\begin{proposition}\label{Pro:NullSpace} Let
\begin{equation*}
\Nn_\eta = \big \{ A \in {\rm CAR}(\Ll), \ \eta(A^\ast A) = 0 \big \},
\end{equation*}
be the closed left ideal of ${\rm CAR}(\Ll)$ associated to $\eta$. Then $\Nn_\eta$ is spanned by the monomials $a^\ast_J a_{J'}$ with $J' \neq \emptyset$.
\end{proposition}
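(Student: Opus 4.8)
The plan is to show that $\Nn_\eta$ coincides with the closed linear span $\Nn_0$ of the monomials $a_J(\chi_J)^\ast a_{J'}(\chi_{J'})$ with $J,J'\in\Kk(\Ll)$, $J'\neq\emptyset$, and arbitrary orderings. The inclusion $\Nn_0\subseteq\Nn_\eta$ is easy; the reverse is the substantive one, and its engine is the identity $\eta\big(a_J(\chi_J)A\big)=\sqrt{|J|!}\,c_{J,\emptyset}(\chi_J)$, where the $c_{J,J'}$ are the bi-equivariant coefficients in the symmetric presentation \eqref{Eq:StandardP1} of $A$.

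For $\Nn_0\subseteq\Nn_\eta$: since $\Nn_\eta$ is a closed left ideal, it suffices to check $a_x\in\Nn_\eta$ for every $x\in\Ll$, because for $J'\neq\emptyset$ one has $a_J(\chi_J)^\ast a_{J'}(\chi_{J'})=\big(a_J(\chi_J)^\ast a_{\chi_{J'}(|J'|)}\cdots a_{\chi_{J'}(2)}\big)\,a_{\chi_{J'}(1)}\in{\rm CAR}(\Ll)\,a_{\chi_{J'}(1)}$, and a closed left ideal containing all $a_x$ then contains $\Nn_0$. But $a_x^\ast a_x=n_{\{x\}}$, whose symmetric presentation is the single monomial $a^\ast_{\{x\}}(\chi)\,a_{\{x\}}(\chi)$ and hence has vanishing coefficient of the unit; thus $\eta(a_x^\ast a_x)=0$, i.e. $a_x\in\Nn_\eta$.

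For $\Nn_\eta\subseteq\Nn_0$: let $A\in\Nn_\eta$, so $\eta(A^\ast A)=0$. The Cauchy--Schwarz inequality for the state $\eta$ gives $\eta(B^\ast A)=0$ for all $B\in{\rm CAR}(\Ll)$; taking $B=a_J(\chi_J)^\ast$ yields $\eta\big(a_J(\chi_J)A\big)=0$ for every $J\in\Kk(\Ll)$ and every ordering $\chi_J$. To extract a coefficient from this I would first record the auxiliary vanishing: $\eta\big(C\,a_L(\chi_L)\big)=0$ whenever $C\in{\rm CAR}(\Ll)$ and $L\neq\emptyset$ --- indeed, Cauchy--Schwarz bounds $|\eta(C\,a_L(\chi_L))|^2$ by $\eta(CC^\ast)\,\eta\big(a_L(\chi_L)^\ast a_L(\chi_L)\big)=\eta(CC^\ast)\,\eta(n_L)$, and $\eta(n_L)=0$ for $L\neq\emptyset$ as above. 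Expanding $\eta\big(a_J(\chi_J)A\big)$ term by term over the norm-convergent presentation \eqref{Eq:StandardP1} of $A$ (whose indices I rename $K,K'$; this is legitimate because $\eta$ is norm continuous and, by the auxiliary vanishing, every term with $K'\neq\emptyset$ contributes $0$ already in the finite partial sums), only the terms with $K'=\emptyset$ survive, giving
\[
\eta\big(a_J(\chi_J)A\big)=\sum_{K\in\Kk(\Ll)}\tfrac{1}{\sqrt{|K|!}}\sum_{\chi_K}c_{K,\emptyset}(\chi_K)\,\eta\big(a_J(\chi_J)a_K(\chi_K)^\ast\big),
\]
which by the orthogonality relations \eqref{Eq:EtaRel1} reduces to $\tfrac{1}{\sqrt{|J|!}}\sum_{\chi'_J}c_{J,\emptyset}(\chi'_J)\,(-1)^{\chi_J^{-1}\circ\chi'_J}$. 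The bi-equivariance of $c_{J,\emptyset}$ together with the sign rule $a_J(s\cdot\chi_J)=(-1)^s a_J(\chi_J)$ forces each summand to equal $c_{J,\emptyset}(\chi_J)$, so summing over the $|J|!$ orderings $\chi'_J$ yields $\eta\big(a_J(\chi_J)A\big)=\sqrt{|J|!}\,c_{J,\emptyset}(\chi_J)$. Hence $c_{J,\emptyset}\equiv0$ for all $J$; the symmetric presentation of $A$ then involves only monomials $a_K(\chi_K)^\ast a_{K'}(\chi_{K'})$ with $K'\neq\emptyset$, and being a norm-convergent sum of such, $A\in\Nn_0$.

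I expect the main obstacle to be the middle step of the second inclusion: establishing the auxiliary vanishing $\eta(C\,a_L(\chi_L))=0$ cleanly, and --- above all --- the sign and combinatorial bookkeeping that makes the sum over orderings collapse to exactly $\sqrt{|J|!}\,c_{J,\emptyset}(\chi_J)$, i.e. the interplay of the factors $1/\sqrt{|J|!}$ in \eqref{Eq:StandardP1}, the sign $(-1)^{\chi_J^{-1}\circ\chi'_J}$ from \eqref{Eq:EtaRel1}, and the bi-equivariance of the coefficients. The remaining ingredients --- the closed-left-ideal argument, the Cauchy--Schwarz steps, and the norm-continuity justification for term-by-term evaluation --- are routine. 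One can alternatively package both inclusions into the single identity $\eta(A^\ast A)=\sum_{J}\sum_{\chi_J}|c_{J,\emptyset}(\chi_J)|^2$, obtained by the same bookkeeping, which makes the chain $\eta(A^\ast A)=0\Leftrightarrow c_{J,\emptyset}\equiv0\Leftrightarrow A\in\Nn_0$ transparent.
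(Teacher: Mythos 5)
Your proof is correct and, in substance, the same as the one in the paper: both reduce the matter to the vanishing of $\eta$ on monomials with nonempty right index together with the orthogonality relations \eqref{Eq:EtaRel1}, and your closing remark about packaging everything into $\eta(A^\ast A)=\sum_{J}\tfrac{1}{|J|!}\sum_{\chi_J}|c_{J,\emptyset}(\chi_J)|^2$ is exactly the computation the paper performs (note the factor $\tfrac{1}{|J|!}$ you dropped in that aside, harmless for the conclusion). Your main line --- extracting each coefficient via $\eta\big(a_J(\chi_J)A\big)=\sqrt{|J|!}\,c_{J,\emptyset}(\chi_J)$ after a Cauchy--Schwarz step, and the closed-left-ideal argument for the easy inclusion --- is a slightly more itemized but equivalent route, and the sign bookkeeping checks out.
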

\proof Clearly, any linear combination of the mentioned monomials belongs to $\Nn_\eta$. Then $\eta(Ba^\ast_J a_{J'})=0$ and $\eta(a^\ast_{J'} a_{J}B)=0$ for any $B \in {\rm CAR}(\Ll)$ and $J' \neq \emptyset$. Now, let $A \in {\rm CAR}(\Ll)$ and assume $A$ is in its symmetric presentation. Then, using the simple facts we just mentioned, $\eta(A^\ast A)$ simplifies to
 \begin{equation}\label{Eq:T24}
\begin{aligned}
\eta(A^\ast A) = \sum_{J,J' \in \Kk(\Ll)} \tfrac{1}{\sqrt{|J|!|J'|!}}\sum_{\chi_J,\chi_{J'}}  \overline{c_{J,\emptyset}(\chi_J)} c_{J',\emptyset}(\chi_{J'})  \eta\Big (a_J(\chi_J) a^\ast_{J'}(\chi_{J'}) \Big ),
\end{aligned}
\end{equation}
and Eq.~\eqref{Eq:EtaRel1} gives
 \begin{equation}\label{Eq:T25}
\eta(A^\ast A) = \sum_{J \in \Kk(\Ll)} \tfrac{1}{|J|!} \sum_{\chi_J} |c_{J,\emptyset}(\chi_{J})|^2.
\end{equation}
As such, if $A \in \Nn_\eta$, then necessarily all $c_{J,\emptyset}$ are identically zero.\qed

\begin{remark}{\rm It follows from the above that the linear space $\Nn_\eta+\Nn_\eta^\ast$ spans the entire ${\rm CAR}(\Ll)$, except the identity. From the very definition of $\eta$ in \eqref{Eq:Eta}, we can conclude that $\Nn_\eta+\Nn_\eta^\ast = {\rm ker} \, \eta $, which shows that $\eta$ is a pure state \cite[Th.~5.3.4]{MurphyBook}. Other arguments arriving at the same conclusion can be found in \cite{HugenholtzCMP1975}.
}$\Diamond$
\end{remark}

\begin{proposition}\label{Pro:Basis} Let $|U,\chi_U\rangle$ be the class of $a^\ast_U(\chi_U)$ in the GNS representation corresponding to $\eta$, 
\begin{equation}\label{Eq:Basis}
\big |U,\chi_U\big \rangle := a^\ast_U(\chi_U) +\Nn_\eta,
\end{equation} 
for $U \in \Kk(\Ll)$ and $\chi_U \in \mathcal{S}(\Ii_{|U|},U)$. Then the vectors~\eqref{Eq:Basis} span the Hilbert space of the GNS representation corresponding to $\eta$. Furthermore, the scalar product between two such vectors is
\begin{equation} \label{Eq:ScalarProd}
\big \langle U,\chi_U \big |V,\chi_V \big \rangle = (-1)^{\chi_U^{-1} \circ \chi_V} \, \delta_{U,V}. 
\end{equation}
\end{proposition}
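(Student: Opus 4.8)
The plan is to reduce everything to the GNS recipe together with the structural facts already established, namely Proposition~\ref{Pro:NullSpace} (which identifies $\Nn_\eta$ as the span of the monomials $a^\ast_J a_{J'}$ with $J'\neq\emptyset$) and the relation~\eqref{Eq:EtaRel1}. Recall that, by construction, the GNS Hilbert space is the completion of ${\rm CAR}(\Ll)/\Nn_\eta$ in the inner product $\langle A+\Nn_\eta, B+\Nn_\eta\rangle = \eta(A^\ast B)$. So the two claims — spanning and the scalar product formula — are really two aspects of the same computation.

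First I would prove the spanning statement. Take an arbitrary $A\in{\rm CAR}(\Ll)$ in its symmetric presentation~\eqref{Eq:StandardP1}. By Proposition~\ref{Pro:NullSpace}, every term $a_J(\chi_J)^\ast a_{J'}(\chi_{J'})$ with $J'\neq\emptyset$ lies in $\Nn_\eta$, hence is killed when passing to the quotient; only the terms with $J'=\emptyset$ survive. Using the convention $a_\emptyset=1$, those surviving terms are precisely scalar multiples of $a^\ast_U(\chi_U)$, so
\begin{equation*}
A+\Nn_\eta = \sum_{U\in\Kk(\Ll)} \tfrac{1}{\sqrt{|U|!}} \sum_{\chi_U} c_{U,\emptyset}(\chi_U)\,\big|U,\chi_U\big\rangle,
\end{equation*}
with the sum converging in the GNS norm (this convergence is inherited from the norm convergence in~\eqref{Eq:StandardP1} together with boundedness of the GNS map). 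Since $A$ was arbitrary and the image of ${\rm CAR}(\Ll)$ is dense in the GNS space by construction, the vectors $|U,\chi_U\rangle$ span a dense subspace, which is the claim.

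Next I would prove the scalar product formula. By the GNS definition and~\eqref{Eq:Mono3},
\begin{equation*}
\big\langle U,\chi_U\big|V,\chi_V\big\rangle = \eta\big(a_U(\chi_U)\,a^\ast_V(\chi_V)\big),
\end{equation*}
where I have used that $\big(a^\ast_U(\chi_U)\big)^\ast = a_U(\chi_U)$. Now apply~\eqref{Eq:EtaRel1} directly with $J=U$, $J'=V$: this gives $(-1)^{\chi_U^{-1}\circ\chi_V}\,\delta_{U,V}$, which is exactly~\eqref{Eq:ScalarProd}. (Alternatively, if~\eqref{Eq:EtaRel1} is not invoked as a black box, one expands $a_U(\chi_U)a^\ast_V(\chi_V)$ via~\eqref{Eq:Id3} and reads off $c_{\emptyset,\emptyset}$: the only nonzero contribution to $\eta$ comes from the term with $U\setminus V=V\setminus U=\emptyset$, i.e. $U=V$, and then $K=U$, producing $n_U$, whose vacuum coefficient is $1$, decorated by the sign bookkeeping which collapses to $(-1)^{\chi_U^{-1}\circ\chi_V}$.)

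The only genuinely non-routine point is the convergence/density bookkeeping in the spanning argument: one must check that the GNS class of the norm-convergent series~\eqref{Eq:StandardP1} really is the norm-convergent series of the classes of its terms, and that density of ${\rm CAR}(\Ll)/\Nn_\eta$ already furnishes density of the $|U,\chi_U\rangle$'s without needing a separate completeness argument. This follows because the GNS quotient map $A\mapsto A+\Nn_\eta$ is norm-decreasing (as $\eta$ is a state, $\eta(A^\ast A)\le\|A\|^2$), so it is continuous, and continuity transports norm convergence of~\eqref{Eq:StandardP1} to norm convergence of the image series. Everything else is a direct appeal to the already-proven Propositions~\ref{Pro:NullSpace} and the identity~\eqref{Eq:EtaRel1}.
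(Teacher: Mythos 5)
Your proposal is correct and follows exactly the route of the paper, whose proof is the one-line observation that the statement is a direct consequence of Proposition~\ref{Pro:NullSpace} and Eq.~\eqref{Eq:EtaRel1}; you have simply written out the details (quotienting kills the $J'\neq\emptyset$ monomials, and $\langle U,\chi_U|V,\chi_V\rangle=\eta\big(a_U(\chi_U)a^\ast_V(\chi_V)\big)$ is evaluated by \eqref{Eq:EtaRel1}). The extra convergence bookkeeping via norm-decrease of the GNS quotient map is sound and harmless.
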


\proof The statement is a direct consequence of Proposition~\ref{Pro:NullSpace} and Eq.~\eqref{Eq:EtaRel1}.\qed

\begin{remark}{\rm Proposition~\ref{Pro:Basis} gives the explicit connection between the GNS representation induced by $\eta$ and the well known representation on the anti-symmetric Fock space $\Ff^{(-)}(\Ll)$, spanned by \eqref{Eq:Basis}. Let us recall that, since $\eta$ is a pure state, the GNS space coincides with $ {\rm CAR}(\Ll)/\Nn_\eta$, {\it i.e.} no Cauchy completion is needed \cite[Th.~5.2.4]{MurphyBook}. Furthermore, the Fock representation is irreducible.
}$\Diamond$
\end{remark}

Since we want to avoid making any particular choice between the possible orderings of the sub-sets, we will work with the frame of $\Ff^{(-)}(\Ll)$ supplied by the vectors listed in Eq.~\eqref{Eq:Basis}. This frame is invariant against the action of the permutation group. As already mentioned in our introductory remarks, this is in fact a key point of our strategy. The following statement describes how operators can be uniquely presented using such a frame.

\begin{proposition}\label{Pro:FockOp} If $\BM\big (\Ff^{(-)}(\Ll)\big )$ is the algebra of bounded operators over the anti-symmetric Fock space, then any of its elements can be uniquely presented in the form
\begin{equation}\label{Eq:BOp1}
F = \sum_{U,U' \in \Kk(\Ll)} \tfrac{1}{\sqrt{|U|!|U'|!}}\sum_{\chi_U,\chi_{U'}} F_{U,U'}(\chi_U,\chi_{U'}) \, \big |U,\chi_U\big \rangle \big \langle U',\chi_{U'} \big |,
\end{equation}
where the coefficients are bi-equivariant and given by
\begin{equation}\label{Eq:C501}
F_{U,U'}(\chi_U,\chi_{U'}) =  \tfrac{1}{\sqrt{|U|! |U'|!}} \big \langle U,\chi_U\big | F  \big | U',\chi_{U'} \big \rangle.
\end{equation}
The sum in Eq.~\eqref{Eq:BOp1} converges in the strong operator topology of $\BM\big (\Ff^{(-)}(\Ll)\big )$. 
\end{proposition}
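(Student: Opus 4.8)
The statement is essentially the operator-analogue of the symmetric presentation of elements of $\mathrm{CAR}(\Ll)$ (Proposition on Eq.~\eqref{Eq:StandardP1}), transported through the Fock representation. The plan is to argue in three stages: first establish that the vectors $|U,\chi_U\rangle$ form a (redundant) frame whose associated ``resolution of the identity'' can be written explicitly, then use this to write any bounded $F$ as a strongly convergent double sum of rank-one operators, and finally verify uniqueness and bi-equivariance of the coefficients.

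First I would make the frame structure precise. By Proposition~\ref{Pro:Basis}, for each fixed $n$ the vectors $\{|U,\chi_U\rangle : U\in\Kk_n(\Ll),\ \chi_U\in\mathcal{S}(\Ii_n,U)\}$ span the $n$-particle sector $\Ff_n^{(-)}(\Ll)$, and by Eq.~\eqref{Eq:ScalarProd} a choice of one ordering $\chi_U^{0}$ per subset $U$ yields an orthonormal basis $\{|U,\chi_U^0\rangle\}$ of that sector; moreover $|U,\chi_U\rangle = (-1)^{(\chi_U^0)^{-1}\circ\chi_U}\,|U,\chi_U^0\rangle$. Summing over all $\chi_U$ therefore over-counts each basis vector by a factor $|U|!$ (the cross terms cancel by the sign, or more simply $\sum_{\chi_U}|U,\chi_U\rangle\langle U,\chi_U| = |U|!\,|U,\chi_U^0\rangle\langle U,\chi_U^0|$, the sign squaring to $1$). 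Hence
\begin{equation*}
P_n = \sum_{U\in\Kk_n(\Ll)}\tfrac{1}{|U|!}\sum_{\chi_U}|U,\chi_U\rangle\langle U,\chi_U|
\end{equation*}
is the orthogonal projection onto $\Ff_n^{(-)}(\Ll)$, and $\sum_n P_n = \mathbf 1$ strongly. This is the technical heart; the main obstacle is bookkeeping the signs so that the over-counted sums collapse correctly, but it reduces to the elementary identity just noted.

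Next, for $F\in\BM(\Ff^{(-)}(\Ll))$ I would write $F = \big(\sum_n P_n\big)F\big(\sum_{n'}P_{n'}\big)$ with the sums converging strongly, insert the resolution of identity on each side, and read off
\begin{equation*}
F = \sum_{U,U'\in\Kk(\Ll)}\tfrac{1}{|U|!\,|U'|!}\sum_{\chi_U,\chi_{U'}}\big\langle U,\chi_U\big|F\big|U',\chi_{U'}\big\rangle\,\big|U,\chi_U\big\rangle\big\langle U',\chi_{U'}\big|.
\end{equation*}
Rewriting $\tfrac{1}{|U|!|U'|!} = \tfrac{1}{\sqrt{|U|!|U'|!}}\cdot\tfrac{1}{\sqrt{|U|!|U'|!}}$ and absorbing one factor into the definition of $F_{U,U'}$ gives exactly Eq.~\eqref{Eq:BOp1} with the coefficients Eq.~\eqref{Eq:C501}. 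Strong convergence is inherited from the strong convergence of $\sum_n P_n$ applied on both sides (a standard $\epsilon/3$ argument using boundedness of $F$ and of the partial sums of $P_n$, which are projections hence norm-one).

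Finally I would check the two asserted properties of the coefficients. Bi-equivariance, $F_{U,U'}(s_1\cdot\chi_U,\chi_{U'}\cdot s_2) = (-1)^{s_1}F_{U,U'}(\chi_U,\chi_{U'})(-1)^{s_2}$, follows immediately from Eq.~\eqref{Eq:C501} together with the transformation rule $|U,s\cdot\chi_U\rangle = (-1)^s|U,\chi_U\rangle$ read off from Eq.~\eqref{Eq:ScalarProd} (or directly from $a_U^\ast(s\cdot\chi_U) = (-1)^s a_U^\ast(\chi_U)$ and Eq.~\eqref{Eq:Basis}). Uniqueness: if such a presentation sums strongly to $0$, pair it against fixed basis vectors $|U',\chi^0_{U'}\rangle$ on the right and $\langle U,\chi^0_U|$ on the left; orthonormality of the chosen basis collapses the sums and, after the over-counting factor, yields $F_{U,U'}(\chi^0_U,\chi^0_{U'})=0$, and then bi-equivariance forces all coefficients to vanish. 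Conversely Eq.~\eqref{Eq:C501} shows the coefficients are determined by $F$. This also retroactively justifies the combinatorial normalisation $\tfrac{1}{\sqrt{|J|!|J'|!}}$ in Eq.~\eqref{Eq:StandardP1}, as promised in the text, since under the Fock representation $a_J^\ast(\chi_J)$ acting on the vacuum sector produces $|J,\chi_J\rangle$ and the same counting applies.
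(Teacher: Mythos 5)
Your proof is correct and follows essentially the same route as the paper's: fix one ordering per subset to obtain an orthonormal basis (equivalently, your resolution of identity $\sum_n P_n=\mathbf 1$ with the $|U|!$ over-counting), expand $F$ in matrix elements, then symmetrize over all orderings to produce the bi-equivariant coefficients, with uniqueness obtained by collapsing back to fixed orderings via equivariance. Your packaging of the existence step as an explicit resolution of the identity is a slightly more formal rendering of the paper's ``sample over all choices of enumeration,'' but the substance is identical.
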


We will refer to Eq.~\eqref{Eq:BOp1} as the symmetric presentation of the operator. The following statement describes the rule for the composition of operators in this symmetric presentation.

\begin{proposition}\label{Pro:FProd} Let $F,F' \in \BM\big (\Ff^{(-)}(\Ll)\big )$ and consider their symmetric presentation as in Eq.~\eqref{Eq:BOp1}. Then the coefficients of the product $FF' \in \BM\big (\Ff^{(-)}(\Ll)\big )$ in the symmetric presentation are supplied by the convolution
\begin{equation*}
(FF')_{U,U'}(\chi_U,\chi_{U'}) = \sum_{V,\chi_V} F_{U,V}(\chi_U,\chi_{V}) F'_{V,U'}(\chi_V,\chi_{U'}).
\end{equation*}
Note that $(FF')_{U,U'}$ are indeed bi-equivariant coefficients.
\end{proposition}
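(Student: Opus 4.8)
The plan is to read off the coefficients of $FF'$ directly from the defining formula \eqref{Eq:C501} of Proposition~\ref{Pro:FockOp}, namely $(FF')_{U,U'}(\chi_U,\chi_{U'}) = \tfrac{1}{\sqrt{|U|!\,|U'|!}}\,\langle U,\chi_U|FF'|U',\chi_{U'}\rangle$, by inserting a resolution of the identity of $\Ff^{(-)}(\Ll)$ expressed through the frame $\{|V,\chi_V\rangle\}$ of \eqref{Eq:Basis}.

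First I would record that, although this frame is overcomplete, the rank-one operator $|V,\chi_V\rangle\langle V,\chi_V|$ does not depend on the ordering $\chi_V$: any two orderings of $V$ differ by some $s\in\Ss_{|V|}$, so $|V,\chi_V\rangle$ changes only by the sign $(-1)^{s}$ (immediate from \eqref{Eq:Basis} and the equivariance $a_V(s\cdot\chi_V) = (-1)^{s}a_V(\chi_V)$), and the two signs cancel. Combining this with Proposition~\ref{Pro:Basis} — which, upon fixing an arbitrary family of orderings $\{\chi_V^{0}\}_{V\in\Kk(\Ll)}$, says via \eqref{Eq:ScalarProd} that $\{|V,\chi_V^{0}\rangle\}_{V\in\Kk(\Ll)}$ is an orthonormal basis of the separable Hilbert space $\Ff^{(-)}(\Ll)$ — one obtains
\[
\one = \sum_{V\in\Kk(\Ll)} P_V, \qquad P_V := \big|V,\chi_V^{0}\big\rangle\big\langle V,\chi_V^{0}\big| = \tfrac{1}{|V|!}\sum_{\chi_V}\big|V,\chi_V\big\rangle\big\langle V,\chi_V\big|,
\]
the sum over $V$ converging in the strong operator topology.

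Next I would write $FF' = F\,\one\,F'$ and pair with frame vectors. For fixed $U,U',\chi_U,\chi_{U'}$ the vectors $\psi := F^{\ast}|U,\chi_U\rangle$ and $\phi := F'|U',\chi_{U'}\rangle$ are fixed, so $\langle U,\chi_U|FF'|U',\chi_{U'}\rangle = \langle\psi,\phi\rangle = \sum_V\langle\psi,P_V\phi\rangle$, the interchange being legitimate because $\sum_V P_V\phi = \phi$ in norm and the inner product is norm-continuous (and, for each $V$, the inner $\chi_V$-sum defining $P_V$ is finite). Substituting \eqref{Eq:C501} for the matrix elements, $\langle U,\chi_U|F|V,\chi_V\rangle = \sqrt{|U|!\,|V|!}\,F_{U,V}(\chi_U,\chi_V)$ and $\langle V,\chi_V|F'|U',\chi_{U'}\rangle = \sqrt{|V|!\,|U'|!}\,F'_{V,U'}(\chi_V,\chi_{U'})$, the factor $\tfrac{1}{|V|!}$ from $P_V$ cancels the product $\sqrt{|V|!}\cdot\sqrt{|V|!}$, and after dividing by $\sqrt{|U|!\,|U'|!}$ one is left with precisely $\sum_{V,\chi_V}F_{U,V}(\chi_U,\chi_V)\,F'_{V,U'}(\chi_V,\chi_{U'})$, as claimed. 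That $(FF')_{U,U'}$ is again a bi-equivariant coefficient needs no separate argument: $FF'$ is a bounded operator, so Proposition~\ref{Pro:FockOp} applies to it directly (alternatively, it follows from the bi-equivariance of $F$ and $F'$ together with the left and right $\Ss$-actions).

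The only genuinely delicate point is the bookkeeping of the overcompleteness — making sure the normalizations $\tfrac{1}{\sqrt{|V|!}}$ appearing in \eqref{Eq:BOp1} and \eqref{Eq:C501}, the $\tfrac{1}{|V|!}$ in the resolution of the identity, and the various square-root factors conspire correctly — together with the routine justification that a strongly convergent resolution of the identity may be inserted between two bounded operators. Neither is hard; once the ordering-independence of $|V,\chi_V\rangle\langle V,\chi_V|$ is in place, the computation is essentially forced.
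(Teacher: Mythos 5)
Your proof is correct and is essentially the paper's argument in a different dress: the paper multiplies the two symmetric presentations and contracts them with the scalar product \eqref{Eq:ScalarProd}, absorbing the sign $(-1)^{\chi_V^{-1}\circ\chi_{V'}}$ via bi-equivariance and collecting a factor $|V|!$ from the redundant ordering sum, while you package exactly the same sign-cancellation and factorial bookkeeping into the ordering-independence of $|V,\chi_V\rangle\langle V,\chi_V|$ and a resolution of the identity inserted between $F$ and $F'$. Your version is, if anything, slightly more careful about the convergence of the interchanged sums, but no new idea is involved.
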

\proof From Eq.~\eqref{Eq:ScalarProd} and the symmetric presentation of the operators,
\begin{equation}
\begin{aligned} \label{Eq:C101}
FF' = \sum_{U,U'} \tfrac{1}{\sqrt{|U|!|U'|!}} \sum_{V,V'} \tfrac{1}{\sqrt{|V|!|V'|!}} \sum_{\chi's}   & (-1)^{\chi_V^{-1} \circ \chi_{V'}} \delta_{V,V'} F_{U,V}(\chi_U,\chi_V)  \\
&  F'_{V',U'}\big(\chi_{V'},\chi_{U'}) \, \big |U,\chi_U\big \rangle \big \langle U', \chi_{U'}\big |,
\end{aligned}
\end{equation}
where the last sum is over all orderings $\chi$ appearing inside this sum. The sets $V$ and $V'$ need to coincide in the second sum, but their orderings do not. Nevertheless, using the equivariance of the coefficients, we have
\begin{equation*}
(-1)^{\chi_V^{-1} \circ \chi'_{V}}F'_{V,U'}\big(\chi'_{V},\chi_{U'}\big) = F'_{V,U'}\big(\chi_{V},\chi_{U'}\big).
\end{equation*}
Then the inner summand is independent of $\chi_{V'}$ and the sum over this ordering supplies the factor $|V|!$, bringing Eq.~\eqref{Eq:C101} to the form~\eqref{Eq:BOp1}, with the coefficients given in the statement.\qed

\begin{remark}{\rm It is at this point where the need for the cumbersome factorial factors is being explicitly displayed.
}$\Diamond$
\end{remark}

We now turn our attention to the Fock representation. The representation of the monomials can be derived directly from the identity~\eqref{Eq:Id1}:

\begin{proposition}\label{Pro:CARMono} The Fock representation of the monomials takes the form,
\begin{equation}\label{Eq:SimpleRep}
\begin{aligned}
\pi_\eta\big(a_J^\ast(\chi_J)\,  a_{J'}(\chi_{J'})\big )  = \sum_{\substack{\Gamma \in \Kk(\Ll) \\ \Gamma \cap(J \cup J') =\emptyset} } \ |J \cup \Gamma,\chi_J \vee \chi_\Gamma \rangle \langle J' \cup \Gamma,\chi_{J'} \vee \chi_\Gamma|,
\end{aligned}
\end{equation}
where the ordering $\chi_\Gamma$ can be any choice.
\end{proposition}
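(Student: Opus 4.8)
The plan is to reduce the claim to the already-established identity~\eqref{Eq:Id1} for the action of creation operators on the Fock basis vectors, together with the formula~\eqref{Eq:ScalarProd} for scalar products. The strategy is to compute the matrix elements $\langle U,\chi_U | \pi_\eta(a_J^\ast(\chi_J)\, a_{J'}(\chi_{J'})) | U',\chi_{U'}\rangle$ against the frame~\eqref{Eq:Basis} and show they coincide with the matrix elements of the right-hand side; the uniqueness part of Proposition~\ref{Pro:FockOp} then forces the two operators to be equal (one must also check the right-hand side defines a bounded operator with a convergent sum, which follows since $a_J^\ast(\chi_J) a_{J'}(\chi_{J'})$ is manifestly bounded in ${\rm CAR}(\Ll)$ and $\pi_\eta$ is a $\ast$-representation).

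First I would recall that in the Fock representation $\pi_\eta(a_x^\ast)$ acts on $|U,\chi_U\rangle$ by ``prepending'' $x$: using~\eqref{Eq:Id1} and~\eqref{Eq:Basis}, if $x \notin U$ then $\pi_\eta(a_x^\ast)|U,\chi_U\rangle = |\{x\}\cup U, \chi_x \vee \chi_U\rangle$, and if $x \in U$ it annihilates the vector (by the CAR relations and the definition of $\Nn_\eta$). Iterating, $\pi_\eta(a_J^\ast(\chi_J))|U',\chi_{U'}\rangle = |J \cup U', \chi_J \vee \chi_{U'}\rangle$ whenever $J \cap U' = \emptyset$, and is zero otherwise. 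Dually, $\pi_\eta(a_{J'}(\chi_{J'})) = \pi_\eta(a_{J'}^\ast(\chi_{J'}))^\ast$ acts as the adjoint: $\langle W, \chi_W | \pi_\eta(a_{J'}(\chi_{J'})) = \langle J' \cup W, \chi_{J'} \vee \chi_W|$ when $J' \cap W = \emptyset$, up to the sign bookkeeping dictated by~\eqref{Eq:ScalarProd}.

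Next, I would apply $\pi_\eta(a_J^\ast(\chi_J)\,a_{J'}(\chi_{J'}))$ to a generic basis vector $|U',\chi_{U'}\rangle$: for a nonzero result one needs $J' \subseteq U'$, write $U' = J' \cup \Gamma$ with $\Gamma = U' \setminus J'$ and $\Gamma \cap J' = \emptyset$; applying $a_{J'}(\chi_{J'})$ collapses $|J'\cup\Gamma, \chi_{J'}\vee\chi_\Gamma'\rangle$ (after re-expressing $\chi_{U'}$ via equivariance in terms of $\chi_{J'}\vee\chi_\Gamma'$, picking up a sign) to a multiple of $|\Gamma,\chi_\Gamma'\rangle$, and then $a_J^\ast(\chi_J)$ produces $|J\cup\Gamma,\chi_J\vee\chi_\Gamma'\rangle$ provided $J \cap \Gamma = \emptyset$. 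Collecting terms over all admissible $\Gamma$ — equivalently, all $\Gamma \in \Kk(\Ll)$ with $\Gamma \cap (J \cup J') = \emptyset$ — reproduces~\eqref{Eq:SimpleRep}. The bi-equivariance of the resulting coefficients, hence independence of the auxiliary choice of $\chi_\Gamma$, is automatic because both $|J\cup\Gamma,\chi_J\vee\chi_\Gamma\rangle$ and $\langle J'\cup\Gamma,\chi_{J'}\vee\chi_\Gamma|$ transform with the same sign $(-1)^{\chi_\Gamma \to \chi_\Gamma'}$ under a change of ordering of $\Gamma$, so the product $|J\cup\Gamma,\chi_J\vee\chi_\Gamma\rangle\langle J'\cup\Gamma,\chi_{J'}\vee\chi_\Gamma|$ is well-defined.

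The main obstacle I anticipate is the careful tracking of the sign factors: the vee-concatenation $\chi_J \vee \chi_\Gamma$ orders $J$ before $\Gamma$, whereas a natural ordering of $U' = J' \cup \Gamma$ need not have this block form, so one must invoke the equivariance relation~\eqref{Eq:ScalarProd} and the sign in~\eqref{Eq:EtaRel1} to move between $\chi_{U'}$ and $\chi_{J'}\vee\chi_\Gamma$, and similarly on the bra side. The key point that makes the bookkeeping close up cleanly is that the sign incurred in passing $a_{J'}(\chi_{J'})$ across and the sign incurred on rebuilding with $a_J^\ast(\chi_J)$ are governed by the same permutation data, so that in the final expression~\eqref{Eq:SimpleRep} all auxiliary signs cancel and only the canonical vee-orderings survive. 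Once the matrix-element computation is in hand, everything else is routine, and the convergence in the strong operator topology follows from Proposition~\ref{Pro:FockOp} applied to the bounded operator $\pi_\eta(a_J^\ast(\chi_J)\,a_{J'}(\chi_{J'}))$.
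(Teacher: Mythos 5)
Your argument is correct and follows exactly the route the paper intends: the paper gives no written proof, merely noting that the formula ``can be derived directly from the identity~\eqref{Eq:Id1}'', and your computation of the action on the frame $|U',\chi_{U'}\rangle$ — splitting $U'=J'\cup\Gamma$, using equivariance and \eqref{Eq:Id1} to collapse $a_{J'}(\chi_{J'})a^\ast_{J'}(\chi_{J'})$ on a vector disjoint from $J'$, then rebuilding with $a^\ast_J(\chi_J)$ — is precisely that derivation, with the sign bookkeeping matching \eqref{Eq:ScalarProd}. The only cosmetic quibble is that invoking Proposition~\ref{Pro:FockOp} for uniqueness is unnecessary, since equality of bounded operators already follows from agreement of matrix elements on the spanning frame.
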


Let $\Ff^{(-)}_N$ be the closed linear sub-space of $\Ff^{(-)}$ spanned by the vectors $\big |U,\chi_U\big \rangle$ with $|U|=N$, usually called the $N$-fermion sector.

\begin{proposition}[\cite{BratteliTAMS1972}]The Fock representation $\pi_\eta$ restricted to the ${\rm GICAR}(\Ll)$ subalgebra decomposes into a direct sum
\begin{equation*}
\pi_\eta = \bigoplus_{N \in \NM} \pi_\eta^N, \quad \pi_\eta^N = \pi_\eta \downharpoonright _{\Ff^{(-)}_N},
\end{equation*}
of irreducible representations.
\end{proposition}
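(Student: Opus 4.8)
The plan is to prove two things: that $\pi_\eta$ restricted to ${\rm GICAR}(\Ll)$ leaves each $N$-fermion sector $\Ff^{(-)}_N$ invariant (so it decomposes as the stated direct sum), and that each summand $\pi_\eta^N$ is irreducible. The first claim is essentially bookkeeping: a generic element of ${\rm GICAR}(\Ll)$ is a norm-convergent sum of monomials $a^\ast_J(\chi_J)\,a_{J'}(\chi_{J'})$ with $|J|=|J'|$, and by Proposition~\ref{Pro:CARMono} the Fock representation of such a monomial is $\sum_{\Gamma} |J\cup\Gamma,\chi_J\vee\chi_\Gamma\rangle\langle J'\cup\Gamma,\chi_{J'}\vee\chi_\Gamma|$. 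Since $|J|=|J'|$ and $\Gamma$ is disjoint from $J\cup J'$, every rank-one term sends a vector indexed by a set of cardinality $|J'|+|\Gamma|$ to a vector indexed by a set of cardinality $|J|+|\Gamma|=|J'|+|\Gamma|$. Hence $\pi_\eta\big(a^\ast_J(\chi_J)a_{J'}(\chi_{J'})\big)$ preserves the cardinality of the index set, so it preserves each $\Ff^{(-)}_N$; passing to norm limits, so does every element of ${\rm GICAR}(\Ll)$. Setting $\pi_\eta^N := \pi_\eta\!\downharpoonright_{\Ff^{(-)}_N}$ then immediately gives $\pi_\eta\!\downharpoonright_{{\rm GICAR}(\Ll)} = \bigoplus_N \pi_\eta^N$.

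For irreducibility of $\pi_\eta^N$, I would show that the image of ${\rm GICAR}(\Ll)$ under $\pi_\eta^N$ contains every rank-one operator $|U,\chi_U\rangle\langle U',\chi_{U'}|$ with $|U|=|U'|=N$; a $\ast$-algebra of operators on a Hilbert space whose closure contains all the rank-one operators (equivalently, all matrix units in a fixed orthonormal-type frame) acts irreducibly, since any nonzero invariant subspace would have to be the whole space. To produce $|U,\chi_U\rangle\langle U',\chi_{U'}|$ take the monomial $a^\ast_U(\chi_U)\,a_{U'}(\chi_{U'}) \in {\rm GICAR}(\Ll)$ and apply Proposition~\ref{Pro:CARMono}: its Fock representation is $\sum_{\Gamma\cap(U\cup U')=\emptyset} |U\cup\Gamma,\chi_U\vee\chi_\Gamma\rangle\langle U'\cup\Gamma,\chi_{U'}\vee\chi_\Gamma|$. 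On the sector $\Ff^{(-)}_N$, the only surviving term is $\Gamma=\emptyset$ (any nonempty $\Gamma$ lands in a higher sector), so $\pi_\eta^N\big(a^\ast_U(\chi_U)a_{U'}(\chi_{U'})\big) = |U,\chi_U\rangle\langle U',\chi_{U'}|$ exactly. Since $U,U'$ range over all of $\Kk_N(\Ll)$ and the orderings are arbitrary, we obtain every rank-one operator in the frame of Proposition~\ref{Pro:Basis}, and finite linear combinations of these are dense in $\KM(\Ff^{(-)}_N)$ in the strong operator topology. Thus $\pi_\eta^N$ is irreducible.

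The main subtlety — though it is more a matter of care than of genuine difficulty — is that the frame $\{|U,\chi_U\rangle\}$ from Proposition~\ref{Pro:Basis} is over-complete: distinct orderings $\chi_U$, $\chi'_U$ of the same set $U$ give vectors differing by the sign $(-1)^{\chi_U^{-1}\circ\chi'_U}$, so one must check that the "matrix units" $|U,\chi_U\rangle\langle U',\chi_{U'}|$ are genuinely rank-one operators on $\Ff^{(-)}_N$ and that they span (a dense subspace of) $\KM(\Ff^{(-)}_N)$ consistently with the sign relations. This is handled by picking, for each $U\in\Kk_N(\Ll)$, one fixed ordering to obtain an honest orthonormal basis via \eqref{Eq:ScalarProd}, observing that $|U,\chi_U\rangle\langle U',\chi_{U'}|$ is then a scalar multiple of the corresponding basic matrix unit, and noting the scalars are nonzero units of modulus one — so nothing is lost. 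With that observation in place, both the invariance and the irreducibility reduce to the explicit formula in Proposition~\ref{Pro:CARMono} together with the elementary fact that an SOT-dense $\ast$-subalgebra of $\KM(\Hh)$ (a fortiori of $\BM(\Hh)$) acts irreducibly.
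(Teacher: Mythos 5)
Your proof is correct and follows essentially the same route as the paper's: invariance of each sector $\Ff^{(-)}_N$ is read off from Proposition~\ref{Pro:CARMono}, and irreducibility follows because $\pi_\eta^N\big({\rm GICAR}(\Ll)\big)$ contains all the rank-one operators $|U,\chi_U\rangle\langle U',\chi_{U'}|$ with $|U|=|U'|=N$ (hence the compacts), which forces the commutant to be trivial. Your extra care with the over-complete frame is a sound but inessential refinement of the same argument.
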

\proof  Proposition~\ref{Pro:CARMono} assures us that the representations of the elements from the ${\rm GICAR}$ sub-algebra leave the subspaces ${\Ff^{(-)}_N}$ invariant. These subspaces are mutually orthogonal and $\Ff^{(-)}=\bigoplus_{N \in \NM} {\Ff^{(-)}_N}$. Using again Proposition~\ref{Pro:CARMono}, one can easily see that the image of ${\rm GICAR}(\Ll)$ through $\pi_\eta^N$ contains the algebra $\KM\big (\Ff_N^{(-)}(\Ll)\big)$ of compact operators over $\Ff_N^{(-)}(\Ll)$. As such, the commutant of this image is $\CM \cdot I$, hence the representation is irreducible.\qed

\begin{proposition}\label{Pro:GICARRep} Let $J,J' \in \Kk_n(\Ll)$. Then:
\begin{enumerate}[{\rm \ (i)}]
\item If $n >N$, then $a_J(\chi_J)^\ast \, a_{J'}(\chi_{J'})$ is sent to zero by $\pi_\eta^N$. 

\item If $n=N$, then
\begin{equation}\label{W1}
\pi_\eta^N\big (a_J(\chi_J)^\ast \, a_{J'}(\chi_{J'}) \big ) = |J, \chi_J\rangle \langle J', \chi_{J'}|.
\end{equation}

\item If $n<N$, then
\begin{equation}\label{W2}
\pi_\eta^N\big (a_J(\chi_J)^\ast \, a_{J'}(\chi_{J'}) \big ) = \sum_{\Gamma \in \Kk_{N-n}(\Ll)} \pi_\eta^N\big (a_J(\chi_J)^\ast \, n_\Gamma \, a_{J'}(\chi_{J'}) \big ).
\end{equation}
The sum can be restricted to those $\Gamma$ with $\Gamma \cap (J \cup J') = \emptyset$ and the rule~\eqref{W1} can be applied to the expression~\eqref{W2}, if desired.
\end{enumerate}
\end{proposition}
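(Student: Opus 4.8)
The plan is to deduce all three items from the explicit Fock representation of monomials recorded in Proposition~\ref{Pro:CARMono}. By Eq.~\eqref{Eq:SimpleRep},
\[
\pi_\eta\big(a_J^\ast(\chi_J)\, a_{J'}(\chi_{J'})\big) = \sum_{\substack{\Gamma \in \Kk(\Ll) \\ \Gamma \cap (J\cup J') = \emptyset}} \big|J\cup\Gamma,\, \chi_J \vee \chi_\Gamma\big\rangle \big\langle J'\cup\Gamma,\, \chi_{J'}\vee\chi_\Gamma\big|.
\]
Since $a_J^\ast(\chi_J)\,a_{J'}(\chi_{J'})$ lies in ${\rm GICAR}(\Ll)$ (as $|J|=|J'|=n$), the operator $\pi_\eta$ of it preserves each $\Ff^{(-)}_N$, and $\pi_\eta^N$ is just its restriction there. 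The first step is then to observe that a rank-one operator $\big|U,\chi_U\big\rangle\big\langle V,\chi_V\big|$ restricts nontrivially to $\Ff^{(-)}_N$ precisely when $|U|=|V|=N$, by the orthogonality relation \eqref{Eq:ScalarProd}. As $\Gamma$ is disjoint from $J\cup J'$ and $|J|=|J'|=n$, the term indexed by $\Gamma$ survives the restriction iff $|\Gamma|=N-n$. This yields at once
\[
\pi_\eta^N\big(a_J^\ast(\chi_J)\, a_{J'}(\chi_{J'})\big) = \sum_{\substack{\Gamma \in \Kk_{N-n}(\Ll) \\ \Gamma \cap (J\cup J') = \emptyset}} \big|J\cup\Gamma,\, \chi_J \vee \chi_\Gamma\big\rangle \big\langle J'\cup\Gamma,\, \chi_{J'}\vee\chi_\Gamma\big|
\]
for $N\ge n$, the sum being empty (hence $0$) when $N<n$. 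Item (i) is the case $n>N$; item (ii) is the case $n=N$, where the only admissible index is $\Gamma=\emptyset$, for which $\chi_J\vee\chi_\emptyset=\chi_J$ by Definition~\ref{Def:Wedge} and $a_\emptyset=1$, so the sum collapses to $\big|J,\chi_J\big\rangle\big\langle J',\chi_{J'}\big|$, which is exactly \eqref{W1}.

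For item (iii), with $n<N$, I would first establish the purely algebraic identities in ${\rm CAR}(\Ll)$. Writing $n_\Gamma=a_\Gamma^\ast(\chi_\Gamma)\,a_\Gamma(\chi_\Gamma)$ and using Eq.~\eqref{Eq:Id1} together with its adjoint, if $\Gamma\cap(J\cup J')=\emptyset$ then
\[
a_J(\chi_J)^\ast\, n_\Gamma\, a_{J'}(\chi_{J'}) = \big(a_J(\chi_J)^\ast a_\Gamma(\chi_\Gamma)^\ast\big)\big(a_\Gamma(\chi_\Gamma) a_{J'}(\chi_{J'})\big) = a_{J\cup\Gamma}(\chi_J\vee\chi_\Gamma)^\ast\, a_{J'\cup\Gamma}(\chi_{J'}\vee\chi_\Gamma),
\]
a monomial of the type handled in item (ii) since $|J\cup\Gamma|=|J'\cup\Gamma|=N$; whereas if $x\in\Gamma\cap J$, the central factor $n_x$ of $n_\Gamma$ commutes with every generator $a_y^{(\ast)}$, $y\ne x$, and can therefore be brought next to the factor $a_x^\ast$ occurring in $a_J(\chi_J)^\ast$, whence $a_x^\ast n_x=(a_x^\ast)^2 a_x=0$ forces the whole product to vanish (symmetrically for $x\in\Gamma\cap J'$, using $n_x\,a_{J'}(\chi_{J'})=0$). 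Consequently the sum over $\Gamma\in\Kk_{N-n}(\Ll)$ on the right-hand side of \eqref{W2} collapses to the sum over those $\Gamma$ disjoint from $J\cup J'$, and applying rule \eqref{W1} to each surviving term reproduces verbatim the formula obtained in the previous paragraph. This simultaneously proves the main equality of (iii), the admissibility of restricting the sum, and the applicability of rule \eqref{W1}; the main equality itself follows by comparing with the restricted formula for $\pi_\eta^N\big(a_J^\ast(\chi_J)\,a_{J'}(\chi_{J'})\big)$ established above.

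There is no genuine obstacle here — the mathematical content is entirely contained in Proposition~\ref{Pro:CARMono}. The only delicate point is the bookkeeping of orderings: one must check that the same choice of $\chi_\Gamma$ appears in both the ket and the bra of each term and that the identity $a_J^\ast\,n_\Gamma\,a_{J'}=a_{J\cup\Gamma}^\ast\,a_{J'\cup\Gamma}$ carries no extra sign, which is precisely what Eq.~\eqref{Eq:Id1} and the ordering-independence of $n_\Gamma$ provide. As in Propositions~\ref{Pro:CARMono} and \ref{Pro:FockOp}, the formally infinite sums above are understood to converge in the strong operator topology.
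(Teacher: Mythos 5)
Your proof is correct, and it follows exactly the route the paper intends: the paper states this proposition without proof as an immediate consequence of Proposition~\ref{Pro:CARMono}, and your restriction of Eq.~\eqref{Eq:SimpleRep} to $\Ff^{(-)}_N$ together with the identity $a_J(\chi_J)^\ast\, n_\Gamma\, a_{J'}(\chi_{J'}) = a_{J\cup\Gamma}(\chi_J\vee\chi_\Gamma)^\ast\, a_{J'\cup\Gamma}(\chi_{J'}\vee\chi_\Gamma)$ (and the vanishing $a_x^\ast n_x = 0 = n_x a_x$ for the non-disjoint $\Gamma$) is precisely the intended bookkeeping. No gaps.
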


\begin{remark}{\rm The above expression can be put in its symmetric presentation~\eqref{Eq:BOp1}, which we omitted to write out because it is not particularly illuminating. Note that the sums in \eqref{Eq:SimpleRep} and \eqref{W2} contain an infinite number of terms, with coefficients that do not decay to zero. As such, the Fock representations of the monomials are, in general, not by compact operators. This is a major difference between the CAR algebra and the algebra of local observables studied in section~\ref{Sec:SingleFermion}. 
}$\Diamond$
\end{remark}

 \begin{figure}[t]
\center
\includegraphics[width=0.8\textwidth]{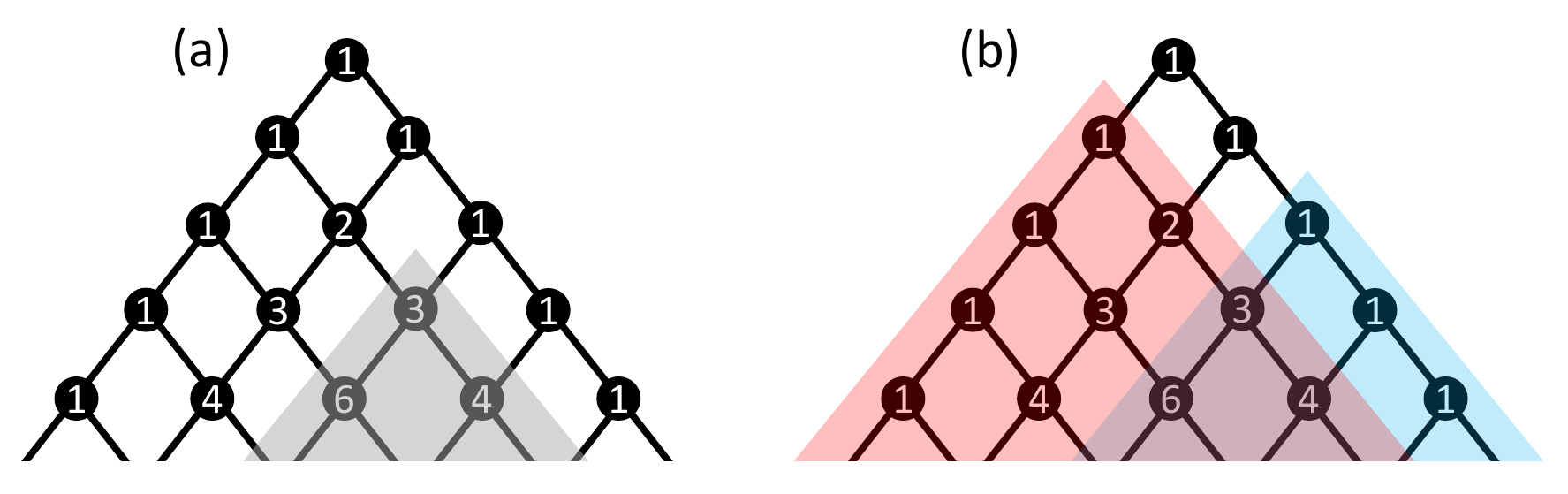}\\
  \caption{\small (a) The Bratteli diagram of the ${\rm GICAR}$ is the Pascal triangle. Its directed and hereditary subsets are triangles like the shaded one, corresponding to the ideal $_2{\rm GI}_1$. (b) The triangles shaded in red and blue corresponds to the primitive ideals $_0{\rm GI}_1$ and $_2{\rm GI}_0$, respectively. Their intersection give $_2{\rm GI}_1$, as it is apparent from the figure.}
 \label{Fig:Bratteli}
\end{figure}

\subsection{${\rm GICAR}$ is a solvable $C^\ast$-algebra}
\label{Sec:SolvableCAR} 

We start with a brief review of the work by Bratteli \cite{BratteliTAMS1972} on the global algebraic structure of the ${\rm GICAR}$ $C^{*}$-algebra. The latter was completely solved from the Bratteli diagram, which for ${\rm GICAR}$ is the Pascal triangle $\Pp$, reproduced in Fig.~\ref{Fig:Bratteli}. We recall that the double-sided ideals of ${\rm GICAR}$ are in one-to-one correspondence with the directed hereditary subsets of $\Pp$ \cite[Th.~III.4.2]{DavidsonBook} and, for the Pascal triangle, these sub-sets are the triangles described in  Fig.~\ref{Fig:Bratteli}. From this, the lattice of ideals can be easily derived: The ideal immediately below two ideals is given by the intersection of their corresponding triangles, while the ideal immediately above is given by the smallest triangle that covers both initial triangles. The particular triangles with the tip on the very left edge of $\Pp$, such as the one highlighted in red in Fig.~\ref{Fig:Bratteli}(b), form a tower of primitive ideals ${}_0{\rm GI}_N$, $N \in \NM$, associated with the Fock representations $\pi^{N}_{\eta}$. For orientation, $_0{\rm GI}_0$ coincides with ${\rm GICAR}$ itself. The triangles with the tip on the very right edge of $\Pp$, such as the one highlighted in blue in Fig.~\ref{Fig:Bratteli}(b), form another tower of primitive ideals $_N{\rm GI}_0$, $N \in \NM$, associated with the anti-Fock representations. According to \cite[Prop.~5.6]{BratteliTAMS1972}, ${}_N{\rm GI}_0$ and ${}_0{\rm GI}_N$ are the only primitive ideals of the ${\rm GICAR}$ algebra. Furthermore, any other ideal is the intersection of two such primitive ideals, ${}_M {\rm GI}_N = {}_M{\rm GI}_0 \cap  {} _0{\rm GI}_N$, as exemplified in Fig.~\ref{Fig:Bratteli}(b).

Our focus will be exclusively on the filtration supplied by the tower of ideals ${}_0{\rm GI}_N$, which will be denoted by ${\rm GI}_N$ from now on. Using Bratteli's diagram again, one can immediately see that ${\rm GI}_N/{\rm GI}_{N+1}$, whose diagram is given by the subtraction of the corresponding triangles \cite[Th.~III.4.4]{DavidsonBook}, is isomorphic to the algebra of compact operators \cite[Prop.~5.6]{BratteliTAMS1972}. This tells us that ${\rm GICAR}$ is a solvable $C^\ast$-algebra in the sense of \cite{DyninPNAS1978} and this particular algebraic structure will play an essential role in our program. In the rest of the section, we supply a more detailed account of the statements made above and supply alternative proofs that rely mostly on the commutation relations, hence accessible to a readership un-familiar with Bratteli diagrams.

\begin{proposition}[\cite{BratteliTAMS1972}]\label{Pro:GICARIdeals} The linear sub-spaces ${\rm GI}_N(\Ll) \subset {\rm GICAR}(\Ll)$ spanned by the elements
\begin{equation}\label{Eq:AJ1}
A = \sum_{n \geq N} \tfrac{1}{n!} \sum_{J,J' \in \Kk_n(\Ll)} \sum_{\chi_J,\chi_J'}  c_{J,J'}\big(\chi_J,\chi_{J'}\big ) a_J(\chi_J)^\ast a_{J'}(\chi_{J'}),
\end{equation}
are two-sided ideals of ${\rm GICAR}(\Ll)$.
\end{proposition}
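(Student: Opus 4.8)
The plan is to show that $\mathrm{GI}_N(\Ll)$ is closed under multiplication by an arbitrary element of $\mathrm{GICAR}(\Ll)$ on both sides, and that it is a closed linear subspace. Since the displayed spanning set consists of elements whose symmetric presentation involves only monomials $a_J(\chi_J)^\ast a_{J'}(\chi_{J'})$ with $|J|=|J'|=n\geq N$, it suffices to verify that for any such monomial $M$ with $n\geq N$ and any generator-pair monomial $a_K(\chi_K)^\ast a_{K'}(\chi_{K'})$ with $|K|=|K'|=m$ (an arbitrary element of $\mathrm{GICAR}$ being a norm-convergent sum of these), both products
\[
\big(a_K(\chi_K)^\ast a_{K'}(\chi_{K'})\big)\, M
\qquad\text{and}\qquad
M\,\big(a_K(\chi_K)^\ast a_{K'}(\chi_{K'})\big)
\]
again lie in $\mathrm{GI}_N(\Ll)$, i.e.\ can be rewritten in the symmetric form \eqref{Eq:AJ1} with all participating set-cardinalities at least $N$. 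Closedness in norm then follows because $\mathrm{GI}_N(\Ll)$ is by construction a norm-closed span (the coefficients satisfy the decay conditions of the symmetric presentation), and a two-sided $\ast$-closed subspace that is norm-closed is an ideal.

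\textbf{Key steps.} First I would reduce the product of two symmetric monomials to symmetric form using the algebraic identities already recorded, principally Proposition~\ref{Pro:U10} together with the commutation identity \eqref{Eq:Id3}. Concretely, to handle $a_{J}(\chi_J)^\ast a_{J'}(\chi_{J'})\cdot a_{K}(\chi_K)^\ast a_{K'}(\chi_{K'})$ one applies \eqref{Eq:Id3} to the middle pair $a_{J'}(\chi_{J'})\,a_{K}(\chi_{K})^\ast$, obtaining a sum over $L\subseteq J'\cap K$ of terms of the shape
\[
(-1)^{\bullet}\,(-1)^{|L|}\,a_{J}(\chi_J)^\ast\, a_{K\setminus J'}^\ast(\chi_{K\setminus J'})\, n_L\, a_{J'\setminus K}(\chi_{J'\setminus K})\, a_{K'}(\chi_{K'}),
\]
then uses \eqref{Eq:Id1} to merge the two creation blocks into $a_{J\cup(K\setminus J')}^\ast$ (the sets are disjoint because $J\cap J'=\emptyset$ after the cancellation and one tracks which indices survive), similarly merges the two annihilation blocks into $a_{(J'\setminus K)\cup K'}$, and finally pushes $n_L$ to the far left or right with Proposition~\ref{Pro:U10}, expanding $n_L=\prod_{x\in L}a_x^\ast a_x$ and absorbing each factor. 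The crucial bookkeeping observation is that in every surviving term the cardinality of the new creation set equals that of the new annihilation set (gauge invariance is preserved, as it must be since $\mathrm{GICAR}$ is a subalgebra) and, more to the point, \emph{that common cardinality is at least $|J|=n\geq N$}: merging the $n_L$'s only \emph{adds} indices, and the cancellation in \eqref{Eq:Id3} removes the same indices from the creation and annihilation sides, so the minimal set-size appearing can never drop below $n$. The same computation performed on the other side handles $M\cdot a_K^\ast a_{K'}$. Summing over all monomials in the symmetric presentation of the arbitrary $\mathrm{GICAR}$ element, and invoking norm-convergence of that presentation, gives the result. The $\ast$-closedness of $\mathrm{GI}_N(\Ll)$ is immediate from \eqref{Eq:Mono3} and bi-equivariance of the coefficients, so in fact these are $\ast$-ideals, though only the two-sided ideal property is asserted.

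\textbf{Main obstacle.} The genuinely delicate point is the sign and index bookkeeping in the reduction to symmetric form: the sign factors $(-1)^\bullet$ in \eqref{Eq:Id3} depend on the chosen orderings, and one must confirm that after re-symmetrizing (summing over all $\chi$'s with the combinatorial prefactors of \eqref{Eq:StandardGI}) the resulting coefficients are genuinely bi-equivariant and that the prefactors match. This is exactly the mechanism illustrated in Proposition~\ref{Pro:FProd}, and I expect the cleanest route is to pass to the Fock picture: by Proposition~\ref{Pro:GICARRep}(i), $\mathrm{GI}_N(\Ll)$ is precisely the kernel of $\bigoplus_{M<N}\pi_\eta^M$ restricted to $\mathrm{GICAR}(\Ll)$ (a monomial $a_J^\ast a_{J'}$ with $|J|=|J'|=n$ is killed by $\pi_\eta^M$ exactly when $n>M$), and a kernel of a $\ast$-representation is automatically a closed two-sided ideal. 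This sidesteps all the combinatorics. The only thing that then needs checking is that the linear span described in \eqref{Eq:AJ1} really coincides with $\bigcap_{M<N}\ker\pi_\eta^M$, which is a routine consequence of Proposition~\ref{Pro:GICARRep} and the faithfulness of $\bigoplus_M\pi_\eta^M$ on $\mathrm{GICAR}(\Ll)$.
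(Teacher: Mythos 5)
Your primary argument is essentially the paper's proof: the paper also multiplies two gauge-invariant monomials, applies \eqref{Eq:Id3} to the middle pair $a_{J'_1}(\chi_{J'_1})a^\ast_{J_2}(\chi_{J_2})$, and observes that the symmetric presentation of the product only involves sets $J,J'$ with $\max\{|J_1|,|J_2|\}\leq|J|=|J'|\leq|J_1|+|J_2|$, so the minimal cardinality never drops below $N$. Your cardinality bookkeeping is correct; the only imprecision is the parenthetical ``the sets are disjoint because $J\cap J'=\emptyset$ after the cancellation'' --- $J$ and $J'$ need not be disjoint in a symmetric monomial (e.g.\ $n_J$), and the relevant point is rather that terms with $J\cap(K\setminus J')\neq\emptyset$ simply vanish, while the $n_L$ factor contributes equally to both the creation and annihilation sets after applying Proposition~\ref{Pro:U10}. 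Your proposed ``cleaner'' alternative via the Fock picture is genuinely different from what the paper does here and is not circular (Proposition~\ref{Pro:GICARRep} is established before this point), but it is less of a free lunch than you suggest: identifying the span \eqref{Eq:AJ1} with $\bigcap_{M<N}\ker\pi^M_\eta$ requires the nontrivial inclusion $\bigcap_{M<N}\ker\pi^M_\eta\subseteq{\rm GI}_N(\Ll)$, which is essentially the content of the paper's \emph{next} proposition --- and the paper's proof of that proposition in turn leans on the conclusion of the present one. You would therefore have to prove the reverse inclusion independently, e.g.\ by an induction on $M$ extracting the coefficients $c_{J,J'}$ with $|J|=M$ from matrix elements of $\pi^M_\eta(A)$ and using uniqueness of the symmetric presentation; this is doable but relocates, rather than sidesteps, the combinatorics.
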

\proof Focusing on monomials from ${\rm GICAR}$, we have from Eq.~\eqref{Eq:Id3} that
\begin{equation}\label{Eq:Prod}
\begin{aligned}
 & a^\ast_{J_1}(\chi_{J_1}) a_{J'_1}(\chi_{J'_1})   a^\ast_{J_2}(\chi_{J_2}) a_{J'_2}(\chi_{J'_2}) = (-1)^\sigma\sum_{K \subseteq J'_1 \cap J_2} (-1)^{|K|}  \\
& \qquad \qquad  a^\ast_{J_1}(\chi_{J_1}) \big (  a^\ast_{J_2 \setminus J'_1} \,n_K \, a_{J'_1 \setminus J_2} \big ) a_{J'_2}(\chi_{J'_2}).
\end{aligned}
\end{equation}
The above product is zero unless $J_1 \cap (J_2 \setminus J'_1) = \emptyset$, $J'_2 \cap (J'_1 \setminus J_2) = \emptyset$ and $|J_1 | + |J_2 \setminus J'_1|=|J'_2| + |J'_1 \setminus J_2|$.  It is then clear that the symmetric presentation of the product~\eqref{Eq:Prod} involves only terms with $J$ and $J'$ satisfying the constraints 
\begin{equation*}
\max \{ |J_1|,|J_2|\} \leq |J|=|J'| \leq |J_1|+|J_2|.
\end{equation*}
The statement follows.\qed

\begin{proposition}[\cite{BratteliTAMS1972}]We have ${\rm GI}_{N+1}(\Ll) \simeq {\rm Ker} \, \pi_\eta^N$, for all $N \in \NM$.
\end{proposition}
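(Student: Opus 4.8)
The plan is to identify the kernel of the $N$-fermion Fock representation $\pi_\eta^N$ on the $\mathrm{GICAR}$ algebra with the ideal $\mathrm{GI}_{N+1}(\Ll)$ by a two-sided containment argument, using the explicit description of $\pi_\eta^N$ on monomials from Proposition~\ref{Pro:GICARRep} and the symmetric presentation of $\mathrm{GICAR}$ elements in Eq.~\eqref{Eq:StandardGI}. First I would establish the inclusion $\mathrm{GI}_{N+1}(\Ll) \subseteq \Ker\,\pi_\eta^N$. A generic element of $\mathrm{GI}_{N+1}(\Ll)$ is, by Proposition~\ref{Pro:GICARIdeals}, a norm-convergent sum of monomials $a_J(\chi_J)^\ast a_{J'}(\chi_{J'})$ with $|J|=|J'|=n\geq N+1$; part (i) of Proposition~\ref{Pro:GICARRep} says each such monomial with $n>N$ is sent to zero by $\pi_\eta^N$, and since $\pi_\eta^N$ is continuous the whole sum maps to zero.

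Next I would prove the reverse inclusion $\Ker\,\pi_\eta^N \subseteq \mathrm{GI}_{N+1}(\Ll)$. Take $A \in \mathrm{GICAR}(\Ll)$ in its symmetric presentation~\eqref{Eq:StandardGI}, and split it as $A = A_{\leq N} + A_{>N}$, where $A_{\leq N}$ collects the terms with block size $n\leq N$ and $A_{>N}$ the terms with $n\geq N+1$. By the first inclusion, $\pi_\eta^N(A_{>N})=0$, so $\pi_\eta^N(A)=\pi_\eta^N(A_{\leq N})$. Using parts (ii) and (iii) of Proposition~\ref{Pro:GICARRep} — which expand $\pi_\eta^N\big(a_J(\chi_J)^\ast a_{J'}(\chi_{J'})\big)$ for $n\leq N$ into a sum of rank-one operators $|J\cup\Gamma,\chi_J\vee\chi_\Gamma\rangle\langle J'\cup\Gamma,\chi_{J'}\vee\chi_\Gamma|$ over $\Gamma\in\Kk_{N-n}(\Ll)$ disjoint from $J\cup J'$ — I would read off the matrix elements $\langle U,\chi_U|\pi_\eta^N(A)|U',\chi_{U'}\rangle$ for $|U|=|U'|=N$ as explicit sums of the coefficients $c_{J,J'}$ (with $J\subseteq U$, $J'\subseteq U'$, $U\setminus J = U'\setminus J'$). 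The hypothesis $\pi_\eta^N(A)=0$ forces all these matrix-element combinations to vanish. An inductive argument on $n$ (starting from $n=N$, where by (ii) the matrix element is exactly $c_{U,U'}(\chi_U,\chi_{U'})$, hence all top-block coefficients vanish, then descending: once the coefficients with block size $>n$ are known to vanish, the vanishing of a suitable matrix element isolates $c_{J,J'}$ with $|J|=n$) shows that every coefficient $c_{J,J'}$ with $|J|=|J'|\leq N$ is zero. Hence $A_{\leq N}=0$ in $\mathrm{GICAR}(\Ll)$ and $A = A_{>N}\in\mathrm{GI}_{N+1}(\Ll)$.

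Finally I would note that $\Ker\,\pi_\eta^N$ is automatically a closed two-sided ideal, and that the identification is compatible with the claimed filtration structure, so this both reproves and refines Bratteli's result. The main obstacle I anticipate is the descending induction that disentangles the coefficients: for $n<N$ the matrix element $\langle U,\chi_U|\pi_\eta^N(A)|U',\chi_{U'}\rangle$ mixes contributions from all blocks $J\subseteq U$ of size $\leq\min(|U|,\ldots)$, weighted by signs $(-1)^{\cdots}$ coming from the $\vee$-orderings in Proposition~\ref{Pro:GICARRep}, so care is needed to choose, at each stage of the induction, $U$ and $U'$ (and their orderings) that cleanly pick out the block-$n$ coefficient once the higher blocks are already known to vanish — this is bookkeeping with the sign conventions of Eqs.~\eqref{Eq:ScalarProd} and \eqref{Eq:SimpleRep} rather than a conceptual difficulty. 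An alternative, cleaner route that avoids the explicit sign chase is to invoke irreducibility of $\pi_\eta^N$ together with Bratteli's ideal-lattice description (Fig.~\ref{Fig:Bratteli}): $\Ker\,\pi_\eta^N$ is a primitive ideal whose Bratteli sub-diagram is the Pascal sub-triangle complementary to the first $N+1$ rows, which is visibly ${}_0\mathrm{GI}_{N+1}=\mathrm{GI}_{N+1}(\Ll)$; I would present the elementary computational proof as the main argument and mention this diagrammatic one as a remark.
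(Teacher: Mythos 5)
Your first inclusion, ${\rm GI}_{N+1}(\Ll)\subseteq\Ker\,\pi_\eta^N$, is fine and matches what the paper takes for granted from Proposition~\ref{Pro:GICARRep}(i). The gap is in the reverse inclusion, and it is not mere bookkeeping. The base case of your descending induction is false: for $|U|=|U'|=N$ with $U\cap U'\neq\emptyset$, Proposition~\ref{Pro:GICARRep}(iii) shows that every pair $(J,J')=(U\setminus\Delta,\,U'\setminus\Delta)$ with $\emptyset\neq\Delta\subseteq U\cap U'$ also contributes to $\langle U,\chi_U|\pi_\eta^N(A)|U',\chi_{U'}\rangle$, so this matrix element equals $\sum_{\Delta\subseteq U\cap U'}\pm\,c_{U\setminus\Delta,U'\setminus\Delta}$ rather than $\pm\,c_{U,U'}$. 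Consequently the level-$N$ matrix elements mix \emph{all} block sizes $n\le N$, with the lower blocks entering through the larger sets, and the contamination at each stage of a descending induction comes from coefficients of block size \emph{smaller} than $n$ --- precisely the ones not yet known to vanish. Worse, the linear system ``all level-$N$ matrix elements vanish'' is genuinely underdetermined on arbitrary bi-equivariant coefficient families: e.g.\ with $|J\setminus J'|=N-1$ one can satisfy every equation by setting $c_{J\cup\{p\},J'\cup\{p\}}=\mp c_{J,J'}$ for all $p$, with $c_{J,J'}\neq 0$. So no purely formal sign chase can close the argument; one must use that $A$ is a finite sum of monomials (or that its coefficients decay), which your proposal never invokes.

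The repair goes in the opposite direction and is exactly the paper's device: reduce to $A$ a finite sum of monomials, let $n_{\min}$ be the smallest block size with a nonzero coefficient, and choose $\Gamma$ with $|\Gamma|=N-n_{\min}$ disjoint from \emph{every} set appearing in the presentation of $A$. Then $n_\Gamma A$, rewritten via Proposition~\ref{Pro:U10} and Eq.~\eqref{Eq:Id1}, lies in ${\rm GI}_N(\Ll)\setminus{\rm GI}_{N+1}(\Ll)$, its block-$N$ part is $\sum\pm\,c_{J,J'}\,a^\ast_{J\cup\Gamma}a_{J'\cup\Gamma}$ over the minimal blocks, and Proposition~\ref{Pro:GICARRep}(ii) makes $\pi_\eta^N(n_\Gamma A)=\pi_\eta^N(n_\Gamma)\pi_\eta^N(A)$ visibly nonzero, whence $\pi_\eta^N(A)\neq 0$. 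Equivalently, in your matrix-element language the correct induction is \emph{ascending} on block size, testing against $(J\cup\Gamma,J'\cup\Gamma)$ with $\Gamma$ chosen away from the support of $A$, so that only $c_{J\setminus\Delta_0,J'\setminus\Delta_0}$ with $\Delta_0\subseteq J\cap J'$ (strictly smaller blocks, already killed) survive alongside $c_{J,J'}$. Your closing remark invoking the Bratteli diagram is a legitimate pointer to Bratteli's original argument, but as written it is a citation, not a proof.
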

\proof  Assume that $A \in {\rm GICAR}(\Ll)$ but $A \notin {\rm GI}_{N+1}(\Ll)$ and that $A$ is a finite sum of monomials. In this situation, we can always find a set $\Gamma \in \Kk(\Ll)$, $|\Gamma| \leq N$, such that $\Gamma \cap (J \cup J')=\emptyset$, for all pairs $(J,J')$ appearing in the standard presentation of $A$, and $n_\Gamma A \in {\rm GI}_{N}(\Ll)$ but $n_\Gamma A \notin {\rm GI}_{N+1}(\Ll)$. Then the product $n_\Gamma A$ can be brought to its symmetric presentation using Proposition~\ref{Pro:U10} and, given our previous conclusion, necessarily
\begin{equation*}
0 \neq \pi_\eta^N(n_\Gamma A) =  \pi_\eta^N(n_\Gamma)  \pi_\eta^N(A),
\end{equation*}
hence such element $A$ cannot belong to ${\rm Ker}\, \pi_\eta^N$.\qed

\begin{remark}{\rm According to the above result, the ideals identified in Proposition~\ref{Pro:GICARIdeals} are the primitive ideals corresponding to the representations $\pi_\eta^N$.
}$\Diamond$
\end{remark}

\begin{proposition}[\cite{BratteliTAMS1972}]\label{Pro:CARQuote} We have
\begin{equation}\label{Eq:QIso}
{\rm GI}_{N}(\Ll) / {\rm GI}_{N+1}(\Ll) \simeq \KM\big (\Ff^{(-)}_N \big ),
\end{equation}
with the isomorphism supplied by the descent of $\pi_\eta^N$ onto the quotient algebra.
\end{proposition}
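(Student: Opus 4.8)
The plan is to show that the descended map $\bar\pi_\eta^N : {\rm GI}_N(\Ll)/{\rm GI}_{N+1}(\Ll) \to \KM\big(\Ff^{(-)}_N\big)$ is a well-defined $\ast$-isomorphism. That it is well-defined and injective follows immediately from the two preceding propositions: ${\rm GI}_{N+1}(\Ll) = {\rm Ker}\,\pi_\eta^N$ by the previous result, and ${\rm GI}_N(\Ll)$ is an ideal containing ${\rm GI}_{N+1}(\Ll)$, so $\pi_\eta^N$ restricted to ${\rm GI}_N(\Ll)$ factors through an injective $\ast$-homomorphism on the quotient. Since a $\ast$-homomorphism of $C^\ast$-algebras is automatically continuous and an injective $\ast$-homomorphism is isometric, the only remaining point is surjectivity onto $\KM\big(\Ff^{(-)}_N\big)$.

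First I would pin down the image of ${\rm GI}_N(\Ll)$ under $\pi_\eta^N$. By Proposition~\ref{Pro:GICARRep}(i), any monomial $a_J(\chi_J)^\ast a_{J'}(\chi_{J'})$ with $|J|=|J'|=n>N$ is sent to zero, so $\pi_\eta^N$ kills all of ${\rm GI}_{N+1}(\Ll)$ and on ${\rm GI}_N(\Ll)$ only the ``$n=N$ layer'' survives modulo ${\rm GI}_{N+1}(\Ll)$. For those, Proposition~\ref{Pro:GICARRep}(ii) gives the clean formula
\begin{equation*}
\pi_\eta^N\big(a_J(\chi_J)^\ast a_{J'}(\chi_{J'})\big) = |J,\chi_J\rangle\langle J',\chi_{J'}|, \quad |J|=|J'|=N.
\end{equation*}
Thus the image contains every rank-one operator $|J,\chi_J\rangle\langle J',\chi_{J'}|$ between frame vectors of $\Ff^{(-)}_N$. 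Next I would invoke Proposition~\ref{Pro:Basis}: the vectors $|U,\chi_U\rangle$ with $|U|=N$ span $\Ff^{(-)}_N$ (indeed, fixing one ordering per $U$ gives an orthonormal basis by Eq.~\eqref{Eq:ScalarProd}). Hence the finite linear span of these rank-one operators is exactly the algebra of finite-rank operators on $\Ff^{(-)}_N$, which is dense in $\KM\big(\Ff^{(-)}_N\big)$. Since the image of a $C^\ast$-algebra under a $\ast$-homomorphism is closed, and here the image is a $C^\ast$-subalgebra of $\KM\big(\Ff^{(-)}_N\big)$ containing a dense set, it equals $\KM\big(\Ff^{(-)}_N\big)$. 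This establishes surjectivity and completes the proof.

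The only genuinely delicate point — and the one I would be most careful about — is the interplay between the infinite sums defining a general element of ${\rm GI}_N(\Ll)$ (per Eq.~\eqref{Eq:AJ1}) and the quotient: one must check that the descended representation of such an infinite norm-convergent sum still lands in the norm-closure of finite-rank operators, i.e.\ in the compacts, rather than in some larger operator algebra. This is where Proposition~\ref{Pro:GICARRep}(i) does the real work: because every $n>N$ term vanishes under $\pi_\eta^N$, only finitely many ``shapes'' of monomials (those with $|J|=|J'|=N$) contribute, and the required decay of the bi-equivariant coefficients $c_{J,J'}$ as the sets are pushed to infinity (noted after Eq.~\eqref{Eq:StandardP1}) forces the resulting operator to be a norm-limit of finite-rank operators on $\Ff^{(-)}_N$, hence compact. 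Everything else is formal $C^\ast$-algebra: injectivity implies isometry, continuity gives closed range, and a dense $\ast$-subalgebra inside $\KM\big(\Ff^{(-)}_N\big)$ forces equality.
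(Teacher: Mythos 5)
Your proof is correct, and it takes a genuinely different route from the paper's. The paper defines the map \eqref{Eq:Iso1} directly on the quotient, sending $[a^\ast_J(\chi_J)a_{J'}(\chi_{J'})]_N$ to $|J,\chi_J\rangle\langle J',\chi_{J'}|$, and does the real work by hand: it computes the product of two monomial classes in ${\rm GI}_N(\Ll)/{\rm GI}_{N+1}(\Ll)$ using the commutation relations (as in Proposition~\ref{Pro:GICARIdeals}) and checks that this reproduces the matrix-unit multiplication and the coefficient convolution of Proposition~\ref{Pro:FProd}; only at the end does it observe, via Proposition~\ref{Pro:GICARRep}(ii), that the map is the descent of $\pi_\eta^N$. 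You instead \emph{start} from the descent of $\pi_\eta^N$, so multiplicativity is free, and you replace the hand computation by the standard $C^\ast$-package: ${\rm GI}_{N+1}(\Ll)=\Ker\,\pi_\eta^N$ gives injectivity, an injective $\ast$-homomorphism is isometric with closed range, and the matrix units $|J,\chi_J\rangle\langle J',\chi_{J'}|$ in the image span a norm-dense subalgebra of $\KM\big(\Ff^{(-)}_N\big)$, forcing surjectivity. Your route buys brevity and isolates where each ingredient is used — the kernel identification for injectivity, Proposition~\ref{Pro:GICARRep}(ii) for surjectivity, and item (i) together with the norm-density of finite sums of monomials in ${\rm GI}_N(\Ll)$ for the containment of the image in the compacts, a point you rightly single out as the delicate one. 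The paper's route buys an explicit description of the quotient's multiplication in terms of coefficient convolution, which is reused elsewhere (e.g.\ in Proposition~\ref{Pro:TwoReps}). One cosmetic correction: the finite linear span of the operators $|J,\chi_J\rangle\langle J',\chi_{J'}|$ is not \emph{exactly} the algebra of finite-rank operators on $\Ff^{(-)}_N$ — a general rank-one operator $|\psi\rangle\langle\phi|$ need not be a finite combination of basis matrix units — but that span is norm-dense in $\KM\big(\Ff^{(-)}_N\big)$, which is all your argument requires.
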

\proof Let $[\cdot]_N$ denote the classes in the quotient space. Obviously, the quotient space is spanned by $\big [a^\ast_J(\chi_J) a_{J'}(\chi_{J'})\big ]_N$, $J,J' \in \Kk_N(\Ll)$, and the class of an element $A \in {\rm GI}_{N}(\Ll)$, presented as in Eq~\eqref{Eq:AJ1}, can be always represented as
$$
[A]_N = \tfrac{1}{N!} \sum_{J,J' \in \Kk_N(\Ll)} \sum_{\chi_J,\chi_J'}  c_{J,J'}\big(\chi_J,\chi_{J'}\big ) \big [a_J(\chi_J)^\ast a_{J'}(\chi_{J'})\big ]_N.
$$
We claim that the map
\begin{equation}\label{Eq:Iso1}
\begin{aligned}
 \tfrac{1}{N!}\sum_{J,J'\in \Kk_N(\Ll)} \sum_{\chi_J,\chi_{J'}} & c_{J,J'}(\chi_J,\chi_{J'}) [a^\ast_J(\chi_J) a_{J'}(\chi_{J'})]_N \\
& \quad \mapsto \tfrac{1}{N!}\sum_{J,J'\in \Kk_N(\Ll)}\sum_{\chi_J,\chi_{J'}} c_{J,J'} (\chi_J,\chi_{J'}) \, |J,\chi_J\rangle \langle J',\chi_{J'}|
\end{aligned}
\end{equation}
supplies the isomorphism in Eq.~\eqref{Eq:QIso}. Indeed, for two monomials, 
$$
\begin{aligned}
 \big [a^\ast_J(\chi_J) a_{J'}(\chi_{J'})\big ]_N & \, [ a^\ast_{\tilde J}(\chi_{\tilde J}) a_{\tilde J'}(\chi_{\tilde J'})\big ]_N \\
& \qquad \qquad = (-1)^{\chi_{J'}^{-1} \circ \chi_{\tilde J}} \delta_{J',\tilde J} [a^\ast_J(\chi_J) a_{\tilde J'}(\chi_{\tilde J'})]_N,
\end{aligned}
$$
which follows from the same type of calculations as in the proof of Proposition~\ref{Pro:GICARIdeals}. Using the same arguments as in the proof of Proposition~\ref{Pro:FProd}, we conclude that
$$
[A]_N \, [\tilde A]_N = \sum_{J,J'\in \Kk_N(\Ll)} \tfrac{1}{N!} \sum_{\chi_J,\chi_{J'}} c'_{J,J'}(\chi_J,\chi_{J'}) [a^\ast_J(\chi_J) a_{J'}(\chi_{J'}]_N,
$$
where
$$
\begin{aligned}
c'_{J,J'}(\chi_J,\chi_{J'}) =  \sum_{K \in \Kk_N(\Ll)}\sum_{\chi_K} c_{J,K}(\chi_J,\chi_K) \tilde c_{K,J'}(\chi_K,\chi_{J'}).
\end{aligned}
$$
 For the second statement, we recall that $\pi_\eta^N$ sends the entire ${\rm GI}_{N+1}(\Ll)$ to zero. Hence, $\pi_\eta^N$ descends on the quotient space and Proposition~\ref{Pro:GICARRep}(ii) shows that this descent coincides with the map~\eqref{Eq:Iso1}.\qed
 
\section{Interacting Fermions: Dynamics}
\label{Sec:FermionsDynamics}

This section formalizes the dynamics of the gauge invariant local physical observables. Our main goal here is to identify a large enough set of derivations that can serve as the core of an algebra associated with the dynamics of the fermions. The starting point is the class of derivations with finite interaction range, which are defined over and return values in a fixed dense subalgebra $\Dd(\Ll) \subset {\rm CAR}(\Ll)$. These derivations can be composed with each other, hence they generate a subalgebra of ${\rm End}(\Dd(\Ll))$. Subsequently, we explore the dependence of the derivations on the pattern $\Ll$ and the constraints imposed by the (assumed) Galilean invariance. For guidance, we introduce a large class of Hamiltonians satisfying these constraints, inspired from the physics literature and generated from many-body potentials. In the process, we demonstrate that the lattice deformations and the fermion permutations cannot be separated. This leads us to the construction of the many-body covers of the space of Delone sets, which supply the natural and rightful domain for the Hamiltonian coefficients. It also enables us to give a precise formulation of a core $\ast$-algebra of derivations that can be actually mapped from real experiments. We show that this algebra admits representations by uniformly bounded operators on the Fock sectors with finite number of particles.

\subsection{Hamiltonians with finite interaction range}
\label{Sec:FIRHam}

We consider a strongly continuous dynamics of the local physical observables 
\begin{equation*}
\bm \alpha : \RM \to {\rm Aut}\big({\rm CAR}(\Ll)\big)
\end{equation*} 
and denote by $\delta_{\bm \alpha}$ the generator of this dynamics. In the majority of the studied physical systems, the domain of $\delta_{\bm \alpha}$ is
\begin{equation*}
{\rm Dom}(\delta_{\bm \alpha})=\Dd(\Ll) : = \mathop{\cup}_k {\rm CAR}(\Ll_k),
\end{equation*}
where $\{\Ll_k\}$ is the net of finite lattices used in section~\ref{SubSec:CARandGICAR} to define ${\rm CAR}(\Ll)$. In the above conditions, $\delta_{\bm \alpha}$ is an inner-limit derivation \cite[p.~26]{BratteliBook1} and, throughout, we will restrict to these cases.  We recall that we are seeking a core for the algebra of generators, which subsequently can be closed in many different ways (see section~\ref{Sec:Filtration}).  It is natural to start from the inner-limit derivations because the laboratory reality is that a finite team of experimenters can only map this type of generators in a finite amount of time.

\begin{remark}{\rm Note that $\Dd(\Ll)$ is not just a linear space but a dense subalgebra of ${\rm CAR}(\Ll)$, closed under the $\ast$-operation. Obviously, $\Dd(\Ll)$ is not the whole ${\rm CAR}(\Ll)$ and, in general, $\delta_{\bm \alpha}$'s are not uniformly bounded over this domain. This is another fundamental qualitative difference between interacting fermion systems and the systems studied in section~\ref{Sec:SingleFermion}, where the generators were bounded. 
}$\Diamond$
\end{remark}
 
Derivations are linear maps over $\Dd(\Ll)$ and a derivation that leaves $\Dd(\Ll)$ invariant is an element of the algebra ${\rm End}\big (\Dd(\Ll)\big )$ of linear maps over $\Dd(\Ll)$. A large class of such derivations is supplied by finite interaction range Hamiltonians:
 
\begin{definition}\label{Def:FRHam} A finite interaction range Hamiltonian is a formal sum
\begin{equation}\label{Eq:HOriginal0}
H_\Ll = \sum_{J,J' \in \Kk(\Ll)}\tfrac{1}{\sqrt{|J|!|J'|!}} \sum_{\chi_J,\chi_{J'}} h_{J,J'}^\Ll(\chi_J,\chi_{J''}) \, a^\ast_J(\chi_J) a_{J'}(\chi_{J'}),
\end{equation} 
where the coefficients $h^\Ll_{J,J'}$ are bi-equivariant, uniformly bounded, obey the constraints 
\begin{equation}\label{Eq:SA}
h^\Ll_{J,J'}(\chi_J,\chi_{J'}) = \overline{h^\Ll_{J',J}(\chi_{J'},\chi_J)},
\end{equation}
and they vanish whenever the diameter of $J \cup J'$ exceeds a fixed value ${\rm R}_{\rm i}$, called here the interaction range. Also, the sum in Eq.~\eqref{Eq:HOriginal} runs only over pairs of sub-sets with $|J \cup J'|$ even.
\end{definition}

\begin{remark}{\rm The expression~\eqref{Eq:HOriginal0} is quite involved because we need to keep track of the essential dependencies. Of course, if the lattice is fixed and a global orientation is chosen, then the notation can be simplified, but this is not at all the case here. Still, the notation will be simplified after we introduce the many-body covers of the space of Delone sets (see Remark~\ref{Re:Not1}).
}$\Diamond$
\end{remark}

\begin{remark}{\rm For reader's convenience, we recall that the diameter of a subset $\Delta$ of a metric space $(X,{\rm d})$ is the real number defined by
\begin{equation*}
d_\Delta := \sup \big\{{\rm d}(x,y), \, x,y \in \Delta \big \}.
\end{equation*} 
In Definition~\ref{Def:FRHam}, the metric space is $\RM^d$ with its Euclidean distance. 
}$\Diamond$
\end{remark}

\begin{remark}{\rm Note that in Definition~\ref{Def:FRHam}, while the diameter of $J \cup J'$ is forced to be finite, there are no constraints on how far the set $J \cup J'$ can be from the origin. As such, the the formal series~\eqref{Eq:HOriginal0} does not belong to ${\rm CAR}(\Ll)$, in general.
}$\Diamond$
\end{remark}

\begin{proposition}\label{Pro:Derivation} Let $\{\Ll_k\}$ be the tower of subsets used to define ${\rm CAR}(\Ll)$ in section~\ref{SubSec:CARandGICAR}, and let $H_{\Ll_k}$ be the truncation of Eq.~\eqref{Eq:HOriginal0} to $J,J'\in \Kk(\Ll_k)$. Then
\begin{equation}\label{Eq:AdDef}
{\rm ad}_{H_\Ll}(A) : =  \lim_{k \rightarrow \infty} \imath [A,H_{\Ll_k}] , \quad A \in \Dd(\Ll),
\end{equation}
is a derivation that leaves $\Dd(\Ll)$ invariant. Furthermore,
\begin{equation*}
{\rm ad}_{H_\Ll}(A)^\ast = {\rm ad}_{H_\Ll}(A^\ast), \quad \forall \ A \in \Dd(\Ll),
\end{equation*}
hence ${\rm ad}_{H_\Ll}$ is a $\ast$-derivation.
\end{proposition}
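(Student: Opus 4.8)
The plan is to show that the limit in \eqref{Eq:AdDef} exists and is independent of the exhausting sequence, then verify the derivation identity and the $\ast$-property by passing these identities through the limit. First I would observe that since $A \in \Dd(\Ll)$, there is some $m$ with $A \in {\rm CAR}(\Ll_m)$. The key point is a \emph{stabilization} phenomenon: because the coefficients $h^\Ll_{J,J'}$ vanish once $\mathrm{diam}(J\cup J') > {\rm R}_{\rm i}$, only the monomials $a^\ast_J(\chi_J) a_{J'}(\chi_{J'})$ with $J \cup J'$ contained in the ${\rm R}_{\rm i}$-neighborhood of $\Ll_m$ can fail to commute with $A$ — any monomial supported on a set disjoint from $\Ll_m$ commutes (or anti-commutes in a way that cancels) with $A$ in the commutator $[A, \cdot]$. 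Since $\Ll$ is Delone (hence uniformly discrete), the set of subsets $J\cup J'$ meeting $\Ll_m$ with bounded diameter is \emph{finite}. Therefore, for all $k$ large enough that $\Ll_k$ contains this ${\rm R}_{\rm i}$-neighborhood of $\Ll_m$, the commutator $\imath[A, H_{\Ll_k}]$ is literally constant in $k$. This makes the limit trivially convergent and manifestly independent of the choice of exhausting sequence $\{\Ll_k\}$; call the common value ${\rm ad}_{H_\Ll}(A)$.

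Next I would check that ${\rm ad}_{H_\Ll}(A) \in \Dd(\Ll)$. From the stabilized expression, ${\rm ad}_{H_\Ll}(A) = \imath[A, H_{\Ll_k}]$ for $k$ large, and $H_{\Ll_k} \in \Dd(\Ll)$ while $A \in \Dd(\Ll)$; since $\Dd(\Ll)$ is a subalgebra, the commutator lies in $\Dd(\Ll)$. For the Leibniz rule, given $A, A' \in \Dd(\Ll)$, pick $m$ with $A, A' \in {\rm CAR}(\Ll_m)$ and $k$ large enough to stabilize all three of ${\rm ad}_{H_\Ll}(A)$, ${\rm ad}_{H_\Ll}(A')$, ${\rm ad}_{H_\Ll}(AA')$ simultaneously (possible since all involve subsets of bounded diameter meeting the fixed finite set $\Ll_m$). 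Then the identity
\begin{equation*}
\imath[AA', H_{\Ll_k}] = \imath[A,H_{\Ll_k}]\,A' + A\,\imath[A',H_{\Ll_k}]
\end{equation*}
holds at the level of the finite algebra ${\rm CAR}(\Ll_k)$, and passing to the stabilized values gives ${\rm ad}_{H_\Ll}(AA') = {\rm ad}_{H_\Ll}(A)A' + A\,{\rm ad}_{H_\Ll}(A')$. Linearity is immediate. Hence ${\rm ad}_{H_\Ll}$ is a derivation of $\Dd(\Ll)$.

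For the $\ast$-property, I would use the self-adjointness constraint \eqref{Eq:SA} on the coefficients together with \eqref{Eq:Mono3}: the constraint \eqref{Eq:SA} is exactly what forces each truncation $H_{\Ll_k}$ to be a \emph{self-adjoint} element of the finite-dimensional $C^\ast$-algebra ${\rm CAR}(\Ll_k)$ (pairing the $(J,J')$ term with the $(J',J)$ term and using that the coefficients are complex-conjugate). Then for $A \in \Dd(\Ll)$ and $k$ large enough to stabilize both ${\rm ad}_{H_\Ll}(A)$ and ${\rm ad}_{H_\Ll}(A^\ast)$, we compute in ${\rm CAR}(\Ll_k)$:
\begin{equation*}
\big(\imath[A, H_{\Ll_k}]\big)^\ast = -\imath\,\big(A H_{\Ll_k} - H_{\Ll_k} A\big)^\ast = -\imath\,\big(H_{\Ll_k}^\ast A^\ast - A^\ast H_{\Ll_k}^\ast\big) = \imath[A^\ast, H_{\Ll_k}],
\end{equation*}
using $H_{\Ll_k}^\ast = H_{\Ll_k}$. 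Passing to stabilized values yields ${\rm ad}_{H_\Ll}(A)^\ast = {\rm ad}_{H_\Ll}(A^\ast)$.

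The only genuine obstacle is the very first step — establishing the stabilization, i.e.\ that for fixed $A$ the commutators $\imath[A,H_{\Ll_k}]$ are eventually constant in $k$. This requires being careful that (i) the \emph{finite interaction range} condition really does confine the non-commuting monomials to a neighborhood of the (finite) support of $A$, and (ii) the \emph{uniform discreteness} of the Delone set $\Ll$ makes the collection of relevant index pairs $(J,\chi_J;J',\chi_{J'})$ finite, so that the formal sum defining the commutator is actually a finite sum. Once stabilization is in hand, everything else is a routine transfer of finite-algebra identities across the (eventually constant) limit.
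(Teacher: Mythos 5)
Your proof is correct and follows essentially the same route as the paper's: stabilization of the truncated commutators $\imath[A,H_{\Ll_k}]$ via the finite interaction range together with uniform discreteness, then transfer of the Leibniz and $\ast$-identities from the finite algebras, using $H_{\Ll_k}^\ast = H_{\Ll_k}$. The one point to sharpen is your parenthetical about disjointly supported monomials: the paper's definition of a finite-range Hamiltonian requires $|J\cup J'|$ to be even, and it is precisely this evenness that makes $a^\ast_J(\chi_J)a_{J'}(\chi_{J'})$ genuinely commute with anything supported away from $J\cup J'$, rather than ``anti-commute in a way that cancels.''
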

\proof If $A \in \Dd(\Ll)$, then necessarily $A \in {\rm CAR}(\Ll_p)$ for some $p \in \NM$. Let $\Ll_k$ be the smallest  subset from the tower which includes all the pairs $(J,J')$ with $d_{J \cup J'}\leq {\rm R}_{\rm i}$ and $(J \cup J') \cap \Ll_p \neq \emptyset$. Since $|J \cup J'|$ is assumed to be even, we have $[a^\ast_K a_{K'},a^\ast_Ja_{J'}]=0$ for any pair $(K,K')$ with $(K \cup K') \cap (J \cup J') = \emptyset$. As a result, 
$$
[A, H_{\Ll_{k'}}] = [A, H_{\Ll_{k}}] \in {\rm CAR}(\Ll),
$$
for all $k' >k$, hence the limit in Eq.~\eqref{Eq:AdDef} exists and, in fact,
\begin{equation}\label{Eq:C1}
{\rm ad}_{H_\Ll}(A) = \imath [A, H_{\Ll_{k}}] \in {\rm CAR}(\Ll_k).
\end{equation}
The above shows that ${\rm ad}_{H_\Ll}$ is well defined over $\Dd(\Ll)$ and takes values in $\Dd(\Ll)$. Regarding the last statement, we have
\begin{equation*}
{\rm ad}_{H_\Ll}(A)^\ast = - \imath ( H_{\Ll_k}^\ast A^\ast - A^\ast H_{\Ll_k}^\ast).
\end{equation*}
With the stated assumptions on the Hamiltonian coefficients, $H_{\Ll_k}^\ast = H_{\Ll_k}$, hence,
\begin{equation*}
{\rm ad}_{H_\Ll}(A)^\ast = {\rm ad}_{H_{\Ll_k}}(A)^\ast = {\rm ad}_{H_{\Ll_k}}(A^\ast) = {\rm ad}_{H_\Ll}(A^\ast)
\end{equation*}
and the statement follows.\qed

\vspace{0.2cm}

\begin{definition}\label{Def:GIHam} A gauge invariant Hamiltonian, abbreviated as GI-Hamiltonian, is a finite-range Hamiltonian as in Definition~\ref{Def:FRHam} with the first sum constrained on sub-sets of equal cardinality. Hence, the formal expression of such a Hamiltonian can be organized as
\begin{equation}\label{Eq:HOriginal}
H_\Ll = \sum_{n \in \NM^\times} \tfrac{1}{n!} \sum_{J,J' \in \Kk_n(\Ll)}  \sum_{\chi_J,\chi_{J'}} h_{J,J'}^\Ll(\chi_J,\chi_{J'}) \, a^\ast_J(\chi_J) a_{J'}(\chi_{J'}).
\end{equation} 
\end{definition}

\begin{remark}{\rm The derivations ${\rm ad}_{H_\Ll}$ corresponding to GI-Hamiltonians commute with the gauge transformations on $\Dd(\Ll) \subset {\rm CAR}(\Ll)$. Our analysis will be restricted from now on to GI-Hamiltonians. Note that, in this case, $|J \cup J'|$ is automatically an even number.
}$\Diamond$
\end{remark}

We recall the unique faithful tracial state $\Tt$ of the ${\rm CAR}$ algebra. Since ${\rm ad}_{H_\Ll}$ are almost-inner, it follows automatically that $\Tt \circ {\rm ad}_{H_\Ll} =0$ on $\Dd(\Ll)$. Then, according to Corollary~1.5.6 in \cite{BratteliBook1}, the derivation ${\rm ad}_{H_\Ll}$ is closable. Furthemore, $\Dd(\Ll)$ is actually a set of analytic elements for ${\rm ad}_{H_\Ll}$, hence ${\rm ad}_{H_\Ll}$ is a pre-generator of a 1-parameter group $\bm \alpha_t$ of $\ast$-automorphisms. Clearly, $\Tt \circ \bm \alpha_t = \Tt$. The GI-Hamiltonians also have an intrinsic relation with the vacuum state:

\begin{proposition}\label{Pro:EtaAd} We have $\eta \circ {\rm ad}_{H_\Ll} =0$ on $\Dd(\Ll)$, for any $H_\Ll$ as in Definition~\ref{Def:GIHam}.
\end{proposition}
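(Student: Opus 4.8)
The plan is to show that the vacuum state $\eta$ annihilates ${\rm ad}_{H_\Ll}(A)$ for every $A \in \Dd(\Ll)$ by reducing to monomials. Since $\eta$ is linear and continuous and $\Dd(\Ll)$ is spanned by the monomials $a^\ast_J(\chi_J) a_{J'}(\chi_{J'})$, and since ${\rm ad}_{H_\Ll}$ is linear, it suffices to check that $\eta\big({\rm ad}_{H_\Ll}(a^\ast_J(\chi_J) a_{J'}(\chi_{J'}))\big) = 0$. By Proposition~\ref{Pro:Derivation}, for a fixed such monomial $A$ the derivation collapses to a finite commutator, ${\rm ad}_{H_\Ll}(A) = \imath[A, H_{\Ll_k}]$ for $k$ large enough, so the whole question lives inside ${\rm CAR}(\Ll_k)$ and involves only finitely many terms of $H_\Ll$. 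Hence it is enough to prove that $\eta([A,B]) = 0$ whenever $A$ and $B$ are gauge-invariant monomials of the form $a^\ast_J(\chi_J) a_{J'}(\chi_{J'})$ with $|J|=|J'|$.

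First I would establish the key identity: for a gauge-invariant monomial $B = a^\ast_K(\chi_K) a_{K'}(\chi_{K'})$ with $|K| = |K'| = n$ and any element $X \in {\rm CAR}(\Ll)$ in symmetric presentation~\eqref{Eq:StandardP1}, one has $\eta(XB) = \eta(BX)$. The cleanest route is to observe that $\eta(Y) = c_{\emptyset,\emptyset}(Y)$ picks out only the coefficient of the identity in the symmetric presentation of $Y$, and to compute this coefficient for both $XB$ and $BX$ using the reduction rule~\eqref{Eq:Id3}. Using Proposition~\ref{Pro:NullSpace}, the scalar $\eta(XB)$ only receives contributions from the part of $X$ of the form $a^\ast_L a_{L'}$ multiplied against $B$ in such a way that all creation and annihilation operators cancel; matching the constraint $|J|=|J'|$ on both $B$ and the relevant pieces of $X$, one finds that $\eta(XB)$ equals a sum over configurations weighted by the coefficients of $X$ and $B$, and the same combinatorial sum appears for $\eta(BX)$ because the trace-like pairing $\eta(a_J(\chi_J) a^\ast_{J'}(\chi_{J'})) = (-1)^{\chi_J^{-1}\circ\chi_{J'}}\delta_{J,J'}$ in~\eqref{Eq:EtaRel1} is symmetric under swapping the two monomials (up to the sign that cancels). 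Alternatively — and this is probably the slickest argument — one notes that $\eta$ restricted to ${\rm GICAR}(\Ll)$ is, by Propositions~\ref{Pro:CARMono} and~\ref{Pro:GICARRep}, nothing but the expectation in the vacuum vector $|\emptyset,\chi_\emptyset\rangle$ of the Fock representation $\pi_\eta$, i.e. $\eta(Y) = \langle \emptyset | \pi_\eta(Y) | \emptyset\rangle$, and $|\emptyset\rangle$ spans the $N=0$ sector $\Ff^{(-)}_0$, which every gauge-invariant operator preserves; so $\eta(YB) = \langle \emptyset|\pi_\eta(Y)\pi_\eta(B)|\emptyset\rangle$ and one only needs $\pi_\eta(B)|\emptyset\rangle \in \Ff^{(-)}_0$ and $\langle\emptyset|\pi_\eta(B) = (\pi_\eta(B^\ast)|\emptyset\rangle)^\ast \in (\Ff^{(-)}_0)^\ast$, which again follows because $B$ and $B^\ast$ are gauge-invariant. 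From $\pi_\eta(B)|\emptyset\rangle = \lambda_B |\emptyset\rangle$ and $\langle\emptyset|\pi_\eta(B) = \mu_B\langle\emptyset|$ one gets $\eta(YB) = \lambda_B \langle\emptyset|\pi_\eta(Y)|\emptyset\rangle = \lambda_B\eta(Y)$ and $\eta(BY) = \mu_B \eta(Y)$; taking $Y = 1$ shows $\lambda_B = \mu_B = \eta(B)$, hence $\eta(YB) = \eta(BY) = \eta(B)\eta(Y)$ whenever $B$ is a gauge-invariant monomial, which is more than enough.

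Granting this, I finish as follows. Write $H_{\Ll_k} = \sum_i h_i B_i$ with each $B_i$ a gauge-invariant monomial (a finite sum). For the fixed monomial $A \in \Dd(\Ll)$,
\begin{equation*}
\eta\big({\rm ad}_{H_\Ll}(A)\big) = \imath\,\eta\big([A, H_{\Ll_k}]\big) = \imath \sum_i h_i\big(\eta(A B_i) - \eta(B_i A)\big) = \imath \sum_i h_i\big(\eta(A)\eta(B_i) - \eta(B_i)\eta(A)\big) = 0,
\end{equation*}
using $\eta(AB_i) = \eta(B_i)\eta(A) = \eta(B_i A)$ from the previous paragraph. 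By linearity and continuity of $\eta$ and of ${\rm ad}_{H_\Ll}$ on $\Dd(\Ll)$, this extends from monomials to all of $\Dd(\Ll)$, proving $\eta \circ {\rm ad}_{H_\Ll} = 0$. The one point requiring a little care — and the likeliest source of a slip — is justifying that $\pi_\eta(B)$ maps the vacuum sector into itself and that $\langle\emptyset|\pi_\eta(B)$ is again proportional to $\langle\emptyset|$; both are instances of the sector-preservation in Proposition~\ref{Pro:GICARRep}, applied to $B$ and to $B^\ast$ respectively, so this is really just bookkeeping. If one prefers the purely combinatorial proof via~\eqref{Eq:Id3} and~\eqref{Eq:EtaRel1} instead, the main obstacle is organizing the sign factors from~\eqref{Eq:Id3} so that the two expansions of $\eta(AB_i)$ and $\eta(B_i A)$ are manifestly equal; the Fock-space argument circumvents this entirely and is the one I would present.
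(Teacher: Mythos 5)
Your proof is correct. The reduction to a finite commutator $\imath[A,H_{\Ll_k}]$ is exactly the paper's first step, but from there the two arguments diverge in packaging. The paper simply observes that $H_{\Ll_k}$ lies in the ideal ${\rm GI}_1(\Ll)$ of Proposition~\ref{Pro:GICARIdeals}, hence so does the commutator, and elements of ${\rm GI}_1(\Ll)$ have vanishing coefficient $c_{\emptyset,\emptyset}$, which is all that $\eta$ sees. You instead prove the trace-like identity $\eta(XB)=\eta(BX)=\eta(B)\eta(X)$ for gauge-invariant monomials $B$ and arbitrary $X$, via the Fock representation: $\pi_\eta(B)$ and $\pi_\eta(B^\ast)$ preserve the one-dimensional vacuum sector, so the vacuum is a joint eigenvector. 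These are two faces of the same fact --- since ${\rm GI}_1(\Ll)=\ker\pi_\eta^0$ and $\eta=\langle\emptyset|\pi_\eta(\cdot)|\emptyset\rangle$, ``the commutator lies in ${\rm GI}_1$'' is precisely ``the commutator annihilates the vacuum sector'' --- but your version isolates a stronger, reusable statement (that $\eta$ commutes past and is multiplicative against gauge-invariant elements), whereas the paper's is shorter because it leans on the ideal structure already established. Two cosmetic points. First, your opening reduction ``it is enough to prove $\eta([A,B])=0$ for $A$ and $B$ both gauge-invariant monomials'' restricts $A$ more than you justify at that stage; this is harmless because your key lemma is proved for arbitrary $X$ and that is what the final display actually uses, but the sentence should either be dropped or weakened to arbitrary monomials $A$. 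Second, the appeal to ``continuity of ${\rm ad}_{H_\Ll}$'' in the last extension step is unnecessary (and the derivation is not norm-continuous): every element of $\Dd(\Ll)$ is a \emph{finite} linear combination of monomials, so plain linearity suffices.
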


\proof We will use the notation from the proof of Proposition~\ref{Pro:Derivation}. If $A \in {\rm CAR}(\Ll_p)$ for some $p \in \NM$, then ${\rm ad}_{H_\Ll}(A) = \imath [A,H_{\Ll_k}]$ for some $k \geq p$. When evaluating $\eta$ on this commutator, the terms of the symmetric presentation of $A$ that are not gauge invariant can be ignored. We can then assume that $A$ is gauge invariant. Now, $H_{\Ll_k}$ belongs to the ideal ${\rm GI_1}(\Ll)$, hence $\imath [A,H_{\Ll_k}]$ also belongs to this ideal. As such, its coefficient $c_{\emptyset,\emptyset}$ is null and the statement follows.\qed

\vspace{0.2cm}

All the above are well established facts \cite{BratteliBook2} and the inner-limit derivations of the ${\rm CAR}$-algebra have been completely characterized in \cite{Jorgensen1991} using its embedding into the Cuntz algebra. Our interest, however, goes well beyond the individual derivations, as already stressed on several occasions.

\subsection{Galilean invariant theories}
\label{SubSec:GInvariance} As in section~\ref{Sec:GalInvHam1}, let us imagine an experimenter sitting at the origin of the physical space $\RM^d$ and studying a system of fermions over the lattice $\Ll$. From the dynamics of local observables, the experimenter maps the Hamiltonian $H_\Ll$, which is equivalent to the mapping of the equivariant coefficients $h_{J,J'}^\Ll$. The same experimenter then deforms the lattice, without changing the nature of the fermions or resonators, and maps again the Hamiltonian. After repeating this program for many Delone sets, the experimenter establishes a map 
\begin{equation}\label{Eq:Map0}
{\rm Del}_{(r,R)}(\RM^d) \ni \Ll \mapsto H_\Ll,
\end{equation} 
which is entirely determined by the nature of the fermions. The notation $H_\Ll$ can be interpreted as the evaluation of a global Hamiltonian $H$ at $\Ll$ and this is the point of view we will adopt from now on. Throughout, our working assumption is that $H_\Ll$'s in  Eq.~\ref{Eq:Map0} have finite interaction ranges.

The formalism faces two immediate challenges. The first one is how to properly define a continuity property for the maps in Eq.~\ref{Eq:Map0}. The second one is understanding the constraints imposed by the assumed Galilean invariance of the theory. The latter is investigated below, while the first challenge is addressed in the following subsections. For now, we will adopt the view that the map~\eqref{Eq:Map0} has been generated by the experiment. Now, during the lengthy experimental process we just described, two lattices $\Ll$ and $\Ll'$ may happen to enter the relation $\Ll' = \Ll-x$ for some $x \in \RM^d$. While the experimenter is pinned at the origin of the physical space at all times, just for this situation, we can imagine the experimenter and the lattice $\Ll'$ being rigidly shifted until the experimenter sits at position $x$. Then the experimenter is dealing with the same lattice $\Ll$ but from a difference location and, in the absence of background fields, Galilean invariance of the physical processes involved in the resonator couplings assures us that, up to a proper relabeling, the new experiments return the same coefficients. Therefore, the Hamiltonian coefficients must be subject to the following relations
\begin{equation}\label{Eq:Galilean0}
h_{J-x,J'-x}^{\Ll-x}\big (\mathfrak t_x \circ \chi_J, \mathfrak t_x \circ \chi_{J'} \big )= h_{J,J'}^\Ll \big (\chi_J,\chi_{J'}\big ),
\end{equation}
for all $\Ll \in {\rm Del}_{(r,R)}(\RM^d)$ and $J,J' \in \Kk(\Ll)$. These relations can be expressed more concisely, as already explained in section~\ref{Sec:Introduction} (see Eq.~\eqref{Eq:GInv}). Let us specify that Remark~\ref{Re:Galilean} applies here as well.

\begin{remark}{\rm Let us acknowledge that, among other things, Eq.~\eqref{Eq:Galilean0} implies
\begin{equation}\label{Eq:ReducedCoef}
h_{J,J'}^\Ll \big (\chi_J,\chi_{J'}\big ) = h_{J-x_1,J'-x_1}^{\Ll-x_1}\big (\mathfrak t_{x_1} \circ \chi_J, \mathfrak t_{x_1} \circ \chi_{J'} \big ), \quad x_1 =\chi_J(1).
\end{equation}
As one can see, the experimenter can archive the entire map $\Ll \mapsto H_\Ll$ by measuring just the coefficients $h_{J,J'}^{\Ll}$ with $J$ and $J'$ inside a ball of radius ${\rm R}_{\rm i}$ and centered at the origin of the physical space. In other words, by local experiments! Of course, the experimenter will need to sample many patterns $\Ll$ and it is at this point where the continuity of the coefficients w.r.t. $\Ll$ is essential. This is because it enables the experimentalist to extrapolate (aka connect the dots) the results generated by a finite number of observations.
}$\Diamond$
\end{remark}


So far, the coefficients of the Hamiltonians exist only in the tables generated by the experimenter. In the following, we describe an analytic method to generate Galilean invariant Hamiltonians that is often found in the physics literature. This class of Hamiltonians will serve as a stepping stone for our quest of the most general expression of global Hamiltonians displaying finite interaction range, Galilean invariance and continuity w.r.t. the underlying lattice. 

It is instructive to start with a simple example, which actually represents the most common many-body Hamiltonian found in the physics literature:

\begin{example}\label{Ex:Model1}{\rm The Hamiltonian of a system of fermions over a discrete lattice $\Ll$ and interacting pair-wise via a given potential $v: \RM^d \rightarrow \RM$ takes the form
\begin{equation}\label{Eq:HL5}
H_\Ll = \sum_{x,x' \in \Ll} w_{x,x'}(\Ll)\,  a_x^\ast a_{x'} + \sum_{x,x' \in \Ll} v(x'-x) \,  a_{x'}^\ast a_x^\ast a_{x'}  a_x,
\end{equation}
where $w$'s are as in section ~\ref{Sec:SingleFermion}. The Hamiltonian has a finite interaction range if the potential $v$ has compact support.
}$\Diamond$\
\end{example}

\begin{remark}\label{Re:2Body}{\rm Example~\ref{Ex:Model1} is exceptional in several ways. Firstly, note that it involves terms where $J$ and $J'$ either contain just one point or they coincide. Because of this particularity, the ordering of the local observables is in fact irrelevant. Secondly, note that, when $x=x'$ in the second sum, the summands cancel. As such, the summation can be restricted to pairs with $x \neq x'$. Since these pairs belong to a Delone set, they never come closer than a distance $r$. As such, the potential $v$ can be multiplied by the function $(1-\phi)$, with $\phi$ a continuous function with support inside $B(0,r)$,  without producing any modifications to the Hamiltonian. This last remark will become relevant for the discussion in Remark~\ref{Re:Odd}.
}$\Diamond$
\end{remark}
To define more general Hamiltonians, one needs to supply a whole family of bi-equivariant coefficients that are indexed by subsets of $\Ll$ and, preferably, depend continuously on $\Ll$. This non-trivial task can be accomplished with the help of many-body potentials. For this, we consider the space $(\RM^d)^{n} \times (\RM^d)^{n}$, together with the natural action of $\RM^d$ induced by the diagonal translations,
 \begin{equation*}
 \mathfrak t_x (x_1, \ldots, x_n;x'_1, \ldots, x'_{n}) = (x_1-x, \ldots, x_n-x;x'_1-x, \ldots, x'_{n}-x),
\end{equation*}  
and of the group $\mathcal{S}_n \times \mathcal{S}_n$, which permutes the first and last $n$-variables, separately. Let $\big [(\RM^d)^{2n}\big ]$ be the space of orbits for the $\RM^d$ action, equipped with the quotient topology. Since the permutations and the diagonal translations commute, the action of $\mathcal{S}_n \times \mathcal{S}_n$ descends to an action on $\big [(\RM^d)^{2n}\big ]$.

\begin{definition}\label{Def:KPot} We call $\hat w_{n} : \big [(\RM^d)^{2n}\big ] \to \CM$ a seed for a bi-equivariant $n$-body potential if $\hat w_{n}$ is continuous, has compact support and is odd relative to the action of $\mathcal{S}_n \times \mathcal{S}_n$. The $n$-body potential associated to such a seed is the function
\begin{equation*}
w_{n} = \hat w_{n} \circ {\rm q}_n : (\RM^d)^{2n} \to \CM,
\end{equation*}
where ${\rm q}_n: (\RM^d)^{2n} \to \big [(\RM^d)^{2n}\big ]$ is the quotient map. In addition, we assume that
\begin{equation*}
w_n(x_1, \ldots, x_n;x'_1, \ldots, x'_{n}) = \overline{w_n(x'_1, \ldots, x'_n;x_1, \ldots, x_{n})}.
\end{equation*}
\end{definition}

\begin{definition}\label{Def:Ham} Assume that a system $\{w_{n}\}_{n \in \NM^\times}$ of many-body potentials  has been supplied. We then declare that the evaluation at $\Ll$ of the global Hamiltonian corresponding to these potentials is
\begin{equation}\label{Eq:PotentialH}
\begin{aligned}
H_\Ll  = \sum_{n \in \NM^\times} \tfrac{1}{n!} \sum_{J,J' \in \Kk_n(\Ll)} \ \sum_{\chi_J,\chi_{J'}} \, & w_{J,J'}^{\Ll}(\chi_J,\chi_{J'})  a^\ast_{J}(\chi_J) a_{J'}(\chi_{J'}),
\end{aligned}
\end{equation}
where the bi-equivariant coefficients are given by
\begin{equation}\label{Eq:PotCoeff}
w_{J,J'}^{\Ll}\big (\chi_J,\chi_{J'}\big ) := w_{n}\big (\chi_J(1),\ldots,\chi_J(n);\chi_{J'}(1),\ldots \chi_{J'}(n)\big),
\end{equation} 
for $J,J' \in \Kk_n(\Ll)$.
\end{definition}

\begin{proposition} The Hamiltonians~\eqref{Eq:PotentialH} are manifestly Galilean invariant.
\end{proposition}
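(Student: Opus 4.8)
The plan is to verify directly that the bi-equivariant coefficients $w^\Ll_{J,J'}$ of Definition~\ref{Def:Ham} satisfy the equivariance relation~\eqref{Eq:Galilean0}, since this is exactly what Galilean invariance of the global Hamiltonian $H$ means (cf. Eq.~\eqref{Eq:GInv}). Before that I would dispatch the routine checks that $H_\Ll$ given by Eq.~\eqref{Eq:PotentialH} is an honest GI-Hamiltonian in the sense of Definition~\ref{Def:GIHam}: the coefficients are bi-equivariant because $w^\Ll_{J,J'}(s_1\cdot\chi_J,\chi_{J'}\cdot s_2)$ amounts to permuting the first, respectively the last, block of arguments of $w_n$, and the seed $\hat w_n$ is odd under $\mathcal{S}_n\times\mathcal{S}_n$ by Definition~\ref{Def:KPot}; the self-adjointness constraint~\eqref{Eq:SA} follows from the conjugation symmetry $w_n(x_1,\dots;x'_1,\dots)=\overline{w_n(x'_1,\dots;x_1,\dots)}$ imposed in Definition~\ref{Def:KPot}; and the coefficients vanish as soon as the diameter of $J\cup J'$ exceeds the (finite) diameter of the image of $\mathrm{supp}\,\hat w_n$ under a section of ${\rm q}_n$, so the interaction range is finite. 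Hence Proposition~\ref{Pro:Derivation} applies and ${\rm ad}_{H_\Ll}$ is a genuine $\ast$-derivation on $\Dd(\Ll)$.

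The core step is the diagonal translation invariance of the $n$-body potentials. By construction $w_n=\hat w_n\circ{\rm q}_n$, and the quotient map ${\rm q}_n:(\RM^d)^{2n}\to[(\RM^d)^{2n}]$ identifies two $2n$-tuples exactly when they differ by a diagonal translation $\mathfrak t_x$. Therefore ${\rm q}_n\circ\mathfrak t_x={\rm q}_n$ for every $x\in\RM^d$, whence
$$
w_n\big(\mathfrak t_x(x_1,\dots,x_n;x'_1,\dots,x'_n)\big)=\hat w_n\big({\rm q}_n(x_1,\dots;x'_1,\dots)\big)=w_n(x_1,\dots,x_n;x'_1,\dots,x'_n),
$$
i.e. $w_n$ is invariant under the diagonal $\RM^d$-action. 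This is the only property of $w_n$ needed below.

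Now fix $\Ll$, $x\in\RM^d$, $J,J'\in\Kk_n(\Ll)$ and orderings $\chi_J,\chi_{J'}$. Then $J-x,J'-x\in\Kk_n(\Ll-x)$, and $\mathfrak t_x\circ\chi_J$ (resp. $\mathfrak t_x\circ\chi_{J'}$) is the ordering of $J-x$ (resp. $J'-x$) with $(\mathfrak t_x\circ\chi_J)(i)=\chi_J(i)-x$. Substituting into Eq.~\eqref{Eq:PotCoeff} and using the translation invariance just established,
\begin{align*}
w^{\Ll-x}_{J-x,J'-x}\big(\mathfrak t_x\circ\chi_J,\mathfrak t_x\circ\chi_{J'}\big)
&=w_n\big(\chi_J(1)-x,\dots,\chi_J(n)-x;\chi_{J'}(1)-x,\dots,\chi_{J'}(n)-x\big)\\
&=w_n\big(\chi_J(1),\dots,\chi_J(n);\chi_{J'}(1),\dots,\chi_{J'}(n)\big)=w^\Ll_{J,J'}\big(\chi_J,\chi_{J'}\big),
\end{align*}
which is precisely Eq.~\eqref{Eq:Galilean0}. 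Since this holds for every $n\in\NM^\times$, summing the terms of Eq.~\eqref{Eq:PotentialH} yields $S_x\circ{\rm Ad}_{H_\Ll}\circ S_x^{-1}={\rm Ad}_{H_{\Ll-x}}$, so $H$ is Galilean invariant.

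There is no genuine obstacle here — this is the sense in which the statement is ``manifest''. The only point demanding a modicum of care is the bookkeeping of the two commuting actions (diagonal translation by $\RM^d$ and the separate $\mathcal{S}_n\times\mathcal{S}_n$ permutations of the $J$- and $J'$-blocks) together with the observation that routing $w_n$ through the orbit space $[(\RM^d)^{2n}]$ is exactly what builds the translation symmetry into the potential from the outset; once that is in place, the identity~\eqref{Eq:Galilean0} is a one-line substitution.
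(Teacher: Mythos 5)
Your proof is correct and follows essentially the same route as the paper's: the coefficients $w^\Ll_{J,J'}$ satisfy the equivariance relation~\eqref{Eq:Galilean0} precisely because $w_n=\hat w_n\circ{\rm q}_n$ is constant along the orbits of the diagonal $\RM^d$-action. The paper states this in one line; your version merely spells out the substitution and adds the (harmless) verification that $H_\Ll$ is a bona fide GI-Hamiltonian.
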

\proof Indeed, the coefficients obey relation~\eqref{Eq:Galilean0} because the potentials $w_n$ are constant along the orbit induced by the diagonal translations. \qed 

\begin{example}\label{Ex:TwoBody}{\rm For a $2$-body potential, Eq.~\eqref{Eq:PotentialH} takes the form
\begin{equation}\label{Eq:2body}
\begin{aligned}
H_\Ll = 2\sum_{x_1,x_2;x'_1,x'_2 \in \Ll} \;  w_{2}(x_1,x_2;x'_1,x'_2) \; a_{x_1}^\ast a_{x_2}^\ast \, a_{x'_1} a_{x'_2} .
\end{aligned}
\end{equation}
Consider now the following specific many-body potential 
\begin{equation}\label{Eq:C61}
w_{2}(x_1,x_2;x'_1,x'_2) =  v_1(x_2-x_1)v_2(x'_2-x'_1) \varphi\big ({\rm d}_{H}(\{x_1,x_2\},\{x'_1,x'_2\})\big ),
\end{equation}
where the two functions $v_1$ and $v_2$ are odd, $v_i(-x) = -v_i(x)$, and $2 v_1 v_2=v$, the potential from Example~\ref{Ex:Model1}. Also, $\varphi:\RM \rightarrow \RM$ is a function with $\varphi(0) = 1$ and with support in the interval $[0,r]$. Note that, if $\Ll \in {\rm Del}_{(r,R)}(\RM^d)$, then the last factor in Eq.~\ref{Eq:C61} takes non-zero values if only if $\{x_1,x_2\}=\{x'_1,x'_2\}$, as un-ordered sets. Then Eq.~\eqref{Eq:2body} reduces to the model Hamiltonian~\eqref{Ex:Model1}. Clearly, the potential~\eqref{Eq:C61} is constant along the orbit induced by the diagonal translations of the points, hence it descends to a seed on $\big [(\RM^d)^{4}\big ]$, which can be checked to be continuous.
}$\Diamond$
\end{example}

\begin{remark}\label{Re:Odd}{\rm As is always with the case for odd functions, $v_i(0)=0$ and, as such, $v(0)=0$. However, given the discussion in Remark~\ref{Re:2Body}, this does not restrict the generality of the above construction.
}$\Diamond$
\end{remark}

\begin{example}{\rm Explicit models of many-body potentials corresponding to various fractional Hall sequences can be found in \cite{ProdanPRB2009}.
}$\Diamond$
\end{example}

\begin{remark}\label{Re:FiniteR}
{\rm For finite interaction range Hamiltonians evaluated on a fixed class ${\rm Del}_{(r,R)}(\RM^d)$ of Delone sets, there is an upper limit on the cardinals of the sub-sets $J$, $J'$ entering in Eq.~\eqref{Eq:PotentialH}. As such, a finite interaction range Hamiltonian as in Eq.~\eqref{Eq:PotentialH} is always generated from a finite number of many-body potential seeds. Also, if the supports of these many-body potentials are contained inside the ball of radius ${\rm R}_{\rm i}$, then the range of the Hamiltonian~\eqref{Eq:PotentialH} is ${\rm R}_{\rm i}$.
}$\Diamond$
\end{remark} 

We recall that our ultimate goal is to define the most general class of finite interaction range Hamiltonians that are Galilean invariant and depend continuously on the pattern $\Ll$. Since we do not yet have a topology on the algebra of derivations, at this point, the only available option is to formulate the continuity directly on the bi-equivariant coefficients. Clearly, the Hamiltonians constructed from many-body potentials provide guidance in this respect. However, even for these explicitly constructed Hamiltonians, there is still a difficulty in assessing the continuity of the coefficients~\eqref{Eq:PotCoeff}, because they are also functions of the orderings. As elements of $\mathcal{S}(\Ii_{|J|} , J)$, $J \subset \Ll$, these orderings change with the deformation of the pattern $\Ll$ and they might not return to their initial value when $\Ll$ is deformed back to its initial configuration. As such, there is a non-trivial topological space where the data $(\Ll, V, \chi_V)$ lives and this is investigated next.

\subsection{The many-body covers of the Delone  space}\label{Sec:Cover} We start by looking at the partial data $(\Ll,V)$, $|V|=n$, which generates the set
$$
{\rm Del}_{(r,R)}^{(n)}(\RM^d)=\Big \{(\Ll,V), \ \Ll \in {\rm Del}_{(r,R)}(\RM^d), \ V \in \Kk_n(\Ll) \Big \}.
$$
It is a subset of ${\rm Del}_{(r,R)}(\RM^d) \times \Kk(\RM^d)$, hence it can be naturally equipped with the relative topology inherited from ${\rm Del}_{(r,R)}(\RM^d) \times \Kk(\RM^d)$, with the latter endowed with the product topology. The following statements describe this topology more concretely.

\begin{proposition} The family of subsets
\begin{equation}\label{Eq:TopoBase3}
\begin{aligned}
U_M^\epsilon(\Ll,V) =  \Big ( U_M^\epsilon(\Ll)\times \mathring B(V,\epsilon) \Big ) \cap {\rm Del}_{(r,R)}^{(n)}(\RM^d)
\end{aligned}
\end{equation}
form a neighborhood base for ${\rm Del}_{(r,R)}^{(n)}(\RM^d)$. Here $\epsilon > 0$, $M $ is positive and large enough such that any $V' \in \mathring B(V,\epsilon)$ is contained in the ball $B(0,M)$ of $\RM^d$, and $ U_M^\epsilon(\Ll)$ is as defined in Eq.~\eqref{Eq:TopoBase2}.
 \end{proposition}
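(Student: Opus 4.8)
The claim is that the sets $U_M^\epsilon(\Ll,V)$ form a neighborhood base for the relative topology on ${\rm Del}_{(r,R)}^{(n)}(\RM^d)$ inherited from the product ${\rm Del}_{(r,R)}(\RM^d)\times\Kk(\RM^d)$. The strategy is the standard two-sided comparison of neighborhood bases: first show that each $U_M^\epsilon(\Ll,V)$ is open in the relative topology, and then show that every relatively open neighborhood of a point $(\Ll,V)$ contains one of the $U_M^\epsilon(\Ll,V)$.

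For the first direction, I would note that $U_M^\epsilon(\Ll)$ is open in ${\rm Del}_{(r,R)}(\RM^d)$ by the proposition preceding Definition~\ref{Def:Hull} (the sets in Eq.~\eqref{Eq:TopoBase2} form a basis), and $\mathring B(V,\epsilon)$ is open in $\Kk(\RM^d)$ with respect to the Hausdorff metric ${\rm d}_{\rm H}$. Hence the product $U_M^\epsilon(\Ll)\times\mathring B(V,\epsilon)$ is open in ${\rm Del}_{(r,R)}(\RM^d)\times\Kk(\RM^d)$, and its intersection with ${\rm Del}_{(r,R)}^{(n)}(\RM^d)$ is relatively open by definition of the subspace topology. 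I should also remark that $(\Ll,V)\in U_M^\epsilon(\Ll,V)$, since $\Ll\in U_M^\epsilon(\Ll)$ (the Hausdorff distance of $\Ll[M]$ to itself is $0<\epsilon$) and $V\in\mathring B(V,\epsilon)$, so these are genuine neighborhoods.

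For the second direction, let $W$ be any relatively open neighborhood of $(\Ll,V)$. By definition of the subspace topology, $W\supseteq (\Oo_1\times\Oo_2)\cap {\rm Del}_{(r,R)}^{(n)}(\RM^d)$ for open sets $\Oo_1\subseteq {\rm Del}_{(r,R)}(\RM^d)$ and $\Oo_2\subseteq \Kk(\RM^d)$ containing $\Ll$ and $V$ respectively. Since the sets \eqref{Eq:TopoBase2} form a basis for ${\rm Del}_{(r,R)}(\RM^d)$, pick $M_1,\epsilon_1>0$ with $U_{M_1}^{\epsilon_1}(\Ll)\subseteq\Oo_1$; since $\mathring B(V,\cdot)$ is a metric-ball base for $\Kk(\RM^d)$, pick $\epsilon_2>0$ with $\mathring B(V,\epsilon_2)\subseteq\Oo_2$. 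Now set $\epsilon:=\min\{\epsilon_1,\epsilon_2\}$ and choose $M\geq M_1$ large enough that every $V'\in\mathring B(V,\epsilon)$ is contained in $B(0,M)$ — this is possible because $V$ is a fixed finite set and $\epsilon$-close subsets in Hausdorff distance lie in a bounded enlargement of $V$. One must check that enlarging $M$ to this value only shrinks (does not enlarge) $U^{\epsilon}_{M_1}(\Ll)$ restricted appropriately; more precisely, I would argue directly that $U_M^\epsilon(\Ll,V)\subseteq U_{M_1}^{\epsilon_1}(\Ll)\times\mathring B(V,\epsilon_2)$, using that $\epsilon\le\epsilon_1$, $\epsilon\le\epsilon_2$, and the monotonicity of the Hausdorff-matching condition in Eq.~\eqref{Eq:TopoBase2} under increasing the truncation radius (matching on $\Ll[M]$ to within $\epsilon$ for $M\ge M_1$ implies matching on $\Ll[M_1]$ to within $\epsilon\le\epsilon_1$, since $\Ll'[M_1]$ is determined by $\Ll'[M]$). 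Intersecting with ${\rm Del}_{(r,R)}^{(n)}(\RM^d)$ then gives $U_M^\epsilon(\Ll,V)\subseteq W$, as desired.

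The only mildly delicate point — and the step I would flag as the main obstacle — is the interplay between the truncation radius $M$ and the size constraint on $V'$: one must confirm that for $M$ sufficiently large relative to $\epsilon$ and to the diameter and position of $V$, the Hausdorff-closeness of $V'$ to $V$ together with $V'\subseteq\Ll'$ is correctly ``seen'' inside the ball $B(0,M)$, so that the product-basis comparison goes through without the two coordinates interfering. This is essentially a bookkeeping argument about the metric \eqref{Eq:PatternMetric} and the definition \eqref{Eq:TopoBase2}, and I would present it as a short lemma-style paragraph rather than a long computation; the underlying reason it works is that $V$ is finite (bounded), so for $M$ past the diameter of $B(V,\epsilon)$ the constraint ``$V'\subseteq B(0,M)$'' is automatic and the two basis families genuinely refine each other.
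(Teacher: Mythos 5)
Your proposal is correct and follows essentially the same route as the paper: both arguments start from the observation that the two-parameter products $U_M^{\epsilon}(\Ll)\times\mathring B(V,\epsilon')$ intersected with ${\rm Del}_{(r,R)}^{(n)}(\RM^d)$ form a base for the relative topology, and then trim to the one-parameter family using monotonicity in $M$ (increasing $M$ shrinks the neighborhood) and in $\epsilon$ (taking the minimum of $\epsilon,\epsilon'$). The "delicate point" you flag about the truncation radius is precisely the monotonicity $U_M^{\epsilon,\epsilon'}\subseteq U_{M'}^{\epsilon,\epsilon'}$ for $M'\le M$ that the paper also asserts, so the two proofs match in both structure and level of detail.
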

 
\proof According to the definition, the following family of sub-sets
\begin{equation}\label{Eq:TopoBase4}
\begin{aligned}
U_M^{\epsilon,\epsilon'}(\Ll,V) = \Big ( U_M^\epsilon(\Ll)\times \mathring B(V,\epsilon') \Big ) \cap {\rm Del}_{(r,R)}^{(n)}(\RM^d),
\end{aligned}
\end{equation}
with $\epsilon,\epsilon'>0$ and $M>0$, is a basis of neighborhoods for the relative topology. Clearly 
\begin{equation*}
U_M^{\epsilon,\epsilon'}(\Ll,V) \subseteq U_{M'}^{\epsilon,\epsilon'}(\Ll,V), \quad \forall \  
 M'  \leq M,
 \end{equation*}
 and, as such, we can trim the family to the subsets $U_M^{\epsilon,\epsilon'}(\Ll,V)$ with $M$ larger than any threshold, without affecting the topology. In particular, we can restrict to $M$'s satisfying the stated constraints.  Furthermore, 
\begin{equation*}
U_M^{\underline \epsilon,\underline \epsilon}(\Ll,V) \subseteq U_M^{\epsilon,\epsilon'}(\Ll,V), \quad \underline \epsilon = \min(\epsilon,\epsilon'),
\end{equation*}
hence the family can be further trimmed to members with $\epsilon = \epsilon'$.\qed 

\begin{remark}\label{Re:LM}{\rm Note that, for any $(\Ll',V') \in U_M^\epsilon(\Ll,V)$, we have $V' \in \Ll'[M]$.
}$\Diamond$
\end{remark}

\begin{proposition}\label{Pro:G} Let $(\Ll',V')$ be any point from the open neighborhood $U_M^\epsilon(\Ll,V)$ of $(\Ll,V)$. Then, if $\epsilon< r/2$, there exists a canonical bijective map $g:V \to V'$.
\end{proposition}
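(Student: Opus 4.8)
The strategy is to use the Delone property to pin down each point of $V$ uniquely inside $\Ll'$ and then to verify that this assignment is a bijection. Concretely, write $V=\{v_1,\dots,v_n\}$. By Remark~\ref{Re:LM}, since $(\Ll',V')\in U_M^\epsilon(\Ll,V)$, both $V$ and $V'$ lie in the ball $B(0,M)$, and the Hausdorff condition ${\rm d}_{\rm H}(\Ll[M],\Ll'[M])<\epsilon$ applies. In particular, for each $v_i\in V\subset\Ll[M]$ there exists a point $v_i'\in\Ll'[M]$ with $|v_i-v_i'|<\epsilon$; I would first argue that this $v_i'$ can be taken in $\Ll'$ rather than merely in $\Ll'[M]=(\Ll'\cap B(0,M))\cup\partial B(0,M)$, by choosing $M$ slightly larger than the ambient ball containing $V$ (which is allowed, as we may shrink neighborhoods by enlarging $M$ per the previous proposition) so that the boundary sphere $\partial B(0,M)$ stays at distance $>\epsilon$ from $V$. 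Then define $g(v_i):=v_i'$.

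\textbf{Well-definedness and injectivity.}
The key point is that $\epsilon<r/2$ forces $v_i'$ to be unique. Indeed, if $v_i',\tilde v_i'\in\Ll'$ both satisfy $|v_i-v_i'|<\epsilon$ and $|v_i-\tilde v_i'|<\epsilon$, then $|v_i'-\tilde v_i'|<2\epsilon<r$, which by $r$-uniform discreteness of $\Ll'$ forces $v_i'=\tilde v_i'$. This shows $g$ is well defined. For injectivity: if $v_i\neq v_j$ then $|v_i-v_j|\geq r$ (uniform discreteness of $\Ll$), hence $|g(v_i)-g(v_j)|\geq |v_i-v_j|-|v_i-g(v_i)|-|v_j-g(v_j)|> r-2\epsilon>0$, so $g(v_i)\neq g(v_j)$. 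In particular $g:V\to\Ll'$ is injective, and one also records the quantitative estimate $|g(v_i)-g(v_j)|>r-2\epsilon$.

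\textbf{Surjectivity onto $V'$ and canonicity.}
It remains to see that the image of $g$ is exactly $V'$ and not some other $n$-subset of $\Ll'$. Since $|V|=|V'|=n$ and $g$ is injective, it suffices to show $g(V)\subseteq V'$, or equivalently $g(V)=V'$ by cardinality. The symmetric role of $\Ll$ and $\Ll'$ in the Hausdorff distance gives, for each $v'\in V'$, a point of $\Ll$ within $\epsilon$ of it; running the same uniqueness argument as above (now using $r$-uniform discreteness of $\Ll$) produces a map $g':V'\to\Ll$, and the $2\epsilon<r$ estimate shows $g'\circ g={\rm id}_V$ and $g\circ g'={\rm id}_{V'}$ — any candidate preimages coincide because they are within $2\epsilon<r$ of each other. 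Hence $g$ is a bijection $V\to V'$. Canonicity is precisely the uniqueness statement: $g(v)$ is \emph{the} unique point of $\Ll'$ within distance $\epsilon$ of $v$, so $g$ depends only on the pair $(\Ll,V)$ and the chosen neighborhood and not on any auxiliary choices; in fact $g$ is independent of $M$ and $\epsilon$ so long as $\epsilon<r/2$ and $(\Ll',V')\in U_M^\epsilon(\Ll,V)$.

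\textbf{Main obstacle.}
The one genuinely delicate point is the boundary sphere appearing in the definition $\Lambda[r]=(\Lambda\cap B(0,r))\cup\partial B(0,r)$ of the pattern metric: a priori the near-point $v_i'$ furnished by the Hausdorff estimate could sit on $\partial B(0,M)$ rather than in $\Ll'$ itself. I expect handling this to be the crux — it is resolved by exploiting the freedom, established in the preceding proposition, to enlarge $M$ (which only shrinks the neighborhood) so that $V$ is uniformly interior to $B(0,M)$ with margin exceeding $\epsilon$, after which $v_i'\in\Ll'[M]$ with $|v_i-v_i'|<\epsilon$ is automatically an honest point of $\Ll'$. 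Everything else is a routine application of the triangle inequality together with $r$-uniform discreteness.
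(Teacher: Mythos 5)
There is a genuine gap in the surjectivity step, and it comes from using the wrong component of the neighborhood. By definition~\eqref{Eq:TopoBase3}, $U_M^\epsilon(\Ll,V)$ is the intersection of $U_M^\epsilon(\Ll)\times \mathring B(V,\epsilon)$ with ${\rm Del}_{(r,R)}^{(n)}(\RM^d)$, where $\mathring B(V,\epsilon)$ is the open ball in $\big(\Kk(\RM^d),{\rm d}_{\rm H}\big)$. Hence $(\Ll',V')\in U_M^\epsilon(\Ll,V)$ already carries the condition ${\rm d}_{\rm H}(V,V')<\epsilon$ between the finite sets themselves, with no truncation by $B(0,M)$ and no boundary sphere involved. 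You instead invoke only the first-factor condition ${\rm d}_{\rm H}(\Ll[M],\Ll'[M])<\epsilon$. That condition produces, for each $v\in V$, a unique point of $\Ll'$ within $\epsilon$ of $v$, but it gives no control over whether that point lies in $V'$. Your attempt to recover this through the reverse map $g'$ is circular: $g'$ is defined on $V'$ and lands in $\Ll$, so the compositions $g'\circ g$ and $g\circ g'$ only make sense once you already know $g(V)\subseteq V'$ and $g'(V')\subseteq V$, which is exactly what is to be proved. Concretely, with only the first-factor condition the pair $(\ZM,\{5,6\})$ would be an admissible $(\Ll',V')$ near $(\ZM,\{0,1\})$, and your $g$ would be the identity on $\{0,1\}$ rather than a map onto $V'$; it is the factor $\mathring B(V,\epsilon)$ that excludes this.

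The repair is the paper's argument: since ${\rm d}_{\rm H}(V,V')<\epsilon$, each ball $\mathring B(v,\epsilon)$, $v\in V$, contains at least one point of $V'$; since $\Ll'$ is $r$-uniformly discrete and $2\epsilon<r$, it contains at most one point of the whole $\Ll'$; hence it contains exactly one point, and that point lies in $V'$. The graph $\{(v,v')\in V\times V' : v'\in\mathring B(v,\epsilon)\}$ is then a bijection, the inverse being obtained by exchanging the roles of $V$ and $V'$. In particular, the ``main obstacle'' you flag --- a near-point possibly sitting on $\partial B(0,M)$ --- never arises, because the relevant Hausdorff estimate is between $V$ and $V'$ directly. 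Your uniqueness and injectivity estimates based on $2\epsilon<r$ are correct and are precisely what the paper uses; only the source of the existence of a nearby point \emph{of $V'$} needs to be corrected.
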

\proof Since ${\rm d}_{\rm H}(V',V) < \epsilon$, the open ball $\mathring B(v,\epsilon)\subset \RM^d$ necessarily contains at least one  point of $V'$, for each $v \in V$. Since both $\Ll$ and $\Ll'$ are from ${\rm Del}_{(r,R)}(\RM^d)$ and $\epsilon<r/2$, each $\mathring B(v,\epsilon)$ can contain at most one point of $\Ll'$. The conclusion is that each $\mathring B(v,\epsilon)$ contains exactly one point $v'$ of $V'$ and, in fact, of the whole $\Ll'$. The canonical map mentioned in the statement is defined to be the map corresponding to the graph 
\begin{equation*}
g= \{(v,v') \in V \times V', \ v' \in \mathring B(v,\epsilon)\}.
\end{equation*}
The map is clearly invertible.\qed

\begin{corollary}\label{Cor:Cover1} The map $${\rm Del}_{(r,R)}^{(n)}(\RM^d)\to{\rm Del}_{(r,R)}(\RM^d),\quad (\Ll,V) \mapsto \Ll,$$ is a local homeomorphism making ${\rm Del}_{(r,R)}^{(n)}(\RM^d)$ into an infinite cover of ${\rm Del}_{(r,R)}(\RM^d)$.
\end{corollary}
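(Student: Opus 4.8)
The plan is to assemble Corollary~\ref{Cor:Cover1} directly from the two propositions that precede it, so the task is essentially bookkeeping: verify that the projection $p_n:(\Ll,V)\mapsto\Ll$ is continuous, open, surjective, and a local homeomorphism, and then invoke the definition of a covering map. Continuity is immediate, since $p_n$ is the restriction to ${\rm Del}_{(r,R)}^{(n)}(\RM^d)$ of the coordinate projection ${\rm Del}_{(r,R)}(\RM^d)\times\Kk(\RM^d)\to{\rm Del}_{(r,R)}(\RM^d)$, and coordinate projections are continuous. Surjectivity follows because any Delone set $\Ll\in{\rm Del}_{(r,R)}(\RM^d)$ contains at least $n$ points (being relatively dense and infinite), so we may pick some $V\in\Kk_n(\Ll)$ with $p_n(\Ll,V)=\Ll$.

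\medskip

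The heart of the argument is the local homeomorphism property. Fix $(\Ll,V)$ and choose $\epsilon<r/2$ together with $M$ large enough that $\mathring B(V,\epsilon)\subset B(0,M)$, so that the neighborhood $U_M^\epsilon(\Ll,V)$ of Eq.~\eqref{Eq:TopoBase3} makes sense. I claim $p_n$ restricts to a homeomorphism from $U_M^\epsilon(\Ll,V)$ onto $U_M^\epsilon(\Ll)$. Restricting $\epsilon$ further if necessary so that $U^\epsilon_M(\Ll)$ consists only of Delone sets close enough to $\Ll$, Proposition~\ref{Pro:G} supplies, for each $\Ll'$ in the image, the canonical bijection $g:V\to V'$ determined by the graph $\{(v,v'):v'\in\mathring B(v,\epsilon)\}$; set $V':=g(V)\subset\Ll'$. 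This produces a map $\sigma:U_M^\epsilon(\Ll)\to U_M^\epsilon(\Ll,V)$, $\sigma(\Ll')=(\Ll',g(V))$, which is a two-sided inverse to $p_n\big|_{U_M^\epsilon(\Ll,V)}$: indeed $p_n\circ\sigma=\textnormal{Id}$ trivially, and for the other composite one checks that any $(\Ll',V'')\in U_M^\epsilon(\Ll,V)$ has $V''\in\Ll'[M]$ and within $\epsilon$ of $V$ (Remark~\ref{Re:LM}), hence $V''$ must be exactly the set singled out by the $\epsilon$-balls around the points of $V$, i.e. $V''=g(V)$. Continuity of $\sigma$ follows from Proposition~\ref{Pro:G}: if $\Ll'$ and $\Ll''$ lie in a small $U_{M}^{\delta}(\Ll')$ with $\delta<\epsilon$, then their associated $n$-subsets are within $\delta$ of each other in the Hausdorff metric, so $\sigma$ sends a basic neighborhood into a basic neighborhood of the form~\eqref{Eq:TopoBase3}. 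This shows $p_n$ is open (it carries the basic sets $U_M^\epsilon(\Ll,V)$ onto the basic sets $U_M^\epsilon(\Ll)$) and a local homeomorphism.

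\medskip

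Finally, to conclude that $p_n$ is a covering map with infinite fibers, observe that over a fixed $\Ll$ the fiber $p_n^{-1}(\Ll)=\{(\Ll,V):V\in\Kk_n(\Ll)\}$ is in bijection with $\Kk_n(\Ll)$, which is countably infinite since $\Ll$ is; and the preimage of the evenly-covered neighborhood $U_M^\epsilon(\Ll)$ decomposes as the disjoint union $\bigsqcup_{V\in\Kk_n(\Ll),\,V\subset\Ll[M]}U_M^\epsilon(\Ll,V)$, each summand mapped homeomorphically onto $U_M^\epsilon(\Ll)$ by the argument above; disjointness holds because two such sets with distinct $V_1\neq V_2\subset\Ll[M]$ cannot share a point, as the fiber-data $V''$ over a nearby $\Ll'$ is pinned down by the $\epsilon$-balls. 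The main obstacle, such as it is, is purely a matter of care with the constants: one must shrink $\epsilon$ and enlarge $M$ in the right order so that (i) $\epsilon<r/2$ forces the one-point-per-ball conclusion of Proposition~\ref{Pro:G}, (ii) $M$ dominates $\mathring B(V,\epsilon)$ so the $[M]$-truncation sees all of $V$ and its perturbations, and (iii) the source neighborhood is small enough that the canonical bijection $g$ exists for every set in it; once these are arranged, every clause of the definition of a covering space falls out mechanically from Proposition~\ref{Pro:G}, Remark~\ref{Re:LM}, and the description of the basic open sets.
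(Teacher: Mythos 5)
Your first two paragraphs are correct and are essentially the paper's (implicit) argument: the corollary is stated without proof as a direct consequence of Proposition~\ref{Pro:G}, and what that proposition buys is exactly your construction of the local inverse $\sigma(\Ll')=(\Ll',g(V))$ on $U_M^\epsilon(\Ll)$, with the paper's convention on $M$ (namely $\mathring B(V,\epsilon)\subset B(0,M)$) guaranteeing that the point of $\Ll'[M]$ within $\epsilon$ of each $v\in V$ is a genuine point of $\Ll'$ and not an artifact of the boundary sphere $\partial B(0,M)$. The uniqueness of $V''$ via the one-point-per-$\epsilon$-ball argument and the continuity of $\sigma$ are also fine. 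So continuity, openness, surjectivity and the local homeomorphism property are all established.

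The last paragraph, however, contains a genuine error. The claimed equality $p_n^{-1}\big(U_M^\epsilon(\Ll)\big)=\bigsqcup_{V\in\Kk_n(\Ll),\,V\subset\Ll[M]}U_M^\epsilon(\Ll,V)$ is false: the left-hand side contains every pair $(\Ll',V')$ with $\Ll'\in U_M^\epsilon(\Ll)$ and $V'\in\Kk_n(\Ll')$ \emph{arbitrary} -- in particular all fibre points $(\Ll,V)$ with $V\not\subset B(0,M)$ and all $(\Ll',V')$ with $V'$ far from the origin -- none of which lie in any sheet $U_M^\epsilon(\Ll,V)$ with $V\subset\Ll[M]$. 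This is not repairable by adjusting constants: a sheet through $(\Ll,V)$ that maps homeomorphically onto a neighborhood of $\Ll$ requires that neighborhood to control $\Ll'$ on a ball containing $V$, so the admissible neighborhoods shrink as $V$ recedes to infinity, and no single neighborhood of $\Ll$ is evenly covered by sheets through \emph{all} of the (countably infinitely many) fibre points. What your argument actually proves -- and what the paper uses -- is that $p_n$ is a surjective local homeomorphism with countably infinite discrete fibres; the word ``cover'' in the corollary must be read in that weaker sense. (Contrast this with the order cover of Proposition~\ref{Pro:Cover2}, where the fibres are finite, ${\rm y}_n^{-1}\big(U_M^\epsilon(\Ll,V)\big)$ really is a disjoint union of $n!$ homeomorphic sheets, and the strict evenly-covered property does hold.) You should either delete the even-covering claim or explicitly weaken it to the statement about discrete fibres.
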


We now define the order cover of the space ${\rm Del}_{(r,R)}^{(n)}(\RM^d)$, which is the natural topological space for the data $(\Ll,V,\chi_V)$. The following statements give the definition and supply a characterization of this cover.

\begin{proposition}\label{Pro:Cover2} Consider the set
\begin{equation*}
\widehat{{\rm Del}}_{(r,R)}^{(n)}(\RM^d) = \Big \{\big(\Ll,V,\chi_V \big): \ (\Ll,V) \in {\rm Del}_{(r,R)}^{(n)}(\RM^d), \ \chi_V \in \mathcal{S}(\Ii_n , V)\Big \}.
\end{equation*} 
Then the subsets
\begin{equation}\label{Eq:BNeigh}
U_M^\epsilon(\Ll,V,\chi_V) = \Big \{(\Ll',V',g \circ \chi_V): \ (\Ll',V') \in U_M^\epsilon(\Ll,V)\Big \},
\end{equation}
with $M >>0$, $0 <\epsilon < r/2$ and $g$ the canonical bijection from Proposition~\ref{Pro:G}, form a neighborhood base for a topology on $\widehat{{\rm Del}}_{(r,R)}^{(n)}(\RM^d)$. The map
\[\widehat{{\rm Del}}_{(r,R)}^{(n)}(\RM^d)\to {\rm Del}_{(r,R)}^{(n)}(\RM^d),\quad (\mathcal{L},V,\chi_{V})\mapsto (\mathcal{L},V),\]
 is a local homeomorphism making $\widehat{{\rm Del}}_{(r,R)}^{(n)}(\RM^d)$ into an $n !$-cover of ${\rm Del}_{(r,R)}^{(n)}(\RM^d)$.
\end{proposition}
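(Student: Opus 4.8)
\textbf{Proof plan for Proposition~\ref{Pro:Cover2}.}

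The plan is to verify three things in turn: that the sets $U_M^\epsilon(\Ll,V,\chi_V)$ form a basis for a topology, that the forgetful map $p:(\mathcal{L},V,\chi_V)\mapsto(\mathcal{L},V)$ is continuous and open, and that $p$ is an $n!$-fold covering map. First I would observe that each fiber $p^{-1}(\mathcal{L},V)$ is canonically identified with $\mathcal{S}(\Ii_n,V)$, which has exactly $n!$ elements, so the cardinality claim is immediate once we know $p$ is a covering. For the basis axioms, the key point is the compatibility of the canonical bijections from Proposition~\ref{Pro:G}: if $(\Ll',V')\in U_M^\epsilon(\Ll,V)$ and $(\Ll'',V'')\in U_{M'}^{\epsilon'}(\Ll',V')$ with all radii below $r/2$, then the bijection $V\to V''$ obtained by composing $g_{V\to V'}$ with $g_{V'\to V''}$ agrees with the direct canonical bijection $V\to V''$, because both are characterized by the ``nearest point'' graph and the balls $\mathring B(v,\epsilon)$ are pairwise disjoint within a Delone set. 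This cocycle-type identity $g_{V'\to V''}\circ g_{V\to V'}=g_{V\to V''}$ is exactly what is needed to check that a point of $\widehat{{\rm Del}}_{(r,R)}^{(n)}(\RM^d)$ lying in two basic neighborhoods lies in a third contained in both, using the corresponding refinement property already established for the sets $U_M^\epsilon(\Ll,V)$ downstairs.

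Next I would show $p$ is continuous and open by noting that, essentially by construction, $p$ maps $U_M^\epsilon(\Ll,V,\chi_V)$ \emph{bijectively} onto $U_M^\epsilon(\Ll,V)$: surjectivity is the definition~\eqref{Eq:BNeigh}, and injectivity follows because the third coordinate $g\circ\chi_V$ is \emph{determined} by $(\Ll',V')$ once $\chi_V$ is fixed, the map $g$ being canonical. Hence $p$ restricted to each basic open set is a continuous open bijection onto a basic open set of ${\rm Del}_{(r,R)}^{(n)}(\RM^d)$, i.e.\ a homeomorphism onto its (open) image. This simultaneously gives continuity, openness, and the local-homeomorphism property.

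Finally, for the evenly-covered / covering property: fix $(\Ll,V)\in{\rm Del}_{(r,R)}^{(n)}(\RM^d)$ and choose $\epsilon<r/2$ and $M$ large enough (so that Proposition~\ref{Pro:G} applies and $V\subset B(0,M)$). I claim $U_M^\epsilon(\Ll,V)$ is evenly covered: its preimage under $p$ is the disjoint union $\bigsqcup_{\chi_V\in\mathcal{S}(\Ii_n,V)} U_M^\epsilon(\Ll,V,\chi_V)$, and each summand maps homeomorphically onto $U_M^\epsilon(\Ll,V)$ by the previous paragraph. Disjointness of the summands holds because if $(\Ll',V',\psi)$ lay in both $U_M^\epsilon(\Ll,V,\chi_V)$ and $U_M^\epsilon(\Ll,V,\chi_V')$ then $\psi=g\circ\chi_V=g\circ\chi_V'$ with the same $g$ (determined by $(\Ll',V')$ and the pair $(\Ll,V)$), forcing $\chi_V=\chi_V'$. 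That $p$ is surjective is clear since every $(\Ll,V)$ admits at least one ordering. I expect the main obstacle to be the bookkeeping in the cocycle identity $g_{V'\to V''}\circ g_{V\to V'}=g_{V\to V''}$ under composition of neighborhoods of possibly different radii and centers: one must be careful that the ``nearest point'' correspondences remain well-defined single-valued bijections along a chain, which ultimately rests on the uniform discreteness bound $r$ and the standing restriction $\epsilon<r/2$; once this is in hand, everything else is a routine transcription of the already-proven facts about ${\rm Del}_{(r,R)}^{(n)}(\RM^d)$ in Corollary~\ref{Cor:Cover1}.
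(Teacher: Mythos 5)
Your proposal is correct and follows essentially the same route as the paper: the basis axiom is verified via the compatibility (cocycle) identity $\tilde g_j = \tilde g \circ g_j$ for the canonical bijections of Proposition~\ref{Pro:G}, and the $n!$-cover structure follows from the fact that each basic set upstairs maps bijectively onto a basic set downstairs while the fiber over $(\Ll,V)$ is $\mathcal{S}(\Ii_n,V)$. The one point you flag as delicate — that the composed nearest-point correspondences agree with the direct one — indeed closes exactly as you suggest, since two points of the same $r$-uniformly discrete set within distance less than $2r$ of each other must coincide.
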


\proof Consider $(\Ll_1,V_1)$ and $(\Ll_2,V_2)$ from ${\rm Del}_{(r,R)}^{(n)}(\RM^d)$ and two overlapping neighborhoods $U_{M_i}^{\epsilon_i}(\Ll_i,V_i,\chi_{V_i})$, $i=1,2$. Let $(\Ll,V,\chi_V)$ be an element from their intersection and let $g_j:V_j \to V$, $j = 1,2$, be the corresponding canonical bijections defined as in Proposition~\ref{Pro:G}. Then necessarily
\begin{equation}\label{Eq:FsubV}
g_1 \circ \chi_{V_1} = g_2 \circ \chi_{V_2} = \chi_V,
\end{equation}  
which is a consequence of the very definition~\eqref{Eq:BNeigh} of the open neighborhoods.
Clearly, $(\Ll,V) \in U_{M_1}^{\epsilon_1}(\Ll_1,V_1) \cap U_{M_2}^{\epsilon_2}(\Ll_2,V_2)$ and, since the subsets defined in Eq.~\eqref{Eq:TopoBase3} form a neighborhood base, there exists a whole neighborhood $U_M^\epsilon(\Ll,V)$ contained in this intersection. 

We claim that
\begin{equation}\label{Eq:Inclusion}
U_M^\epsilon (\Ll,V,\chi_{V}) \subset U_{M_1}^{\epsilon_1}(\Ll_1,V_1,\chi_{V_1}) \cap U_{M_2}^{\epsilon_2} (\Ll_2,V_2,\chi_{V_2}),
\end{equation}
where the neighborhood on the left is as in Eq.~\eqref{Eq:BNeigh}, with $\chi_V$ as in Eq.~\ref{Eq:FsubV}. Indeed, take $(\tilde \Ll, \tilde V)$ from $U_M^\epsilon(\Ll,V)$ and let $\tilde g_j : V_j \to \tilde V$, $j=1,2$ and $\tilde g: V \to \tilde V$ be the corresponding canonical bijection defined as in Proposition~\ref{Pro:G}. Then $\tilde g_j = \tilde g \circ g_j$, $j=1,2$, and, as a consequence,
\begin{equation*}
\tilde g_j\circ\chi_{V_j} = \tilde g_j \circ \chi_{V_j} = \tilde g \circ g_j \circ \chi_{V_j} = \tilde g \circ \chi_V, \quad j = 1,2.
\end{equation*}
In other words,
\begin{equation*}
(\tilde \Ll,\tilde V,\tilde g \circ \chi_V) \in U_{M_1}^{\epsilon_1}(\Ll_1,V_1,\chi_{V_1}) \cap U_{M_2}^{\epsilon_2}(\Ll_2,V_2,\chi_{V_2}),
\end{equation*}
and the statement from Eq.~\eqref{Eq:Inclusion} follows. Lastly, if ${\rm y}_n $ is the projection $(\Ll,V,\chi_V) \mapsto (\Ll,V)$, then ${\rm y}_n^{-1}\big (U_M^\epsilon(\Ll,V) \big )$ consist of $n!$ disjoint open subsets of $\widehat{{\rm Del}}_{(r,R)}^{(n)}(\RM^d)$. \qed

\begin{proposition} The order cover is path connected.
\end{proposition}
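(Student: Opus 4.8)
The plan is to show that the order cover $\widehat{{\rm Del}}_{(r,R)}^{(n)}(\RM^d)$ is path connected by exhibiting, for any two points $(\Ll_0,V_0,\chi_0)$ and $(\Ll_1,V_1,\chi_1)$, an explicit continuous path joining them. The key observation is that the permutation group $\Ss_n$ acts on the cover via deck transformations, and this action is realized by \emph{geometric motions} of the underlying configuration — exchanging two points of $V$ by rigidly moving them along loops in $\RM^d$ — rather than by a discrete relabeling. So path connectedness will follow from two ingredients: (a) the base space ${\rm Del}_{(r,R)}(\RM^d)$ together with the subset relations is path connected in a way that lets us move $(\Ll,V)$ around, and (b) the deck group $\Ss_n$ is generated by transpositions, each of which can be realized by a loop in the base that lifts to a path connecting the two relevant sheets.

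**Steps.** First I would reduce to the case $\Ll_0 = \Ll_1$ and $V_0 = V_1$. For this, recall that the full space of patterns $(\Cc(\RM^d),D)$ is path connected — indeed, any Delone set can be continuously deformed to any other inside $\Cc(\RM^d)$, e.g. through the trivial pattern or by a straight-line homotopy of the point positions in a way that stays within ${\rm Del}_{(r,R)}(\RM^d)$ after possibly enlarging $R$; more carefully, it suffices to use that ${\rm Del}_{(r,R)}(\RM^d)$ is the continuous image of a path-connected parameter space of point configurations. Along such a deformation $\Ll_t$ we can carry a chosen subset $V_t \subset \Ll_t$ continuously (the points of $V$ move continuously with the deformation), and the ordering $\chi_t := g_t \circ \chi_0$ is determined by the canonical bijections $g_t$ of Proposition~\ref{Pro:G} as long as we proceed in small enough steps; this produces a path in $\widehat{{\rm Del}}_{(r,R)}^{(n)}(\RM^d)$ from $(\Ll_0,V_0,\chi_0)$ to $(\Ll_1,V_1,\chi_1')$ for \emph{some} ordering $\chi_1'$ of $V_1$.

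Second, it remains to connect $(\Ll_1,V_1,\chi_1')$ to $(\Ll_1,V_1,\chi_1)$, i.e. to move between two sheets over the same base point $(\Ll_1,V_1)$. Since $\chi_1$ and $\chi_1'$ differ by an element of $\Ss_n$, and $\Ss_n$ is generated by transpositions, it is enough to handle a single transposition of two points $v,w \in V_1$. For this I would construct a loop in ${\rm Del}_{(r,R)}^{(n)}(\RM^d)$ based at $(\Ll_1,V_1)$ that physically interchanges $v$ and $w$: move $v$ and $w$ continuously along a small loop in $\RM^d$ (around each other, keeping all mutual distances above $r$ — possible by choosing the loop small and working locally, since $\Ll_1$ is Delone so $v,w$ and all other points have definite separations), returning the \emph{set} $V_1$ and the pattern $\Ll_1$ to themselves but swapping the roles of $v$ and $w$. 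Lifting this loop through the covering map and tracking the canonical bijections $g$ shows that the endpoint ordering is the transposition $\chi_1' \circ (v\ w)$ applied to the start, giving a path between the two desired sheets. Concatenating finitely many such paths realizes any permutation, completing the argument.

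**Main obstacle.** The delicate point is the first step: making precise that $(\Ll,V)$ can be moved continuously in ${\rm Del}_{(r,R)}^{(n)}(\RM^d)$ between \emph{any} two configurations while staying inside the fixed Delone class. A straight-line homotopy of point positions need not preserve the uniform discreteness and relative density constants, so one must either allow the constants to vary (working in $\bigcup_{(r,R)}{\rm Del}_{(r,R)}$, which is harmless if the final statement is read in that union) or choose the deformation path more cleverly — for instance, first spreading the pattern out, rearranging, then compressing — so that $(r,R)$ is respected throughout. I expect the cleanest route is to deform through a common reference pattern (say a very sparse periodic lattice containing room for $n$ marked points), for which path connectedness is transparent, and to note that the covering map is a local homeomorphism so small base-paths always lift; the geometric realization of transpositions in the second step is then routine given the Delone separation bounds.
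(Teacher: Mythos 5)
Your proof is correct and follows essentially the same route as the paper: the heart of both arguments is that a transposition of two points of $V$ can be realized by a loop in ${\rm Del}_{(r,R)}^{(n)}(\RM^d)$ that physically exchanges the two points, so that the lift of this loop joins the sheets $(\Ll,V,\chi_V)$ and $(\Ll,V,\chi_V\circ s)$, and transpositions generate $\Ss_n$. Your first step (joining distinct base points while staying inside a fixed Delone class) is actually more than the paper supplies --- its proof only establishes transitivity of the monodromy on a single fiber and tacitly assumes the base is path connected --- and the one caveat you share with the paper is that the exchange loop requires $d\ge 2$, since in $\RM^1$ two tracked points of a Delone set can never swap their order.
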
 
\proof Take a continuous path 
\begin{equation*}
[0,1] \ni t \xmapsto{\sigma} (\Ll_t,V_t) \in {\rm Del}_{(r,R)}^{(n)}(\RM^d),
\end{equation*} 
where just two points move and switch positions. Assume that these two points are inside $V_t$. Let $s : V \rightarrow V$ be the permutation that encodes the exchange of the two points. Then, $(\Ll,V,\chi_V)$ and $(\Ll,V,\chi_V \circ s)$ are  connected by ${\rm y}_n^{-1}(\sigma[0,1])$. By further exchanging points of $V$, we can connect $(\Ll,V,\chi_V)$ with any other $(\Ll,V,\chi'_V)$ from the order cover.\qed

\begin{remark}{\rm By combining the order cover and the map introduced in Corollary~\ref{Cor:Cover1}, we obtain the covering map 
$$
\mathfrak a_n : \widehat{{\rm Del}}_{(r,R)}^{(n)}(\RM^d) \to {\rm Del}_{(r,R)}(\RM^d), \quad \mathfrak a_n(\Ll,V,\chi_V) :=\Ll,
$$ 
which we call the $n$-body covering map of the space of Delone sets.
}$\Diamond$
\end{remark}

\begin{remark}\label{Re:Not1}{\rm We now have the means to simplify the notation. The elements of $\widehat{{\rm Del}}_{(r,R)}^{(n)}(\RM^d)$ will be denoted by one letter, such as $\xi$, $\zeta$, etc.. If $\xi = (\Ll,V,\chi_V)$ and if any of those three pieces of information needs to be specified, we will do so using $\Ll_\xi$, $V_\xi$ and $\chi_\xi$, as exemplified below.
}$\Diamond$
\end{remark}

The topological space $\widehat{{\rm Del}}_{(r,R)}^{(n)}(\RM^d)$ inherits two important group actions:

\begin{proposition} The group action of $\RM^d$ by translations, defined in Eq.~\eqref{Eq:RDGroupAction}, extends to a group action on the many-body covers by
\begin{equation*}
\hat {\mathfrak t}_x (\xi ) := \Big (\Ll_\xi-x, V_\xi-x, \mathfrak t_{x} \circ \chi_\xi \Big ).
\end{equation*}
\end{proposition}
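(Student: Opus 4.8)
The claim is that the formula $\hat{\mathfrak t}_x(\xi) := (\Ll_\xi - x, V_\xi - x, \mathfrak t_x \circ \chi_\xi)$ defines a group action of $\RM^d$ on $\widehat{{\rm Del}}_{(r,R)}^{(n)}(\RM^d)$ extending the translation action $\mathfrak t$ of Eq.~\eqref{Eq:RDGroupAction}. There are really three things to verify: (1) the formula lands back in $\widehat{{\rm Del}}_{(r,R)}^{(n)}(\RM^d)$, i.e.\ it is well defined; (2) it satisfies the group-action axioms $\hat{\mathfrak t}_0 = \mathrm{Id}$ and $\hat{\mathfrak t}_x \circ \hat{\mathfrak t}_y = \hat{\mathfrak t}_{x+y}$; and (3) it is continuous (so that it is an action by homeomorphisms, matching the statement of Eq.~\eqref{Eq:RDGroupAction}), and it lifts $\mathfrak t$ through the covering maps.

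\textbf{Step 1 (well-definedness).} I would first note that $\mathfrak t_x(\Ll_\xi) = \Ll_\xi - x$ is again an $(r,R)$-Delone set since ${\rm Del}_{(r,R)}(\RM^d)$ is translation-invariant, and that $V_\xi - x \in \Kk_n(\Ll_\xi - x)$ because translation by $-x$ is a bijection $\Ll_\xi \to \Ll_\xi - x$ carrying the $n$-element subset $V_\xi$ to the $n$-element subset $V_\xi - x$. Finally $\mathfrak t_x \circ \chi_\xi$ is the composition of the bijection $\chi_\xi : \Ii_n \to V_\xi$ with the bijection $\mathfrak t_x : V_\xi \to V_\xi - x$, hence a bijection $\Ii_n \to V_\xi - x$, so $\mathfrak t_x \circ \chi_\xi \in \mathcal S(\Ii_n, V_\xi - x)$. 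Therefore $\hat{\mathfrak t}_x(\xi) \in \widehat{{\rm Del}}_{(r,R)}^{(n)}(\RM^d)$.

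\textbf{Step 2 (group axioms) and Step 3 (continuity and lifting).} For $x = 0$ the map $\mathfrak t_0$ is the identity on $\RM^d$, so each of the three components is unchanged and $\hat{\mathfrak t}_0 = \mathrm{Id}$. For the composition law, one computes componentwise: the Delone-set component gives $(\Ll_\xi - y) - x = \Ll_\xi - (x+y)$ using that $\mathfrak t$ is itself a group action (Eq.~\eqref{Eq:RDGroupAction}); the subset component is the same calculation applied to $V_\xi$; and the ordering component is $\mathfrak t_x \circ (\mathfrak t_y \circ \chi_\xi) = (\mathfrak t_x \circ \mathfrak t_y) \circ \chi_\xi = \mathfrak t_{x+y} \circ \chi_\xi$ by associativity of composition and the group law for translations. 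Hence $\hat{\mathfrak t}_x \circ \hat{\mathfrak t}_y = \hat{\mathfrak t}_{x+y}$. For continuity, the cleanest route is to use the neighborhood base from Proposition~\ref{Pro:Cover2}: given $\xi = (\Ll,V,\chi_V)$, a basic neighborhood $U_M^\epsilon(\hat{\mathfrak t}_x(\xi))$ of the image pulls back, after adjusting $M$ and $\epsilon$ using the (uniform) continuity of translation on the compact space of patterns and on $\Kk(\RM^d)$, to a basic neighborhood of $\xi$ on which $\hat{\mathfrak t}$ agrees with the prescribed lift of $\mathfrak t$; here one uses that for $(\Ll',V')$ near $(\Ll,V)$ the canonical bijection $g : V \to V'$ of Proposition~\ref{Pro:G} is compatible with translation, i.e.\ translating and then taking the canonical bijection agrees with taking the canonical bijection and then translating. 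Joint continuity in $(x,\xi)$ follows from the same estimate together with continuity of $\mathfrak t : \RM^d \to {\rm Homeo}({\rm Del}_{(r,R)}(\RM^d))$. Finally, by construction $\mathfrak a_n \circ \hat{\mathfrak t}_x = \mathfrak t_x \circ \mathfrak a_n$ and $\mathrm y_n \circ \hat{\mathfrak t}_x$ equals the translation lift on ${\rm Del}_{(r,R)}^{(n)}(\RM^d)$, so $\hat{\mathfrak t}$ genuinely extends $\mathfrak t$ along the covering maps.

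\textbf{Main obstacle.} The only non-formal point is continuity, and specifically the interplay between the canonical bijections $g$ of Proposition~\ref{Pro:G} and translations: one must check that for $\epsilon < r/2$ and $(\Ll',V') \in U_M^\epsilon(\Ll,V)$ the diagram relating $g : V \to V'$, $g_x : V - x \to V' - x$, and the translations $\mathfrak t_x$ commutes, so that the third component of $\hat{\mathfrak t}_x$ transforms correctly under the transition maps of the cover. This is what makes $\hat{\mathfrak t}_x$ a well-defined \emph{continuous} lift rather than merely a set-theoretic one; everything else is a routine unwinding of definitions.
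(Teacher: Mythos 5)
Your proposal is correct and matches the paper's argument: the only substantive point in both is that the canonical bijection $g$ of Proposition~\ref{Pro:G} is compatible with translations, so that the third component $\mathfrak t_x\circ\chi_\xi$ transforms correctly under the transition maps of the order cover. The paper merely streamlines the continuity check by first reducing to $|x|<r/2$ (enough, since such $x$ generate $\RM^d$ and the group law is formal), where $g:V_\xi\to V_\xi-x$ is literally the shift; your direct treatment of general $x$ via adjusted $M,\epsilon$ is an equivalent, slightly longer route.
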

\proof It is enough to establish the validity of the statement for $x$ in an open ball of the origin. For $|x|<r/2$, the canonical map $g:V_\xi \to \mathfrak t_x(V_\xi)$ coincides with the shift $V_\xi \mapsto V_\xi-x$ and, as such, $\hat{\mathfrak t}_x$ is a homeomorphism in such cases. \qed.

\begin{proposition}\label{Pro:Deck} If $s \in \mathcal{S}_n$ is a permutation, then
\begin{equation*}
s\mapsto\Lambda_{s},\quad \Lambda_s (\xi) = (\Ll_\xi,V_\xi,\chi_\xi \circ s^{-1})
\end{equation*}
defines an isomorphism of $\mathcal{S}_{n}$ onto  the group of deck transformations of the order cover
$\widehat{\rm Del}^{(n)}_{(r,R)}(\RM^d)\to {\rm Del}^{(n)}_{(r,R)}(\RM^d)$.
\end{proposition}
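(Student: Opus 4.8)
The statement has two parts: first, that $s \mapsto \Lambda_s$ is a well-defined group homomorphism $\mathcal{S}_n \to \mathrm{Homeo}\big(\widehat{\rm Del}^{(n)}_{(r,R)}(\RM^d)\big)$ whose image consists of deck transformations of the order cover ${\rm y}_n : \widehat{\rm Del}^{(n)}_{(r,R)}(\RM^d)\to {\rm Del}^{(n)}_{(r,R)}(\RM^d)$, and second, that this homomorphism is an isomorphism onto the \emph{full} group of deck transformations. The plan is to verify these in sequence, using the explicit neighborhood base \eqref{Eq:BNeigh} for the topology throughout.

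First I would check that each $\Lambda_s$ is a homeomorphism covering the identity. It obviously maps fibers of ${\rm y}_n$ to themselves, since it changes only the ordering $\chi_\xi$ and leaves $(\Ll_\xi, V_\xi)$ untouched. To see continuity (indeed openness), note that for any basic neighborhood $U_M^\epsilon(\Ll,V,\chi_V)$ as in \eqref{Eq:BNeigh}, precomposition of orderings with a fixed permutation commutes with the canonical bijections $g$ of Proposition~\ref{Pro:G} — because $g$ acts on orderings by postcomposition, $g \circ (\chi_V \circ s^{-1}) = (g \circ \chi_V) \circ s^{-1}$ — so $\Lambda_s\big(U_M^\epsilon(\Ll,V,\chi_V)\big) = U_M^\epsilon(\Ll,V,\chi_V \circ s^{-1})$, again a basic open set. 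Hence $\Lambda_s$ is an open continuous bijection with inverse $\Lambda_{s^{-1}}$, i.e.\ a homeomorphism. The homomorphism property $\Lambda_{s_1} \circ \Lambda_{s_2} = \Lambda_{s_1 s_2}$ is the computation $(\chi_\xi \circ s_2^{-1}) \circ s_1^{-1} = \chi_\xi \circ (s_1 s_2)^{-1}$, and injectivity is immediate: if $\Lambda_s = \mathrm{id}$ then $\chi_\xi \circ s^{-1} = \chi_\xi$ for all $\xi$, forcing $s = e$. So $s \mapsto \Lambda_s$ is an injective homomorphism landing in the deck transformation group (every $\Lambda_s$ fixes ${\rm y}_n$).

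The remaining and only substantive point is \textbf{surjectivity} onto the full deck group. Here I would invoke the standard fact that for a covering space, the deck transformation group acts freely on each fiber, and that a deck transformation is determined by its value at a single point once the base is connected — which holds since the order cover is path connected (proved just above in the excerpt) and, being a covering of a reasonable space, locally path connected. Fix a base point $\xi_0 = (\Ll,V,\chi_V)$. Its fiber ${\rm y}_n^{-1}(\Ll,V)$ consists of the $n!$ triples $(\Ll, V, \chi_V \circ s^{-1})$, $s \in \mathcal{S}_n$, and the $\Lambda_s$ already permute this fiber simply transitively. Given an arbitrary deck transformation $\Phi$, it sends $\xi_0$ to some element of this same fiber, say $\Lambda_{s_0}(\xi_0)$; then $\Lambda_{s_0}^{-1} \circ \Phi$ is a deck transformation fixing $\xi_0$, hence (by the rigidity of deck transformations over a connected base) equal to the identity, so $\Phi = \Lambda_{s_0}$. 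Thus $s \mapsto \Lambda_s$ is onto, completing the isomorphism.

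\textbf{Anticipated obstacle.} The main thing to be careful about is not the combinatorics but ensuring the covering-space rigidity theorem genuinely applies: one must confirm that ${\rm y}_n$ is an honest covering map (established in Proposition~\ref{Pro:Cover2}) and that the total space is connected and locally path connected, so that a deck transformation is pinned down by its effect on one point. Local path connectedness follows because the basic neighborhoods \eqref{Eq:BNeigh} are homeomorphic, via ${\rm y}_n$, to the sets $U_M^\epsilon(\Ll,V)$ in ${\rm Del}^{(n)}_{(r,R)}(\RM^d)$, which in turn (through Corollary~\ref{Cor:Cover1}) reflect the local structure of ${\rm Del}_{(r,R)}(\RM^d)$; one should note that these balls in the Hausdorff metric on finite point sets are path connected, e.g.\ by linearly interpolating the finitely many points of $V$ while keeping $\Ll$ fixed, which stays inside the neighborhood for small enough displacements. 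With connectedness and local path connectedness in hand, the deck-group rigidity is classical and the proof closes.
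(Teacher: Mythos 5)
Your proof is correct and follows essentially the same route as the paper: verify that $\Lambda_s$ permutes the basic neighborhoods $U_M^\epsilon(\xi)$ (hence is a homeomorphism), that $s\mapsto\Lambda_s$ is an injective homomorphism into the deck group, and then obtain surjectivity from the path-connectedness of the order cover via standard covering-space theory. The paper phrases the last step as a cardinality bound ($|{\rm Deck}|\le n!$) while you phrase it as rigidity of deck transformations on a fiber, but these are the same underlying fact about free deck actions on connected covers.
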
 
\proof From its very definition, $\Lambda_s$ acts fiber-wise and $\Lambda$ is clearly a group action. Furthermore,
\begin{equation*}
\Lambda_s\big (U_M^\epsilon(\xi)\big) = U_M^\epsilon\big (\Lambda_s(\xi)\big),
\end{equation*}  
which means $\Lambda_s$ are continuous maps with continuous inverses, hence homeomorphisms. The map $s\mapsto \Lambda_{s}$ is injective and since the order of the group of deck transformations is at most $n!$, it is surjective as well.\qed

\subsection{The algebra of finite-range Galilean invariant derivations}
\label{Sec:AFRGI}

We are now ready to specify the most general Hamiltonians expected to come out of an actual laboratory investigation. The data entering the Hamiltonian coefficients generate the sets
\begin{equation}\label{Eq:KeySpace}
\widehat{{\rm Del}}_{(r,R)}^{(n,m)}(\RM^d)=\Big \{ \big (\xi,\zeta \big ) \in \widehat{{\rm Del}}_{(r,R)}^{(n)}(\RM^d) \times \widehat{{\rm Del}}_{(r,R)}^{(m)}(\RM^d), \ \Ll_\xi = \Ll_\zeta \Big \},
\end{equation}
where $n$ and $m$ sample $\NM^\times$. They can and will be naturally equipped with the topology inherited from $\widehat{{\rm Del}}_{(r,R)}^{(n)}(\RM^d) \times \widehat{{\rm Del}}_{(r,R)}^{(m)}(\RM^d)$. 

\vspace{0.2cm}

At this point, we introduce more efficient notation, which will be adopted throughout from now on:
\begin{enumerate}[\ $\circ$]
\item We define the correspondence
\begin{equation*}
\widehat{{\rm Del}}_{(r,R)}^{(n)}(\RM^d) \ni \xi \mapsto \bm a(\xi) \in {\rm CAR}(\Ll_\xi), \quad \bm a(\xi) = a_{V_\xi}(\chi_\xi),
\end{equation*}
for all $n \in \NM^\times$. It will be useful to append a point $\o$ and declare that $\bm a(\o)=0$.

\item For $(\xi,\zeta) \in \widehat{{\rm Del}}_{(r,R)}^{(n,m)}(\RM^d)$, we write 
\begin{equation*}
\xi \vee \zeta = (\Ll_\xi=\Ll_\zeta, \ V_\xi \cup V_\zeta, \ \chi_\xi \vee \chi_\zeta) \in \widehat{{\rm Del}}_{(r,R)}^{(n+m)}(\RM^d)
\end{equation*}
if $V_\xi \cap V_\zeta = \emptyset$, and $\xi \vee \zeta = \o$ otherwise.

\item For $(\xi,\zeta) \in \widehat{{\rm Del}}_{(r,R)}^{(n,m)}(\RM^d)$, we write $\zeta \leq \xi$ if $V_\zeta \subseteq V_\xi$ and $\chi_\xi=\chi_\zeta \vee \chi_\Gamma$, for some order $\chi_\Gamma$ of $\Gamma = V_\xi \setminus V_\zeta$.

\item For $(\xi,\zeta) \in \widehat{{\rm Del}}_{(r,R)}^{(n,m)}(\RM^d)$ such that $\zeta \leq \xi$, we denote by $\xi \setminus \zeta$ the unique element of $\widehat{{\rm Del}}_{(r,R)}^{(n-m)}(\RM^d)$ with the property that $\zeta \vee (\xi \setminus \zeta) = \xi$.

\item We introduce the covering map
$$
\mathfrak b_{n}:\widehat{{\rm Del}}_{(r,R)}^{(n,n)}(\RM^d) \to {\rm Del}_{(r,R)}(\RM^d), \quad \mathfrak b_n(\xi,\zeta) : = \Ll_\xi = \Ll_\zeta.
$$
\end{enumerate}

\vspace{0.2cm}

Since our focus is on GI-Hamiltonians, we will mainly be dealing with the spaces $\widehat{{\rm Del}}_{(r,R)}^{(n,n)}(\RM^d)$. Below, we list several obvious but important properties of these spaces.

\begin{proposition} The following statements hold:
\begin{enumerate}[{\rm \ 1)}]
\item The group $\RM^d$ acts on $\widehat{{\rm Del}}_{(r,R)}^{(n,n)}(\RM^d)$ by homeomorphisms via 
\begin{equation}\label{Eq:RdAction}
x \mapsto \hat{\mathfrak t}_x \times \hat{\mathfrak t}_x.
\end{equation}
This action is continuous, free and proper, and will be denoted by the same symbol $\hat{\mathfrak t}$ to ease the notation.
\item The map
\begin{equation}\label{Eq:Un}
\mathfrak u_{n} : \widehat{{\rm Del}}_{(r,R)}^{(n,n)}(\RM^d) \to \widehat{{\rm Del}}_{(r,R)}^{(n,n)}(\RM^d), \quad \mathfrak u_{n}(\xi,\zeta)=(\zeta,\xi),
\end{equation}
is a homeomorphism.
\item The map
\begin{equation*}
\mathfrak{v}_{n} : \widehat{{\rm Del}}_{(r,R)}^{(n,n)}(\RM^d) \to \RM^d, \quad \mathfrak{v}_{n}(\xi,\zeta)=\chi_\xi(1),
\end{equation*}
is continuous.

\end{enumerate}
\end{proposition}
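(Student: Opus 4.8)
The plan is to verify the three asserted properties by unwinding the definitions of the topology on $\widehat{{\rm Del}}_{(r,R)}^{(n,n)}(\RM^d)$, using the neighborhood base \eqref{Eq:BNeigh} together with the constraint $\Ll_\xi=\Ll_\zeta$ that cuts out $\widehat{{\rm Del}}_{(r,R)}^{(n,n)}(\RM^d)$ inside the product $\widehat{{\rm Del}}_{(r,R)}^{(n)}(\RM^d)\times\widehat{{\rm Del}}_{(r,R)}^{(n)}(\RM^d)$. Each item is essentially a continuity/openness check against this base, so I would treat the three parts in sequence.

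For part 1), I would first note that $\hat{\mathfrak t}_x$ on each factor is a homeomorphism by the preceding proposition on the $\RM^d$-action on $\widehat{{\rm Del}}_{(r,R)}^{(n)}(\RM^d)$, and since it acts diagonally on the $\Ll$-component it preserves the diagonal condition $\Ll_\xi=\Ll_\zeta$; hence $\hat{\mathfrak t}_x\times\hat{\mathfrak t}_x$ restricts to a self-homeomorphism of $\widehat{{\rm Del}}_{(r,R)}^{(n,n)}(\RM^d)$, and $x\mapsto\hat{\mathfrak t}_x$ being a group action on each factor gives it on the product. Continuity of the joint map $\RM^d\times\widehat{{\rm Del}}_{(r,R)}^{(n,n)}(\RM^d)\to\widehat{{\rm Del}}_{(r,R)}^{(n,n)}(\RM^d)$ follows by checking that the preimage of a basic open set $U_M^\epsilon(\xi,V_\xi,\chi_\xi)\times U_M^\epsilon(\zeta,V_\zeta,\chi_\zeta)$ intersected with the diagonal is open, which reduces to the already-established continuity of $\hat{\mathfrak t}$ on the single-body covers and the fact that for small $|x|$ the canonical bijection $g$ of Proposition~\ref{Pro:G} is just the translation. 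Freeness is immediate since $\hat{\mathfrak t}_x(\Ll)=\Ll-x$ has no fixed points for $x\neq 0$ on any nonempty Delone set. For properness I would use that the base $({\rm Del}_{(r,R)}(\RM^d),D)$ is compact with free proper $\RM^d$-action on its canonical transversal data, or more directly argue that $V_\xi$ is relatively dense and uniformly discrete so that the set of $x$ moving a compact set of configurations back into a compact set is compact; alternatively, invoke that the map $(\xi,x)\mapsto(\xi,\hat{\mathfrak t}_x\xi)$ is a closed embedding because the orbit map to ${\rm Del}_{(r,R)}(\RM^d)$ has the lattice $\Ll_\xi$ as a discrete fiber and the $\RM^d$-orbit of a Delone set meets any compact neighborhood in a bounded set.

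Parts 2) and 3) are short. For 2), $\mathfrak u_n$ is an involution, so it suffices to show it is continuous; but swapping the two factors sends $U_M^\epsilon(\xi,V_\xi,\chi_\xi)\times U_M^\epsilon(\zeta,V_\zeta,\chi_\zeta)$ restricted to the diagonal onto $U_M^\epsilon(\zeta,V_\zeta,\chi_\zeta)\times U_M^\epsilon(\xi,V_\xi,\chi_\xi)$ restricted to the diagonal, which is again basic open, so $\mathfrak u_n$ and $\mathfrak u_n^{-1}=\mathfrak u_n$ are continuous, hence $\mathfrak u_n$ is a homeomorphism. For 3), continuity of $\mathfrak v_n(\xi,\zeta)=\chi_\xi(1)$ follows because on a basic neighborhood $U_M^\epsilon(\xi,V_\xi,\chi_\xi)$ the point $\chi_{\xi'}(1)=g(\chi_\xi(1))$ lies in the open ball $\mathring B(\chi_\xi(1),\epsilon)$ by Proposition~\ref{Pro:G} (the canonical bijection moves each point by less than $\epsilon$), so $\mathfrak v_n$ is continuous at every point with modulus of continuity controlled by $\epsilon$; equivalently $\mathfrak v_n$ factors as $\widehat{{\rm Del}}_{(r,R)}^{(n,n)}(\RM^d)\to\widehat{{\rm Del}}_{(r,R)}^{(n)}(\RM^d)\to\RM^d$, the first map being the continuous projection to the first factor and the second being the evaluation $\xi\mapsto\chi_\xi(1)$, whose continuity is exactly the ball estimate just described.

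The only genuinely nontrivial point is the properness claim in part 1); the rest is bookkeeping against the neighborhood base. The cleanest route for properness is probably to exhibit, for any compact $K\subseteq\widehat{{\rm Del}}_{(r,R)}^{(n,n)}(\RM^d)$, a bound on $\{x\in\RM^d:\hat{\mathfrak t}_x K\cap K\neq\emptyset\}$: such an $x$ must carry some $V_\xi$ (a subset of a uniformly discrete, relatively dense set, with diameter bounded on $K$ since $K$ is compact and $\mathfrak v_n$ and the analogous coordinate maps are continuous) to another such set within Hausdorff distance governed by $K$, which forces $|x|$ to be bounded because a relatively dense set cannot be translated by a large vector and still overlap its original copy in a fixed bounded window; combined with the fact that the diagonal condition is $\RM^d$-invariant, this gives properness of the action on $\widehat{{\rm Del}}_{(r,R)}^{(n,n)}(\RM^d)$. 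I would then record that freeness plus properness is exactly what is needed downstream to realize the quotient by $\hat{\mathfrak t}$ as the unit space of the groupoid $\Gg_N$.
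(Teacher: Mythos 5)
The paper records this proposition without proof (it is introduced as a list of ``obvious but important properties''), so your contribution is to supply the details; your overall structure --- checking everything against the neighborhood base \eqref{Eq:BNeigh}, and reducing properness to boundedness of the anchor coordinates on compact sets --- is sound, and parts 2) and 3) are fine as written. There is, however, one genuine error: your justification of freeness. You assert that $\hat{\mathfrak t}_x(\Ll)=\Ll-x$ has no fixed points for $x\neq 0$ on any nonempty Delone set, but this is false --- any periodic Delone set such as $\ZM^d$ satisfies $\Ll-x=\Ll$ for every lattice vector $x$, and such patterns are squarely within the scope of the paper (see Example~\ref{Ex:PointXi} and Section~\ref{Sec:SpecialCases}). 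Freeness of the action on $\widehat{{\rm Del}}_{(r,R)}^{(n,n)}(\RM^d)$ does hold, but for a different reason: a point of this space carries the \emph{finite} marked subset $V_\xi$ together with its ordering, and $\hat{\mathfrak t}_x(\xi)=\xi$ forces $\mathfrak t_x\circ\chi_\xi=\chi_\xi$, i.e.\ $\chi_\xi(1)-x=\chi_\xi(1)$, hence $x=0$. (Equivalently: a nonempty finite set cannot satisfy $V_\xi-x=V_\xi$ for $x\neq 0$; look at the point of $V_\xi$ extremal in the direction of $x$.) The conclusion survives, but the argument as you wrote it does not.

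On properness, your final paragraph contains the right idea but could be stated more crisply: since $\mathfrak v_n$ is continuous (your part 3)), it is bounded on any compact $K$, say by $C$; if $\hat{\mathfrak t}_x(\xi,\zeta)\in K$ and $(\xi,\zeta)\in K$ then $\chi_{\hat{\mathfrak t}_x\xi}(1)=\chi_\xi(1)-x$ gives $|x|\leq 2C$, so $\{x\in\RM^d:\hat{\mathfrak t}_xK\cap K\neq\emptyset\}$ is bounded, which is all that properness requires here. This replaces the somewhat vaguer appeal to relative density of the ambient Delone set, and it has the advantage of exhibiting the logical dependence of part 1) on part 3), so you may wish to reorder the items accordingly in your write-up.
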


As we have already seen in subsection~\ref{SubSec:GInvariance}, the space of orbits for the $\RM^d$ action supplies an effective instrument for defining coefficients with finite interaction range, which obey the constraints~\eqref{Eq:Galilean0} imposed by the Galilean invariance. Similarly:

\begin{definition}\label{Def:QuotientSp} We denote by $\big [ \widehat{{\rm Del}}_{(r,R)}^{(n,n)}(\RM^d) \big ]$ the space of orbits for the action~\eqref{Eq:RdAction} of $\RM^d$ and we equip this space with the quotient topology. We let  
\begin{equation*}
\hat {\rm q}_{n} : \widehat{{\rm Del}}_{(r,R)}^{(n,n)} (\RM^d) \to \big [\widehat{{\rm Del}}_{(r,R)}^{(n,n)} (\RM^d) \big ]
\end{equation*} 
be the quotient map, making $\widehat{{\rm Del}}_{(r,R)}^{(n,n)} (\RM^d)$ into a principal $\mathbb{R}^{d}$-bundle over $[\widehat{{\rm Del}}_{(r,R)}^{(n,n)} (\RM^d)]$.
\end{definition}

\begin{proposition} 
\label{Prop:leftright}
The rule
\begin{equation*}
s_1 \cdot [(\xi,\zeta)] \cdot s_2 = [(\Lambda_{s_1} \xi,\Lambda_{s_2}^{-1}\zeta)], \quad s_i \in \mathcal{S}_N,
\end{equation*}
defines commuting left and right actions of $\mathcal{S}_n$ on $\big [ \widehat{{\rm Del}}_{(r,R)}^{(n,n)}(\RM^d) \big ]$. These actions are by homeomorphisms.
\end{proposition}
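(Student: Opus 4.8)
The statement asserts three things about the rule $s_1\cdot[(\xi,\zeta)]\cdot s_2 = [(\Lambda_{s_1}\xi,\Lambda_{s_2}^{-1}\zeta)]$: that it is well defined on the quotient, that the two assignments are (commuting) group actions of $\Ss_n$, and that each group element acts as a homeomorphism. My plan is to dispatch these in that order, leaning on the facts already established in section~\ref{Sec:Cover}, in particular that the deck transformations $\Lambda_s$ (Proposition~\ref{Pro:Deck}) are homeomorphisms acting fiberwise, and that they commute with the translation action $\hat{\mathfrak t}_x$.

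First I would check well-definedness. A point of $\big[\widehat{{\rm Del}}_{(r,R)}^{(n,n)}(\RM^d)\big]$ is an $\RM^d$-orbit, so two representatives $(\xi,\zeta)$ and $(\xi',\zeta')$ lie in the same class iff $(\xi',\zeta') = \hat{\mathfrak t}_x(\xi,\zeta) = (\hat{\mathfrak t}_x\xi,\hat{\mathfrak t}_x\zeta)$ for some $x\in\RM^d$. The key point is that $\Lambda_{s}$ and $\hat{\mathfrak t}_x$ commute: from the explicit formulas $\Lambda_s(\Ll,V,\chi)=(\Ll,V,\chi\circ s^{-1})$ and $\hat{\mathfrak t}_x(\Ll,V,\chi)=(\Ll-x,V-x,\mathfrak t_x\circ\chi)$, one reads off $\Lambda_s\circ\hat{\mathfrak t}_x = \hat{\mathfrak t}_x\circ\Lambda_s$ (the permutation acts on the domain $\Ii_n$, the translation on the target $\RM^d$, so they never interfere). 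Hence $(\Lambda_{s_1}\xi',\Lambda_{s_2}^{-1}\zeta') = (\hat{\mathfrak t}_x\Lambda_{s_1}\xi,\hat{\mathfrak t}_x\Lambda_{s_2}^{-1}\zeta) = \hat{\mathfrak t}_x(\Lambda_{s_1}\xi,\Lambda_{s_2}^{-1}\zeta)$, which is in the same orbit as $(\Lambda_{s_1}\xi,\Lambda_{s_2}^{-1}\zeta)$. So the rule descends to the quotient.

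Next, the group-action axioms. For the left action: $s_1\cdot(s_1'\cdot[(\xi,\zeta)]) = s_1\cdot[(\Lambda_{s_1'}\xi,\zeta)] = [(\Lambda_{s_1}\Lambda_{s_1'}\xi,\zeta)] = [(\Lambda_{s_1 s_1'}\xi,\zeta)]$ since $\Lambda$ is a group homomorphism (Proposition~\ref{Pro:Deck}), and the identity of $\Ss_n$ gives $\Lambda_{\rm id}=\mathrm{Id}$. The right action is identical except the inverse makes $s\mapsto\Lambda_s^{-1}$ an anti-homomorphism, so $([(\xi,\zeta)]\cdot s_2)\cdot s_2' = [(\xi,\Lambda_{s_2'}^{-1}\Lambda_{s_2}^{-1}\zeta)] = [(\xi,\Lambda_{s_2 s_2'}^{-1}\zeta)] = [(\xi,\zeta)]\cdot(s_2 s_2')$, i.e.\ a genuine right action. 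Commutativity of the two is immediate: $(s_1\cdot[(\xi,\zeta)])\cdot s_2 = [(\Lambda_{s_1}\xi,\Lambda_{s_2}^{-1}\zeta)] = s_1\cdot([(\xi,\zeta)]\cdot s_2)$, because the left operation only touches the $\xi$-slot and the right only the $\zeta$-slot. Finally, for the topological statement: the map $(\xi,\zeta)\mapsto(\Lambda_{s_1}\xi,\Lambda_{s_2}^{-1}\zeta)$ is a homeomorphism of $\widehat{{\rm Del}}_{(r,R)}^{(n,n)}(\RM^d)$ (product of homeomorphisms), it commutes with the $\RM^d$-action, hence it descends to a continuous map on the quotient with continuous inverse supplied by $(s_1^{-1},s_2^{-1})$; thus each $s_1\cdot(-)\cdot s_2$ is a homeomorphism.

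\textbf{Expected main obstacle.} None of the steps is deep; the only thing requiring a moment's care is the bookkeeping around how $\Lambda_s$ interacts with passing to the quotient — specifically making the commutation $\Lambda_s\hat{\mathfrak t}_x = \hat{\mathfrak t}_x\Lambda_s$ explicit and invoking it correctly for well-definedness — together with keeping the left/right (homomorphism vs.\ anti-homomorphism via the inverse) convention straight so that the right action really is a right action. Everything else is a routine unwinding of the definitions in Remark~\ref{Re:Not1} and Proposition~\ref{Pro:Deck}.
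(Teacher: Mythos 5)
Your proposal is correct and follows exactly the paper's route: the paper's own proof is the one-line observation that the homeomorphisms $\Lambda_s$ and $\hat{\mathfrak t}_x$ commute, and your write-up simply makes explicit the well-definedness, action-axiom, and descent-to-quotient checks that this commutation immediately yields. No gaps.
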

\proof This is a consequence of the fact that the homeomorphisms $\Lambda_s$ and $\hat{\mathfrak t}_x$ all commute.\qed

\begin{definition}\label{Def:Z2Coeff} A seed for a continuous bi-equivariant $n$-coefficient is a compactly supported continuous function 
\begin{equation*}
\hat h_{n}: \big [\widehat{{\rm Del}}_{(r,R)}^{(n,n)} (\RM^d) \big ] \to \CM,
\end{equation*} 
such that
\begin{equation*}
\hat h_{n}\big (s_1 \cdot [(\xi,\zeta)]\cdot s_2 \big ) =  (-1)^{s_1}\,   \hat h_{n}\big ([(\xi,\zeta)] \big )\, (-1)^{s_2}
\end{equation*}
for all $s_1,s_2 \in \Ss_n$. The continuous bi-equivariant $n$-coefficient corresponding to such seed is the function
\begin{equation*}
h_{n} := \hat h_{n} \circ \hat{\rm q}_{n} : \widehat{{\rm Del}}_{(r,R)}^{(n,n)} (\RM^d)  \to \CM.
\end{equation*} 
\end{definition}

We are now in the position to formulate the most general class of physical Hamiltonians that can be mapped by an experimenter:

\begin{definition}\label{Def:H} The evaluation at $\Ll$ of the GI-Hamiltonian corresponding to a finite set $\{h_{n} = \overline{ h_{n} \circ \mathfrak u_{n}}\}$ of continuous bi-equivariant coefficients is
\begin{equation}\label{Eq:FinalH}
\begin{aligned}
H_\Ll = \sum_{n \in \NM^\times} \tfrac{1}{n!}  \sum_{(\xi,\zeta)\in \mathfrak b^{-1}_{n}(\Ll)}
  h_{n} (\xi,\zeta) \bm a^\ast(\xi) \bm a(\zeta).
\end{aligned}
\end{equation}
\end{definition}

\begin{remark}{\rm As we have seen in Proposition~\ref{Pro:Derivation}, the condition $h_{n} = \overline{ h_{n} \circ \mathfrak u_{n}}$, which is just Eq.~\eqref{Eq:SA} formulated in a proper setting, ensures that the derivations corresponding to $H_\Ll$ from Eq.~\eqref{Eq:FinalH} are in fact $\ast$-derivations.
}$\Diamond$
\end{remark}

We recall that derivations associated with these physical GI-Hamiltonians leave their common domain $\Dd(\Ll)$ invariant, hence they can be composed to generate the core algebra canonically associated with the dynamics of fermions on discrete lattices:

\begin{definition}\label{Def:GIHAlgebra} For $\Ll \in {\rm Del}_{(r,R)}(\RM^d)$, we define $\dot\Sigma(\Ll)$ to be the sub-algebra of ${\rm End}\big (\Dd(\Ll)\big )$ generated by derivations ${\rm ad}_{Q_\Ll^n}$ coming from continuous bi-equivariant coefficients,
\begin{equation}\label{Eq:FinalGIH}
\begin{aligned}
Q^n_\Ll = \tfrac{1}{n!}   \sum_{(\xi,\zeta)\in \mathfrak b^{-1}_{n}(\Ll)}
  q_{n} (\xi,\zeta) \, \bm a^\ast(\xi) \bm a(\zeta), \quad n \in \NM^\times.
\end{aligned}
\end{equation}
The elements of $\dot\Sigma(\Ll)$ can be presented as 
\begin{equation}\label{Eq:Qq}
\Qq = \sum_{\{Q\}} c_{\{Q\}} \ {\rm ad}_{Q_\Ll^{n_1}} \circ \cdots \circ {\rm ad}_{Q_\Ll^{n_k}}, \quad c_{\{Q\}} \in \CM,
\end{equation}
where the sum is over finite tuples of $Q$'s as in Eq~\eqref{Eq:FinalGIH} and only a finite number of $c$-coefficients are non-zero. The multiplication of two elements $\Qq_1$ and $\Qq_2$ is given by the composition $\Qq_1 \circ \Qq_2$ of linear maps over $\Dd(\Ll)$.
\end{definition}

\begin{remark}{\rm The presentation in Eq.~\ref{Eq:Qq} is not unique, hence there is little chance to derive any algebraic properties directly from this definition. As such, Definition~\ref{Def:GIHAlgebra} mostly communicates which elements of ${\rm End}\big (\Dd(\Ll)\big )$ are included in the sub-algebra $\dot\Sigma(\Ll)$. Note that the coefficients of a generic element $\Qq$ are no longer of finite range.
}$\Diamond$
\end{remark}

\begin{remark}{\rm The linear space spanned by ${\rm ad}_{Q_\Ll^{n}}$, with $Q_\Ll^n$ as in Eq.~\eqref{Eq:FinalGIH}, is easily seen to be invariant against the Lie bracket 
$$
\big ( {\rm ad}_{Q_\Ll^{n}}, {\rm ad}_{Q_\Ll^{n'}} \big ) \mapsto {\rm ad}_{Q_\Ll^{n}} \circ {\rm ad}_{Q_\Ll^{n'}} - {\rm ad}_{Q_\Ll^{n'}} \circ {\rm ad}_{Q_\Ll^{n}},
$$ 
hence it can be regarded as a Lie algebra. Indeed, the coefficients of such Lie brackets continue to have finite range. Then $\dot \Sigma(\Ll)$ can be viewed as the smallest associative algebra enveloping this Lie algebra.
}$\Diamond$
\end{remark} 

The following statement shows that $\dot\Sigma(\Ll)$ carries an intrinsic $\ast$-operation:

\begin{proposition}\label{Pro:StarH} Consider the following $\ast$-operation defined on the generators of $\dot\Sigma(\Ll)$ via
$$
({\rm ad}_{Q_\Ll^n})^\ast(A) = - {\rm ad}_{(Q_\Ll^n)^\ast}(A) = - \big ({\rm ad}_{Q_\Ll^n}(A^\ast)\big )^\ast,
$$
which, at the level of coefficients, acts as 
$$
q_{n} \mapsto q^\ast_{n} = - \overline {q_{n} \circ \mathfrak u_{n}}.
$$ 
Then this operation can be extended to a $\ast$-operation over the entire $\dot\Sigma(\Ll)$ via,
\begin{equation}\label{Eq:QqStar}
\Qq^\ast = \sum_{\{Q\}} (-1)^k \, \overline{c_{\{Q\}}} \ {\rm Ad}_{(Q_\Ll^{n_k})^\ast} \circ \ldots \circ {\rm Ad}_{(Q_\Ll^{n_1})^\ast},
\end{equation}
where $\Qq$ is as in Eq.~\eqref{Eq:Qq}.

\end{proposition}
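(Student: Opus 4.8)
The plan is to verify that the formula~\eqref{Eq:QqStar} is well-defined (independent of the presentation~\eqref{Eq:Qq}) and that it is a genuine $\ast$-operation, i.e. conjugate-linear, involutive, and anti-multiplicative. First I would establish the base case at the level of generators: for a single $Q_\Ll^n$ as in~\eqref{Eq:FinalGIH}, one checks directly that $A\mapsto -({\rm ad}_{Q_\Ll^n}(A^\ast))^\ast$ is again a derivation on $\Dd(\Ll)$ (using that $\Dd(\Ll)$ is a $\ast$-subalgebra, hence $A^\ast\in\Dd(\Ll)$ when $A\in\Dd(\Ll)$), that it equals ${\rm ad}_{-(Q_\Ll^n)^\ast}$ with $(Q_\Ll^n)^\ast$ obtained by replacing $q_n$ with $-\overline{q_n\circ\mathfrak u_n}$, and that $-\overline{q_n\circ\mathfrak u_n}$ is again a continuous bi-equivariant coefficient. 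This last point is where the hypotheses bite: the equivariance $\hat h_n(s_1\cdot[(\xi,\zeta)]\cdot s_2)=(-1)^{s_1}\hat h_n([(\xi,\zeta)])(-1)^{s_2}$ survives complex conjugation because the signs are real, and composing with $\mathfrak u_n$ (which swaps $\xi\leftrightarrow\zeta$) interchanges the left and right equivariance, so the swapped-and-conjugated coefficient is still bi-equivariant; compact support and continuity are obviously preserved. Thus $(\,\cdot\,)^\ast$ maps generators to generators.

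Next I would treat compositions. For a product ${\rm ad}_{Q_\Ll^{n_1}}\circ\cdots\circ{\rm ad}_{Q_\Ll^{n_k}}$, the natural candidate adjoint is $(-1)^k\,{\rm ad}_{(Q_\Ll^{n_k})^\ast}\circ\cdots\circ{\rm ad}_{(Q_\Ll^{n_1})^\ast}$, and one verifies by induction on $k$ that, for all $A\in\Dd(\Ll)$,
\begin{equation*}
\big(({\rm ad}_{Q_\Ll^{n_1}}\circ\cdots\circ{\rm ad}_{Q_\Ll^{n_k}})(A^\ast)\big)^\ast = (-1)^k\,({\rm ad}_{(Q_\Ll^{n_k})^\ast}\circ\cdots\circ{\rm ad}_{(Q_\Ll^{n_1})^\ast})(A).
\end{equation*}
The inductive step uses only the single-generator identity $({\rm ad}_{Q}(B^\ast))^\ast=-{\rm ad}_{Q^\ast}(B)$ together with the fact that the operator on the inside of the conjugation again lands in $\Dd(\Ll)$, so the identity can be peeled off one factor at a time; the sign $(-1)^k$ simply collects one minus sign per peeled factor, and the order of the factors reverses because conjugation is anti-multiplicative on ${\rm CAR}(\Ll)$. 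Conjugate-linearity in the $c_{\{Q\}}$ is immediate from the $\overline{c_{\{Q\}}}$ in~\eqref{Eq:QqStar}, and anti-multiplicativity $(\Qq_1\circ\Qq_2)^\ast=\Qq_2^\ast\circ\Qq_1^\ast$ follows by concatenating tuples and reversing, matching the reversal and the additivity of $k$ in the exponent $(-1)^{k_1+k_2}$. Involutivity $(\Qq^\ast)^\ast=\Qq$ follows from $((Q_\Ll^n)^\ast)^\ast=Q_\Ll^n$ at the generator level, i.e. $-\overline{(-\overline{q_n\circ\mathfrak u_n})\circ\mathfrak u_n}=q_n$ since $\mathfrak u_n$ is an involution.

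The main obstacle is \textbf{well-definedness}: since the presentation~\eqref{Eq:Qq} of an element $\Qq\in\dot\Sigma(\Ll)$ is highly non-unique (as Remark~\ref{Re:GIHAlgebra} stresses), I must show that~\eqref{Eq:QqStar} does not depend on the chosen tuples. The clean way around this is to characterize $\Qq^\ast$ intrinsically: prove that for \emph{every} $\Qq\in\dot\Sigma(\Ll)$ the map $A\mapsto -(\Qq(A^\ast))^\ast$ is again an element of ${\rm End}(\Dd(\Ll))$ lying in $\dot\Sigma(\Ll)$ — which is exactly what the induction of the previous paragraph shows, since it identifies $-(\Qq(A^\ast))^\ast$ with the value at $A$ of the explicit element built from any presentation of $\Qq$ — and then observe that the assignment $\Qq\mapsto(A\mapsto -(\Qq(A^\ast))^\ast)$ is manifestly presentation-independent because it is defined purely in terms of the linear map $\Qq$ itself. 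Hence~\eqref{Eq:QqStar}, for any presentation, computes this same intrinsic map, so it is well-defined; all the algebraic axioms of a $\ast$-operation then transfer from the intrinsic description, where they are transparent. I would present the argument in this order — intrinsic map first, explicit formula second — to sidestep the non-uniqueness cleanly rather than fighting it term by term.
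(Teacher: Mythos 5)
There is a genuine gap at the heart of your well-definedness argument. Your inductive claim
\begin{equation*}
\big(({\rm ad}_{Q_\Ll^{n_1}}\circ\cdots\circ{\rm ad}_{Q_\Ll^{n_k}})(A^\ast)\big)^\ast \;=\; (-1)^k\,\big({\rm ad}_{(Q_\Ll^{n_k})^\ast}\circ\cdots\circ{\rm ad}_{(Q_\Ll^{n_1})^\ast}\big)(A)
\end{equation*}
is false for $k\ge 2$. Writing $\sigma(X)=X^\ast$, the single-generator identity says $\sigma\circ{\rm ad}_{Q}\circ\sigma={\rm ad}_{Q^\dagger}$, with $Q^\dagger$ the hermitian conjugate of $Q$ in ${\rm CAR}(\Ll)$; applying this to a composition gives $\sigma\circ(D_1\circ\cdots\circ D_k)\circ\sigma=(\sigma D_1\sigma)\circ\cdots\circ(\sigma D_k\sigma)$, i.e.\ the conjugated generators appear in the \emph{same} order and with no global sign $(-1)^k$. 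The anti-multiplicativity of the involution on ${\rm CAR}(\Ll)$ is irrelevant here, because what is being ``conjugated'' is a composition of linear maps, not a product inside ${\rm CAR}(\Ll)$. Already for $k=2$ your right-hand side differs from the left-hand side by the commutator $[{\rm ad}_{Q_1^\dagger},{\rm ad}_{Q_2^\dagger}]$, which is generically nonzero. Consequently the ``intrinsic map'' $\Qq\mapsto\big(A\mapsto -(\Qq(A^\ast))^\ast\big)$ agrees with Eq.~\eqref{Eq:QqStar} only for $k\le 1$; it is covariant rather than anti-multiplicative, so it could not define the $\ast$-operation of the proposition in any case, and identifying the formula with it does not establish independence of the presentation --- which was the entire point.

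The correct intrinsic characterization, and the one the paper uses, is the formal adjoint with respect to the unique faithful tracial state $\Tt$ of ${\rm CAR}(\Ll)$. Since $\Tt\circ{\rm ad}_{Q_\Ll^n}=0$ on $\Dd(\Ll)$, traciality plus the Leibniz rule give $\Tt\big(B^\ast\,{\rm ad}_{Q_\Ll^{n}}(A)\big)=-\Tt\big({\rm ad}_{(Q_\Ll^{n})^\ast}(B)^\ast A\big)$, and iterating yields $\Tt\big(B^\ast\,\Qq(A)\big)=\Tt\big(\Qq^\ast(B)^\ast A\big)$ with $\Qq^\ast$ exactly as in Eq.~\eqref{Eq:QqStar}: adjoints with respect to a sesquilinear pairing reverse the order of composition, which is where the reversal and the sign $(-1)^k$ actually come from. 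Faithfulness of $\Tt$ then makes the adjoint unique, hence presentation-independent, and the $\ast$-algebra axioms follow. Your generator-level observations (that $-\overline{q_n\circ\mathfrak u_n}$ is again a continuous bi-equivariant finite-range coefficient, and the formal conjugate-linearity and anti-multiplicativity of the expression \eqref{Eq:QqStar}) are fine, but they all hinge on the well-definedness you have not proved.
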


\proof The challenge for the definition in Eq.~\eqref{Eq:QqStar} is the fact that the presentation in Eq.~\ref{Eq:Qq} of $\Qq$ is not unique. Thus, we must prove that the protocol defined in Eq.~\ref{Eq:QqStar} is independent of the presentation. Our main tool will be the unique tracial state $\Tt$ of ${\rm CAR}(\Ll)$, which satisfies $\Tt \circ {\rm ad}_{Q_\Ll^n} =0$ on $\Dd(\Ll)$, a relation that holds for any almost-inner derivation. Then
$$
\Tt\big (B^\ast \, {\rm ad}_{Q_\Ll^{n}} (A)\big )  = \overline{\Tt\big ({\rm ad}_{Q_\Ll^{n}} (A)^\ast B \big)}= \overline{\Tt\big ({\rm ad}_{(Q_\Ll^{n})^\ast} (A^\ast) B\big )},
$$
and using the Leibniz rule,
$$
\Tt\big (B^\ast \, {\rm ad}_{Q_\Ll^{n}} (A)\big )  =  - \overline{\Tt\big (A^\ast \, {\rm ad}_{(Q_\Ll^{n})^\ast}(B)\big  )} = - \Tt\big ({\rm ad}_{(Q_\Ll^{n})^\ast}(B)^\ast A\big  ).
$$
Iterating,
$$
\Tt\big (B^\ast {\rm ad}_{Q_\Ll^{n_1}} \circ \cdots \circ {\rm ad}_{Q_\Ll^{n_k}} (A)\big )  = (-1)^k \Tt\big ({\rm ad}_{(Q_\Ll^{n_k})^\ast} \circ \cdots \circ {\rm ad}_{(Q_\Ll^{n_1})^\ast}(B)^\ast A\big  ),
$$
hence
\begin{equation}\label{Eq:RR1}
\Tt\big (B^\ast \Qq(A) \big ) = \Tt\big (\Qq^\ast(B)^\ast A \big ) = \overline {\Tt \big (A^\ast \Qq^\ast(B) \big )}.
\end{equation}
Now suppose $\Qq$ has two different presentations for which the protocol~\eqref{Eq:QqStar} returns two different outcomes $\Qq^\ast$ and $\tilde{\Qq}^\ast$. Then Eq.~\eqref{Eq:RR1} assures us that
$$
\Tt \big (A^\ast (\Qq^\ast - \tilde{\Qq}^\ast)(B) \big ) = 0, \quad \forall \ A,B \in \Dd(\Ll).
$$
Since $\Tt$ is faithful, we must conclude that $\Qq^\ast = \tilde{\Qq}^\ast$.\qed

\vspace{0.2cm}

We end the section with examples of elements from $\dot\Sigma(\Ll)$.

\begin{example}{\rm Let $w_n$ be a bi-equivariant $n$-body potential as in Defintion~\ref{Def:KPot}. Then $h_{n} : \widehat{{\rm Del}}_{(r,R)}^{(n,n)} (\RM^d)  \to \RM$
$$
h_n(\xi,\zeta) = w_n \big (\chi_\xi(1), \ldots, \chi_\xi(n);\chi_\zeta(1), \ldots, \chi_\zeta(n)\big )
$$
is a continuous bi-equivariant $n$-coefficient as in Definition~\ref{Def:Z2Coeff} and the corresponding derivation ${\rm ad}_{H_\Ll^n}$ belongs to $\dot\Sigma(\Ll)$.
}$\Diamond$
\end{example}

\begin{example}\label{Ex:DiagH}{\rm Let $w: \RM \to \RM$ be continuous and with compact support and consider $w_{n} : \widehat{{\rm Del}}_{(r,R)}^{(n,n)} (\RM^d)  \to \RM$,
\begin{equation}\label{Eq:CC}
w_{n}(\xi,\zeta ) = (-1)^{\chi_{\xi}^{-1} \circ \chi_{\zeta}} \, \delta_{V_\xi,V_\zeta} \, w(d_\xi),
\end{equation}
where $d_\xi$ is the diameter of $V_\xi$. Then $w_n$ is a continuous equivariant coefficient and the evaluation at $\Ll$ of the corresponding Hamiltonian is
$$
W_\Ll^n = \tfrac{1}{n!}\sum_{\xi \in \mathfrak a_n^{-1}(\Ll)} w(d_\xi) \, n(\xi), \quad n(\xi) := n_{V_\xi},
$$
which supplies an important generalization of Example~\ref{Ex:TwoBody}. To prove the continuity of the coefficient, let
$$
\tilde w_{n} : \widehat{{\rm Del}}_{(r,R)}^{(n)} (\RM^d) \times \widehat{{\rm Del}}_{(r,R)}^{(n)} (\RM^d)  \to \RM,
$$
be defined as
\begin{equation*}
\tilde {w}_{n}(\xi,\zeta) = \left \{
\begin{aligned}
& (-1)^{\chi_\xi^{-1} \circ g \circ \chi_\zeta} \, v\big ({\rm d}_{\rm H}(V_\xi,V_\zeta)\big ) w\big(\sqrt{d_{\xi}d_{\zeta}}\big), \ {\rm if} \ {\rm d}_{\rm H}(V_\xi,V_\zeta) <\tfrac{r}{2}, \\
& 0 \ {\rm otherwise},
\end{aligned}
\right . 
\end{equation*}
where $v:[0,\infty) \to \RM $ is a continuous function with support in the interval $[0,r]$ and $v(0)=1$, while $g : V_\zeta \to V_\xi$ is the canonical map described in Proposition~\ref{Pro:G}. This is clearly a continuous function and its restriction on $\widehat{{\rm Del}}_{(r,R)}^{(n,n)} (\RM^d)$ is exactly the coefficient~\eqref{Eq:CC}.
}$\Diamond$
\end{example}

\subsection{Fock representation of derivations}
\label{SubSec:FockH}

Formally, the derivations introduced in Definition~
\ref{Def:GIHAlgebra} act on the Fock space $\Ff^{(-)}(\Ll)$ via
\begin{equation}\label{Eq:HamAction}
|A\rangle \mapsto |{\rm ad}_{Q_\Ll^n}(A)\rangle, \quad A \in \Dd(\Ll).
\end{equation}
This section is devoted to the representation of $\dot\Sigma(\Ll)$ induced by these actions. 

\begin{proposition}\label{Pro:DotPi} The action in Eq.~\eqref{Eq:HamAction} is well defined and takes the form
$$
|A\rangle \mapsto -\imath \, \dot \pi_\eta(Q^n_\Ll)|A\rangle, \quad A \in \Dd(\Ll),
$$
where, if $Q^n_\Ll$ is as in Eq.~\eqref{Eq:FinalGIH}, then
\begin{equation}\label{Eq:FormalRep}
\begin{aligned}
\dot \pi_\eta(Q^n_\Ll) = \tfrac{1}{n!}   \sum_{(\xi,\zeta)\in \mathfrak b^{-1}_{n}(\Ll)}
  q_{n} (\xi,\zeta) \pi_\eta \big (\bm a^\ast(\xi) \bm a(\zeta) \big ).
\end{aligned}
\end{equation}
Eq.~\eqref{Eq:FormalRep} defines a linear operator on the sub-space $\Dd(\Ll)/\Nn_\eta$.
\end{proposition}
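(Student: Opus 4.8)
The plan is to reduce everything to the structure of the left ideal $\Nn_\eta$ (Proposition~\ref{Pro:NullSpace}) together with the stabilization of truncations established in the proof of Proposition~\ref{Pro:Derivation}. First I would fix $A\in\Dd(\Ll)$, so that $A\in{\rm CAR}(\Ll_p)$ for some $p$, and denote by $Q^n_{\Ll,k}$ the truncation of \eqref{Eq:FinalGIH} to pairs $(\xi,\zeta)$ with $V_\xi,V_\zeta\subset\Ll_k$. Exactly as in the proof of Proposition~\ref{Pro:Derivation}, for $k$ large enough one has ${\rm ad}_{Q^n_\Ll}(A)=\imath[A,Q^n_{\Ll,k}]\in{\rm CAR}(\Ll_k)\subset\Dd(\Ll)$; in particular $|{\rm ad}_{Q^n_\Ll}(A)\rangle$ is a legitimate vector of $\Dd(\Ll)/\Nn_\eta$, which takes care of the domain and target of the map \eqref{Eq:HamAction}.

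The key observation is that every $\zeta$ occurring in \eqref{Eq:FinalGIH} has $|V_\zeta|=n\geq1$, so $\bm a(\zeta)=a_{V_\zeta}(\chi_\zeta)=a^\ast_\emptyset\,a_{V_\zeta}(\chi_\zeta)$ is a monomial of the type described in Proposition~\ref{Pro:NullSpace} with a non-empty annihilation part, whence $\bm a(\zeta)\in\Nn_\eta$. Since $\Nn_\eta$ is a left ideal, $B\,\bm a^\ast(\xi)\bm a(\zeta)\in\Nn_\eta$ for every $B\in{\rm CAR}(\Ll)$; in particular $A\,Q^n_{\Ll,k}\in\Nn_\eta$. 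Writing $\imath[A,Q^n_{\Ll,k}]=\imath\,AQ^n_{\Ll,k}-\imath\,Q^n_{\Ll,k}A$ and passing to classes in $\Dd(\Ll)/\Nn_\eta$, the first term vanishes and one is left with
\[
|{\rm ad}_{Q^n_\Ll}(A)\rangle=-\imath\,|Q^n_{\Ll,k}A\rangle=-\imath\,\pi_\eta(Q^n_{\Ll,k})|A\rangle,
\]
using the defining property $\pi_\eta(B)|A\rangle=|BA\rangle$ of the GNS representation.

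It then remains to see that the truncation is immaterial and that \eqref{Eq:FormalRep} is meaningful. If $(\xi,\zeta)\in\mathfrak b^{-1}_n(\Ll)$ has $V_\xi\cup V_\zeta$ disjoint from $\Ll_p$, then $\bm a^\ast(\xi)\bm a(\zeta)$, being a word in an even number ($2n$) of generators supported away from $\Ll_p$, commutes with $A$, so $\bm a^\ast(\xi)\bm a(\zeta)A=A\,\bm a^\ast(\xi)\bm a(\zeta)\in\Nn_\eta$ by the previous paragraph and hence $\pi_\eta(\bm a^\ast(\xi)\bm a(\zeta))|A\rangle=0$. Because $q_n$ has compact support, it vanishes unless $\mathrm{diam}(V_\xi\cup V_\zeta)\leq{\rm R}_{\rm i}$, so only finitely many pairs $(\xi,\zeta)$ with $(V_\xi\cup V_\zeta)\cap\Ll_p\neq\emptyset$ survive; enlarging $k$ so that $\Ll_k$ contains all of their unions $V_\xi\cup V_\zeta$ then gives $\pi_\eta(Q^n_{\Ll,k})|A\rangle=\dot\pi_\eta(Q^n_\Ll)|A\rangle$, the right-hand sum of \eqref{Eq:FormalRep} reducing to a finite sum once applied to $|A\rangle$. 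The same argument shows that on any vector of $\Dd(\Ll)/\Nn_\eta$ only finitely many terms of \eqref{Eq:FormalRep} act nontrivially, each $\pi_\eta(\bm a^\ast(\xi)\bm a(\zeta))$ being a genuine operator on $\Ff^{(-)}(\Ll)$, so $\dot\pi_\eta(Q^n_\Ll)$ is a well-defined linear endomorphism of $\Dd(\Ll)/\Nn_\eta$, and by the displayed identity it implements the action \eqref{Eq:HamAction}.

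The computation is essentially bookkeeping, and the only point requiring genuine care is the last one: that the formal series defining $\dot\pi_\eta(Q^n_\Ll)$ actually collapses to a finite sum on every vector of the dense subspace. This is precisely where the finite-interaction-range constraint on $q_n$ and the inclusion $\bm a(\zeta)\in\Nn_\eta$ must be combined; beyond this I expect no real obstacle.
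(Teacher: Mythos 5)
Your proof is correct, and its computational core coincides with the paper's: both arguments replace $Q^n_\Ll$ by a finite truncation $\tilde Q$ adapted to the support of $A$, observe that $\tilde Q$ lies in the left ideal $\Nn_\eta$ (each monomial $\bm a^\ast(\xi)\bm a(\zeta)$ has a non-empty annihilation part, cf. Proposition~\ref{Pro:NullSpace}), so that $A\tilde Q\in\Nn_\eta$ and $|{\rm ad}_{Q^n_\Ll}(A)\rangle=-\imath|\tilde Q A\rangle=-\imath\,\pi_\eta(\tilde Q)|A\rangle$, and then check that enlarging the truncation only adds terms annihilating $|A\rangle$. The one place where you genuinely diverge is the well-definedness of the map \eqref{Eq:HamAction} on cosets: the paper establishes ${\rm ad}_{Q^n_\Ll}\big(\Dd(\Ll)\cap\Nn_\eta\big)\subseteq\Nn_\eta$ abstractly, from $\eta\circ{\rm ad}_{Q^n_\Ll}=0$ (Proposition~\ref{Pro:EtaAd}) together with the Leibniz rule and density of $\Dd(\Ll)$, whereas you obtain it as a corollary of the explicit identity, since the right-hand side $-\imath\,\dot\pi_\eta(Q^n_\Ll)|A\rangle$ is a finite sum of honest operators applied to the class $|A\rangle$ and therefore representative-independent. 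Your route is more self-contained (it does not invoke Proposition~\ref{Pro:EtaAd}) but is tied to the specific monomial structure of the $Q^n_\Ll$; the paper's argument is softer and would survive for any almost-inner derivation annihilated by $\eta$. It would be worth stating the final inference explicitly — that the identity forces $|{\rm ad}_{Q^n_\Ll}(A)\rangle=|{\rm ad}_{Q^n_\Ll}(A')\rangle$ whenever $A-A'\in\Dd(\Ll)\cap\Nn_\eta$ — but this is a presentational point, not a gap.
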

\proof i) The formal action~\eqref{Eq:HamAction} is well defined if and only if 
\begin{equation*}
{\rm ad}_{Q^n_\Ll}\big (\Dd(\Ll) \cap \Nn_\eta \big ) \subseteq \Nn_\eta.
\end{equation*} 
This property is a direct consequence of the fact that $\eta \circ {\rm ad}_{Q^n_\Ll} =0$ on $\Dd(\Ll)$ (see Proposition~\ref{Pro:EtaAd}). Indeed, recall that $A \in \Nn_\eta$ if and only if $\eta(BA)=0$ for all $B \in {\rm CAR}(\Ll)$. If that is the case, then a simple application of the Leibniz rule gives
\begin{equation}\label{Eq:TTT1}
\eta\big (B \, {\rm ad}_{Q^n_\Ll}(A)\big ) = - \eta\big ({\rm ad}_{Q^n_\Ll}(B) A\big ) =0,
\end{equation}
for all $B \in \Dd(\Ll)$. Since $\Dd(\Ll)$ is dense in ${\rm CAR}(\Ll)$, we can safely conclude from Eq.~\eqref{Eq:TTT1} that ${\rm ad}_{Q^n_\Ll}(A) \in \Nn_\eta$. Now, if $A \in \Dd(\Ll)$, then necessarily $A \in {\rm CAR}(\Ll_k)$, for some $k \in \NM$. Let $\tilde Q^n_{\Ll}$ be any finite truncation of the sums in Eq.~\eqref{Eq:FinalGIH} that includes all $\xi$ and $\zeta$ with $(V_\xi \cup V_\zeta) \cap \Ll_k \neq \emptyset$. This is possible because the $Q$'s have finite interaction range. Then $\tilde Q^n_{\Ll_k}$ defines an element of ${\rm CAR}(\Ll)$ and, furthermore,
$$
 {\rm ad}_{Q^n_{\Ll}}(A) = {\rm ad}_{\tilde Q^n_{\Ll}}(A) = \imath(A \, \tilde Q^n_{\Ll} - \tilde Q^n_{\Ll} \, A),
$$
if $A \in \Dd(\Ll) \cap \Nn_\eta$. Since $\tilde Q^n_{\Ll}$ belongs to $\Nn_\eta$, the action of the derivation reduces to
$$
|A \rangle \mapsto |{\rm ad}_{Q^n_{\Ll}}(A)\rangle= -\imath|\tilde Q^n_{\Ll} A \rangle = -\imath  \dot \pi_\eta(\tilde Q^n_{\Ll}) |A \rangle.
$$
The statement then follows.\qed

\begin{proposition}\label{Pro:PiEta} Let $Q^n_\Ll$ be as in Eq.~\eqref{Eq:FinalGIH}. Then
$$
\eta\big (B^\ast \, {\rm ad}_{Q_\Ll^n}(A)\big ) = - \eta\big ({\rm ad}_{(Q_\Ll^n)^\ast}(B)^\ast A\big ), \quad \forall \ A,B \in \Dd(\Ll).
$$
\end{proposition}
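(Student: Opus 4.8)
The plan is to mimic the derivation of Eq.~\eqref{Eq:RR1} from the proof of Proposition~\ref{Pro:StarH}, with the vacuum state $\eta$ playing the role of the tracial state $\Tt$ and Proposition~\ref{Pro:EtaAd} replacing the identity $\Tt \circ {\rm ad}_{Q_\Ll^n} = 0$. The ingredients are: $\Dd(\Ll)$ is a $\ast$-subalgebra of ${\rm CAR}(\Ll)$, so $B^\ast A \in \Dd(\Ll)$ whenever $A,B \in \Dd(\Ll)$; ${\rm ad}_{Q_\Ll^n}$ obeys the Leibniz rule on $\Dd(\Ll)$; and $\eta \circ {\rm ad}_{Q_\Ll^n} = 0$ on $\Dd(\Ll)$. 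For the last point, for $C \in \Dd(\Ll)$ one has ${\rm ad}_{Q_\Ll^n}(C) = \imath[C,\tilde Q_\Ll^n]$ for a suitable finite truncation $\tilde Q_\Ll^n \in {\rm GI}_n(\Ll) \subseteq {\rm GI}_1(\Ll)$; since ${\rm GI}_1(\Ll)$ is a two-sided ideal, the commutator lies in it and therefore has vanishing $c_{\emptyset,\emptyset}$-coefficient, i.e. $\eta$ vanishes on it. This is exactly the argument of Proposition~\ref{Pro:EtaAd}, and it does not use self-adjointness of $Q_\Ll^n$.

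I would then apply the Leibniz rule to the product $B^\ast A \in \Dd(\Ll)$, namely ${\rm ad}_{Q_\Ll^n}(B^\ast A) = {\rm ad}_{Q_\Ll^n}(B^\ast)\,A + B^\ast\,{\rm ad}_{Q_\Ll^n}(A)$. Evaluating $\eta$ on both sides and killing the left-hand side gives $\eta\big(B^\ast\,{\rm ad}_{Q_\Ll^n}(A)\big) = -\eta\big({\rm ad}_{Q_\Ll^n}(B^\ast)\,A\big)$. It remains to rewrite ${\rm ad}_{Q_\Ll^n}(B^\ast)$ as ${\rm ad}_{(Q_\Ll^n)^\ast}(B)^\ast$: from the chain $({\rm ad}_{Q_\Ll^n})^\ast(B) = -{\rm ad}_{(Q_\Ll^n)^\ast}(B) = -\big({\rm ad}_{Q_\Ll^n}(B^\ast)\big)^\ast$ in Proposition~\ref{Pro:StarH} one reads off ${\rm ad}_{(Q_\Ll^n)^\ast}(B) = \big({\rm ad}_{Q_\Ll^n}(B^\ast)\big)^\ast$, hence ${\rm ad}_{(Q_\Ll^n)^\ast}(B)^\ast = {\rm ad}_{Q_\Ll^n}(B^\ast)$. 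Substituting this yields $\eta\big(B^\ast\,{\rm ad}_{Q_\Ll^n}(A)\big) = -\eta\big({\rm ad}_{(Q_\Ll^n)^\ast}(B)^\ast A\big)$, which is the claim.

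I do not expect a real obstacle: the statement is essentially a one-line consequence of the Leibniz rule together with Propositions~\ref{Pro:EtaAd} and~\ref{Pro:StarH}, and is the Fock-space counterpart of the tracial identity~\eqref{Eq:RR1}. The only points demanding a little care are bookkeeping ones, namely tracking the sign in the definition of the intrinsic $\ast$-operation, and confirming that $\eta \circ {\rm ad}_{Q_\Ll^n} = 0$ applies to the (not necessarily self-adjoint) generators $Q_\Ll^n$, which is why I would spell out the short ideal-theoretic justification above rather than merely cite Proposition~\ref{Pro:EtaAd}. This identity is what is needed next to check that the Fock representation $\dot\pi_\eta$ intertwines the intrinsic $\ast$-operation on $\dot\Sigma(\Ll)$ with the Hilbert-space adjoint on the pre-Hilbert space $\Dd(\Ll)/\Nn_\eta$.
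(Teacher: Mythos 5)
Your proof is correct and follows exactly the route the paper takes, which simply cites Proposition~\ref{Pro:EtaAd} and the Leibniz rule applied to $B^\ast A \in \Dd(\Ll)$. Your extra care in checking that $\eta \circ {\rm ad}_{Q_\Ll^n} = 0$ persists for non-self-adjoint generators (since Proposition~\ref{Pro:EtaAd} is stated for Hamiltonians satisfying Eq.~\eqref{Eq:SA}, while the $Q_\Ll^n$ of Eq.~\eqref{Eq:FinalGIH} need not) and in deriving ${\rm ad}_{(Q_\Ll^n)^\ast}(B)^\ast = {\rm ad}_{Q_\Ll^n}(B^\ast)$ from Proposition~\ref{Pro:StarH} fills in details the paper leaves implicit.
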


\proof The statement follows from Proposition~\ref{Pro:EtaAd} and Leibniz rule.\qed

\begin{proposition} The action $\dot \pi_\eta$ from Eq.~\eqref{Eq:FormalRep} extends to a representation of the whole algebra $\dot\Sigma(\Ll)$ on the linear space $\Dd(\Ll)/\Nn_\eta$. Explicitly, with $\Qq$ as in Eq.~\eqref{Eq:Qq}, then $\dot \pi_\eta(\Qq)|A\rangle : = |\Qq(A) \rangle$, for all $A \in \Dd(\Ll)$, or
\begin{equation}\label{Eq:DotPiQq}
\dot \pi_\eta(\Qq) = \sum_{\{Q\}} c_{\{Q\}} \ \dot \pi_\eta(Q_\Ll^{n_1}) \cdots \dot \pi_\eta(Q_\Ll^{n_k}).
\end{equation}
Furthermore,
\begin{equation}\label{Eq:Hermitean}
\langle B |\dot \pi_\eta(\Qq) |A \rangle = \overline{\langle A |\dot \pi_\eta(\Qq^\ast) |B \rangle}, \quad \forall \ A,B \in \Dd(\Ll).
\end{equation}
\end{proposition}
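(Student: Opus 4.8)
The plan is to split the argument into two parts: first that $\dot\pi_\eta$ is a well-defined representation together with the explicit formula \eqref{Eq:DotPiQq}, and then the Hermitian-type identity \eqref{Eq:Hermitean}.

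For the first part, the crucial observation is that an element $\Qq \in \dot\Sigma(\Ll)$ is, by Definition~\ref{Def:GIHAlgebra}, an honest linear map on $\Dd(\Ll)$, so $\Qq(A)\in\Dd(\Ll)$ is determined by $A$ alone and is independent of the (non-unique) presentation \eqref{Eq:Qq}. Hence, in order for $|A\rangle\mapsto|\Qq(A)\rangle$ to descend to a well-defined linear map on $\Dd(\Ll)/\Nn_\eta$, the only thing to check is that $\Qq\big(\Dd(\Ll)\cap\Nn_\eta\big)\subseteq\Nn_\eta$. I would deduce this from the single-generator case: in the proof of Proposition~\ref{Pro:DotPi} it is shown, using $\eta\circ{\rm ad}_{Q_\Ll^n}=0$ (Proposition~\ref{Pro:EtaAd}) and the Leibniz rule, that ${\rm ad}_{Q_\Ll^n}\big(\Dd(\Ll)\cap\Nn_\eta\big)\subseteq\Nn_\eta$ for every generator. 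Since the set of $T\in{\rm End}\big(\Dd(\Ll)\big)$ with $T\big(\Dd(\Ll)\cap\Nn_\eta\big)\subseteq\Nn_\eta$ is closed under composition and linear combinations, it contains all of $\dot\Sigma(\Ll)$, so every $\Qq$ induces an operator $\dot\pi_\eta(\Qq)$ on $\Dd(\Ll)/\Nn_\eta$ with $\dot\pi_\eta(\Qq)|A\rangle=|\Qq(A)\rangle$. That $\dot\pi_\eta$ is an algebra homomorphism is then immediate, since $\Qq\mapsto\big(A\mapsto\Qq(A)\big)$ already is one on ${\rm End}\big(\Dd(\Ll)\big)$. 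Formula \eqref{Eq:DotPiQq} follows by substituting the presentation \eqref{Eq:Qq}, using linearity and multiplicativity of $\dot\pi_\eta$, and identifying the operator induced by each generator ${\rm ad}_{Q_\Ll^{n_i}}$ with the operator of Eq.~\eqref{Eq:FormalRep} via Proposition~\ref{Pro:DotPi}, iterated along the composition (keeping track of the factors of $\imath$).

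For the identity \eqref{Eq:Hermitean}, I would first record that, since $|A\rangle$ is the GNS class of $A$ and $\eta$ is the vacuum state, Eqs.~\eqref{Eq:Basis}--\eqref{Eq:ScalarProd} give $\langle B|C\rangle=\eta(B^\ast C)$ for all $A,B,C\in\Dd(\Ll)$; in particular $\langle B|\dot\pi_\eta(\Qq)|A\rangle=\eta\big(B^\ast\Qq(A)\big)$. The claim is therefore equivalent to $\eta\big(B^\ast\Qq(A)\big)=\eta\big(\Qq^\ast(B)^\ast A\big)$, which is precisely relation~\eqref{Eq:RR1} from the proof of Proposition~\ref{Pro:StarH} with the tracial state $\Tt$ replaced by the vacuum state $\eta$. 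The derivation there uses only two ingredients: that $\Tt$ annihilates every generator, and the single-generator step $\eta\big(B^\ast{\rm ad}_{Q_\Ll^n}(A)\big)=-\eta\big({\rm ad}_{(Q_\Ll^n)^\ast}(B)^\ast A\big)$ obtained from the Leibniz rule. The state $\eta$ satisfies the first by Proposition~\ref{Pro:EtaAd} (applied also to $(Q_\Ll^n)^\ast$, which is again a GI-Hamiltonian) and the second is exactly Proposition~\ref{Pro:PiEta}; so the same iteration over the tuple $(n_1,\ldots,n_k)$, with the reversal of order and the signs $(-1)^k$ that are built into the definition \eqref{Eq:QqStar} of $\Qq^\ast$, and then linearity over the sum in \eqref{Eq:Qq}, yields $\eta\big(B^\ast\Qq(A)\big)=\eta\big(\Qq^\ast(B)^\ast A\big)$. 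Finally $\eta\big(\Qq^\ast(B)^\ast A\big)=\overline{\eta\big(A^\ast\Qq^\ast(B)\big)}=\overline{\langle A|\dot\pi_\eta(\Qq^\ast)|B\rangle}$ because $\eta$ is positive, which is \eqref{Eq:Hermitean}.

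The main obstacle is conceptual rather than computational: one must resist defining $\dot\pi_\eta$ by the formula \eqref{Eq:DotPiQq}, because the presentation \eqref{Eq:Qq} is not unique and there is no manifest reason for the right-hand side to be presentation-independent. The clean route is to define $\dot\pi_\eta(\Qq)$ through the intrinsic action of $\Qq$ as a linear map on $\Dd(\Ll)/\Nn_\eta$ — whose well-definedness reduces, as above, to the single-generator statement of Proposition~\ref{Pro:DotPi} — and only afterwards to read off \eqref{Eq:DotPiQq}. Once this order of operations is adopted, both \eqref{Eq:DotPiQq} and \eqref{Eq:Hermitean} are bookkeeping on top of Propositions~\ref{Pro:EtaAd}, \ref{Pro:DotPi} and \ref{Pro:PiEta} and the computation already carried out in the proof of Proposition~\ref{Pro:StarH}.
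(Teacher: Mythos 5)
Your proposal is correct and follows essentially the same route as the paper, whose proof consists precisely of iterating Propositions~\ref{Pro:DotPi} and \ref{Pro:PiEta}; you have simply made explicit the two points the paper leaves implicit, namely that invariance of $\Dd(\Ll)\cap\Nn_\eta$ is stable under sums and compositions of generators (so $\dot\pi_\eta(\Qq)|A\rangle:=|\Qq(A)\rangle$ is intrinsically well defined, independent of the presentation \eqref{Eq:Qq}), and that the adjoint identity is the $\eta$-analogue of Eq.~\eqref{Eq:RR1} obtained by the same iteration with the order reversal and sign $(-1)^k$ built into \eqref{Eq:QqStar}. Your remark about tracking the factors of $\imath$ when passing from ${\rm ad}_{Q^{n_i}_\Ll}$ to $\dot\pi_\eta(Q^{n_i}_\Ll)$ is warranted and, if anything, more careful than the paper's own statement of \eqref{Eq:DotPiQq}.
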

\proof It follows by iterating Propositions~\ref{Pro:DotPi} and \ref{Pro:PiEta}.\qed

\begin{remark}\label{Re:PhysH}{\rm The Fock representation $\dot \pi_\eta$ gives sense to the formal series~\eqref{Eq:FinalGIH} as endomorphisms of the linear space $\Dd(\Ll)/\Nn_\eta$. Hence, the image of $\dot \Sigma(\Ll)$ through $\dot \pi_\eta$, or equivalently $\dot \Sigma(\Ll) /  \ker \dot \pi_\eta$, can be regarded as the core algebra of the physical Hamiltonians.
}$\Diamond$
\end{remark}

The gauge invariant derivations~\eqref{Eq:FinalGIH} leave the sub-spaces $\Ff^{(-)}_N(\Ll) \cap \Dd(\Ll)/\Nn_\eta$ invariant and this is also true for any element of the algebra $\dot\Sigma(\Ll)$. As a result, the representation $\dot \pi_\eta$ of this algebra decomposes into a direct sum $\dot \pi_\eta = \bigoplus_{N \in \NM} \dot \pi_\eta^N$. 

\begin{proposition}\label{Pro:BoundedH} Let $Q^n_\Ll$ be as in Eq.~\eqref{Eq:FinalGIH}. Then the linear operator $\dot \pi_\eta^N(Q^n_\Ll)$ is uniformly bounded on $\Ff^{(-)}_N(\Ll) \cap \Dd(\Ll)/\Nn_\eta$.
\end{proposition}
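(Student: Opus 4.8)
The plan is to realise $\dot\pi_\eta^N(Q^n_\Ll)$ as a ``banded'' matrix in a fixed orthonormal basis of the $N$-sector and to control it by a Schur test. If $n>N$ then Proposition~\ref{Pro:GICARRep}(i) already gives $\dot\pi_\eta^N(Q^n_\Ll)=0$, so I may assume $n\le N$. First I would fix, once and for all, an ordering $\chi_U\in\mathcal{S}(\Ii_N,U)$ for every $U\in\Kk_N(\Ll)$; by Proposition~\ref{Pro:Basis} the vectors $|U,\chi_U\rangle$ then form an orthonormal basis of $\Ff^{(-)}_N(\Ll)$, and $\Ff^{(-)}_N(\Ll)\cap\Dd(\Ll)/\Nn_\eta$ is exactly their finite linear span. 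Since $\bm a(\zeta)\,|U',\chi_{U'}\rangle=0$ unless $V_\zeta\subseteq U'$ and $q_n$ has finite interaction range, only finitely many monomials of $Q^n_\Ll$ act non‑trivially on a given basis vector, so the matrix entries $M_{U,U'}:=\langle U,\chi_U|\dot\pi_\eta^N(Q^n_\Ll)|U',\chi_{U'}\rangle$ are well defined. Using Proposition~\ref{Pro:CARMono} (equivalently Proposition~\ref{Pro:GICARRep}(iii)) together with the inner product formula~\eqref{Eq:ScalarProd}, I would show that $M_{U,U'}$ is a finite sum, over pairs of $n$-element subsets $V\subseteq U$, $W\subseteq U'$ with $U\setminus V=U'\setminus W$, of blocks of the form $\tfrac1{n!}\sum_{\chi_V,\chi_W}(\pm1)\,q_n\big((\Ll,V,\chi_V),(\Ll,W,\chi_W)\big)$, each of which vanishes unless $\mathrm{diam}(V\cup W)\le {\rm R}_{\rm i}$.

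Two uniform inputs then close the estimate. The first is boundedness of the coefficient: the seed $\hat h_n$ is continuous with compact support on $\big[\widehat{{\rm Del}}_{(r,R)}^{(n,n)}(\RM^d)\big]$, hence $C:=\sup|q_n|=\|\hat h_n\|_\infty<\infty$, which bounds each of the $(n!)^2$ ordering terms, so that $|M_{U,U'}|\le n!\,\binom{N}{n}\,C$ for all $U,U'$ (there being at most $\binom{N}{n}$ admissible pairs $(V,W)$ for fixed $U,U'$). The second is the Delone packing bound: distinct points of an $(r,R)$-Delone set are more than $2r$ apart, so any closed ball of radius ${\rm R}_{\rm i}$ meets $\Ll$ in at most $K:=(1+{\rm R}_{\rm i}/r)^d$ points; consequently $M_{U,U'}\neq0$ forces $U'$ to arise from $U$ by swapping an $n$-subset $V\subseteq U$ of diameter $\le {\rm R}_{\rm i}$ for an $n$-subset $W\subseteq\Ll$ lying in a common radius-${\rm R}_{\rm i}$ ball with $V$. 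There are at most $N$ possible anchor points for $V$, at most $\binom{K}{n}$ choices of $V$ per anchor and at most $\binom{K}{n}$ choices of $W$ per $V$, so the number of $U'$ with $M_{U,U'}\neq0$ is at most $P:=N\binom{K}{n}^2$, and by the symmetry $(U,U')\leftrightarrow(U',U)$ the same bound holds for the number of $U$ with $M_{U,U'}\neq0$. The Schur test then yields
\begin{equation*}
\|\dot\pi_\eta^N(Q^n_\Ll)\,v\|\;\le\;P\,n!\,\binom{N}{n}\,C\;\|v\|,\qquad v\in\Ff^{(-)}_N(\Ll)\cap\Dd(\Ll)/\Nn_\eta,
\end{equation*}
so $\dot\pi_\eta^N(Q^n_\Ll)$ is bounded, with a bound depending only on $N$, $n$, ${\rm R}_{\rm i}$ and the Delone parameters $(r,R,d)$ — in particular uniform over the dense subspace — and hence extends by continuity to a bounded operator on $\Ff^{(-)}_N(\Ll)$. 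I would also record that the identical estimate applies verbatim to the finite truncations $\tilde Q^n_{\Ll_k}$ of Proposition~\ref{Pro:DotPi}, with the same constant, as these are needed to pass to a strong-operator limit later on.

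The main obstacle is bookkeeping rather than conceptual: correctly tracking the signs $(-1)^{\chi^{-1}\circ\chi'}$ and the $\tfrac1{\sqrt{N!\,N!}}$ normalisations when translating between the symmetric presentation of Proposition~\ref{Pro:FockOp} and the orthonormal-basis matrix $M$ (here Propositions~\ref{Pro:Basis}, \ref{Pro:FProd} and \ref{Pro:GICARRep} do the work), and making sure that the formally infinite sum defining $Q^n_\Ll$ genuinely produces, on each basis vector, a finite sum with the stated structure. The single quantitative ingredient — and the precise place where finite interaction range and the Delone (as opposed to merely uniformly discrete) hypothesis enter — is the packing count that makes the number of non-zero entries per row and column finite and, for fixed $N$, independent of the particular $\Ll$.
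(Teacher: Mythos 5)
Your proposal is correct and rests on the same two ingredients as the paper's proof: the finite interaction range forces the operator to be a banded matrix in the frame $|\xi\rangle=\bm a^\ast(\xi)+\Nn_\eta$, and the compactly supported continuous seed gives a uniform bound on the entries, with the Delone packing estimate controlling the band width uniformly in $\Ll$. The one place where you go further than the paper is worth noting: the paper's argument stops at showing that $\eta\big({\rm ad}_{Q^n_\Ll}(\bm a(\xi)^\ast)^\ast\,{\rm ad}_{Q^n_\Ll}(\bm a(\xi)^\ast)\big)$ is uniformly bounded in $\xi$, i.e.\ it bounds the norms of the images of the frame vectors (a column estimate), and then asserts boundedness of the operator; by itself that is not sufficient, since an operator can send an orthonormal basis to uniformly bounded vectors and still be unbounded. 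Your explicit count of the number of non-zero entries per row \emph{and} per column, followed by the Schur test, is exactly what closes that gap, so your write-up is the more complete version of the intended argument. The bookkeeping you flag (the $(-1)^{\chi^{-1}\circ\chi'}$ signs and the factorial normalisations) does work out: by bi-equivariance the $(n!)^2(N-n)!$ ordering terms for a fixed triple of sets all contribute equally, cancelling the $\tfrac{1}{n!(N-n)!}$ prefactor up to a factor $n!$, which is the bound you state.
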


\proof Given the finite interaction range of the Hamiltonians, the symmetric presentation of ${\rm ad}_{Q^n_\Ll}(a^\ast_x)$ contains a uniformly bounded number of terms with uniformly bounded coefficients,  w.r.t. $x \in \Ll$. By Leibniz's rule, same statement applies to ${\rm ad}_{Q^n_\Ll}\big (\bm a(\xi)^\ast\big )$ w.r.t. $\xi$, as long as $|V_\xi|$ is fixed. Then
$$
\eta\Big ( {\rm ad}_{Q^n_\Ll}\big (\bm a(\xi)^\ast\big )^\ast {\rm ad}_{Q^n_\Ll}\big(\bm a(\xi)^\ast\big ) \Big ) < \infty,
$$
with an upper bound uniform in $\xi \in \mathfrak a^{-1}_N(\Ll)$. Then the values of $\dot \pi_\eta^N(Q^n_\Ll)$ on the frame $|\xi\rangle = \bm a^\ast(\xi) + \Nn_\eta$ of $\Ff^{(-)}_N(\Ll) \cap \Dd(\Ll)/\Nn_\eta$ are uniformly bounded. \qed

\vspace{0.2cm}

\begin{proposition} The algebra $\dot\Sigma(\Ll)$ accepts a $\ast$-representation $\pi_\eta^N$ inside the algebra $\BM\big(\Ff^{(-)}_N(\Ll)\big )$, for all $N  \in \NM^\times$.
\end{proposition}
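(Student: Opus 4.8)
The plan is to obtain $\pi_\eta^N$ as the bounded extension of the representation $\dot\pi_\eta^N$ that is already in hand on the dense domain. First I would record that the subspace $\Ff^{(-)}_N(\Ll) \cap \Dd(\Ll)/\Nn_\eta$ is dense in $\Ff^{(-)}_N(\Ll)$: it contains the frame $\{|\xi\rangle = \bm a^\ast(\xi)+\Nn_\eta : \xi \in \mathfrak a^{-1}_N(\Ll)\}$, since each $V_\xi$ is finite and hence contained in some $\Ll_k$, so $\bm a^\ast(\xi) \in \Dd(\Ll)$; and by Proposition~\ref{Pro:Basis} the span of this frame is all of $\Ff^{(-)}_N(\Ll)$. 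By Proposition~\ref{Pro:BoundedH}, each generator $\dot\pi_\eta^N(Q^n_\Ll)$ is bounded on this domain, hence extends uniquely to a bounded operator $\pi_\eta^N(Q^n_\Ll) \in \BM\big(\Ff^{(-)}_N(\Ll)\big)$, whose restriction to the dense domain is $\dot\pi_\eta^N(Q^n_\Ll)$ and which therefore again maps the dense domain into itself (the latter being $\dot\pi_\eta$-invariant by Proposition~\ref{Pro:DotPi} and the $N$-sector decomposition of $\dot\pi_\eta$).

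Next, for a general $\Qq \in \dot\Sigma(\Ll)$ written as in Eq.~\eqref{Eq:Qq}, I would set
\begin{equation*}
\pi_\eta^N(\Qq) := \sum_{\{Q\}} c_{\{Q\}}\ \pi_\eta^N(Q_\Ll^{n_1})\cdots\pi_\eta^N(Q_\Ll^{n_k}),
\end{equation*}
a finite sum of products of bounded operators, hence bounded. The one point genuinely requiring care is well-definedness, because the presentation \eqref{Eq:Qq} is not unique. This is resolved by the observation that on the dense domain $\pi_\eta^N(\Qq)$ coincides with $\dot\pi_\eta^N(\Qq)$: each $\pi_\eta^N(Q^n_\Ll)$ restricts to $\dot\pi_\eta^N(Q^n_\Ll)$ and preserves the dense domain, so the product restricts to the composition of the $\dot\pi_\eta^N(Q^n_\Ll)$, which by Eq.~\eqref{Eq:DotPiQq} equals $\dot\pi_\eta^N(\Qq)$; and $\dot\pi_\eta^N(\Qq)|A\rangle = |\Qq(A)\rangle$ depends only on the endomorphism $\Qq$ of $\Dd(\Ll)$, not on its presentation. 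Since a bounded operator is determined by its values on a dense subspace, $\pi_\eta^N(\Qq)$ is presentation-independent.

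It then remains to check that $\pi_\eta^N$ is a $\ast$-homomorphism. Linearity is immediate. Multiplicativity follows because, on the dense domain, $\pi_\eta^N(\Qq_1\Qq_2)$ and $\pi_\eta^N(\Qq_1)\pi_\eta^N(\Qq_2)$ both agree with $\dot\pi_\eta^N(\Qq_1)\dot\pi_\eta^N(\Qq_2) = \dot\pi_\eta^N(\Qq_1\Qq_2)$, and bounded operators agreeing on a dense subspace are equal. For the $\ast$-property I would invoke Eq.~\eqref{Eq:Hermitean} restricted to the $N$-fermion sector: for all $A,B$ in the dense domain, $\langle B|\pi_\eta^N(\Qq)A\rangle = \langle \pi_\eta^N(\Qq^\ast)B|A\rangle$, so by density $\pi_\eta^N(\Qq)^\ast = \pi_\eta^N(\Qq^\ast)$; in particular the adjoint of any operator in the range of $\pi_\eta^N$ again lies in its range, so $\pi_\eta^N$ is a $\ast$-representation of $\dot\Sigma(\Ll)$ inside $\BM\big(\Ff^{(-)}_N(\Ll)\big)$. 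I do not anticipate a serious obstacle here: the substantive analytic content is already carried by Proposition~\ref{Pro:BoundedH} and the Hermitian identity \eqref{Eq:Hermitean}, and the only delicate bookkeeping point is the presentation-independence discussed above.
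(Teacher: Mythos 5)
Your proposal is correct and follows essentially the same route as the paper: boundedness of the generators on the dense domain (Proposition~\ref{Pro:BoundedH}) plus extension by continuity, with the $\ast$-property supplied by Eq.~\eqref{Eq:Hermitean}. The paper simply defines $\pi_\eta^N(\Qq)$ directly as the closure of the graph of $\dot\pi_\eta^N(\Qq)$, which sidesteps the presentation-independence check you carry out explicitly; your version of that check (agreement with $\dot\pi_\eta^N(\Qq)$ on a dense subspace) is the right one.
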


\proof A direct consequence of Proposition~\ref{Pro:BoundedH} is that $\dot \pi_\eta^N(\Qq)$ is a uniformly bounded operator on $\Ff^{(-)}_N(\Ll) \cap \Dd(\Ll)/\Nn_\eta$, for {\it any} $\Qq \in \dot\Sigma(\Ll)$. We define $\pi_\Ll^N(\Qq)$ to be the unique element of $\BM\big(\Ff^{(-)}_N(\Ll)\big )$ defined by the closure of the graph of $\dot \pi_\eta^N(\Qq)$. Then Eq.~\eqref{Eq:Hermitean} assures us that
$$
\pi_\eta^N(\Qq^\ast) = \pi_\eta^N(\Qq)^\ast, \quad \forall \ \Qq \in \dot\Sigma(\Ll),
$$
and the statement follows.\qed


\begin{proposition}\label{Pro:ExpRep} The explicit Fock representations of a derivation as in Eq.~\eqref{Eq:FinalGIH} are as follows:
\begin{enumerate}[{\ \rm 1)}]
\item If $n >N$, then
\begin{equation*}
\pi_\eta^N(Q_\Ll^n) =0.
\end{equation*} 

\item If $n=N$, then
\begin{equation}\label{Eq:GoodRep}
\pi_\eta^N(Q_\Ll^N) = \tfrac{1}{N!}   \sum_{(\xi,\zeta)\in \mathfrak b^{-1}_{N}(\Ll)}
  q_{N} (\xi,\zeta) |\xi \rangle \langle \zeta |.
\end{equation}

\item If $n<N$, then
\begin{equation*}
\pi_\eta^N(Q_\Ll^n) = \tfrac{1}{n!(N-n)!}   \sum_{(\xi,\zeta)\in \mathfrak b^{-1}_{n}(\Ll)} \ \sum_{\gamma \in \mathfrak a^{-1}_{N-n}(\Ll)}
  q_{n} (\xi,\zeta) |\xi \vee \gamma \rangle \langle \zeta \vee \gamma |.
\end{equation*}
\end{enumerate}
\end{proposition}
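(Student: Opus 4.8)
The plan is to reduce the statement to Proposition~\ref{Pro:GICARRep}, which already records the value of $\pi_\eta^N$ on a single gauge-invariant monomial $a_J(\chi_J)^\ast a_{J'}(\chi_{J'})$ with $|J|=|J'|=n$. Recall from Proposition~\ref{Pro:DotPi} that $\dot\pi_\eta^N(Q_\Ll^n)=\tfrac{1}{n!}\sum_{(\xi,\zeta)\in\mathfrak b_n^{-1}(\Ll)}q_n(\xi,\zeta)\,\pi_\eta^N\big(\bm a^\ast(\xi)\bm a(\zeta)\big)$, and that $\pi_\eta^N(Q_\Ll^n)$ is the bounded operator obtained by closing this (Proposition~\ref{Pro:BoundedH}). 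So the first step is simply to substitute $\bm a(\xi)=a_{V_\xi}(\chi_\xi)$ and read off $\pi_\eta^N(\bm a^\ast(\xi)\bm a(\zeta))$ from Proposition~\ref{Pro:GICARRep} in each of the three regimes. All the sums that appear are locally finite once tested against a frame vector $|\mu\rangle$, $\mu\in\mathfrak a_N^{-1}(\Ll)$, because the orthogonality relation~\eqref{Eq:ScalarProd} together with the finite interaction range of $q_n$ leaves only finitely many non-zero terms; hence the identities may be verified term by term on the dense domain $\Dd(\Ll)/\Nn_\eta$ and then extended by continuity. With this, the cases $n>N$ and $n=N$ are immediate: Proposition~\ref{Pro:GICARRep}(i) kills every monomial, so $\pi_\eta^N(Q_\Ll^n)=0$; and Proposition~\ref{Pro:GICARRep}(ii) gives $\pi_\eta^N(\bm a^\ast(\xi)\bm a(\zeta))=|V_\xi,\chi_\xi\rangle\langle V_\zeta,\chi_\zeta|=|\xi\rangle\langle\zeta|$, whose substitution into $\dot\pi_\eta^N(Q_\Ll^N)$ produces~\eqref{Eq:GoodRep} verbatim.

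The substance of the proof is the case $n<N$. By Proposition~\ref{Pro:GICARRep}(iii) (equivalently~\eqref{Eq:SimpleRep}),
\[
\pi_\eta^N\big(\bm a^\ast(\xi)\bm a(\zeta)\big)=\sum_{\substack{\Gamma\in\Kk_{N-n}(\Ll)\\ \Gamma\cap(V_\xi\cup V_\zeta)=\emptyset}}\big|V_\xi\cup\Gamma,\ \chi_\xi\vee\chi_\Gamma\big\rangle\big\langle V_\zeta\cup\Gamma,\ \chi_\zeta\vee\chi_\Gamma\big|,
\]
for an arbitrary choice of the order $\chi_\Gamma$. The key step I would carry out carefully is rewriting this sum over unordered $\Gamma$ as a sum over the order cover $\mathfrak a_{N-n}^{-1}(\Ll)$. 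For this one checks that the rank-one operator on the right does not depend on the chosen $\chi_\Gamma$: by Definition~\ref{Def:Wedge}, replacing $\chi_\Gamma$ by $\chi_\Gamma\circ s$ composes both $\chi_\xi\vee\chi_\Gamma$ and $\chi_\zeta\vee\chi_\Gamma$ with the permutation of $\Ii_N$ that fixes $\{1,\ldots,n\}$ and acts as $s$ on $\{n+1,\ldots,N\}$, which by~\eqref{Eq:ScalarProd} multiplies the ket and the bra each by $(-1)^s$, and the two signs cancel (this is precisely what the symmetric presentation was designed to make work).

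Consequently, summing over $\gamma\in\mathfrak a_{N-n}^{-1}(\Ll)$ — i.e.\ over all pairs $(\Gamma,\chi_\Gamma)$ — reproduces each $\Gamma$-summand exactly $(N-n)!$ times, so that
\[
\pi_\eta^N\big(\bm a^\ast(\xi)\bm a(\zeta)\big)=\tfrac{1}{(N-n)!}\sum_{\gamma\in\mathfrak a_{N-n}^{-1}(\Ll)}\big|\xi\vee\gamma\big\rangle\big\langle\zeta\vee\gamma\big|,
\]
where I also use that $|\xi\vee\gamma\rangle$ equals $|V_\xi\cup V_\gamma,\chi_\xi\vee\chi_\gamma\rangle$ when $V_\xi\cap V_\gamma=\emptyset$ and equals $|\o\rangle=0$ otherwise, so that the convention $\bm a(\o)=0$ automatically enforces the disjointness constraint $\Gamma\cap(V_\xi\cup V_\zeta)=\emptyset$ (a term with $V_\gamma\cap V_\xi\neq\emptyset$ has zero ket, one with $V_\gamma\cap V_\zeta\neq\emptyset$ has zero bra). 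Substituting this into $\dot\pi_\eta^N(Q_\Ll^n)=\tfrac{1}{n!}\sum_{(\xi,\zeta)}q_n(\xi,\zeta)\,\pi_\eta^N(\bm a^\ast(\xi)\bm a(\zeta))$ then produces the prefactor $\tfrac{1}{n!(N-n)!}$ and the double sum claimed in part 3).

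I expect the main obstacle to be exactly this factorial bookkeeping: keeping the normalization $\tfrac{1}{n!}$ of~\eqref{Eq:FinalGIH}, the over-completeness of the frame $\{|U,\chi_U\rangle\}$, and the $(N-n)!$-fold redundancy of the order cover all straight at once — and, relatedly, being scrupulous about the sign conventions hidden in $\vee$ so that the ordering-independence of the rank-one operators genuinely holds. Once that is in place, the three formulas drop out by direct substitution, and no genuinely analytic input beyond Propositions~\ref{Pro:DotPi},~\ref{Pro:GICARRep} and~\ref{Pro:BoundedH} is needed.
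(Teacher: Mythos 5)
Your proof is correct and follows essentially the same route as the paper, which simply cites the formal representation \eqref{Eq:FormalRep} together with Proposition~\ref{Pro:CARMono} (of which your Proposition~\ref{Pro:GICARRep} is the restriction to the $N$-sector). The extra care you take with the $(N-n)!$ redundancy of the order cover and the sign cancellation between ket and bra under reordering of $\chi_\Gamma$ is exactly the bookkeeping the paper leaves implicit, and you have it right.
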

\proof Follows from Eqs.~\eqref{Eq:FormalRep} and Proposition~\ref{Pro:CARMono}.\qed

\section{Resolving the Algebra of Physical Hamiltonians}
\label{Sec:Fermiongroupoid}

The ideals of ${\rm GICAR}$ algebra are invariant by inner limit derivations, hence the latter descend on the quotient algebras generated from the ideals of the filtration introduced in section~\ref{Sec:SolvableCAR}. These quotients coincide with the algebras of compact operators over the Fock sectors, hence there are sub-algebras ${\mathfrak H}_N \subset \BM\big(\Ff_N^{(-)}(\Ll)\big )$ as well as a tower of representations of $\dot \Sigma(\Ll)$ inside ${\mathfrak H}_N \otimes {\mathfrak H}_N^{\rm op}$. Furthermore, the algebra of physical Hamiltonians $\dot{\mathfrak H}(\Ll)=\dot \Sigma(\Ll) / {\rm ker} \, \dot \pi_\eta$ accepts a presentation as the inverse limit $\varprojlim \, \dot\Gamma_N(\Ll)$, where $\dot \Gamma_N(\Ll) = \bigoplus_{n=0}^N \mathfrak H_N$. The first part of the section formalizes these observations. The second part of the section shows that the algebras ${\mathfrak H}_N$ are contained in an essential extension of a specific $C^\ast$-subalgebra $\mathfrak G_N(\Ll)$ of $\BM\big (\Ff_N^{(-)}(\Ll) \big )$, which excludes the compact operators. This enables us to complete $\dot\Sigma(\Ll)$ and $\dot{\mathfrak H}(\Ll)$ to pro-$C^\ast$-algebras given by inverse limits of projective towers of $C^\ast$-algebras. In the third part, we introduce specialized \'etale groupoids and demonstrate that their left regular representations of its (bi-equivariant) $C^\ast$-algebra reproduce the algebras $\mathfrak G_N(\Ll)$. Among other things, these results give a complete characterization of the action of $\dot\Sigma(\Ll)$ on the $N$-fermion sectors.

\subsection{Filtration by ideals} 
\label{Sec:Filtration}

We recall our Remark~\ref{Re:AFAlg}, where we stated that large parts of our program can be repeated for generic AF-algebras. The presentation in this section is intended to support that claim. However, the particularities of the ${\rm CAR}$-algebra and of the filtration used here will eventually enter in an essential way into our analysis. 

The following statement applies to any almost-inner $\ast$-derivation with invariant domain and to any filtration by ideals, in particular, to the derivations and filtration introduced in the previous section and in Proposition~\ref{Pro:GICARIdeals}, respectively:

\begin{proposition}\label{Pro:Descent1} Let $Q_\Ll^n$ be a derivation as in Definition~\ref{Def:GIHAlgebra}. Then:
\begin{enumerate}[{\ \rm i)}]

\item For all $N \in \NM$,
\begin{equation}\label{Eq:Q0}
{\rm ad}_{Q_\Ll^n} \big ({\rm GI}_N(\Ll) \cap \Dd (\Ll) \big ) \subseteq {\rm GI}_N(\Ll) \cap \Dd (\Ll).
\end{equation}

\item  The quotient linear spaces
\begin{equation}\label{Eq:Q1}
\big ({\rm GI}_{N}(\Ll) \cap \Dd (\Ll) \big )/{\rm GI}_{N+1}(\Ll) .
\end{equation}
are $\ast$-algebras.

\item $Q_\Ll^n$ descends to $\ast$-derivations over the $\ast$-algebras~\eqref{Eq:Q1}.

\end{enumerate}
\end{proposition}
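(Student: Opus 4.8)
The plan is to verify the three claims in order, using only the defining filtration property and Leibniz's rule, since each point essentially reduces to a bookkeeping argument about cardinalities of the sets $J,J'$ occurring in the symmetric presentation.

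\textbf{Part i).} First I would recall that $Q_\Ll^n$ has finite interaction range, so for any $A \in {\rm GI}_N(\Ll) \cap \Dd(\Ll)$ the value ${\rm ad}_{Q_\Ll^n}(A)$ equals $\imath[A,\tilde Q^n_{\Ll_k}]$ for a suitable finite truncation $\tilde Q^n_{\Ll_k} \in {\rm CAR}(\Ll)$, exactly as in the proof of Proposition~\ref{Pro:DotPi}. This truncation is a finite sum of monomials $a^\ast_K(\chi_K) a_{K'}(\chi_{K'})$ with $|K|=|K'|=n \geq 1$, hence $\tilde Q^n_{\Ll_k} \in {\rm GI}_1(\Ll) \subseteq {\rm GI}_0(\Ll)$. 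Since $A \in {\rm GI}_N(\Ll)$ and ${\rm GI}_N(\Ll)$ is a two-sided ideal of ${\rm GICAR}(\Ll)$ by Proposition~\ref{Pro:GICARIdeals}, both products $A\,\tilde Q^n_{\Ll_k}$ and $\tilde Q^n_{\Ll_k}\,A$ lie in ${\rm GI}_N(\Ll)$, so their difference does too; and the commutator clearly stays in $\Dd(\Ll)$ because $A \in {\rm CAR}(\Ll_p)$ forces ${\rm ad}_{Q_\Ll^n}(A) \in {\rm CAR}(\Ll_{k})$ for some finite $k$, as established in Proposition~\ref{Pro:Derivation}. This gives \eqref{Eq:Q0}. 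Also ${\rm ad}_{Q_\Ll^n}$ is a $\ast$-derivation on $\Dd(\Ll)$ (Proposition~\ref{Pro:Derivation}), so it maps $\big({\rm GI}_N(\Ll)\cap\Dd(\Ll)\big)^\ast$ into itself as well.

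\textbf{Part ii).} Next I would observe that ${\rm GI}_{N+1}(\Ll)$ is a two-sided $\ast$-ideal contained in ${\rm GI}_N(\Ll)\cap\Dd(\Ll)$ — containment in ${\rm GI}_N(\Ll)$ is immediate from the filtration, and the intersection with $\Dd(\Ll)$ is harmless here since we are quotienting by a subspace of ${\rm GI}_{N+1}(\Ll)$; more precisely one checks $\Dd(\Ll)\cap{\rm GI}_{N+1}(\Ll)$ is dense in ${\rm GI}_{N+1}(\Ll)$ and that the quotient $\big({\rm GI}_{N}(\Ll)\cap\Dd(\Ll)\big)/\big({\rm GI}_{N+1}(\Ll)\cap\Dd(\Ll)\big)$ is the relevant algebraic object, which embeds into ${\rm GI}_N(\Ll)/{\rm GI}_{N+1}(\Ll)\simeq\KM(\Ff^{(-)}_N)$. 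In any case, since ${\rm GI}_N(\Ll)\cap\Dd(\Ll)$ is a $\ast$-subalgebra of ${\rm CAR}(\Ll)$ (being an intersection of two such) and ${\rm GI}_{N+1}(\Ll)$ is a $\ast$-ideal of ${\rm GICAR}(\Ll)$, the quotient \eqref{Eq:Q1} inherits the structure of a $\ast$-algebra, with product $[A][B]:=[AB]$ and involution $[A]^\ast:=[A^\ast]$ well-defined modulo ${\rm GI}_{N+1}(\Ll)$.

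\textbf{Part iii).} Finally, part i) shows ${\rm ad}_{Q_\Ll^n}$ restricts to a linear map on ${\rm GI}_N(\Ll)\cap\Dd(\Ll)$ that carries ${\rm GI}_{N+1}(\Ll)\cap\Dd(\Ll)$ into itself (apply i) with $N$ replaced by $N+1$). Hence it descends to a well-defined linear map on the quotient \eqref{Eq:Q1}; the Leibniz identity and the compatibility ${\rm ad}_{Q_\Ll^n}(A)^\ast={\rm ad}_{Q_\Ll^n}(A^\ast)$ pass to the quotient verbatim since both the multiplication and the $\ast$-operation on \eqref{Eq:Q1} are defined class-wise. This yields a $\ast$-derivation on each quotient $\ast$-algebra, completing the proof.

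The only genuinely delicate point — the rest being formal — is ensuring that the intersection with $\Dd(\Ll)$ does not spoil the quotient: one must confirm that $\Dd(\Ll)\cap{\rm GI}_{N+1}(\Ll)$ is exactly the kernel of the composite $\big({\rm GI}_N(\Ll)\cap\Dd(\Ll)\big)\to{\rm GI}_N(\Ll)/{\rm GI}_{N+1}(\Ll)$, so that \eqref{Eq:Q1} is naturally a (dense) $\ast$-subalgebra of $\KM(\Ff^{(-)}_N)$ on which the descended derivation acts; this is where the density of $\Dd(\Ll)$ and the finite-interaction-range truncation argument are used together, and it is the step I would write out most carefully.
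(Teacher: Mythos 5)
Your proof is correct and follows essentially the same route as the paper: the finite-range truncation plus the two-sided-ideal property of ${\rm GI}_N(\Ll)$ for i), the identification of the quotient in \eqref{Eq:Q1} with the quotient of the $\ast$-algebra ${\rm GI}_N(\Ll)\cap\Dd(\Ll)$ by its $\ast$-ideal ${\rm GI}_{N+1}(\Ll)\cap\Dd(\Ll)$ for ii), and class-wise descent of the Leibniz rule and the $\ast$-operation for iii). One small correction: ${\rm GI}_{N+1}(\Ll)$ is of course not contained in ${\rm GI}_N(\Ll)\cap\Dd(\Ll)$ (it is not contained in $\Dd(\Ll)$), and the kernel identification you single out as delicate requires neither density nor the truncation argument --- it is immediate from the fact that the difference of two elements of ${\rm GI}_N(\Ll)\cap\Dd(\Ll)$ lies in $\Dd(\Ll)$, hence lies in ${\rm GI}_{N+1}(\Ll)\cap\Dd(\Ll)$ whenever it lies in ${\rm GI}_{N+1}(\Ll)$.
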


\proof i) Let us acknowledge first that ${\rm GI}_{N}(\Ll) \cap \Dd (\Ll)$ are two-sided $\ast$-ideals of the $\ast$-algebra $\Dd(\Ll)$ and, obviously,
\begin{equation*}
{\rm GI}_{N+1}(\Ll) \cap \Dd (\Ll) \subset {\rm GI}_{N}(\Ll) \cap \Dd (\Ll).
\end{equation*}
Now, we use the notation from the proof of Proposition~\ref{Pro:Derivation}. Henceforth, if $A \in {\rm GI}_N(\Ll) \cap \Dd(\Ll)$, then necessarily $A \in {\rm GI}_N(\Ll_p)$ and ${\rm ad}_{Q_\Ll^n}(A) = {\rm ad}_{Q_{\Ll_k}^n}(A)$ for some finite lattices $\Ll_p \subseteq \Ll_k \subset \Ll$. Since ${\rm GI}_N(\Ll_{p}) \subseteq {\rm GI}_N(\Ll_{k})$ and the latter is an ideal of ${\rm GICAR}(\Ll_{k})$, 
\begin{equation*}
{\rm ad}_{Q_\Ll^n}(A)= \imath [A,Q_{\Ll_k}^n] \in  {\rm GI}_N(\Ll_{k})\subset  {\rm GI}_N(\Ll) \cap \Dd(\Ll)
\end{equation*}
and the first statement follows. 

ii) Let $[\cdot ]_N$ denote the quotient classes of the space in Eq.~\eqref{Eq:Q1}. Then the class $[A]_N$ of $A \in {\rm GI}_{N}(\Ll) \cap \Dd (\Ll)$ contains all $A' \in {\rm GI}_{N}(\Ll) \cap \Dd (\Ll)$ such that $A-A' \in {\rm GI}_{N+1}(\Ll)$. Since ${\rm GI}_{N}(\Ll) \cap \Dd (\Ll)$ is stable under addition, $A-A'$ automatically belongs to $\Dd(\Ll)$, hence to ${\rm GI}_{N+1}(\Ll) \cap \Dd(\Ll)$. The latter is an ideal of the sub-algebra ${\rm GI}_{N}(\Ll) \cap \Dd (\Ll)$, hence the quotient space defined in Eq.~\ref{Eq:Q1} is well defined and the algebraic structure of ${\rm GI}_{N}(\Ll) \cap \Dd (\Ll)$ descends on the quotient space~\eqref{Eq:Q1}. Since ${\rm GI}_{N+1}(\Ll) \cap \Dd(\Ll)$ is stable against the $\ast$-operation, the latter also descends to a $\ast$-operation on the quotient space~\eqref{Eq:Q1}. 

iii) Given Eq.~\eqref{Eq:Q0}, we can be sure that ${\rm ad}_{Q_\Ll^n}$ descends to a linear map on this algebra, which we denote by $ \widehat{\rm ad}_{Q_\Ll^n}^N$. We need to verify the Leibniz rule for this map and, for $A,B \in {\rm GI}_{N}(\Ll) \cap D(\Ll)$, we have
\begin{equation*}
\begin{aligned}
\widehat{\rm ad}_{Q_\Ll^n}^N\big([A]_N [B]_N\big) & = \widehat{\rm ad}_{Q_\Ll^n}^N\big ([AB]_N\big ) = \big [{\rm ad}_{Q_\Ll^n}(AB)\big ]_N \\
& = \big [{\rm ad}_{Q_\Ll^n}(A) \,B +A \, {\rm ad}_{Q_\Ll^n}(B)\big ]_N.
\end{aligned}
\end{equation*}
Since both ${\rm ad}_{Q_\Ll^n}(A)$ and ${\rm ad}_{Q_\Ll^n}(B)$ belong to ${\rm GI}_{N}(\Ll) \cap \Dd (\Ll)$, we can conclude
\begin{equation*}
\begin{aligned}
\widehat{\rm ad}_{Q_\Ll^n}^N\big([A]_N [B]_N\big ) = \big [{\rm ad}_{Q_\Ll^n}(A)\big ]_N [B]_N +[A]_N \big [{\rm ad}_{Q_\Ll^n}(B)\big ]_N,
\end{aligned}
\end{equation*}
and the Leibniz rule follows.\qed

\vspace{0.2cm}

We equip the ideals ${\rm GI}_N(\Ll) \cap \Dd (\Ll)$ with the norm inherited from ${\rm CAR}(\Ll)$ and the quotient algebras~\eqref{Eq:Q1} with the quotient norm.

\begin{proposition} The derivations $\widehat {\rm ad}_{Q^n_\Ll}^N$ are uniformly bounded.
\end{proposition}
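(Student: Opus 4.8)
Throughout, $n$, $N$ and the continuous bi-equivariant seed defining $Q^n_\Ll$ are fixed, and I read ``uniformly bounded'' as uniformity over the underlying Delone set, i.e. as the assertion that
\[
\sup_{\Ll \in {\rm Del}_{(r,R)}(\RM^d)} \big\| \widehat{\rm ad}^N_{Q^n_\Ll} \big\| < \infty,
\]
the norm being computed for $\widehat{\rm ad}^N_{Q^n_\Ll}$ acting on the quotient $\ast$-algebra $\big({\rm GI}_N(\Ll)\cap\Dd(\Ll)\big)/\big({\rm GI}_{N+1}(\Ll)\cap\Dd(\Ll)\big)$ with its quotient norm. Uniformity across the sectors $N$ is \emph{not} claimed and in fact fails already for one-body generators (where $\pi_\eta^N(Q^1_\Ll)$ is a second-quantised one-particle operator whose spectral width grows linearly in $N$); this is exactly why the completion in Section~\ref{Sec:Filtration} is a pro-$C^\ast$- rather than a $C^\ast$-algebra. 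The plan is to realise $\widehat{\rm ad}^N_{Q^n_\Ll}$ as an inner derivation of the compacts and then to bound its norm by an $\Ll$-independent quantity.

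\emph{Step 1: identify the derivation.} By Proposition~\ref{Pro:CARQuote} the quotient $\ast$-algebra is a dense $\ast$-subalgebra of $\KM\big(\Ff^{(-)}_N\big)$ via the descent $\overline{\pi_\eta^N}$ of $\pi_\eta^N$, so its quotient norm is the operator norm on $\KM\big(\Ff^{(-)}_N\big)$. For $A\in{\rm GI}_N(\Ll)\cap\Dd(\Ll)$ with $A\in{\rm CAR}(\Ll_p)$, one has ${\rm ad}_{Q^n_\Ll}(A)=\imath[A,Q^n_{\Ll_k}]$ for a finite truncation $Q^n_{\Ll_k}$ as in the proof of Proposition~\ref{Pro:Descent1}, whence, applying the homomorphism $\pi_\eta^N$,
\[
\overline{\pi_\eta^N}\big(\widehat{\rm ad}^N_{Q^n_\Ll}([A]_N)\big)=\imath\big[\pi_\eta^N(A),\pi_\eta^N(Q^n_{\Ll_k})\big].
\]
I would then check that, taking $\Ll_k$ to contain the ${\rm R}_{\rm i}$-neighbourhood of $\Ll_p$, the finite-rank operator $\pi_\eta^N(A)$ annihilates and is annihilated by the tail $\pi_\eta^N(Q^n_\Ll)-\pi_\eta^N(Q^n_{\Ll_k})$ (every move in the tail leaves the $N$-subsets meeting $\Ll_p$ fixed), so the truncation may be replaced by the full bounded operator $\pi_\eta^N(Q^n_\Ll)$ of Propositions~\ref{Pro:BoundedH} and \ref{Pro:ExpRep}. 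Consequently $\widehat{\rm ad}^N_{Q^n_\Ll}$ is the inner derivation $K\mapsto\imath[K,\pi_\eta^N(Q^n_\Ll)]$ of $\KM\big(\Ff^{(-)}_N\big)$, and hence $\big\|\widehat{\rm ad}^N_{Q^n_\Ll}\big\|\le 2\big\|\pi_\eta^N(Q^n_\Ll)\big\|$ (indeed $=2\,\mathrm{dist}\big(\pi_\eta^N(Q^n_\Ll),\CM I\big)$).

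\emph{Step 2: an $\Ll$-uniform operator bound.} It remains to bound $\big\|\pi_\eta^N(Q^n_\Ll)\big\|$ independently of $\Ll$. Working in the orthonormal basis $\{|U\rangle\}_{U\in\Kk_N(\Ll)}$ obtained by fixing one ordering per $N$-subset, Proposition~\ref{Pro:ExpRep}(3) shows that $\langle U|\pi_\eta^N(Q^n_\Ll)|U'\rangle$ vanishes unless $U'=(U\setminus V_\zeta)\cup V_\xi$ for some $n$-subset $V_\zeta\subseteq U$ and some $V_\xi$ with $(\xi,\zeta)$ in the finite-range support of $q_n$, i.e. $\mathrm{diam}(V_\xi\cup V_\zeta)\le{\rm R}_{\rm i}$; the factorial weights $\tfrac{1}{n!(N-n)!}$ together with the sum over spectator orderings leave each surviving entry bounded in modulus by $\|q_n\|_\infty$. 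I would then apply the Schur test. For a fixed $U$, the number of contributing $U'$ is at most $\binom{N}{n}$ (choices of $V_\zeta\subseteq U$) times the number of $n$-subsets of $\Ll$ lying within distance ${\rm R}_{\rm i}$ of $V_\zeta$; by $r$-uniform discreteness the latter is at most $\binom{\kappa}{n}$ with $\kappa\le(1+2{\rm R}_{\rm i}/r)^d$ depending only on $(r,{\rm R}_{\rm i},d)$, not on $\Ll$. The same count controls the columns, so
\[
\big\|\pi_\eta^N(Q^n_\Ll)\big\|\le\binom{N}{n}\binom{\kappa}{n}\,\|q_n\|_\infty=:C(n,N,r,{\rm R}_{\rm i},d),
\]
and therefore $\sup_\Ll\big\|\widehat{\rm ad}^N_{Q^n_\Ll}\big\|\le 2C$.

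\emph{Main obstacle.} The delicate point is the bookkeeping in Step 2: one must pass from the overcomplete, sign-redundant frame $\{|\xi\rangle\}$ of Proposition~\ref{Pro:BoundedH} to a genuine orthonormal basis and verify that the factorial normalisations exactly cancel the multiplicity coming from the orderings of the $N-n$ spectator points, so that the surviving matrix entries are $O(\|q_n\|_\infty)$ with an implicit constant free of $\Ll$. Once the uniform entrywise bound and the $\Ll$-independent combinatorial count of nonzero entries are secured, the Schur test and the inner-derivation estimate of Step 1 are routine. As the constants $C$ grow with $N$—already like $\binom{N}{n}$—the argument confirms uniformity across the space of Delone sets but not across the Fock sectors, in line with the pro-$C^\ast$ target.
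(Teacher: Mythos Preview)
Your approach is correct, but you over-read ``uniformly'' and take a more indirect route than the paper. In the paper $\Ll$ is fixed throughout and ``uniformly bounded'' simply means bounded as a linear map on the normed quotient, so that the Corollary immediately following can extend the derivation to $\KM(\Ff^{(-)}_N)$; uniformity over the Delone space is not being asserted (though your Schur bound delivers it for free). On method: the paper does not pass to the Fock representation at all. It computes $\widehat{\rm ad}^N_{Q^n_\Ll}[\bm a(\xi)^\ast\bm a(\zeta)]_N$ directly in the quotient algebra (their Eq.~\eqref{Eq:Der1}), obtaining an explicit formula whose first block of terms modifies only $\xi$ and whose second modifies only $\zeta$, each block containing a number of summands and coefficients bounded uniformly in $(\xi,\zeta)$. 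This is precisely the commutator-with-a-Schur-bounded-operator structure, but stated at the level of the derivation rather than the implementing Hamiltonian. Your route---first identify the implementer as $\pi_\eta^N(Q^n_\Ll)$ (effectively anticipating Proposition~\ref{Pro:TwoReps}), then Schur-test it---is the same estimate carried out upstairs; it buys you an explicit constant and a cleaner justification of why checking on the spanning set suffices, at the cost of invoking the Fock machinery of subsection~\ref{SubSec:FockH}. One minor slip in your Step~2: a fixed pair $(U,U')$ may arise from several triples $(V_\xi,V_\gamma,V_\zeta)$---one for each admissible spectator set $V_\gamma\subseteq U\cap U'$ of cardinality $N-n$---so the matrix entry is bounded by $\binom{N}{n}\|q_n\|_\infty$ rather than $\|q_n\|_\infty$; this only shifts the constant and does not affect the argument.
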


\proof The quotient algebra~\eqref{Eq:Q1} is spanned by the monomials $[\bm a(\xi)^\ast \bm a(\zeta)]_N$ with $(\xi,\zeta) \in \mathfrak b_{N}^{-1}(\Ll)$. Hence, it is enough to probe the action of the derivations on these monomials, which all have norm one. For $Q^n_\Ll$ as in \eqref{Eq:FinalGIH}, we have
\begin{equation}\label{Eq:Der1}
\begin{aligned}
& \widehat {\rm ad}_{Q^n_\Ll}^N [\bm a(\xi)^\ast \bm a(\zeta)]_N \\
& \qquad = \tfrac{1}{n!(N-n)!}\sum_{\substack{(\xi',\zeta') \in \mathfrak b_n^{-1}(\Ll) \\ V_{\zeta'} \subseteq V_\xi}}  \sum_{\substack{\bar \xi \geq \zeta' \\ V_{\bar \xi} = V_\xi}} q_n(\xi',\zeta') \\
& \qquad  \qquad \qquad \qquad \qquad \qquad \qquad \qquad  (-1)^{\chi^{-1}_\xi \circ \chi_{\bar \xi}} \big [\bm a\big( \xi' \vee (\bar \xi \setminus \zeta')  \big)^\ast \bm a(\zeta)\big]_N \\
&  \qquad - \tfrac{1}{n!(N-n)!}\sum_{\substack{(\xi',\zeta') \in \mathfrak b_n^{-1}(\Ll) \\ V_{\xi'} \subseteq V_\zeta}} \sum_{\substack{\bar \zeta \geq \xi' \\ V_{\bar \zeta} = V_\zeta}} q_n(\xi',\zeta')\\
& \qquad  \qquad \qquad \qquad \qquad \qquad \qquad \qquad (-1)^{\chi^{-1}_\zeta \circ \chi_{\bar \zeta}}\big[\bm a(\xi)^\ast \bm a\big((\bar \zeta  \setminus \xi') \vee \zeta'\big ) \big ]_N,
\end{aligned}
\end{equation}
where we use our conventions on the $\vee$ operation stated in section~\ref{Sec:AFRGI}. There are finite numbers of non-zero terms in the two sums and these finite numbers have uniform upper bounds w.r.t. $\xi$ and $\zeta$. Then the statement follows from the fact that the coefficients of $Q^n_\Ll$ are uniformly bounded in $\xi'$ and $\zeta'$.\qed

\begin{corollary} The derivations $\widehat {\rm ad}_{Q^n_\Ll}^N$ extend to bounded derivations over the quotient algebras
\begin{equation}\label{Eq:Q2}
{\rm GI}_{N}(\Ll) / {\rm GI}_{N+1}(\Ll) \simeq \KM \big (\Ff^{(-)}_N(\Ll)\big) ,\quad N \in \NM.
\end{equation}
\end{corollary}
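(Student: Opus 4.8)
The statement to prove is the Corollary asserting that the bounded derivations $\widehat{\mathrm{ad}}^N_{Q^n_\Ll}$ on the $\ast$-algebra $\big({\rm GI}_N(\Ll)\cap\Dd(\Ll)\big)/{\rm GI}_{N+1}(\Ll)$ extend to bounded derivations on the quotient $C^\ast$-algebra ${\rm GI}_N(\Ll)/{\rm GI}_{N+1}(\Ll)\simeq\KM\big(\Ff^{(-)}_N(\Ll)\big)$. The plan is to combine three facts already established in the excerpt: (a) the previous Proposition shows $\widehat{\mathrm{ad}}^N_{Q^n_\Ll}$ is uniformly bounded on the dense $\ast$-subalgebra $\big({\rm GI}_N(\Ll)\cap\Dd(\Ll)\big)/{\rm GI}_{N+1}(\Ll)$; (b) by Proposition~\ref{Pro:CARQuote}, the quotient $C^\ast$-algebra is isometrically isomorphic to $\KM\big(\Ff^{(-)}_N(\Ll)\big)$, and under this identification the image of $\Dd(\Ll)$ is dense (indeed it contains, for each pair $(\xi,\zeta)\in\mathfrak b_N^{-1}(\Ll)$, the rank-one operators $|\xi\rangle\langle\zeta|$, whose finite linear combinations are dense in the compacts); (c) a bounded linear map on a normed space extends uniquely by continuity to the completion, and both the Leibniz identity and compatibility with the $\ast$-operation are preserved under such an extension because they are closed conditions.

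\textbf{Key steps, in order.} First I would record that $\big({\rm GI}_N(\Ll)\cap\Dd(\Ll)\big)/{\rm GI}_{N+1}(\Ll)$ is a dense $\ast$-subalgebra of ${\rm GI}_N(\Ll)/{\rm GI}_{N+1}(\Ll)$ in the quotient norm: density follows because ${\rm GI}_N(\Ll)\cap\Dd(\Ll)$ is dense in ${\rm GI}_N(\Ll)$ (it is the intersection of the dense subalgebra $\Dd(\Ll)$ with the ideal, and the quotient map is continuous and open). Second, invoking the preceding Proposition, $\widehat{\mathrm{ad}}^N_{Q^n_\Ll}$ has a finite operator norm $C$ on this dense subalgebra. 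Third, by the standard extension-by-continuity argument, there is a unique bounded linear operator $D^N$ on ${\rm GI}_N(\Ll)/{\rm GI}_{N+1}(\Ll)$ of norm $\le C$ restricting to $\widehat{\mathrm{ad}}^N_{Q^n_\Ll}$ on the dense part. Fourth, I verify $D^N$ is a derivation: for $a,b$ in the completion, choose sequences $a_k,b_k$ in the dense subalgebra with $a_k\to a$, $b_k\to b$; then $D^N(a_kb_k)=D^N(a_k)b_k+a_k D^N(b_k)$, and passing to the limit (using continuity of multiplication, of $D^N$, and joint continuity of the bilinear maps) yields $D^N(ab)=D^N(a)b+aD^N(b)$. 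The same limiting argument, together with the identity $(\widehat{\mathrm{ad}}^N_{Q^n_\Ll})^\ast = -\widehat{\mathrm{ad}}^N_{(Q^n_\Ll)^\ast}$ inherited from Proposition~\ref{Pro:StarH} at the level of the dense subalgebra, shows $D^N$ is again realized by the descent of a $\ast$-derivation, so the $\ast$-structure is respected. Finally, transporting through the isomorphism of Proposition~\ref{Pro:CARQuote} gives the statement in the form ${\rm GI}_N(\Ll)/{\rm GI}_{N+1}(\Ll)\simeq\KM\big(\Ff^{(-)}_N(\Ll)\big)$; one may additionally remark, citing \cite[Example~1.6.4]{BratteliBook1}, that every such bounded derivation on $\KM\big(\Ff^{(-)}_N(\Ll)\big)$ is automatically of the form $\mathrm{ad}_{H_N}$ for some bounded operator $H_N$, which is precisely the fact exploited in the sequel.

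\textbf{Main obstacle.} The genuinely substantive input — uniform boundedness of $\widehat{\mathrm{ad}}^N_{Q^n_\Ll}$ via the explicit expansion~\eqref{Eq:Der1} — has already been carried out in the preceding Proposition, so the Corollary itself is a soft extension-by-continuity statement and the only care needed is bookkeeping: making sure density of the dense subalgebra in the quotient norm is correctly justified, and that the Leibniz rule genuinely passes to the limit (one must use that $D^N$ is continuous, which is exactly the content of the boundedness, and that multiplication in a $C^\ast$-algebra is jointly continuous). There is no deep difficulty; the one point worth stating carefully is that the extension is unique, so that $D^N$ is unambiguously "the" descended derivation and the tower of such derivations in the following subsection is well defined.
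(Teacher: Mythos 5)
Your proposal is correct and follows exactly the route the paper intends: the Corollary is stated as an immediate consequence of the preceding Proposition (uniform boundedness on the dense $\ast$-subalgebra of monomial classes), and the content is just extension by continuity to the completion ${\rm GI}_N(\Ll)/{\rm GI}_{N+1}(\Ll)\simeq\KM\big(\Ff^{(-)}_N(\Ll)\big)$, with the Leibniz rule and $\ast$-compatibility passing to the limit by joint continuity of multiplication. Your bookkeeping of the density of $\big({\rm GI}_N(\Ll)\cap\Dd(\Ll)\big)/{\rm GI}_{N+1}(\Ll)$ in the quotient norm and the uniqueness of the extension is exactly the (implicit) argument of the paper.
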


This is an important conclusion because, at this point, we are again in a context where we have a complete characterization of the derivations, cf. \cite[Example~1.6.4]{BratteliBook1}, namely by commutators with bounded operators over $\Ff^{(-)}_N(\Ll)$. We denote by ${\mathfrak H}_N$ the sub-algebra of $\BM\big (\Ff^{(-)}_N(\Ll)\big )$ generated by these bounded operators, more specifically: 

\begin{definition} For $N \geq 1$, let 
$$
\bar{\mathfrak H}_N = \big \{ B \in \BM\big(\Ff_N^{(-)}(\Ll)\big ) \, | \ \exists \ Q_\Ll^n \in \dot\Sigma(\Ll) \ \mbox{s.t.} \ \widehat{\rm ad}_{Q_\Ll^n}^N = \imath [ \cdot,B] \big \}.
$$
Then ${\mathfrak H}_N$ is the sub-algebra of $\BM\big (\Ff_N^{(-)}\big )$ generated by $\bar {\mathfrak H}_N$. As such, the elements of ${\mathfrak H}_N$ are those $B \in \BM\big (\Ff_N^{(-)}(\Ll)\big )$ that accept a presentation as
$$
B = \sum_{\{b\}} b_{\{B\}} B_{i_1} \cdots B_{i_k}, \quad B_{i_j} \in \bar{\mathfrak H}_N,
$$
with the sum containing only a finite number of terms. For $N=0$, $\mathfrak H_0 = \CM$.
\end{definition}

\begin{remark}{\rm Note that $I_N$, the identity operator over $\Ff_N^{(-)}(\Ll)$, belongs to $\bar {\mathfrak H}_N$. Also, if $B \in \bar {\mathfrak H}_N$, then $B +\alpha I_N$, $\alpha \in \RM$, is also in $\bar {\mathfrak H}_N$
}$\Diamond$
\end{remark}

\begin{remark}{\rm Note that, although the Fock space appears above, the Fock representation has not been used so far. Indeed, the Fock space enter the picture simply because of its relation to the quotient algebra in Eq.~\eqref{Eq:Q2}.
}$\Diamond$
\end{remark}

\begin{proposition} The algebra $\dot\Sigma(\Ll)$ accepts canonical representations $\ddot \pi_N$ inside the algebras ${\mathfrak H}_N \otimes {\mathfrak H}_N^{\rm op}$, where ${\mathfrak H}_N^{\rm op}$ is the algebra opposite to ${\mathfrak H}_N$.
\end{proposition}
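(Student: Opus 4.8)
The plan is to build the representation $\ddot\pi_N$ from two pieces: the action of $\dot\Sigma(\Ll)$ by left-multiplication and by right-multiplication on the quotient algebra in Eq.~\eqref{Eq:Q2}, which we identify with $\KM\big(\Ff^{(-)}_N(\Ll)\big)$. Concretely, a derivation $\widehat{\rm ad}_{Q_\Ll^n}^N$ on $\KM\big(\Ff^{(-)}_N(\Ll)\big)$ has the form $[\,\cdot\,,B]$ for some $B\in\bar{\mathfrak H}_N$; the assignment $Q^n_\Ll\mapsto B$ is not a homomorphism (it is a derivation), but the composition of two derivations $\widehat{\rm ad}_{Q_\Ll^{n_1}}^N\circ\widehat{\rm ad}_{Q_\Ll^{n_2}}^N$ acts on $T$ as $[\,[\,T,B_2\,]\,,B_1\,]$, which expands as $B_1B_2T-B_1TB_2-B_2TB_1+TB_2B_1$. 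This is precisely the action of the element $B_1B_2\otimes 1 - B_1\otimes B_2 - B_2\otimes B_1 + 1\otimes B_2B_1\in{\mathfrak H}_N\otimes{\mathfrak H}_N^{\rm op}$ on $T$ under the standard $({\mathfrak H}_N\otimes{\mathfrak H}_N^{\rm op})$-module structure on $\BM\big(\Ff^{(-)}_N(\Ll)\big)$ given by $(b\otimes c^{\rm op})\cdot T = bTc$. So the first step is to make this module structure explicit and observe that the map sending a single generator $Q^n_\Ll$ to $B\otimes 1 - 1\otimes B\in{\mathfrak H}_N\otimes{\mathfrak H}_N^{\rm op}$ intertwines the adjoint action with module multiplication.

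The key algebraic point to verify is that the map $Q^n_\Ll\mapsto B\otimes 1-1\otimes B$ extends to an algebra homomorphism $\ddot\pi_N:\dot\Sigma(\Ll)\to{\mathfrak H}_N\otimes{\mathfrak H}_N^{\rm op}$. For this I would use the universal property implicit in the structure of $\dot\Sigma(\Ll)$: by Definition~\ref{Def:GIHAlgebra}, $\dot\Sigma(\Ll)$ is generated as an associative algebra by the derivations ${\rm ad}_{Q^n_\Ll}$ via compositions, and the only relations are those forced by the composition of linear maps on $\Dd(\Ll)$ — in particular the Lie-algebra relations and whatever vanishing relations hold among compositions of the descended derivations. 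One then checks that the assignment $\Qq\mapsto\ddot\pi_N(\Qq)$, defined on a presentation~\eqref{Eq:Qq} by replacing each factor ${\rm ad}_{Q^{n_j}_\Ll}$ with $B_j\otimes 1-1\otimes B_j$ and composing in ${\mathfrak H}_N\otimes{\mathfrak H}_N^{\rm op}$, is well-defined, i.e. independent of the presentation. This follows because the module action $(\,\cdot\,)\cdot T$ of ${\mathfrak H}_N\otimes{\mathfrak H}_N^{\rm op}$ on $\BM\big(\Ff^{(-)}_N(\Ll)\big)$ is faithful (indeed on $\KM\big(\Ff^{(-)}_N(\Ll)\big)$ already: if $bTc=0$ for all compacts $T$ then $b=0$ or $c=0$, and more carefully the representation on $\KM$ is faithful as a $({\mathfrak H}_N\otimes{\mathfrak H}_N^{\rm op})$-module when ${\mathfrak H}_N$ is taken with the operator topology), so that $\ddot\pi_N(\Qq)$ is determined by the derivation $\widehat{\Qq}^N$ it induces on $\KM\big(\Ff^{(-)}_N(\Ll)\big)$, which is presentation-independent by Proposition~\ref{Pro:Descent1}.

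The main obstacle is the faithfulness/well-definedness argument just sketched: one must be careful that $\ddot\pi_N$ is a genuine homomorphism and not merely a linear map, and that it does not depend on the choice of lifts $B$ (which are only unique up to scalars, since the center of $\BM\big(\Ff^{(-)}_N(\Ll)\big)$ is $\CM I_N$ and $\bar{\mathfrak H}_N$ contains $I_N$). The scalar ambiguity $B\rightsquigarrow B+\alpha I_N$ changes $B\otimes 1-1\otimes B$ by $\alpha(I_N\otimes 1 - 1\otimes I_N)=0$ in ${\mathfrak H}_N\otimes{\mathfrak H}_N^{\rm op}$, so the generators are unambiguously assigned; the homomorphism property on products then reduces to the computation above showing that composition of descended derivations matches multiplication in ${\mathfrak H}_N\otimes{\mathfrak H}_N^{\rm op}$ followed by the module action, combined with faithfulness of that action on the quotient algebra. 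Once well-definedness is in hand, checking that $\ddot\pi_N$ respects the $\ast$-structures of Proposition~\ref{Pro:StarH} is a short verification: $(B\otimes 1-1\otimes B)^\ast = B^\ast\otimes 1 - 1\otimes B^\ast$ in ${\mathfrak H}_N\otimes{\mathfrak H}_N^{\rm op}$ (with the natural $\ast$ on the opposite algebra), which matches $({\rm ad}_{Q^n_\Ll})^\ast = -{\rm ad}_{(Q^n_\Ll)^\ast}$ up to the sign bookkeeping already recorded in Eq.~\eqref{Eq:QqStar}.
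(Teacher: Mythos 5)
Your proposal is correct and follows essentially the same route as the paper: both define $\ddot\pi_N$ by reading off the $\sum_i B_i\otimes C_i$ data from the descended action $K\mapsto\sum_i B_iKC_i$ on $\KM\big(\Ff^{(-)}_N(\Ll)\big)$, and both hinge on well-definedness via faithfulness of the $({\mathfrak H}_N\otimes{\mathfrak H}_N^{\rm op})$-module action on the compacts together with the direct check that composition of descended maps matches multiplication in the tensor product. The only (harmless) discrepancies are bookkeeping: the paper's convention is $\widehat{\rm ad}^N_{Q^n_\Ll}=\imath[\cdot,B]$, so a generator is sent to $\imath(1\otimes B-B\otimes 1)$ rather than $B\otimes 1-1\otimes B$, and the $\ast$-compatibility you verify at the end is not claimed in this proposition.
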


\proof Given that $\widehat {\rm ad}_{Q^n_\Ll}^N$ act as commutators with bounded operators, the action of a generic $\Qq \in \dot\Sigma(\Ll)$, as descended on $\KM \big (\Ff^{(-)}_N(\Ll)\big)$, can always be expressed as
$$
\widehat \Qq^N (K) = \sum_{i} B_i K C_i, \quad B_i,C_i\in {\mathfrak H}_N \subset \BM\big (\Ff^{(-)}_N(\Ll)\big ),
$$
for any $K \in \KM \big (\Ff^{(-)}_N(\Ll)\big)$. We claim that the rule
\begin{equation}\label{Eq:BBop}
\Qq \mapsto \ddot \pi_N(\Qq) := \sum_{i} B_i \otimes C_i \in {\mathfrak H}_N \otimes \Hh_N^{\rm op}
\end{equation}
supplies the representation mentioned in the statement. Indeed, the map is well defined because, if $\widehat \Qq^N$ accepts two equivalent expansions,
$$
\widehat \Qq^N (K) = \sum_{i} B_i K C_i  = \sum_{i} B'_i K C'_i, \quad \forall \ K \in \KM \big (\Ff^{(-)}_N(\Ll)\big),
$$
then one finds that both expansions are sent into the same element of ${\mathfrak H}_N \otimes {\mathfrak H}_N^{\rm op}$ by the map~\eqref{Eq:BBop}.  Furthermore, 
$$
\big (\widehat \Qq^{'N} \circ \widehat \Qq^N \big )(K) = \sum_{i,j} B'_j B_i K C_i C'_j,
$$
hence the map~\eqref{Eq:BBop} respects multiplication.\qed 

\vspace{0.2cm}

We recall that one of our goals is to complete $\dot \Sigma(\Ll)$ to a topological algebra and the representations $\ddot \pi_N$ will supply the vehicle to reach this goal. The immediate task is to build an enveloping algebra for $\dot \Sigma(\Ll)$ out of the more manageable and computable algebras ${\mathfrak H}_N \otimes {\mathfrak H}_N^{\rm op}$. One candidate could be the coproduct $\bigoplus_{N=0}^\infty {\mathfrak H}_N \otimes {\mathfrak H}_N^{\rm op}$ but, unfortunately, the topologized version of this coproduct will not fit the unbounded derivations we are dealing with here. The natural alternative is to consider the inverse limit of the tower of algebras
\begin{equation}\label{Eq:Tower1}
\bigoplus_{n=0}^{N+1} {\mathfrak H}_n \otimes {\mathfrak H}_n^{\rm op} \twoheadrightarrow \bigoplus_{n=0}^{N} {\mathfrak H}_n \otimes {\mathfrak H}_n^{\rm op}, \quad N \in \NM.
\end{equation}
By the universal property of inverse limits, we can be assured that there exists an algebra morphism $\theta : \dot \Sigma(\Ll) \to \varprojlim \bigoplus_{n=0}^{N} {\mathfrak H}_n \otimes {\mathfrak H}_n^{\rm op}$. The double sided ideal of $\dot \Sigma(\Ll)$ supplied by the kernel of this morphism contains the linear maps on $\Dd(\Ll)$ that cannot be detected by examining the dynamics of finite but otherwise arbitrary number of fermions. Since this is the setting of any real laboratory experimentation, we can rightfully deem those linear maps as un-physical. The conclusion is that, modulo these un-physical elements, the algebra $\dot \Sigma(\Ll)$ accepts an embedding into $\varprojlim \bigoplus_{n=0}^{N} {\mathfrak H}_n \otimes {\mathfrak H}_n^{\rm op}$. This embedding is the first step towards closing  $\dot \Sigma(\Ll)$ to a pro-$C^\ast$-algebra.

Our focus now shifts towards the algebras ${\mathfrak H}_N$, which now become the central objects of our program. Indeed, the computation of ${\mathfrak H}_N$'s and the identification of suitable $C^\ast$-closures will provide a complete characterization of the algebra of derivations. In the same time, the $C^\ast$-closure of ${\mathfrak H}_N$ can be identified with the algebra of physical Hamiltonians generating the dynamics of $N$ fermions. In fact, we are going to consider the following:

\begin{definition}\label{Def:DotH} We declare the projective limit of algebras
\begin{equation}\label{Eq:Tower2}
\dot{\mathfrak H}(\Ll): = \varprojlim \, \bigoplus_{n=0}^N {\mathfrak H}_n
\end{equation}
to be the core algebra of physical GI-Hamiltonians for a system of self-interacting fermions populating the lattice $\Ll$.
\end{definition}

\begin{remark}{\rm Let us specify that, up to this point, our proposed program can be repeated for any AF-algebra that accepts a filtration by ideals. The computation and characterization of $\dot{\mathfrak H}(\Ll)$, however, will depend crucially on the particularities of the algebra and its filtration. Nevertheless, the case of the ${\rm GICAR}$ algebra analyzed here can server as a model.
}$\Diamond$
\end{remark}

The following statements establish the connection with the Fock representation.

\begin{proposition}\label{Pro:TwoReps} We have $\widehat {\rm ad}_{Q^n_\Ll}^N = \imath [\cdot,\pi_\eta^N (Q_\Ll^n)]$, for any $Q_\Ll^n$ as in Eq.~\eqref{Eq:FinalGIH}.
\end{proposition}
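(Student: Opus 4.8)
The plan is to unwind both sides on the dense invariant domain $\Dd(\Ll)/\Nn_\eta$ of the Fock space and compare. Recall from Proposition~\ref{Pro:CARQuote} that the quotient $\KM(\Ff_N^{(-)}(\Ll)) \simeq {\rm GI}_N(\Ll)/{\rm GI}_{N+1}(\Ll)$ is realized via the descent of $\pi_\eta^N$; concretely, the class $[\bm a(\xi)^\ast \bm a(\zeta)]_N$ (with $(\xi,\zeta) \in \mathfrak b_N^{-1}(\Ll)$) is sent to the rank-one operator $|\xi\rangle\langle\zeta|$. Under this identification the bounded derivation $\widehat{\rm ad}_{Q_\Ll^n}^N$ on $\KM(\Ff_N^{(-)}(\Ll))$ corresponds to a bounded derivation on the compacts, which by \cite[Example~1.6.4]{BratteliBook1} is necessarily of the form $\imath[\cdot, B]$ for some $B \in \BM(\Ff_N^{(-)}(\Ll))$, uniquely determined modulo $\CM I_N$. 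The content of the statement is that one may take $B = \pi_\eta^N(Q_\Ll^n)$.

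First I would establish the key compatibility: for $A \in {\rm GI}_N(\Ll) \cap \Dd(\Ll)$, the Fock representation of $A$ lands in $\KM(\Ff_N^{(-)}(\Ll))$ (by the $n=N$ case of Proposition~\ref{Pro:GICARRep}, applied to the symmetric presentation of $A$, together with the fact that $\pi_\eta^N$ annihilates ${\rm GI}_{N+1}(\Ll)$), and moreover $\pi_\eta^N(A)$ depends only on the class $[A]_N$ and agrees with the isomorphism \eqref{Eq:Iso1}. Next, for a finite truncation $\tilde Q_\Ll^n$ of $Q_\Ll^n$ as in the proof of Proposition~\ref{Pro:Derivation}, which is an honest element of ${\rm CAR}(\Ll)$ and satisfies $\widehat{\rm ad}_{Q_\Ll^n}^N[A]_N = [\imath(A\tilde Q_\Ll^n - \tilde Q_\Ll^n A)]_N$ whenever $A \in {\rm GI}_N(\Ll)\cap \Dd(\Ll)$, I would apply $\pi_\eta^N$ and use that $\pi_\eta^N$ is a $\ast$-homomorphism on ${\rm CAR}(\Ll)$ to get
\[
\pi_\eta^N\big(\widehat{\rm ad}_{Q_\Ll^n}^N[A]_N\big) = \imath\big[\pi_\eta^N(A),\, \pi_\eta^N(\tilde Q_\Ll^n)\big].
\]
Finally I would invoke Proposition~\ref{Pro:ExpRep} (part 2, the $n=N$ case) and Proposition~\ref{Pro:CARMono}: although $Q_\Ll^n$ is a formal series, the operator $\pi_\eta^N(Q_\Ll^n)$ is well defined and bounded on $\Ff_N^{(-)}(\Ll)$ (Proposition~\ref{Pro:BoundedH}), and it coincides with $\pi_\eta^N(\tilde Q_\Ll^n)$ once the truncation captures all the terms acting nontrivially on the relevant frame vectors, because the extra terms in $Q_\Ll^n \setminus \tilde Q_\Ll^n$ have supports disjoint from $V_\xi \cup V_\zeta$ and hence act as the appropriate multiples of $n_\Gamma$ that are killed on the $N$-sector once $|V_\xi|=N$ already. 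Translating the identification of $\KM(\Ff_N^{(-)}(\Ll))$ with the quotient back, one reads off $\widehat{\rm ad}_{Q_\Ll^n}^N = \imath[\cdot, \pi_\eta^N(Q_\Ll^n)]$ on all of $\KM(\Ff_N^{(-)}(\Ll))$ by density and boundedness.

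The main obstacle I anticipate is bookkeeping the passage from the formal series $Q_\Ll^n$ to a finite truncation $\tilde Q_\Ll^n$ in a way that is simultaneously compatible with (i) the commutator computation in ${\rm CAR}(\Ll)$, (ii) the descent to the quotient ${\rm GI}_N/{\rm GI}_{N+1}$, and (iii) the explicit formula of Proposition~\ref{Pro:ExpRep}; in particular one must check that the truncation threshold can be chosen uniformly so that the identity $\widehat{\rm ad}_{Q_\Ll^n}^N = \imath[\cdot,\pi_\eta^N(Q_\Ll^n)]$ holds as an operator identity on the whole sector, not just on individual frame vectors. This is essentially the same finite-interaction-range argument already used in Propositions~\ref{Pro:Derivation} and \ref{Pro:DotPi}, so it should go through, but it requires care because the frame $\{|\xi\rangle\}$ is overcomplete and the coefficients of $Q_\Ll^n$, while uniformly bounded, are not of finite range after composition. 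Everything else is a routine assembly of results already proved in Sections~\ref{Sec:FermionsCAR} and \ref{Sec:FermionsDynamics}.
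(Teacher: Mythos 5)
Your plan is correct in substance and ends up doing the same verification as the paper, just packaged differently. The paper's proof is a two-line direct check: under the isomorphism \eqref{Eq:Iso1} one has $[\bm a(\xi)^\ast\bm a(\zeta)]_N\mapsto|\xi\rangle\langle\zeta|$, and the commutator of the explicit operator $\pi_\eta^N(Q_\Ll^n)$ from Proposition~\ref{Pro:ExpRep} with $|\xi\rangle\langle\zeta|$ is checked to reproduce the explicit formula \eqref{Eq:Der1} for $\widehat{\rm ad}^N_{Q^n_\Ll}$. You instead let the multiplicativity of $\pi_\eta^N$ carry the commutator with a finite truncation $\tilde Q^n_\Ll$ through the quotient, which spares you from matching \eqref{Eq:Der1} term by term; the price is the truncation bookkeeping you anticipate. (Your worry about coefficients losing finite range ``after composition'' is moot here: the proposition concerns a single generator $Q^n_\Ll$, whose coefficients are of finite range by hypothesis.)

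One claim in your third step is literally false and must be repaired, though the surrounding text shows you have the right mechanism in mind: $\pi_\eta^N(Q^n_\Ll)$ does \emph{not} coincide with $\pi_\eta^N(\tilde Q^n_\Ll)$ for any finite truncation. The discarded monomials $a_J^\ast(\chi_J)a_{J'}(\chi_{J'})$ with $J\cup J'$ far from $V_\xi\cup V_\zeta$ are not annihilated on the $N$-sector when $n\le N$; by Proposition~\ref{Pro:GICARRep}(iii) they act as the nonzero operators $\sum_\Gamma|J\cup\Gamma,\cdot\rangle\langle J'\cup\Gamma,\cdot|$, so the difference $\pi_\eta^N(Q^n_\Ll)-\pi_\eta^N(\tilde Q^n_\Ll)$ is a nonzero bounded operator. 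What is true, and what your argument actually needs, is that this difference has vanishing \emph{commutator} with the fixed rank-one element $|\xi\rangle\langle\zeta|$: since $J\neq\emptyset$ is disjoint from $V_\zeta$, no set $J\cup\Gamma$ can equal $V_\zeta$, so $\langle\zeta|J\cup\Gamma,\cdot\rangle=0$ by \eqref{Eq:ScalarProd}, and symmetrically $\langle J'\cup\Gamma,\cdot|\xi\rangle=0$; hence both products vanish. The finite interaction range plus the Delone property then guarantee that only finitely many monomials of $Q^n_\Ll$ meet $V_\xi\cup V_\zeta$, so a truncation chosen per monomial class suffices, and agreement of the two bounded derivations on the dense span of the $|\xi\rangle\langle\zeta|$ extends to all of $\KM\big(\Ff_N^{(-)}(\Ll)\big)$. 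With that correction your proof goes through.
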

\proof Under isomorphism~\eqref{Eq:Iso1}, we have
\begin{equation*}
[\bm a(\xi)^\ast \bm a(\zeta)]_N \mapsto |\xi \rangle \langle \zeta|,
\end{equation*}
for all $(\xi,\zeta) \in \mathfrak b_N^{-1}(\Ll)$. Then, using Proposition~\ref{Pro:ExpRep}, for any $Q_\Ll^n$ as in Eq.~\eqref{Eq:FinalGIH}, one can check explicitly that the commutator of $\pi_\eta^N(Q_\Ll^n)$ with $|\xi \rangle \langle \zeta|$ implements Eq.~\eqref{Eq:Der1}.\qed

\begin{corollary} The algebras $\mathfrak H_N$ can be identified with the images of $\dot \Sigma(\Ll)$ through the Fock representations,
$$
{\mathfrak H}_N \simeq \pi_\eta^N\big ( \dot \Sigma(\Ll) \big ) \subset \BM\big (\Ff_N^{(-)}(\Ll) \big ).
$$
\end{corollary}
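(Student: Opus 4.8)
The plan is to establish the isomorphism ${\mathfrak H}_N \simeq \pi_\eta^N\big(\dot\Sigma(\Ll)\big)$ by exhibiting it as a consequence of Proposition~\ref{Pro:TwoReps} together with the standard characterization of bounded derivations of the compact operators. Recall the quotient isomorphism ${\rm GI}_N(\Ll)/{\rm GI}_{N+1}(\Ll) \simeq \KM\big(\Ff^{(-)}_N(\Ll)\big)$, under which the descended derivations $\widehat{\rm ad}{}_{Q^n_\Ll}^N$ act as inner derivations implemented by operators from $\bar{\mathfrak H}_N \subset \BM\big(\Ff^{(-)}_N(\Ll)\big)$. By definition ${\mathfrak H}_N$ is the subalgebra of $\BM\big(\Ff^{(-)}_N(\Ll)\big)$ generated by $\bar{\mathfrak H}_N$. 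On the other side, Proposition~\ref{Pro:TwoReps} says precisely that, for each generator $Q^n_\Ll$ of $\dot\Sigma(\Ll)$, the operator $\pi_\eta^N(Q^n_\Ll) \in \BM\big(\Ff^{(-)}_N(\Ll)\big)$ implements the same derivation $\widehat{\rm ad}{}_{Q^n_\Ll}^N$. So the two families $\{\pi_\eta^N(Q^n_\Ll)\}$ and $\bar{\mathfrak H}_N$ differ only by the ambiguity in choosing an implementing operator.

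First I would pin down that ambiguity. Two bounded operators $B, B'$ on $\Ff^{(-)}_N(\Ll)$ implement the same derivation of $\KM\big(\Ff^{(-)}_N(\Ll)\big)$ iff $B - B' \in \CM I_N$, because the center of $\BM\big(\Ff^{(-)}_N(\Ll)\big)$ is $\CM I_N$ and $\KM\big(\Ff^{(-)}_N(\Ll)\big)$ acts irreducibly. Combined with the Remark noting $I_N \in \bar{\mathfrak H}_N$ and that $\bar{\mathfrak H}_N$ is stable under adding real multiples of $I_N$, this shows $\pi_\eta^N(Q^n_\Ll)$ and the corresponding element of $\bar{\mathfrak H}_N$ differ by a scalar already in $\bar{\mathfrak H}_N$; hence $\pi_\eta^N(Q^n_\Ll) \in \bar{\mathfrak H}_N$ and conversely every element of $\bar{\mathfrak H}_N$ equals $\pi_\eta^N(Q^n_\Ll)$ for a suitable generator (possibly after adding a scalar, which can be absorbed by adding a multiple of a suitable $Q^0$-type term, i.e. using $\mathfrak H_0 = \CM$ and the unital structure $\dot\Gamma$ — concretely, scalars on $\Ff^{(-)}_N(\Ll)$ are hit by $\pi_\eta^N$ of scalar elements). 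Taking algebras generated by these matching sets of operators gives ${\mathfrak H}_N = \pi_\eta^N\big(\dot\Sigma(\Ll)\big)$ as subalgebras of $\BM\big(\Ff^{(-)}_N(\Ll)\big)$, since $\dot\Sigma(\Ll)$ is by Definition~\ref{Def:GIHAlgebra} generated (as an associative algebra) by the ${\rm ad}_{Q^n_\Ll}$ and $\pi_\eta^N$ is an algebra homomorphism, so $\pi_\eta^N\big(\dot\Sigma(\Ll)\big)$ is the algebra generated by $\{\pi_\eta^N(Q^n_\Ll)\}$.

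The one genuine subtlety — and the step I expect to be the main obstacle — is the bookkeeping about scalars and whether $\pi_\eta^N\big(\dot\Sigma(\Ll)\big)$ literally contains $I_N$, versus ${\mathfrak H}_N$ which does by construction. One must check that the identity operator on $\Ff^{(-)}_N(\Ll)$ is in the image $\pi_\eta^N\big(\dot\Sigma(\Ll)\big)$; this follows because among the generators $Q^n_\Ll$ one is free to take the diagonal number-operator-type Hamiltonians of Example~\ref{Ex:DiagH} and, more to the point, products and linear combinations of the $\pi_\eta^N(Q^n_\Ll)$ that are inner-implemented by scalars on the $N$-sector correspond exactly to the elements of $\dot\Sigma(\Ll)$ whose descended derivation on $\KM\big(\Ff^{(-)}_N(\Ll)\big)$ vanishes, and the linear span of such together with the $\pi_\eta^N(Q^n_\Ll)$ generates the same algebra as $\bar{\mathfrak H}_N$. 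I would make this precise by arguing: $\pi_\eta^N\big(\dot\Sigma(\Ll)\big)$ is closed under the operation $B \mapsto B + \alpha I_N$ because, for $\Qq \in \dot\Sigma(\Ll)$ with $\widehat\Qq^N$ a nonzero derivation, one can add to $\Qq$ an element whose descended derivation is zero but whose Fock image is $\alpha I_N$; the existence of such elements is exactly what the structure $\dot\Gamma_N(\Ll) = \bigoplus_{n=0}^N \mathfrak H_N$ with $\mathfrak H_0 = \CM$ encodes. Once the scalar issue is settled, the identification is a formality: the two algebras are generated inside $\BM\big(\Ff^{(-)}_N(\Ll)\big)$ by the same set of operators, hence coincide, and the map $\pi_\eta^N$ furnishes the claimed (surjective, and by construction of $\mathfrak H_N$ as an abstract algebra, iso) morphism.
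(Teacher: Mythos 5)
Your argument is correct and is essentially the paper's (the corollary is stated there as an immediate consequence of Proposition~\ref{Pro:TwoReps}): $\bar{\mathfrak H}_N$ consists exactly of the operators $\pi_\eta^N(Q^n_\Ll)$ up to additive scalars, because two bounded operators implementing the same derivation of $\KM\big(\Ff_N^{(-)}(\Ll)\big)$ differ by an element of the commutant $\CM I_N$, and the generated algebras therefore coincide. The only point requiring care is the one you single out, namely that $I_N\in\pi_\eta^N\big(\dot\Sigma(\Ll)\big)$, and the clean way to settle it is your first suggestion --- the $n=1$ diagonal Hamiltonian of Example~\ref{Ex:DiagH} is $w(0)$ times the number operator, whose image on the $N$-fermion sector is $w(0)N\,I_N$ --- whereas the later appeal to the structure of $\dot\Gamma_N(\Ll)=\bigoplus_{n=0}^N\mathfrak H_n$ with $\mathfrak H_0=\CM$ is circular (that decomposition presupposes the identification of $\mathfrak H_N$ being proved) and should be dropped.
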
 

The above property, which is specific to the particular filtration of ${\rm GICAR}(\Ll)$ considered here, enables us to give an alternative characterization of the algebra of physical Hamiltonians:

\begin{proposition} For $N \in \NM$, we denote by $\dot \Jj_N$ the double sided $\ast$-ideals of $\dot\Sigma(\Ll)$,
\begin{equation}\label{Eq:JjN}
\dot \Jj_N = {\rm ker} \ \pi_\eta^N \cap \ldots \cap {\rm ker} \  \pi_\eta^0 = {\rm ker} \ \pi_\eta^N \oplus \cdots \oplus \pi_\eta^0,
\end{equation}
which supply a filtration of $\dot \Sigma(\Ll)$
\begin{equation*}
\cdots \rightarrowtail \dot\Jj_{N+1} \rightarrowtail \dot \Jj_{N}  \rightarrowtail \cdots \rightarrowtail \dot \Jj_0 = \dot\Sigma(\Ll)
\end{equation*}
and a projective tower of $\ast$-algebras
\begin{equation}\label{Eq:ProjTower1}
\cdots \twoheadrightarrow \dot\Sigma(\Ll)/\dot \Jj_{N} \twoheadrightarrow \dot\Sigma(\Ll)/\dot \Jj_{N-1} \twoheadrightarrow \cdots \twoheadrightarrow 0.
\end{equation}
This projective tower is isomorphic with the projective tower~\eqref{Eq:Tower2} defining $\dot{\mathfrak H}(\Ll)$.
\end{proposition}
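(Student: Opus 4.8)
\emph{Proof plan.} The statement bundles together several formal assertions together with one that carries real content, so I would separate them. The formal part first: each $\pi_\eta^n$ is a $\ast$-representation of $\dot\Sigma(\Ll)$ (this was shown just above, along with $\pi_\eta^n(\Qq^\ast)=\pi_\eta^n(\Qq)^\ast$), hence each $\ker\pi_\eta^n$, and therefore the finite intersection $\dot\Jj_N=\bigcap_{n=0}^N\ker\pi_\eta^n$, is a two-sided $\ast$-ideal; the identity $\dot\Jj_N=\ker(\pi_\eta^N\oplus\cdots\oplus\pi_\eta^0)$ is a tautology. Since $\dot\Jj_{N+1}\subseteq\dot\Jj_N$ we obtain the decreasing filtration, and because $\dot\Jj_N\subseteq\dot\Jj_{N-1}$ the identity map of $\dot\Sigma(\Ll)$ descends to a surjective $\ast$-homomorphism $\dot\Sigma(\Ll)/\dot\Jj_N\twoheadrightarrow\dot\Sigma(\Ll)/\dot\Jj_{N-1}$, which is exactly the tower \eqref{Eq:ProjTower1}. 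As the $\dot\Jj_N$ are nested and $\bigcap_N\dot\Jj_N=\bigcap_N\ker\pi_\eta^N=\ker\dot\pi_\eta$ (recall $\dot\pi_\eta=\bigoplus_N\pi_\eta^N$), the inverse limit of \eqref{Eq:ProjTower1} is $\dot\Sigma(\Ll)/\ker\dot\pi_\eta=\dot{\mathfrak H}(\Ll)$.

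Next I would build the comparison with \eqref{Eq:Tower2}. Using the Corollary identifying $\mathfrak H_n$ with $\pi_\eta^n(\dot\Sigma(\Ll))$, the family $(\pi_\eta^0,\dots,\pi_\eta^N)$ annihilates $\dot\Jj_N$ and so induces a $\ast$-homomorphism $\Phi_N:\dot\Sigma(\Ll)/\dot\Jj_N\to\bigoplus_{n=0}^N\mathfrak H_n$, which is injective precisely because $\dot\Jj_N$ is the joint kernel. One checks immediately that $\Phi_N$ intertwines the connecting maps of \eqref{Eq:ProjTower1} with the coordinate projections $\bigoplus_{n=0}^N\mathfrak H_n\twoheadrightarrow\bigoplus_{n=0}^{N-1}\mathfrak H_n$ of \eqref{Eq:Tower2}. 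Hence, once every $\Phi_N$ is known to be onto, $\{\Phi_N\}_N$ is an isomorphism of projective towers, and passing to inverse limits gives the claimed $\dot{\mathfrak H}(\Ll)\simeq\varprojlim\bigoplus_{n=0}^N\mathfrak H_n$. Everything so far is bookkeeping; the surjectivity of $\Phi_N$ is the step I expect to be the real obstacle.

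For surjectivity I would argue by induction on $N$, the case $N=0$ being $\pi_\eta^0(\dot\Sigma(\Ll))=\mathfrak H_0$. For the inductive step, given $(B_0,\dots,B_N)$ with $B_n\in\mathfrak H_n$, lift $(B_0,\dots,B_{N-1})$ through $\Phi_{N-1}$ to some $\Qq'$ and reduce to producing $\Qq''\in\dot\Jj_{N-1}$ with $\pi_\eta^N(\Qq'')=B_N-\pi_\eta^N(\Qq')\in\mathfrak H_N$. Thus it suffices to show $\pi_\eta^N(\dot\Jj_{N-1})=\mathfrak H_N$, i.e. that the ideals $\dot\Jj_{N-1}=\bigcap_{n<N}\ker\pi_\eta^n$ and $\ker\pi_\eta^N$ are comaximal. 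This in turn reduces to exhibiting one element $P_N\in\dot\Jj_{N-1}$ with $\pi_\eta^N(P_N)=I_N$: then $\dot\Sigma(\Ll)\,P_N\subseteq\dot\Jj_{N-1}$ gives $\pi_\eta^N(\dot\Jj_{N-1})\supseteq\pi_\eta^N(\dot\Sigma(\Ll))\,I_N=\mathfrak H_N$.

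The construction of $P_N$ is where the particular structure of the ${\rm GICAR}$ algebra enters, and this is the delicate point of the plan. Taking $w$ in Example~\ref{Ex:DiagH} with $n=1$, and noting that every one-point subset has diameter $0$, the resulting generator is $W^1_\Ll=w(0)\sum_{x\in\Ll}n_{\{x\}}\in\dot\Sigma(\Ll)$ — i.e. $w(0)$ times the total number operator — so by Proposition~\ref{Pro:ExpRep} its Fock representation on the $m$-sector is $\pi_\eta^m(W^1_\Ll)=w(0)\,m\,I_m$ for every $m$. Normalising $w(0)=1$ and setting
\[
P_N:=\tfrac1{N!}\,W^1_\Ll\,(W^1_\Ll-I)\,(W^1_\Ll-2I)\cdots\bigl(W^1_\Ll-(N-1)I\bigr)\in\dot\Sigma(\Ll),
\]
we get $\pi_\eta^m(P_N)=\binom{m}{N}I_m$, which vanishes for $m<N$ and equals $I_N$ for $m=N$; hence $P_N\in\dot\Jj_{N-1}$ and $\pi_\eta^N(P_N)=I_N$, closing the induction. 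The only subtlety to verify carefully is that $W^1_\Ll$ genuinely comes from a compactly supported bi-equivariant coefficient — which is the content of Example~\ref{Ex:DiagH} at $n=1$, where the diagonal of the relevant cover is clopen with compact image in the orbit space. Modulo that verification, the comaximality step is dispatched, and the rest of the proof is the formal manipulation of towers described above.
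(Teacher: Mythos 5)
Your proposal is correct, and it is worth noting that the paper itself offers \emph{no} proof of this proposition: it is stated bare, with the tower isomorphism implicitly treated as bookkeeping following from the Corollary $\mathfrak H_n\simeq\pi_\eta^n\big(\dot\Sigma(\Ll)\big)$. You have correctly isolated the one claim that is not bookkeeping, namely that the injection $\dot\Sigma(\Ll)/\dot\Jj_N\rightarrowtail\bigoplus_{n=0}^N\mathfrak H_n$ induced by $(\pi_\eta^0,\ldots,\pi_\eta^N)$ is actually \emph{onto}, i.e.\ that the tuple of representations is jointly surjective rather than landing in a proper ``diagonal'' subalgebra. Your comaximality argument settles this: the $n=1$ instance of Example~\ref{Ex:DiagH} does produce the number operator as a legitimate generator (the diagonal $V_\xi=V_\zeta$ is clopen by uniform discreteness and has compact image in the orbit space, exactly as in the paper's own verification for general $n$), it acts as $m\,I_m$ on the $m$-sector by Proposition~\ref{Pro:ExpRep}, and the divided falling factorial $P_N$ has zero constant term, so it lies in the (non-unital) algebra $\dot\Sigma(\Ll)$ and satisfies $\pi_\eta^m(P_N)=\binom{m}{N}I_m$. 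Since $\dot\Jj_{N-1}$ is a two-sided ideal, $\Rr P_N$ realizes any $\pi_\eta^N(\Rr)\in\mathfrak H_N$ while dying in the lower sectors, which closes the induction. As a bonus, your $P_N$ shows $I_N\in\pi_\eta^N\big(\dot\Sigma(\Ll)\big)$ for $N\geq1$, so the paper's Corollary holds on the nose there rather than only up to unitization.

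One small caveat on the base case: with the paper's convention $\mathfrak H_0=\CM$, the equality $\pi_\eta^0\big(\dot\Sigma(\Ll)\big)=\mathfrak H_0$ you invoke at $N=0$ is false, since every gauge-invariant derivation annihilates the vacuum sector (Proposition~\ref{Pro:ExpRep}, case $n>N$ with $N=0$), so $\pi_\eta^0\big(\dot\Sigma(\Ll)\big)=\{0\}$. This is an inconsistency inherited from the paper, not introduced by you, and it is harmless (start the induction at $N=1$, or read $\mathfrak H_0=\{0\}$), but your write-up should acknowledge it. You may also want to note that the paper's normalization of $\dot\pi_\eta$ carries stray factors of $-\imath$ per composition; this only rescales the coefficients of your polynomial in the number operator and does not affect the construction.
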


\begin{remark}{\rm The universal property of inverse limits guarantees the existence of an algebra morphism $\dot \Sigma(\Ll) \to \dot{\mathfrak H}(\Ll)$, which is clearly surjective in this case. If $\Jj_\infty$ is the kernel of this morphism, then $\Jj_\infty = \ker \dot \pi_\eta$ and, as such, $\dot \Sigma(\Ll)/\Jj_\infty \simeq \dot{\mathfrak H}(\Ll)$ coincides with the image of $\dot \Sigma(\Ll)$ through the Fock representation. In Remark~\ref{Re:PhysH}, we identified this image with algebra of physical Hamiltonians and this fully justifies Definition~\ref{Def:DotH}.
}$\Diamond$
\end{remark}

\subsection{Completing the algebra of the physical Hamiltonians}

Our next task is to complete $\dot{\mathfrak H}(\Ll)$ to a (pro-) $C^\ast$-algebra and to characterize this completion. For this, we supply first a finer characterization of ${\mathfrak H}_N \simeq \pi_\eta^N\big (\dot \Sigma(\Ll) \big )$. 

\begin{proposition} Let $Q_\Ll^N$ and $Q_\Ll^n$ be two derivations  as in Eq.~\eqref{Eq:FinalGIH}. Then
\begin{equation}\label{Eq:WW}
\pi_\eta^N\big ( Q_\Ll^n \circ Q_\Ll^N \big ) = \pi_\eta^N\big ( \tilde Q_\Ll^N \big )
\end{equation}
for some $\tilde Q_\Ll^N$ as in Eq.~\eqref{Eq:FinalGIH}.
\end{proposition}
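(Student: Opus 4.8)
The plan is to exploit the fact, already established in Proposition~\ref{Pro:ExpRep}, that for a derivation $Q_\Ll^N$ of matching fermion number $n=N$ the Fock representation $\pi_\eta^N(Q_\Ll^N)$ is precisely the ``rank-one convolution'' operator $\tfrac{1}{N!}\sum_{(\xi,\zeta)\in\mathfrak b^{-1}_N(\Ll)}q_N(\xi,\zeta)\,|\xi\rangle\langle\zeta|$ on the $N$-sector, whereas for $n<N$ the operator $\pi_\eta^N(Q_\Ll^n)$ is obtained by padding: $\sum_{(\xi,\zeta),\gamma}q_n(\xi,\zeta)\,|\xi\vee\gamma\rangle\langle\zeta\vee\gamma|$. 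Composing the latter with the former gives again an operator of the form $\sum c(\xi,\zeta)|\xi\rangle\langle\zeta|$ on $\Ff^{(-)}_N(\Ll)$, with $\xi,\zeta$ ranging over $\mathfrak b^{-1}_N(\Ll)$, by Proposition~\ref{Pro:FProd} (the convolution rule for symmetric matrix elements). So the content of the statement is that the resulting coefficient function on $\widehat{\rm Del}^{(N,N)}_{(r,R)}(\RM^d)$ is again a \emph{legitimate} bi-equivariant $N$-coefficient in the sense of Definition~\ref{Def:Z2Coeff}: continuous, compactly supported modulo the $\RM^d$-action, and with the right $\Ss_N$-bi-equivariance and $*$-compatibility.

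\textbf{Steps.} First I would write out $\pi_\eta^N(Q_\Ll^n)$ and $\pi_\eta^N(Q_\Ll^N)$ explicitly from Proposition~\ref{Pro:ExpRep}, multiply them using orthogonality $\langle U,\chi_U|V,\chi_V\rangle=(-1)^{\chi_U^{-1}\circ\chi_V}\delta_{U,V}$ from \eqref{Eq:ScalarProd}, and read off
\[
\pi_\eta^N(Q_\Ll^n\circ Q_\Ll^N)=\tfrac{1}{N!}\sum_{(\xi,\zeta)\in\mathfrak b^{-1}_N(\Ll)}\tilde q_N(\xi,\zeta)\,|\xi\rangle\langle\zeta|,
\]
where $\tilde q_N(\xi,\zeta)$ is, schematically, a finite sum over intermediate $\eta\in\mathfrak a^{-1}_N(\Ll)$ and over decompositions $\eta=\eta'\vee\gamma$ with $|V_{\eta'}|=n$, of products $q_n(\xi_{\eta'},\eta')\,q_N(\eta,\zeta)$ times sign factors — exactly the composition recorded in the proof of Proposition~\ref{Pro:CARQuote} combined with the padding in Proposition~\ref{Pro:ExpRep}(3). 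Second, I must identify $\tilde q_N$ as $q_N'\circ\mathfrak u_N$-compatible: the $\Ss_N$-bi-equivariance is automatic because convolution of bi-equivariant coefficients is bi-equivariant (this is the algebraic statement behind Proposition~\ref{Pro:FProd}, and was also used in Proposition~\ref{Pro:CARQuote}). Third, and this is where the actual work sits, I must check that $\tilde q_N$ descends from a \emph{continuous, compactly supported} seed on $[\widehat{\rm Del}^{(N,N)}_{(r,R)}(\RM^d)]$. Galilean invariance (constancy along $\RM^d$-orbits) is inherited termwise since each factor $q_n,q_N$ has that property and the $\vee$-operations and sign factors are $\RM^d$-equivariant. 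Compact support of the support (modulo translation) follows because $Q_\Ll^n$ and $Q_\Ll^N$ have finite interaction range: the sets $V_\xi\cup V_\zeta$ appearing with nonzero coefficient have diameter bounded by the sum of the two ranges, since any intermediate $\eta$ must overlap both $V_\xi$ and $V_\zeta$ within the respective ranges (this is the same bound that appeared in the proof of Proposition~\ref{Pro:Derivation} and in Proposition~\ref{Pro:GICARIdeals}). Continuity is the delicate point: one shows that the map $(\xi,\zeta)\mapsto$(finite set of intermediate $\eta$'s and the associated sign factors) is locally constant, using the canonical bijections of Proposition~\ref{Pro:G} on small neighbourhoods $U_M^\epsilon$, so that on each such neighbourhood $\tilde q_N$ is literally a finite sum of products of continuous functions $q_n,q_N$ evaluated at continuously-varying arguments; the signs $(-1)^{\chi^{-1}\circ\chi'}$ are constant on each connected neighbourhood because the orderings transported by the canonical bijection vary ``rigidly''. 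Finally the $*$-compatibility condition $\tilde q_N=\overline{\tilde q_N\circ\mathfrak u_N}$ (needed only if one wants a $*$-derivation) follows from the corresponding conditions on $q_n,q_N$ together with the identity $(FG)^\ast=G^\ast F^\ast$ at the level of the symmetric matrix elements.

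\textbf{Main obstacle.} The real subtlety is the bookkeeping of the summation over the intermediate configurations $\eta$ together with the sign factors, and verifying that the result is genuinely continuous (not merely continuous away from a measure-zero set) as a function on the order cover $\widehat{\rm Del}^{(N,N)}_{(r,R)}(\RM^d)$. One has to be careful that when two points of $\Ll$ collide or exchange under a deformation, the reshuffling of which intermediate $\eta$'s contribute, and the reshuffling of orderings via the canonical bijections $g$ of Proposition~\ref{Pro:G}, exactly compensate so that $\tilde q_N$ has no jump. This is the ``fermion permutations cannot be separated from lattice deformations'' phenomenon emphasized in section~\ref{Sec:Cover}, and it is precisely why the coefficients were set up on the order covers in the first place; once that structure is in place the compensation is forced, but it should be spelled out at least on one representative neighbourhood. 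Everything else — the algebraic form \eqref{Eq:WW}, the bi-equivariance, the finite-range/compact-support bound — is routine given Propositions~\ref{Pro:FProd}, \ref{Pro:CARMono}, \ref{Pro:ExpRep} and the proof technique of Proposition~\ref{Pro:Derivation}.
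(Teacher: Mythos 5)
Your proposal is correct and follows essentially the same route as the paper: compute the product of the explicit Fock representations from Proposition~\ref{Pro:ExpRep}, read off the composite coefficient $\tilde q_N$ as a finite sum over intermediate configurations of products $q_n\cdot q_N$ with signs, and check that it is bi-equivariant, translation-invariant, of finite range, and continuous. The paper simply asserts these last properties in one line, whereas you (rightly) flag the continuity of the intermediate-configuration sum as the point deserving an explicit local argument; that is added care, not a different method.
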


\proof We will make use of Proposition~\ref{Pro:ExpRep}. If $n >N$, then $\tilde Q_\Ll^N$ can be taken as zero. If $n \leq N$, then, up to a normalization constant, 
\begin{equation*}
\pi_\eta^N(Q_\Ll^n \circ Q_\Ll^N) = \sum_{(\xi,\zeta) \in \mathfrak b_N^{-1}(\Ll)} \sum_{\zeta' \in \mathfrak a_n^{-1}(\Ll)} \sum_{\substack{\gamma \in \mathfrak a_{N-n}^{-1}(\Ll)\\ \gamma \leq \xi} } \, q_n(\xi \setminus \gamma,\zeta')  q_N(\zeta' \vee \gamma,\zeta) \, |\xi\rangle \langle \zeta |
\end{equation*}
and we can take $\tilde Q_\Ll^N$ to be the derivation corresponding to the coefficient
\begin{equation*}
\tilde q_N(\xi,\zeta) = \sum_{\zeta' \in \mathfrak a_n^{-1}(\Ll)} \sum_{\substack{\gamma \in \mathfrak a_{N-n}^{-1}(\Ll) \\ \gamma \leq \xi} } \, q_n(\xi \setminus \gamma,\zeta')  q_N(\zeta' \vee \gamma,\zeta). 
\end{equation*}
Indeed,  $\tilde q_N$ is continuous, bi-equivariant and vanishes if the diameter of $V_\xi \cup V_\zeta$ is sufficiently large. \qed

\begin{remark}{\rm Similar statements apply to the product $Q_\Ll^N \circ Q_\Ll^n$, which follows from applying the $\ast$-operation on $Q_\Ll^n \circ Q_\Ll^N$.
}$\Diamond$
\end{remark}

\begin{corollary} The linear space $\dot{\mathfrak G}_N(\Ll)$ of ${\mathfrak H}_N$ spanned by $\pi_\eta^N\big (Q_\Ll^N \big )$ with $Q_\Ll^N$ as in Eq.~\eqref{Eq:FinalGIH} is a two-sided $\ast$-ideal of ${\mathfrak H}_N$.
\end{corollary}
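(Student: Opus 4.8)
The plan is to reduce the ideal property entirely to the product formula already established in Equation~\eqref{Eq:WW}, using two structural facts from the previous pages: that $\pi_\eta^N$ is an algebra $\ast$-homomorphism (so composition in $\dot\Sigma(\Ll)$ becomes operator multiplication in $\BM(\Ff^{(-)}_N(\Ll))$), and that, by the Corollary just above, $\mathfrak H_N=\pi_\eta^N(\dot\Sigma(\Ll))$ is linearly spanned by words $\pi_\eta^N(Q_\Ll^{n_1})\cdots\pi_\eta^N(Q_\Ll^{n_k})$ with each $Q_\Ll^{n_i}$ as in Eq.~\eqref{Eq:FinalGIH}. First I would note that $\dot{\mathfrak G}_N(\Ll)$ equals $\{\pi_\eta^N(Q_\Ll^N)\}$ as $Q_\Ll^N$ runs over derivations of the form~\eqref{Eq:FinalGIH} with level $n=N$, since the admissible $N$-coefficients form a complex vector space and $Q_\Ll^N$ (hence its Fock image) depends linearly on its coefficient. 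By bilinearity of the product it then suffices to test absorption on the generators $\pi_\eta^N(Q_\Ll^n)$ of $\mathfrak H_N$ and the generators $\pi_\eta^N(Q_\Ll^N)$ of $\dot{\mathfrak G}_N(\Ll)$, and to check $\ast$-stability on the latter.

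For $\ast$-stability I would invoke Proposition~\ref{Pro:StarH}: the coefficient of $(Q_\Ll^N)^\ast$ is $-\overline{q_N\circ\mathfrak u_N}$, which is again continuous, bi-equivariant and compactly supported, so $(Q_\Ll^N)^\ast$ is once more of the form~\eqref{Eq:FinalGIH} with $n=N$, and since $\pi_\eta^N$ is a $\ast$-representation, $\pi_\eta^N(Q_\Ll^N)^\ast=\pi_\eta^N\big((Q_\Ll^N)^\ast\big)\in\dot{\mathfrak G}_N(\Ll)$. For one-sided absorption I would quote Equation~\eqref{Eq:WW} verbatim: it supplies $\tilde Q_\Ll^N$ as in~\eqref{Eq:FinalGIH} with $\pi_\eta^N(Q_\Ll^n\circ Q_\Ll^N)=\pi_\eta^N(\tilde Q_\Ll^N)$, whence $\pi_\eta^N(Q_\Ll^n)\,\pi_\eta^N(Q_\Ll^N)=\pi_\eta^N(\tilde Q_\Ll^N)\in\dot{\mathfrak G}_N(\Ll)$; the other order follows from the companion statement for $Q_\Ll^N\circ Q_\Ll^n$ noted in the Remark after~\eqref{Eq:WW}, or equivalently by taking adjoints and using the $\ast$-stability just obtained.

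The last step is to upgrade this single-factor absorption to arbitrary words, which I would do by a short induction peeling off factors of $F=\pi_\eta^N(Q_\Ll^{n_1})\cdots\pi_\eta^N(Q_\Ll^{n_k})$ from the right: for $G=\pi_\eta^N(Q_\Ll^N)$, the rightmost product $\pi_\eta^N(Q_\Ll^{n_k})\,G$ is again of the form $\pi_\eta^N(\tilde Q_\Ll^N)$ by the base case, hence lies in $\dot{\mathfrak G}_N(\Ll)$; applying the base case again to $\pi_\eta^N(Q_\Ll^{n_{k-1}})\,\pi_\eta^N(\tilde Q_\Ll^N)$ and iterating $k$ times yields $FG\in\dot{\mathfrak G}_N(\Ll)$, and summing over words (using that $\dot{\mathfrak G}_N(\Ll)$ is a linear subspace) gives $\mathfrak H_N\cdot\dot{\mathfrak G}_N(\Ll)\subseteq\dot{\mathfrak G}_N(\Ll)$, with the symmetric inclusion handled identically. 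Therefore $\dot{\mathfrak G}_N(\Ll)$ is a two-sided $\ast$-ideal of $\mathfrak H_N$.

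I do not anticipate a real obstacle: all the genuine content — that the composite $Q_\Ll^n\circ Q_\Ll^N$ again has a continuous, bi-equivariant, finite-range $N$-coefficient — was already carried out in the proof of Equation~\eqref{Eq:WW}. The only place needing a little care is the induction bookkeeping, namely keeping the intermediate operators in the single-derivation form $\pi_\eta^N(Q_\Ll^N)$ (rather than a mere linear combination) at each stage, which is exactly what the vector-space structure of the $N$-coefficients guarantees.
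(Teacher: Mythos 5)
Your proposal is correct and follows exactly the route the paper intends: the corollary is stated as an immediate consequence of Eq.~\eqref{Eq:WW} and the remark covering the opposite order $Q_\Ll^N\circ Q_\Ll^n$, and your write-up merely fills in the routine steps (linearity of the coefficient-to-operator assignment, $\ast$-stability via Proposition~\ref{Pro:StarH}, and the induction peeling off one generator of $\mathfrak H_N$ at a time). No gaps.
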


\begin{definition} For $N >1$, we define $\mathfrak G_N(\Ll)$ to be the non-unital $C^\ast$-algebra supplied by the norm closure of $\dot{\mathfrak G}_N(\Ll)$ inside $\BM\big(\Ff^{(-)}_N(\Ll)\big)$.
\end{definition}

\begin{remark}{\rm The case $N=1$ is special. Indeed, $\dot{\mathfrak G}_1(\Ll)$ is unital and its closure $\mathfrak G_1(\Ll)$ coincides with a left regular representation of the algebra $C^\ast_r(\Gg_1)$ studied in section~\ref{SubSec:groupoidAlg1}.
}$\Diamond$
\end{remark}

\begin{remark}{\rm As already pointed out in Remark~\ref{Re:Compl}, it is at this point, through the completion $\mathfrak G_N(\Ll)$, that Hamiltonians with infinite interaction range are included in our theory.
}$\Diamond$
\end{remark}

We will show next that ${\mathfrak H}_N$ is contained in an essential extension of $\mathfrak G_N(\Ll)$.

\begin{proposition}\label{Pro:UnitApprox} Let $w: \RM_+ \to [0,1]$ be a smooth, non-increasing function, which is equal to 1 over the interval $[0,1]$ and to 0 over the interval $[2,\infty)$. For $\epsilon>0$, we write $w_\epsilon(t) = w(\epsilon t)$ and define the net of uniformly bounded operators
\begin{equation}\label{Eq:AIdentity}
1_N^\epsilon : = \tfrac{1}{N!}\sum_{\gamma \in \mathfrak a_N^{-1}(\Ll)} w_\epsilon(d_\gamma) \, |\gamma \rangle \langle \gamma |, \quad \|1_N^\epsilon\| = 1,
\end{equation}
where $d_{\gamma}$ denotes the diameter of $V_\gamma$.
Then $1_N^\epsilon$ is a quasi-central approximate unit for $\mathfrak G_N(\Ll)$.
\end{proposition}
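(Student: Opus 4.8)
The plan is to verify the three defining properties of a quasi-central approximate unit for $\mathfrak G_N(\Ll)$: that $\{1_N^\epsilon\}$ is an approximate unit for $\dot{\mathfrak G}_N(\Ll)$ (and hence for its norm closure $\mathfrak G_N(\Ll)$), and that it is asymptotically central. Since $\dot{\mathfrak G}_N(\Ll)$ is spanned by the operators $\pi_\eta^N(Q_\Ll^N)$ with $Q_\Ll^N$ as in Eq.~\eqref{Eq:FinalGIH}, and by Proposition~\ref{Pro:ExpRep} these act as $\frac{1}{N!}\sum_{(\xi,\zeta)\in\mathfrak b_N^{-1}(\Ll)} q_N(\xi,\zeta)\,|\xi\rangle\langle\zeta|$, the whole computation reduces to understanding how $1_N^\epsilon$ interacts with rank-one operators $|\xi\rangle\langle\zeta|$ on the frame $\{|\gamma\rangle\}$. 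Using the scalar product formula \eqref{Eq:ScalarProd}, one finds $1_N^\epsilon \,|\xi\rangle\langle\zeta| = w_\epsilon(d_\xi)\,|\xi\rangle\langle\zeta|$ and $|\xi\rangle\langle\zeta|\,1_N^\epsilon = w_\epsilon(d_\zeta)\,|\xi\rangle\langle\zeta|$, so that
\begin{equation*}
1_N^\epsilon\, \pi_\eta^N(Q_\Ll^N) - \pi_\eta^N(Q_\Ll^N) = \tfrac{1}{N!}\sum_{(\xi,\zeta)\in\mathfrak b_N^{-1}(\Ll)} \big(w_\epsilon(d_\xi)-1\big)\,q_N(\xi,\zeta)\,|\xi\rangle\langle\zeta|,
\end{equation*}
and similarly the commutator $[1_N^\epsilon,\pi_\eta^N(Q_\Ll^N)]$ carries the factor $w_\epsilon(d_\xi)-w_\epsilon(d_\zeta)$.

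First I would establish the approximate-unit property. The key point is that for a derivation $Q_\Ll^N$ of \emph{finite} interaction range ${\rm R}_{\rm i}$, the coefficient $q_N(\xi,\zeta)$ vanishes unless the diameter of $V_\xi\cup V_\zeta$ (and hence both $d_\xi$ and $d_\zeta$) is at most ${\rm R}_{\rm i}$. Therefore, once $\epsilon$ is small enough that $2/\epsilon > {\rm R}_{\rm i}$, i.e. $w_\epsilon\equiv 1$ on $[0,{\rm R}_{\rm i}]$, the factor $w_\epsilon(d_\xi)-1$ vanishes on the entire support of $q_N$, so $1_N^\epsilon\,\pi_\eta^N(Q_\Ll^N) = \pi_\eta^N(Q_\Ll^N)$ exactly (and likewise on the right, and for commutators). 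Thus on the dense subalgebra $\dot{\mathfrak G}_N(\Ll)$ the net $1_N^\epsilon$ acts as an eventual exact left and right unit and commutes eventually exactly. For a general $a\in\mathfrak G_N(\Ll)$, approximate $a$ in norm by some $\dot a\in\dot{\mathfrak G}_N(\Ll)$ within $\delta$; then using $\|1_N^\epsilon\|=1$ one estimates $\|1_N^\epsilon a - a\| \le \|1_N^\epsilon(a-\dot a)\| + \|1_N^\epsilon\dot a - \dot a\| + \|\dot a - a\| \le 2\delta + \|1_N^\epsilon\dot a-\dot a\|$, and the middle term vanishes for $\epsilon$ small by the finite-range argument. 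The same three-$\epsilon$ estimate handles right multiplication and the quasi-centrality bound $\|[1_N^\epsilon,a]\|\to 0$.

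The one genuinely necessary preliminary is that $\{1_N^\epsilon\}$ is a well-defined net of bounded self-adjoint operators with $\|1_N^\epsilon\|=1$: this follows because $1_N^\epsilon$ is diagonal in the orthonormal-up-to-sign frame $\{|\gamma\rangle\}$ (more precisely, writing it in the symmetric presentation \eqref{Eq:BOp1}, its coefficient is $w_\epsilon(d_\gamma)\,(-1)^{\chi_\gamma^{-1}\circ\chi_{\gamma'}}\delta_{V_\gamma,V_{\gamma'}}$, so it is the orthogonal projection-weighted sum of the rank-one projections onto the one-dimensional subspaces spanned by each $|V_\gamma,\chi_\gamma\rangle$, with weights in $[0,1]$). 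Boundedness by $1$ is then immediate and self-adjointness is clear. One should also note that the index set $\epsilon>0$ is directed by $\epsilon\to 0$, giving a legitimate net.

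Strictly speaking there is no hard obstacle here: the ``main'' step is the observation that finite interaction range of the $Q_\Ll^N$ forces the cutoff $w_\epsilon$ to act trivially for small $\epsilon$, which collapses all the estimates. The only point requiring a little care is to phrase this cleanly: $\dot{\mathfrak G}_N(\Ll)$ is a union over finite-range derivations, so for each \emph{fixed} $\dot a\in\dot{\mathfrak G}_N(\Ll)$ there is a threshold $\epsilon(\dot a)>0$ below which $1_N^\epsilon$ acts as an exact unit on $\dot a$, and this uniformity-per-element is exactly what the standard $\epsilon/3$ density argument needs. I would also remark (or defer to a following result, as the excerpt suggests with its reference to the multiplier algebra) that this approximate unit is \emph{not} eventually equal to a unit of $\mathfrak G_N(\Ll)$ itself when $N>1$, since $1_N^\epsilon$ does not converge in norm — consistent with $\mathfrak G_N(\Ll)$ being non-unital and $\mathfrak H_N$ sitting inside its multiplier algebra.
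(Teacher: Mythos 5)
Your argument is essentially the paper's proof: the same diagonal action of $1_N^\epsilon$ on the frame, the same factors $w_\epsilon(d_\xi)-1$ and $w_\epsilon(d_\xi)-w_\epsilon(d_\zeta)$ appearing in the defect and in the commutator, the same observation that finite interaction range makes both vanish identically once $1/\epsilon$ exceeds ${\rm R}_{\rm i}$, and the same density argument (which the paper leaves implicit).

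One item is missing: the verification that $1_N^\epsilon$ actually \emph{belongs to} $\mathfrak G_N(\Ll)$. This is part of what ``approximate unit for $\mathfrak G_N(\Ll)$'' means, and it is precisely what the following corollary uses (there, $\pi_\eta^N(\Qq)\,1_N^\epsilon$ must lie in $\mathfrak G_N(\Ll)$). You write down exactly the right coefficient, $(-1)^{\chi_\gamma^{-1}\circ\chi_{\gamma'}}\,\delta_{V_\gamma,V_{\gamma'}}\,w_\epsilon(d_\gamma)$, but use it only to get boundedness and self-adjointness; to conclude membership one must check that this is a \emph{continuous} bi-equivariant coefficient of finite range in the sense of Definition~\ref{Def:Z2Coeff}, so that $1_N^\epsilon=\pi_\eta^N(W^N_\Ll)$ for some $W^N_\Ll$ as in Eq.~\eqref{Eq:FinalGIH}. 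The continuity of the factor $\delta_{V_\gamma,V_{\gamma'}}$ on the order cover is not automatic; the paper secures it in Example~\ref{Ex:DiagH} by extending the coefficient off the diagonal with a cutoff $v\big({\rm d}_{\rm H}(V_\xi,V_\zeta)\big)$ supported in $[0,r]$ (using that distinct points of a Delone set are at least $r$ apart). With that reference added, your proof is complete and coincides with the paper's.
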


\proof Example~\ref{Ex:DiagH} assures us that the operator~\eqref{Eq:AIdentity} belongs to $\mathfrak G_N(\Ll)$. By definition, any element from $\mathfrak G_N(\Ll)$ can be approximated in norm by a $Q_\Ll^N$ as in Eq.~\eqref{Eq:FinalGIH}, for which we have
$$
1_N \, \pi_\eta^N (Q_\Ll^N) - \ddot \pi_\eta^N(Q_\Ll^N) = \sum_{(\xi,\zeta )\in \mathfrak b_N^{-1}(\Ll)} \big (w_\epsilon(d_{\xi}) - 1\big) q_N(\xi,\zeta)  
 |\xi  \rangle \langle \zeta |.
$$
The right hand side is a bounded operator that vanishes identically once $1/\epsilon$ exceeds the interaction range of $Q_\Ll^N$. Similarly, 
$$
\big [1_N^\epsilon,\pi_\eta^N(Q_\Ll^N) \big ]=\sum_{(\xi,\zeta) \in \mathfrak b_N^{-1}(\Ll)} \big (w_\epsilon(d_{\xi}) - w_\epsilon(d_{\zeta})\big) \, q_N(\xi,\zeta) \, 
 |\xi  \rangle \langle \zeta |
$$
vanishes identically once $1/\epsilon$ exceeds the interaction range of $Q_\Ll^N$.\qed 

\begin{corollary} ${\mathfrak H}_N$ is contained in an essential extension of $\mathfrak G_N(\Ll)$.
\end{corollary}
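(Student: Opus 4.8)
The plan is to exhibit $\mathfrak{H}_N$ as a subalgebra of the multiplier algebra $\Mm(\mathfrak{G}_N(\Ll))$, where the latter is realized concretely inside $\BM(\Ff_N^{(-)}(\Ll))$, and to verify that this inclusion of $\mathfrak{G}_N(\Ll)$ in $\Mm(\mathfrak{G}_N(\Ll))$ is an essential extension. First I would recall the standard fact that whenever a $C^\ast$-algebra $\mathfrak{G}_N(\Ll)$ acts non-degenerately on a Hilbert space $\Ff_N^{(-)}(\Ll)$, the idealizer
\[
\big\{\, B \in \BM(\Ff_N^{(-)}(\Ll)) \ :\ B\,\mathfrak{G}_N(\Ll) \subseteq \mathfrak{G}_N(\Ll) \ \text{and}\ \mathfrak{G}_N(\Ll)\,B \subseteq \mathfrak{G}_N(\Ll)\,\big\}
\]
is canonically isomorphic to the multiplier algebra $\Mm(\mathfrak{G}_N(\Ll))$. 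So the task reduces to two points: (i) showing $\mathfrak{G}_N(\Ll)$ acts non-degenerately on $\Ff_N^{(-)}(\Ll)$, and (ii) showing $\mathfrak{H}_N$ lies in the idealizer of $\mathfrak{G}_N(\Ll)$ inside $\BM(\Ff_N^{(-)}(\Ll))$.

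For (i), the quasi-central approximate unit $1_N^\epsilon$ constructed in Proposition~\ref{Pro:UnitApprox} does the job: since $1_N^\epsilon \to I_N$ strongly (indeed $1_N^\epsilon$ converges strongly to the identity on each frame vector $|\gamma\rangle$ as $\epsilon \to 0$, because $w_\epsilon(d_\gamma) \to 1$ for every fixed $\gamma$), the closed span of $\mathfrak{G}_N(\Ll)\,\Ff_N^{(-)}(\Ll)$ is all of $\Ff_N^{(-)}(\Ll)$, which is non-degeneracy. For (ii), recall that $\mathfrak{H}_N \simeq \pi_\eta^N(\dot\Sigma(\Ll))$ is generated as an algebra by the operators $\pi_\eta^N(Q_\Ll^n)$ for $n \le N$, and by the Corollary to the product proposition, $\dot{\mathfrak{G}}_N(\Ll) = \mathrm{span}\{\pi_\eta^N(Q_\Ll^N)\}$ is a two-sided $\ast$-ideal of $\mathfrak{H}_N$; taking norm closures, $\mathfrak{H}_N$ multiplies $\mathfrak{G}_N(\Ll)$ into itself on both sides. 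Combined with non-degeneracy, this places $\mathfrak{H}_N$ inside the idealizer, hence inside $\Mm(\mathfrak{G}_N(\Ll))$, so $\mathfrak{H}_N$ sits in an extension of $\mathfrak{G}_N(\Ll)$.

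It remains to check that this extension is \emph{essential}, i.e.\ that $\mathfrak{G}_N(\Ll)$ is an essential ideal in (any intermediate algebra between $\mathfrak{G}_N(\Ll)$ and) $\Mm(\mathfrak{G}_N(\Ll))$ — equivalently, that no nonzero element of $\mathfrak{H}_N$ annihilates $\mathfrak{G}_N(\Ll)$. This is automatic: $\mathfrak{G}_N(\Ll)$ is by construction an essential ideal of $\Mm(\mathfrak{G}_N(\Ll))$ (the multiplier algebra is the maximal essential extension), and any subalgebra of $\Mm(\mathfrak{G}_N(\Ll))$ that contains $\mathfrak{G}_N(\Ll)$ inherits this — if $B \in \mathfrak{H}_N$ satisfies $B\,\mathfrak{G}_N(\Ll) = 0$, then in particular $B\,1_N^\epsilon = 0$ for all $\epsilon$, and letting $\epsilon \to 0$ in the strong topology gives $B = 0$. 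For $N = 1$ one should note separately that $\mathfrak{G}_1(\Ll)$ is unital, so the extension is trivial but still (vacuously) essential.

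The main obstacle I anticipate is purely a matter of bookkeeping rather than of substance: one must be careful that $\dot{\mathfrak{G}}_N(\Ll)$ being an ideal in the \emph{dense} $\ast$-subalgebra $\mathfrak{H}_N$ genuinely passes to $\mathfrak{G}_N(\Ll)$ being an ideal in the enveloping $C^\ast$-algebra one wants — but since $\mathfrak{H}_N$ here is taken with its concrete operator norm inside $\BM(\Ff_N^{(-)}(\Ll))$ and $\mathfrak{G}_N(\Ll)$ is its norm-closed two-sided ideal, continuity of multiplication closes the gap with no extra work. The only genuinely load-bearing input is Proposition~\ref{Pro:UnitApprox}, which has already been established.
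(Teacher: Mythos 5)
Your proof is correct and rests on exactly the same two load-bearing inputs as the paper's: the quasi-central approximate unit $1_N^\epsilon$ of Proposition~\ref{Pro:UnitApprox} converging strongly to the identity, and the fact that $\dot{\mathfrak G}_N(\Ll)$ is a two-sided ideal of ${\mathfrak H}_N$, so this is essentially the same argument. The only difference is packaging: you place ${\mathfrak H}_N$ directly in the idealizer of $\mathfrak G_N(\Ll)$ inside $\BM\big(\Ff^{(-)}_N(\Ll)\big)$, i.e.\ in $\Mm\big(\mathfrak G_N(\Ll)\big)$, whereas the paper argues via the strong closure and the $C^\ast$-algebra generated by ${\mathfrak H}_N$; your formulation is, if anything, marginally sharper since containment in the multiplier algebra is what Definition~\ref{Def:Multiplier} relies on afterwards.
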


\proof The net $1_N^\epsilon$ converges to the identity of $\BM\big ( \Ff^{(-)}_N(\Ll) \big )$ in its strong topology. Therefore, for any $\Qq \in \dot\Sigma(\Ll)$, the net $ \pi_\eta^N(\Qq) 1_N^\epsilon$ belongs to $\mathfrak G_N(\Ll)$ and converges to $\pi_\eta^N(\Qq)$ in the strong topology. As a conclusion, ${\mathfrak H}_N$ is contained in the strong closure of $\mathfrak G_N(\Ll)$. Now recall that  $\mathfrak G_N(\Ll)$ is the norm closure of $\dot{\mathfrak G}_N(\Ll)$ and the latter is an ideal of ${\mathfrak H}_N$. Then the smallest $C^\ast$-algebra containing ${\mathfrak H}_N$ must contain ${\mathfrak G}_N(\Ll)$ as an ideal and be contained in the strong closure of $\mathfrak G_N(\Ll)$. Hence, ${\mathfrak H}_N$ is contained in an essential extension of $\mathfrak G_N(\Ll)$.  \qed

\vspace{0.2cm}

We now have the means to identify a natural completion of ${\mathfrak H}_N$ to a $C^\ast$-algebra:

\begin{definition}\label{Def:Multiplier} We define the completion of ${\mathfrak H}_N$ to be the maximal essential extension of $\mathfrak G_N(\Ll)$, {\it i.e.} its multiplier algebra $\Mm\big (\mathfrak G_N(\Ll)\big )$. 
\end{definition}

\begin{remark}\label{Re:Multiplier}{\rm A non-unital $C^\ast$-algebra $\Aa$ accepts, in general, many essential extensions. The minimal one corresponds to the unitization $\tilde A$ of the algebra and the maximal one to the multiplier algebra $\Mm(\Aa)$, as already stated above. The choice made in Definition~\ref{Def:Multiplier} is based on physical considerations. Indeed, $\Mm(\Aa)$ can be characterized as follows \cite{AkemannJFA1973}. If $\Aa^m$ denotes the set of self-adjoint elements of $\Aa''$, the double commutant of $\Aa$,  that can be reached from below by increasing nets from $\tilde A$ and $\Aa_m := - \Aa^m$, then the self-adjoint part of $\Mm(\Aa)$ coincides with $\Aa^m \cap \Aa_m$. As such, Definition~\ref{Def:Multiplier} declares that, from our point of view, if $\pm H \in \BM\big (\Ff_N^{(-)}(\Ll)\big )$ can be both reached from below via sequences of finite-interaction range Hamiltonians, then $H$ can play the role of a physical Hamiltonian. 
}$\Diamond$
\end{remark}

We now reach a major conclusion of our work:

\begin{theorem}\label{Th:M1} The algebra of physical Galilean and gauge invariant Hamiltonians with finite interaction range accepts a natural completion to a pro-$C^\ast$-algebra $\mathfrak H(\Ll)$ in the sense of \cite{PhillipsJOT1988}.
\end{theorem}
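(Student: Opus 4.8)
The plan is to assemble the pro-$C^\ast$-algebra directly from the projective tower of $\ast$-algebras already constructed in the previous subsection, by upgrading each stage to a genuine $C^\ast$-algebra in a way that is compatible with the connecting maps. Recall that in Definition~\ref{Def:DotH} we set $\dot{\mathfrak H}(\Ll) = \varprojlim\, \bigoplus_{n=0}^N {\mathfrak H}_n$, with surjective connecting homomorphisms $\bigoplus_{n=0}^{N+1}{\mathfrak H}_n \twoheadrightarrow \bigoplus_{n=0}^{N}{\mathfrak H}_n$ given by dropping the top summand. For each $n$, Definition~\ref{Def:Multiplier} declares the completion of ${\mathfrak H}_n$ to be the multiplier algebra $\Gamma_n(\Ll) := \Mm\big(\mathfrak G_n(\Ll)\big)$ (with $\Gamma_0(\Ll)=\CM$ and, by the remark following Proposition~\ref{Pro:UnitApprox}, $\Gamma_1(\Ll)$ the unital algebra $C^\ast_r(\Gg_1)$ in its left regular representation on $\Ff^{(-)}_1(\Ll)$). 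First I would set $\Gamma_N(\Ll) := \bigoplus_{n=0}^N \Gamma_n(\Ll)$, note that the projections $\bigoplus_{n=0}^{N+1}\Gamma_n(\Ll)\twoheadrightarrow \bigoplus_{n=0}^{N}\Gamma_n(\Ll)$ onto the first $N+1$ summands are surjective $\ast$-homomorphisms of $C^\ast$-algebras, and define $\mathfrak H(\Ll) := \varprojlim_N \Gamma_N(\Ll)$. This inverse limit of a countable tower of $C^\ast$-algebras, equipped with the initial topology from the canonical maps $\mathfrak p_N:\mathfrak H(\Ll)\to\Gamma_N(\Ll)$, is by construction a pro-$C^\ast$-algebra in the sense of \cite{PhillipsJOT1988}: it is a complete Hausdorff topological $\ast$-algebra whose topology is given by the increasing family of $C^\ast$-seminorms $\|\cdot\|_N := \|\mathfrak p_N(\cdot)\|_{\Gamma_N(\Ll)}$, and it is $\sigma$-complete since the tower is countable.

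The second step is to produce the embedding of the algebra of physical Hamiltonians. Each inclusion ${\mathfrak H}_n \hookrightarrow \Gamma_n(\Ll)$ is a $\ast$-monomorphism (by the preceding corollary, ${\mathfrak H}_n$ sits as a $\ast$-subalgebra between the essential ideal $\mathfrak G_n(\Ll)$ and its multiplier algebra), and these are visibly compatible with the connecting maps of both towers. By the universal property of the inverse limit they assemble into a $\ast$-homomorphism $\iota:\dot{\mathfrak H}(\Ll)\to\mathfrak H(\Ll)$. Injectivity of $\iota$ follows because an element of $\dot{\mathfrak H}(\Ll)=\varprojlim\bigoplus_{n=0}^N{\mathfrak H}_n$ that maps to $0$ has each of its finite-stage components equal to $0$ in $\bigoplus_{n=0}^N\Gamma_n(\Ll)$, hence in $\bigoplus_{n=0}^N{\mathfrak H}_n$ by the injectivity of the stagewise inclusions, so it is $0$. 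Combined with the identification $\dot{\mathfrak H}(\Ll)\simeq\dot\Sigma(\Ll)/\ker\dot\pi_\eta$ established in the previous subsection (and the reading of that quotient as the core algebra of physical GI-Hamiltonians, per Remark~\ref{Re:PhysH}), this shows the algebra of physical Galilean- and gauge-invariant finite-range Hamiltonians embeds densely into $\mathfrak H(\Ll)$; density holds because $\dot{\mathfrak G}_n(\Ll)$ is norm-dense in $\mathfrak G_n(\Ll)$ and, via the quasi-central approximate unit of Proposition~\ref{Pro:UnitApprox}, strictly dense in $\Mm(\mathfrak G_n(\Ll))=\Gamma_n(\Ll)$, so the image of $\iota$ is dense in each seminorm and hence in $\mathfrak H(\Ll)$.

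The main obstacle I anticipate is not the formal inverse-limit bookkeeping but verifying that $\Gamma_N(\Ll)$ is genuinely the \emph{natural} completion in the sense required — that is, that the tower of multiplier algebras is well-defined and that the maps between consecutive stages really are morphisms of $C^\ast$-algebras with the right functoriality. Two points need care: (i) one must check that dropping the top summand ${\mathfrak H}_{N+1}$ before completing agrees with completing and then dropping the top summand $\Gamma_{N+1}(\Ll)$ — this is immediate for direct sums but is the place where the compatibility of ``completion'' with ``truncation'' is being used; and (ii) one must confirm that $\mathfrak G_N(\Ll)$ is indeed non-unital with a quasi-central approximate unit (so that $\Mm(\mathfrak G_N(\Ll))$ is the correct maximal essential extension and strictly contains $\mathfrak G_N(\Ll)$), which for $N>1$ rests on the fact recorded in Remark~\ref{Re:CMor} that the unit space of $\Gg_N$ is only locally compact; this is exactly the content of Proposition~\ref{Pro:UnitApprox} and the corollary following it, so it is available. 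Once these are in hand, the pro-$C^\ast$ structure follows formally, and the deeper structural description of $\mathfrak H(\Ll)$ — its filtration by the kernels $\Jj_N(\Ll)=\ker\mathfrak p_N$ and the groupoid identification of the subquotients — is deferred to Theorem~\ref{Th:0}.
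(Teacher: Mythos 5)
Your construction coincides with the paper's: the same tower $\bigoplus_{n=0}^N \Mm\big(\mathfrak G_n(\Ll)\big)$ with truncation epimorphisms, the same inverse limit in topological $\ast$-algebras, and the same stagewise embedding of $\dot{\mathfrak H}(\Ll)$ via $\mathfrak H_n \hookrightarrow \Mm\big(\mathfrak G_n(\Ll)\big)$, so the proposal is correct and matches the paper's proof. One caveat: your closing density claim overreaches, since strict density of $\mathfrak G_n(\Ll)$ in its multiplier algebra does not give norm density of $\mathfrak H_n$ in $\Mm\big(\mathfrak G_n(\Ll)\big)$, and the seminorms on the inverse limit are the $C^\ast$-norms of the stages; the theorem (and the paper's proof) only asserts a faithful morphism, not a dense image.
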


\proof We generate the projective tower of $C^\ast$-algebras
\begin{equation}\label{Eq:ProjTower2}
\cdots \twoheadrightarrow \bigoplus_{n=0}^{N+1} \Mm\big (\mathfrak G_n(\Ll)\big ) \twoheadrightarrow \bigoplus_{n=0}^N \Mm\big (\mathfrak G_n(\Ll)\big ) \twoheadrightarrow \cdots \twoheadrightarrow 0
\end{equation}
and take $\mathfrak H(\Ll)$ as its inverse limit in the category of topological $\ast$-algebras. Then $\mathfrak H(\Ll)$ is a pro-$C^\ast$-algebra \cite{PhillipsJOT1988}. Furthermore, the projective tower~\eqref{Eq:Tower2} can be embedded in the tower~\eqref{Eq:ProjTower2}. As a result, there is a faithful algebra morphism from $\dot {\mathfrak H}(\Ll)$ to ${\mathfrak H}(\Ll)$. \qed

\begin{remark}{\rm Let $\mathfrak p_N : {\mathfrak H}(\Ll) \twoheadrightarrow \bigoplus_{n=0}^N \Mm\big (\mathfrak G_n(\Ll)\big )$ be the empimorphisms associated with the inverse limit of the tower~\eqref{Eq:ProjTower2}. If $\Qq \in \dot\Sigma(\Ll)$ is a generic element as in Eq.~\eqref{Eq:Qq}, then its image in $\mathfrak H(\Ll)$ is equal to the coherent sequence 
$$
\{\mathfrak p_N(\Qq)\}_N, \quad \mathfrak p_N(\Qq) = \bigoplus_{n=0}^N \pi_\eta^N(\Qq),
$$
with $\pi_\eta^N (\Qq)$ supplied in and below Eq.~\eqref{Eq:DotPiQq}.  
While the projections $\mathfrak p_N(\Qq)$  are all bounded, the sequence $\{\mathfrak p_N(\Qq)\}_N$ is not uniformly bounded. As such, the inverse limit of the tower~\eqref{Eq:ProjTower2} cannot be taken inside the category of $C^\ast$-algebras. On the other hand, if the inverse limit is taken inside the category of topological $\ast$-algebras, then any $\Qq$ from $\dot \Sigma(\Ll)$ has an image in $\mathfrak H(\Ll)$.
}$\Diamond$
\end{remark}

The inverse limit $\mathfrak H(\Ll)$ admits the following intrinsic characterization:

\begin{corollary} The algebra $\mathfrak H(\Ll)$ admits a filtration by $C^\ast$-ideals 
\begin{equation*}
\cdots \rightarrowtail \Jj_{N+1} \rightarrowtail \Jj_{N} \rightarrowtail \cdots  \rightarrowtail \Jj_0 = \Sigma(\Ll),
\end{equation*}
with the property that
\begin{equation}\label{Eq:IdJ}
\Jj_{N-1}/\Jj_N = \Mm\big (\mathfrak G_N(\Ll)\big ).
\end{equation}
\end{corollary}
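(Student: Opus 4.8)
The plan is to read off the filtration of $\mathfrak H(\Ll)$ directly from the construction of the inverse limit in Theorem~\ref{Th:M1}. Recall that $\mathfrak H(\Ll) = \varprojlim \bigoplus_{n=0}^N \Mm\big(\mathfrak G_n(\Ll)\big)$ with the connecting epimorphisms of the tower~\eqref{Eq:ProjTower2} being the obvious truncations $\bigoplus_{n=0}^{N+1} \Mm\big(\mathfrak G_n(\Ll)\big) \twoheadrightarrow \bigoplus_{n=0}^{N} \Mm\big(\mathfrak G_n(\Ll)\big)$ that forget the last summand. First I would introduce, for each $N\in\NM$, the epimorphism $\mathfrak p_N : \mathfrak H(\Ll) \twoheadrightarrow \bigoplus_{n=0}^{N}\Mm\big(\mathfrak G_n(\Ll)\big)$ coming from the universal property of the inverse limit, and define $\Jj_N := \ker \mathfrak p_N$. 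Since each $\mathfrak p_N$ is a $\ast$-homomorphism of topological $\ast$-algebras whose range carries its natural $C^\ast$-topology, $\Jj_N$ is a closed two-sided $\ast$-ideal of $\mathfrak H(\Ll)$; it is in fact a $C^\ast$-ideal in the pro-$C^\ast$ sense, being the kernel of a continuous $\ast$-homomorphism onto a $C^\ast$-algebra. The inclusions $\Jj_{N+1}\subseteq \Jj_N$ are immediate because $\mathfrak p_N$ factors through $\mathfrak p_{N+1}$ followed by the truncation, and $\Jj_0 = \ker\mathfrak p_0$; since $\mathfrak G_0(\Ll)=\CM$ with trivial multiplier algebra $\CM$, $\mathfrak p_0$ picks out the scalar (vacuum) part, and the full algebra $\mathfrak H(\Ll)$ sits at $N=\infty$, so the filtration reads $\cdots\rightarrowtail \Jj_{N+1}\rightarrowtail \Jj_N \rightarrowtail\cdots\rightarrowtail \Jj_0$ with $\Jj_{-1}:=\mathfrak H(\Ll)=\Sigma(\Ll)$ in the notation of the statement (so the indexing is shifted by one relative to $\mathfrak p$).

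The substantive point is the identification~\eqref{Eq:IdJ}. The key step is to observe that $\mathfrak p_{N}$ and $\mathfrak p_{N-1}$ differ precisely by the $N$-th summand: composing $\mathfrak p_N$ with the projection of $\bigoplus_{n=0}^{N}\Mm\big(\mathfrak G_n(\Ll)\big)$ onto its last factor $\Mm\big(\mathfrak G_N(\Ll)\big)$ gives an epimorphism $q_N: \mathfrak H(\Ll)\twoheadrightarrow \Mm\big(\mathfrak G_N(\Ll)\big)$, and I would argue that $q_N$ is surjective (this is built into the inverse limit, since every coherent sequence with prescribed $N$-th entry can be realized) and that $\ker q_N \cap \Jj_{N-1} = \Jj_N$. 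More cleanly: restrict $q_N$ to the ideal $\Jj_{N-1}$. An element of $\Jj_{N-1}$ is a coherent sequence all of whose components of index $\leq N-1$ vanish; its only possibly nonzero information up to level $N$ is its $N$-th component, so $q_N\!\restriction_{\Jj_{N-1}} : \Jj_{N-1}\to \Mm\big(\mathfrak G_N(\Ll)\big)$ has kernel exactly those sequences vanishing also at level $N$, i.e. $\Jj_N$, and it is onto because for any $m\in\Mm\big(\mathfrak G_N(\Ll)\big)$ the sequence $(0,\dots,0,m,?,\dots)$ extends to a coherent sequence (fill in the higher levels using the connecting maps, which are surjective, e.g. by zero in the new summands). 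Hence the induced map $\Jj_{N-1}/\Jj_N \xrightarrow{\ \sim\ } \Mm\big(\mathfrak G_N(\Ll)\big)$ is an isomorphism of $C^\ast$-algebras, which is~\eqref{Eq:IdJ}.

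I expect the main obstacle to be bookkeeping rather than anything deep: one must be careful that the inverse limit is taken in the category of topological $\ast$-algebras (pro-$C^\ast$-algebras), not $C^\ast$-algebras, so "ideal" and "quotient" should be understood accordingly, and one should verify that $\Jj_{N-1}/\Jj_N$ inherits its $C^\ast$-structure from the fact that it is a closed ideal in a pro-$C^\ast$-algebra that happens to be a genuine $C^\ast$-algebra (it is the image of $\Jj_{N-1}$ under the $C^\ast$-valued map $q_N$). A secondary, very minor point is to reconcile the index shift between the $\mathfrak p_N$ used here and the statement, which writes $\Jj_0 = \Sigma(\Ll)$ and $\Jj_{N-1}/\Jj_N = \Mm\big(\mathfrak G_N(\Ll)\big)$ — this simply amounts to relabelling $\mathfrak p_{N-1}$ as the map whose kernel is $\Jj_N$, and checking the bottom of the tower ($\mathfrak G_0=\CM$, so the $N=1$ quotient is $\Mm(\mathfrak G_1(\Ll)) = \mathfrak G_1(\Ll)$, the unital Bellissard--Kellendonk algebra of Section~\ref{SubSec:groupoidAlg1}, consistent with the earlier remark). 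None of the subsequent content of Theorem~\ref{Th:0} — the groupoid picture $\mathfrak G_N(\Ll)=\pi_\Ll\big(\Mm(C^\ast_{r,\Ss_N}(\Gg_N,\CM))\big)$ — is needed for this corollary; it is purely a matter of unwinding the definition of $\mathfrak H(\Ll)$ as an inverse limit.
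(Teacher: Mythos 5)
Your proposal is correct and follows exactly the paper's route: the paper's entire proof is ``Take $\Jj_N := \ker \mathfrak p_N$; obviously the $\Jj_N$'s satisfy \eqref{Eq:IdJ}'', and you have simply spelled out the ``obviously'' (the factorization of $\mathfrak p_{N-1}$ through $\mathfrak p_N$, the surjectivity of the last-summand projection restricted to $\Jj_{N-1}$, and the identification of its kernel with $\Jj_N$). Your remark about the index bookkeeping at the bottom of the tower ($\Jj_0$ versus $\ker\mathfrak p_0$ with $\mathfrak G_0=\CM$) flags a small infelicity in the paper's own statement rather than a gap in your argument.
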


\proof  Take $\Jj_N : = {\rm ker}\, \mathfrak p_N$. Obviously, $\Jj_N$'s satisfy the identity~\eqref{Eq:IdJ}.\qed

\vspace{0.2cm}

Lastly, we can supply the desired completion of the algebra of derivations:

\begin{theorem}\label{Th:SigmaFinal} The algebra of physical derivations $\dot \Sigma(\Ll)$ accepts a completion to a pro-$C^\ast$-algebra. Specifically, modulo non-physical elements, there is the faithful morphism
$$
\dot \Sigma(\Ll) \rightarrowtail \varprojlim \, \bigoplus_{n=0}^N \Mm\big (\mathfrak G_n(\Ll)\big ) \otimes  \Mm\big (\mathfrak G_n(\Ll)\big )^{\rm op}.
$$
\end{theorem}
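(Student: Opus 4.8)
The plan is to re-run, at the level of the derivation algebra, the same inverse–limit construction that produced $\dot{\mathfrak H}(\Ll)$ in Definition~\ref{Def:DotH} and the embedding $\dot{\mathfrak H}(\Ll)\rightarrowtail\mathfrak H(\Ll)$ of Theorem~\ref{Th:M1}, but replacing each $\mathfrak H_n$ by the codomain $\mathfrak H_n\otimes\mathfrak H_n^{\rm op}$ of the representation $\ddot\pi_n$, and then pushing everything forward into $\Mm\big(\mathfrak G_n(\Ll)\big)\otimes\Mm\big(\mathfrak G_n(\Ll)\big)^{\rm op}$. So the statement will be obtained by composing three arrows: the morphism $\theta$ into $\varprojlim\bigoplus_{n=0}^N\mathfrak H_n\otimes\mathfrak H_n^{\rm op}$ given by the $\ddot\pi_n$; a tower-level embedding of $\mathfrak H_n\otimes\mathfrak H_n^{\rm op}$ into $\Mm\big(\mathfrak G_n(\Ll)\big)\otimes\Mm\big(\mathfrak G_n(\Ll)\big)^{\rm op}$; and the passage to inverse limits.

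First I would record that the representations $\ddot\pi_N:\dot\Sigma(\Ll)\to\mathfrak H_N\otimes\mathfrak H_N^{\rm op}$, together with the truncation epimorphisms of the tower~\eqref{Eq:Tower1}, form a cone over that tower, so the universal property of the inverse limit yields the algebra morphism $\theta:\dot\Sigma(\Ll)\to\varprojlim\bigoplus_{n=0}^N\mathfrak H_n\otimes\mathfrak H_n^{\rm op}$ already discussed before Theorem~\ref{Th:M1}; by construction, $\ker\theta$ is the ideal of non-physical elements introduced there. Next, the Corollary preceding Definition~\ref{Def:Multiplier} supplies an injective $\ast$-homomorphism $\mathfrak H_N\hookrightarrow\Mm\big(\mathfrak G_N(\Ll)\big)$, hence $\mathfrak H_N^{\rm op}\hookrightarrow\Mm\big(\mathfrak G_N(\Ll)\big)^{\rm op}$; tensoring these (using the spatial norm, which is consistent with the concrete realization of $\mathfrak H_N\otimes\mathfrak H_N^{\rm op}$ on Hilbert--Schmidt operators over $\Ff_N^{(-)}(\Ll)$ used in the construction of $\ddot\pi_N$) and using that the algebraic tensor product of injections is injective and embeds into the minimal $C^\ast$-tensor product, one gets an injection $\mathfrak H_N\otimes\mathfrak H_N^{\rm op}\hookrightarrow\Mm\big(\mathfrak G_N(\Ll)\big)\otimes\Mm\big(\mathfrak G_N(\Ll)\big)^{\rm op}$. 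These injections are compatible with truncation, so they assemble into an inclusion of towers, and passing to inverse limits in the category of topological $\ast$-algebras they give an injective morphism $\varprojlim\bigoplus_{n=0}^N\mathfrak H_n\otimes\mathfrak H_n^{\rm op}\hookrightarrow\mathfrak S(\Ll)$, where $\mathfrak S(\Ll):=\varprojlim\bigoplus_{n=0}^N\Mm\big(\mathfrak G_n(\Ll)\big)\otimes\Mm\big(\mathfrak G_n(\Ll)\big)^{\rm op}$; injectivity holds since a coherent sequence all of whose components vanish is the zero element.

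Then $\mathfrak S(\Ll)$ is the inverse limit of a tower of $C^\ast$-algebras with surjective connecting $\ast$-homomorphisms (each level is a finite direct sum of spatial tensor products of $C^\ast$-algebras, hence a $C^\ast$-algebra), so by \cite{PhillipsJOT1988} it is a pro-$C^\ast$-algebra. Composing, $\dot\Sigma(\Ll)\xrightarrow{\theta}\varprojlim\bigoplus_{n=0}^N\mathfrak H_n\otimes\mathfrak H_n^{\rm op}\hookrightarrow\mathfrak S(\Ll)$; because the second arrow is injective, the kernel of the composite is exactly $\ker\theta$, i.e. the non-physical elements, which is precisely the claimed faithful morphism $\dot\Sigma(\Ll)/\ker\theta\rightarrowtail\mathfrak S(\Ll)$.

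The principal obstacle is not conceptual but lies in the $C^\ast$-algebraic bookkeeping of the last two steps: one must be careful that the correct tensor product is used throughout, that passing to opposite algebras commutes with taking multipliers (so $\Mm\big(\mathfrak G_N(\Ll)\big)^{\rm op}\cong\Mm\big(\mathfrak G_N(\Ll)^{\rm op}\big)$ and the target is genuinely assembled from $C^\ast$-algebras), and that injectivity survives every stage — in particular through the minimal $C^\ast$-tensor product, which is the standard reason such completions are well behaved. A secondary, purely expository point is to confirm that the kernel appearing here is the same non-physical ideal as in the discussion preceding Theorem~\ref{Th:M1}, so that the faithfulness clause carries the intended physical meaning; beyond this the proof is a routine diagram chase analogous to the proof of Theorem~\ref{Th:M1}.
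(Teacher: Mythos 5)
Your proposal is correct and follows exactly the route the paper intends: the theorem is stated without an explicit proof, but it is meant to follow from the same construction as Theorem~\ref{Th:M1}, namely composing the morphism $\theta$ into $\varprojlim\bigoplus_{n=0}^N\mathfrak H_n\otimes\mathfrak H_n^{\rm op}$ with the level-wise embeddings $\mathfrak H_n\hookrightarrow\Mm\big(\mathfrak G_n(\Ll)\big)$ and passing to inverse limits of the resulting projective tower of $C^\ast$-algebras. Your additional care about the minimal tensor product and the identification of the kernel with the ideal of non-physical elements supplies precisely the bookkeeping the paper leaves implicit.
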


\subsection{Topological groupoid $\Gg_N$ associated to the dynamics of $N$-fermions}
\label{SubSec:AlgGN} 

From here on, our focus is entirely on the characterization of the $C^\ast$-algebra $\mathfrak G_N(\Ll)$. As we shall see in subsection~\ref{Sec:LeftRegRep}, $\mathfrak G_N(\Ll)$ coincides with a left regular representation of a bi-equivariant $C^\ast$-algebra associated to the \'etale groupoid $\mathcal{G}_{N}$ introduced below and studied in this section. 

Throughout this and following subsections, we fix a Delone set $\Ll_0$ and we restrict all other patterns $\Ll$ to the hull $\Omega_{\Ll_0}$. Recalling the definition of $\widehat{{\rm Del}}_{(r,R)}^{(N,N)}(\RM^d)$ from Eq.~\eqref{Eq:KeySpace}, we consider the subset where the patterns are restricted to $\Omega_{\Ll_0}$:
$$
\widehat{\Omega}_{\Ll_0}^{(N)} : =\Big \{ \big (\xi,\zeta \big ) \in \widehat{{\rm Del}}_{(r,R)}^{(N)}(\RM^d) \times \widehat{{\rm Del}}_{(r,R)}^{(N)}(\RM^d), \ \Ll_\xi = \Ll_\zeta \in \Omega_{\Ll_0} \Big \}.
$$
Since $\Omega_{\Ll_0}$ is closed and translation invariant, the action~\eqref{Eq:RdAction} of $\RM^d$ on $\widehat{{\rm Del}}_{(r,R)}^{(N,N)}(\RM^d)$ restricts to $\widehat{\Omega}_{\Ll_0}^{(N)}$ and we can consider the quotient space $\big [\widehat \Omega_{\Ll_0}^{(N)}\big ] \subset \big[\widehat{{\rm Del}}_{(r,R)}^{(N,N)}(\RM^d)\big]$. The latter was introduced in Definition~\ref{Def:QuotientSp} and served as the domain for the seeds of the Hamiltonian coefficients. We so obtain a commutative diagram
$$
\centerline{\xymatrix{\widehat{\Omega}_{\Ll_0}^{(N)} \ar[r] \ar[d]^{\hat{\rm q}_{N}} &\widehat{{\rm Del}}_{(r,R)}^{(N,N)}(\RM^d) \ar[d]^{\hat{\rm q}_{N}} \\  \big[\widehat{\Omega}_{\Ll_0}^{(N)} \big]\ar[r] & \big[\widehat{{\rm Del}}_{(r,R)}^{(N,N)}(\RM^d)\big] }}
$$
of principal $\mathbb{R}^{d}$-bundles.  We will show that the base space $\big [\widehat \Omega_{\Ll_0}^{(N)}\big ]$ carries the structure of an \'etale groupoid with a 2-action of the symmetric group $\mathcal{S}_{N}$. Its bi-equivariant $C^{*}$-algebra turns out to have the sought for properties. To this end, we will need a concrete model for this quotient space as a subspace of $\widehat \Omega_{\Ll_0}^{(N)}$.

\begin{lemma}The map
\begin{equation}\label{Eq:Tn}
\hat{\rm p}_N:\widehat \Omega_{\Ll_0}^{(N)}\to \widehat \Omega_{\Ll_0}^{(N)},\quad \hat{\rm p}_{N}(\xi,\zeta):=\hat{\mathfrak{t}}_{\chi_\xi(1)}(\xi,\zeta)
\end{equation}
is continuous, $\mathbb{R}^{d}$-invariant and satisfies $(\hat{\rm p}_{N})^{2}:=\hat{\rm p}_{N}\circ\hat{\rm p}_{N}=\hat{\rm p}_{N}$.
\end{lemma}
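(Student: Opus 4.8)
The plan is to verify the three asserted properties of $\hat{\rm p}_N$ directly from the definitions, exploiting the fact that $\mathfrak v_N(\xi,\zeta)=\chi_\xi(1)$ is continuous (as recorded just above) and that the $\RM^d$-action $\hat{\mathfrak t}$ is continuous.

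First, for continuity: write $\hat{\rm p}_N$ as the composition of the map $(\xi,\zeta)\mapsto\big(\mathfrak v_N(\xi,\zeta),(\xi,\zeta)\big)\in\RM^d\times\widehat\Omega_{\Ll_0}^{(N)}$ with the action map $\RM^d\times\widehat\Omega_{\Ll_0}^{(N)}\to\widehat\Omega_{\Ll_0}^{(N)}$, $(x,(\xi,\zeta))\mapsto\hat{\mathfrak t}_x(\xi,\zeta)$. The first component $\mathfrak v_N$ is continuous by the Proposition stating $\mathfrak v_n$ is continuous, the identity component is obviously continuous, so the pair is continuous; the action is continuous since $\hat{\mathfrak t}$ is a continuous action of $\RM^d$ by homeomorphisms. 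Hence $\hat{\rm p}_N$ is continuous.

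Second, for $\RM^d$-invariance: one must show $\hat{\rm p}_N\circ\hat{\mathfrak t}_y=\hat{\rm p}_N$ for all $y\in\RM^d$. Unwinding the definition of $\hat{\mathfrak t}_y(\xi,\zeta)=(\hat{\mathfrak t}_y\xi,\hat{\mathfrak t}_y\zeta)$, the first coordinate of the translated element is $\hat{\mathfrak t}_y\xi=(\Ll_\xi-y,V_\xi-y,\mathfrak t_y\circ\chi_\xi)$, so its distinguished point is $(\mathfrak t_y\circ\chi_\xi)(1)=\chi_\xi(1)-y$. Therefore
\begin{equation*}
\hat{\rm p}_N\big(\hat{\mathfrak t}_y(\xi,\zeta)\big)=\hat{\mathfrak t}_{\chi_\xi(1)-y}\big(\hat{\mathfrak t}_y(\xi,\zeta)\big)=\hat{\mathfrak t}_{\chi_\xi(1)-y+y}(\xi,\zeta)=\hat{\mathfrak t}_{\chi_\xi(1)}(\xi,\zeta)=\hat{\rm p}_N(\xi,\zeta),
\end{equation*}
where the second equality uses that $\hat{\mathfrak t}$ is a group action. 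This gives $\RM^d$-invariance, i.e. constancy of $\hat{\rm p}_N$ on $\RM^d$-orbits.

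Third, idempotency: apply $\hat{\rm p}_N$ to $\hat{\rm p}_N(\xi,\zeta)=\hat{\mathfrak t}_{\chi_\xi(1)}(\xi,\zeta)$. By $\RM^d$-invariance just established, $\hat{\rm p}_N$ takes the same value on $\hat{\mathfrak t}_{\chi_\xi(1)}(\xi,\zeta)$ as on $(\xi,\zeta)$, so $(\hat{\rm p}_N)^2(\xi,\zeta)=\hat{\rm p}_N(\xi,\zeta)$. (Alternatively, one checks directly that the distinguished point of the first coordinate of $\hat{\rm p}_N(\xi,\zeta)$ is $\chi_\xi(1)-\chi_\xi(1)=0$, so the second application translates by $0$.) None of these steps presents a genuine obstacle; the only mild subtlety is bookkeeping the translation convention $\mathfrak t_x(\Lambda)=\Lambda-x$ and the induced action $\mathfrak t_x\circ\chi$ on orderings consistently, so that the signs of the translation amounts compose correctly — this is where I would be most careful, but it is routine once the conventions from Remark~\ref{Re:Not1} and the preceding propositions are fixed.
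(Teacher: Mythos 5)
Your proof is correct and follows essentially the same route as the paper: the paper's entire argument is the single identity $\hat{\mathfrak t}_{\chi_{\hat{\mathfrak t}_{x}(\xi)}(1)}=\hat{\mathfrak t}_{\chi_{\xi}(1)}\circ\hat{\mathfrak t}_{x}^{-1}$, which is exactly your computation $(\mathfrak t_y\circ\chi_\xi)(1)=\chi_\xi(1)-y$ packaged differently, and your continuity argument via $\mathfrak v_N$ and the joint continuity of the action just spells out what the paper calls ``immediate.'' Nothing to change.
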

\begin{proof}
Continuity of $\hat{\rm p}_{N}$ is immediate. Furthermore, the identity
$$
\hat{\mathfrak t}_{\chi_{\hat{\mathfrak t}_{x}(\zeta)}(1)}= \hat{\mathfrak t}_{\chi_{\zeta}(1)}\circ \hat{\mathfrak t}_{x}^{-1},
$$
proves that $\hat{\rm p}_{N}$ is $\mathbb{R}{^d}$-invariant and that $\hat{\rm p}_{N}\circ\hat{\rm p}_{N}=\hat{\rm p}_{N}$.
\end{proof}

\begin{definition} We define 
\begin{align*}\Gg_N:&=\textnormal{Ran}(\hat{\rm p}_{N})=\hat{\rm p}_{N}\left(\widehat \Omega_{\Ll_0}^{(N)}\right)\\
&=\Big \{(\xi,\zeta)\in \widehat{{\rm Del}}_{(r,R)}^{(N,N)}(\RM^d), \, \Ll_\xi=\Ll_\zeta \in \Omega_{\mathcal{L}_{0}},\, \chi_\xi(1) = 0 \Big \},
\end{align*}
with the relative topology inherited from  the inclusion $\hat{\rm j}_{N}:\mathcal{G}_{N} \rightarrowtail \widehat{{\rm Del}}_{(r,R)}^{(N,N)}(\RM^d)$.
\end{definition}
We consider the space $\mathcal{G}_{N}\times\mathbb{R}^{d}$ with the $\mathbb{R}^{d}$ action given by translation action of $\mathbb{R}^{d}$ on itself: $t_{x}((\xi,\zeta),y):= ( (\xi,\zeta), y-x )$. The map $\widehat{\rm pr}_{N}:\mathcal{G}_{N}\times\mathbb{R}^{d}\to \mathcal{G}_{N}$ given by projection onto $\mathcal{G}_{N}$, is a trivial principal $\mathbb{R}^{d}$-bundle over $\mathcal{G}_{N}$. We will now show that $\mathcal{G}_{N}\times\mathbb{R}^{d}$ and $\widehat \Omega_{\Ll_0}^{(N)}$ are homeomorphic as principal $\mathbb{R}^{d}$-bundles.
\begin{proposition} The map
\[\sigma_{N} :\widehat \Omega_{\Ll_0}^{(N)} \xrightarrow{\sim} \mathcal{G}_{N}\times\mathbb{R}^{d},\quad (\xi,\zeta)\mapsto (\hat{\rm p}_{N}(\xi,\zeta), \chi_{\xi}(1)).\]
is an $\mathbb{R}^{d}$-equivariant homeomorphism. There is a commutative diagram
$$
\centerline{\xymatrix{\widehat{\Omega}_{\Ll_0}^{(N)} \ar[d]^{\hat{\rm q}_{N}} &\widehat{\Omega}_{\Ll_0}^{(N)} \ar[l]_{=}\ar[r]^{\sigma_{N}} \ar[d]^{\hat{\rm p}_{N}} &\mathcal{G}_{N}\times\mathbb{R}^{d}\ar[d]^{\widehat{\rm pr}_{N}}\\ \big[\widehat{\Omega}_{\Ll_0}^{(N)}\big]& \mathcal{G}_{N} \ar[l]_{\hat{\rm q}_{N}}\ar[r]^{=} & \mathcal{G}_{N}}}
$$
 of principal $\mathbb{R}{^d}$-bundles.
\end{proposition}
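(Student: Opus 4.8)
The strategy is to write down the inverse map explicitly and then check the required compatibilities, all of which reduce to the group‑action identities for $\hat{\mathfrak t}$ and the idempotency of $\hat{\rm p}_N$ already recorded above. Define
\[
\tau_N:\Gg_N\times\RM^d\longrightarrow\widehat\Omega_{\Ll_0}^{(N)},\qquad
\tau_N\big((\xi,\zeta),y\big):=\hat{\mathfrak t}_{-y}(\xi,\zeta)=\hat{\mathfrak t}_y^{-1}(\xi,\zeta),
\]
which makes sense because $\Gg_N\subset\widehat\Omega_{\Ll_0}^{(N)}$ and $\hat{\mathfrak t}$ is a genuine $\RM^d$-action, so $\hat{\mathfrak t}_{-y}=\hat{\mathfrak t}_y^{-1}$. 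First I would verify that $\sigma_N$ and $\tau_N$ are mutually inverse. For $\tau_N\circ\sigma_N$ one has $\tau_N\big(\hat{\rm p}_N(\xi,\zeta),\chi_\xi(1)\big)=\hat{\mathfrak t}_{-\chi_\xi(1)}\hat{\mathfrak t}_{\chi_\xi(1)}(\xi,\zeta)=\hat{\mathfrak t}_0(\xi,\zeta)=(\xi,\zeta)$. For $\sigma_N\circ\tau_N$ one uses that $\hat{\rm p}_N$ is $\RM^d$-invariant and idempotent with $\operatorname{Ran}\hat{\rm p}_N=\Gg_N$, so $\hat{\rm p}_N\big(\hat{\mathfrak t}_{-y}(\xi,\zeta)\big)=\hat{\rm p}_N(\xi,\zeta)=(\xi,\zeta)$ for $(\xi,\zeta)\in\Gg_N$, while $\chi_{\hat{\mathfrak t}_{-y}\xi}(1)=\chi_\xi(1)+y=y$ since $\chi_\xi(1)=0$ on $\Gg_N$; hence $\sigma_N\circ\tau_N=\mathrm{id}$.

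Next, continuity. The map $\sigma_N$ is continuous because its first component $\hat{\rm p}_N$ is continuous by the preceding lemma (and continuous \emph{into} $\Gg_N$ with its subspace topology by the universal property of the subspace topology, since it lands in $\Gg_N$), and its second component $(\xi,\zeta)\mapsto\chi_\xi(1)$ is the restriction to $\widehat\Omega_{\Ll_0}^{(N)}$ of the continuous map $\mathfrak v_N$. The inverse $\tau_N$ is continuous as the composite of $\big((\xi,\zeta),y\big)\mapsto\big(-y,(\xi,\zeta)\big)$, the inclusion $\Gg_N\hookrightarrow\widehat\Omega_{\Ll_0}^{(N)}$, and the jointly continuous action map $\RM^d\times\widehat\Omega_{\Ll_0}^{(N)}\to\widehat\Omega_{\Ll_0}^{(N)}$ (joint continuity of $\hat{\mathfrak t}$ being the content of its being called a continuous action). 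Thus $\sigma_N$ is a homeomorphism. Equivariance follows from the same two facts: $\sigma_N\big(\hat{\mathfrak t}_x(\xi,\zeta)\big)=\big(\hat{\rm p}_N(\xi,\zeta),\,\chi_\xi(1)-x\big)=t_x\big(\sigma_N(\xi,\zeta)\big)$, using $\hat{\rm p}_N\circ\hat{\mathfrak t}_x=\hat{\rm p}_N$ and $\chi_{\hat{\mathfrak t}_x\xi}(1)=\chi_\xi(1)-x$; together with the homeomorphism property this makes $\sigma_N$ an isomorphism of principal $\RM^d$-bundles.

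Finally, the diagram. The right-hand square commutes on the nose: $\widehat{\rm pr}_N\circ\sigma_N=\hat{\rm p}_N$ directly from the definitions, and the bottom edge $\Gg_N\xrightarrow{=}\Gg_N$ is the identity. The left-hand square commutes because $\hat{\rm p}_N(\xi,\zeta)=\hat{\mathfrak t}_{\chi_\xi(1)}(\xi,\zeta)$ lies in the $\RM^d$-orbit of $(\xi,\zeta)$, so $\hat{\rm q}_N\circ\hat{\rm p}_N=\hat{\rm q}_N$, where the bottom-left arrow $\hat{\rm q}_N$ is the restriction of the quotient map to $\Gg_N$. As a byproduct one gets that this restriction $\hat{\rm q}_N\!\restriction_{\Gg_N}:\Gg_N\to[\widehat\Omega_{\Ll_0}^{(N)}]$ is a homeomorphism, with inverse $[\xi,\zeta]\mapsto\hat{\rm p}_N(\xi,\zeta)$ (well defined by $\RM^d$-invariance of $\hat{\rm p}_N$, continuous since $\hat{\rm q}_N$ is a quotient map), which is exactly the ``concrete model'' for the base of the bundle. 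The only genuine care needed is the bookkeeping of the translation conventions ($\mathfrak t_x$ shifts by $-x$, the identity $\chi_{\hat{\mathfrak t}_x\xi}(1)=\chi_\xi(1)-x$, and $\hat{\mathfrak t}_{-y}=\hat{\mathfrak t}_y^{-1}$) and invoking joint continuity of $\hat{\mathfrak t}$; there is no deeper obstacle.
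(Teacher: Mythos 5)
Your proof is correct and follows essentially the same route as the paper: both exhibit the explicit inverse $\tau_N((\xi,\zeta),x)=\hat{\mathfrak t}_{-x}(\xi,\zeta)$ and reduce everything to the $\RM^d$-invariance and idempotency of $\hat{\rm p}_N$ together with the identity $\chi_{\hat{\mathfrak t}_x\xi}(1)=\chi_\xi(1)-x$. Your version merely spells out the continuity and diagram-chasing details that the paper leaves implicit.
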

\begin{proof} 
The map $\sigma_{N}$ is continuous and $\mathbb{R}^{d}$-equivariant since $\hat{\rm p}_{N}$ is $\mathbb{R}^{d}$-invariant and $\chi_{\hat{\mathfrak{t}}_{x}\xi}(1)=\chi_{\xi}(1)-x$. Its inverse is given by
\[\tau_{N}:\mathcal{G}_{N}\times\mathbb{R}^{d}\to \widehat \Omega_{\Ll_0}^{(N)},\quad ((\xi,\zeta),x)\mapsto \hat{\mathfrak{t}}_{-x}(\xi,\zeta).\]
It follows that $\sigma_{N}$ and $\hat{\rm p}_{N}$ are $\mathbb{R}{^d}$ equivariant homeomorphims.
\end{proof}

\begin{corollary}\label{Co:Equiv} The relative topology on $\mathcal{G}_{N}$ induced by inclusion $\hat{\rm j}_{N}:\mathcal{G}_{N}\to \widehat \Omega_{\Ll_0}^{(N)}$ coincides with the quotient topology induced by $\hat{\rm p}_{N}$. The restriction of the quotient map $\hat{\rm q}_{N}:\widehat \Omega_{\Ll_0}^{(N)}\to\big[\widehat \Omega_{\Ll_0}^{(N)}\big]$ induces a homeomorphism $\mathcal{G}_{N}\xrightarrow{\sim} \big[\widehat \Omega_{\Ll_0}^{(N)}\big]$ with inverse induced by the map $\hat{\rm p}_{N}$.
\end{corollary}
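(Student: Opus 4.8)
The plan is to derive both assertions formally from two facts already established above: that $\hat{\rm p}_{N}$ is a continuous, $\RM^{d}$-invariant idempotent with $\Ran(\hat{\rm p}_{N})=\mathcal{G}_{N}$ (the preceding Lemma), and that $\sigma_{N}$ is an $\RM^{d}$-equivariant homeomorphism $\widehat{\Omega}_{\Ll_0}^{(N)}\xrightarrow{\sim}\mathcal{G}_{N}\times\RM^{d}$ intertwining the translation actions (the preceding Proposition).

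First I would settle the topology on $\mathcal{G}_{N}$. Every $(\xi,\zeta)\in\mathcal{G}_{N}$ satisfies $\chi_{\xi}(1)=0$, so $\hat{\rm p}_{N}(\xi,\zeta)=\hat{\mathfrak t}_{0}(\xi,\zeta)=(\xi,\zeta)$; hence $\hat{\rm p}_{N}\circ\hat{\rm j}_{N}=\mathrm{Id}_{\mathcal{G}_{N}}$ and $\hat{\rm p}_{N}\colon\widehat{\Omega}_{\Ll_0}^{(N)}\to\mathcal{G}_{N}$ is a continuous retraction onto the subspace $\mathcal{G}_{N}$. A continuous retraction is a quotient map whose quotient topology on the image coincides with the subspace topology: if $\hat{\rm p}_{N}^{-1}(U)$ is open, then $U=\hat{\rm p}_{N}^{-1}(U)\cap\mathcal{G}_{N}=(\hat{\rm p}_{N}\circ\hat{\rm j}_{N})^{-1}(U)$ is open in the relative topology, while the reverse implication is just continuity of $\hat{\rm p}_{N}$. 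This is the first sentence of the corollary.

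Next I would examine the map $\hat{\rm q}_{N}\circ\hat{\rm j}_{N}\colon\mathcal{G}_{N}\to\big[\widehat{\Omega}_{\Ll_0}^{(N)}\big]$. It is continuous as a composite. It is surjective because $\hat{\rm p}_{N}(\xi,\zeta)=\hat{\mathfrak t}_{\chi_{\xi}(1)}(\xi,\zeta)$ lies in the $\RM^{d}$-orbit of $(\xi,\zeta)$, so $\hat{\rm q}_{N}\circ\hat{\rm p}_{N}=\hat{\rm q}_{N}$ and $\hat{\rm p}_{N}$ already takes values in $\mathcal{G}_{N}$. It is injective because if $(\xi',\zeta')=\hat{\mathfrak t}_{x}(\xi,\zeta)$ with both members in $\mathcal{G}_{N}$, comparing the distinguished points gives $0=\chi_{\xi'}(1)=\chi_{\xi}(1)-x=-x$, whence $x=0$; equivalently, this is the freeness of the $\RM^{d}$-action restricted to the slice $\{\chi_{\xi}(1)=0\}$. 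For continuity of the inverse I would invoke the universal property of the quotient map $\hat{\rm q}_{N}$: the continuous $\RM^{d}$-invariant map $\hat{\rm p}_{N}$ factors as $\hat{\rm p}_{N}=\bar{\rm p}_{N}\circ\hat{\rm q}_{N}$ for a unique continuous $\bar{\rm p}_{N}\colon\big[\widehat{\Omega}_{\Ll_0}^{(N)}\big]\to\mathcal{G}_{N}$, and then $\bar{\rm p}_{N}\circ(\hat{\rm q}_{N}\circ\hat{\rm j}_{N})=\hat{\rm p}_{N}\circ\hat{\rm j}_{N}=\mathrm{Id}_{\mathcal{G}_{N}}$, while $(\hat{\rm q}_{N}\circ\hat{\rm j}_{N})\circ\bar{\rm p}_{N}=\mathrm{Id}$ on $\big[\widehat{\Omega}_{\Ll_0}^{(N)}\big]$ by $\hat{\rm q}_{N}\circ\hat{\rm p}_{N}=\hat{\rm q}_{N}$ once more. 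Thus $\bar{\rm p}_{N}$ is the claimed inverse, and it is by construction the map induced by $\hat{\rm p}_{N}$. As an alternative one can route everything through $\sigma_{N}$, which descends to a homeomorphism $\big[\widehat{\Omega}_{\Ll_0}^{(N)}\big]\xrightarrow{\sim}(\mathcal{G}_{N}\times\RM^{d})/\RM^{d}=\mathcal{G}_{N}$ agreeing with the inverse of $\hat{\rm q}_{N}\circ\hat{\rm j}_{N}$ by the commutative diagram of the preceding Proposition.

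I do not anticipate a genuine obstacle here: once the Lemma and Proposition are granted, the corollary is point-set bookkeeping. The only spots demanding a moment's care are checking $\hat{\rm p}_{N}|_{\mathcal{G}_{N}}=\mathrm{Id}_{\mathcal{G}_{N}}$, keeping straight which composites in the diagram are being identified, and recognising injectivity of $\hat{\rm q}_{N}\circ\hat{\rm j}_{N}$ as the freeness of the translation action on the $\chi_{\xi}(1)=0$ slice.
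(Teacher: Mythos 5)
Your proof is correct and follows essentially the same route as the paper, which states the corollary as an immediate consequence of the preceding lemma and proposition: $\hat{\rm p}_{N}$ restricts to the identity on the slice $\{\chi_{\xi}(1)=0\}$, hence is a continuous retraction (so quotient and subspace topologies agree), and the induced map on $\big[\widehat{\Omega}_{\Ll_0}^{(N)}\big]$ obtained from the universal property of $\hat{\rm q}_{N}$ is the two-sided inverse of $\hat{\rm q}_{N}\circ\hat{\rm j}_{N}$. The point-set details you supply (the identity $\hat{\rm p}_{N}^{-1}(U)\cap\mathcal{G}_{N}=U$, injectivity via freeness of the translation action on the slice, and $\hat{\rm q}_{N}\circ\hat{\rm p}_{N}=\hat{\rm q}_{N}$) are exactly the bookkeeping the paper leaves implicit.
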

We now continue to equip the space $\mathcal{G}_{N}$ with the structure of an \'etale groupoid. In the following, pairs like $(\xi,\zeta)$ will be automatically assumed from $\widehat \Omega_{\Ll_0}^{(N)}$ if not specified otherwise.

\begin{definition}\label{Def:GroupoidN} The groupoid associated to the dynamics of $N$ fermions on a Delone set $\Ll_0$ consists of:
\begin{enumerate}[\rm \ 1.] 
\item The topological space $\Gg_N$, whose definition is reproduced below 
$$
\label{eq: setGn}
\Gg_N = \big \{(\xi,\zeta)\in \widehat \Omega_{\Ll_0}^{(N)},\, \chi_\xi(1) = 0 \big \},
$$
equipped with the inversion map
$$
(\xi,\zeta)^{-1} = \hat{\mathfrak{t}}_{\chi_\zeta(1)} (\zeta,\xi).
$$

\item The set of composable elements 
\[\Gg^{(2)}_{N}:= \Big \{\big ((\xi,\zeta),(\xi',\zeta')\big ) \in \Gg_N \times \Gg_N:  \xi' = \hat{\mathfrak t}_{\chi_\zeta(1)} \zeta \Big \}\subset \Gg_N \times \Gg_N,\] 
equipped with the composition map
$$
(\xi,\zeta) \cdot \hat {\mathfrak t}_{\chi_\zeta(1)}( \zeta,\zeta') := (\xi, \zeta').
$$
\end{enumerate}
\end{definition}

\begin{remark}\label{Re:Pairs}{\rm Note that $\Ll_\xi$ ($=\Ll_\zeta $) actually belongs to the transversal $\Xi_{\Ll_0}$ for any $(\xi,\zeta) \in \Gg_N$. Also, pairs like $\hat{\mathfrak t}_{\chi_\xi(1)}(\xi,\zeta)$ and $\hat{\mathfrak t}_{\chi_{\zeta}(1)}(\zeta,\eta)$ are always composable and any composable pair can be written in this form. Furthermore,
\begin{equation}\label{Eq:V1}
\hat{\mathfrak t}_{\chi_\xi(1)}(\xi,\zeta) \cdot  \hat{\mathfrak t}_{\chi_{\zeta}(1)}(\zeta,\eta) = \hat{\mathfrak t}_{\chi_\xi(1)}(\xi,\eta).
\end{equation}
These statements follows directly from the identity
\begin{equation}\label{Eq:ID101}
\hat{\mathfrak t}_{\chi_{\hat{\mathfrak t}_{\chi_{\xi(1)}}(\zeta)}(1)}= \hat{\mathfrak t}_{\chi_{\zeta}(1)} \circ \hat{\mathfrak t}_{\chi_\xi(1)}^{-1},
\end{equation}
which will also be used below several times.
}$\Diamond$
\end{remark}

According to the above definition, the range and the source maps are given by
\begin{equation}\label{Eq:RSFinal}
\mathfrak{r}(\xi,\zeta) = (\xi,\xi),\quad \mathfrak{s}(\xi,\zeta) =\hat{\mathfrak{t}}_{\chi_\zeta(1)}(\zeta,\zeta).
\end{equation}
As such, the space of units $\Gg_N^{(0)}$ can be canonically identified with
\begin{equation}\label{Eq:XiN}
\Xi_{\Ll_0}^{(N)} := \big \{\xi \in \widehat{{\rm Del}}_{(r,R)}^{(N)}(\RM^d): \ \Ll_\xi \in \Xi_{\Ll_0}, \ \chi_\xi(1)=0 \big \}.
\end{equation}
With this notation, $\Xi_{\Ll_0}^{(1)} = \Xi_{\Ll_0}$, the transversal of $\Ll_0$ introduced in section~\ref{SubSec:Patterns}. Note that for all $N >1$, $\Xi_{\Ll_0}^{(N)}$ is not a compact space.

\begin{proposition} With its algebraic structure the set $\Gg_N$ is a groupoid over $\Xi_{\mathcal{L}_0}^{(N)}$.
\end{proposition}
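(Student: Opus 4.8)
The plan is to verify the four axioms of Definition~\ref{Def:groupoid} directly, using the model $\Gg_N = \{(\xi,\zeta)\in\widehat\Omega_{\Ll_0}^{(N)} : \chi_\xi(1)=0\}$ together with the master identity~\eqref{Eq:ID101} to control the translation factors $\hat{\mathfrak t}_{\chi_\xi(1)}$ that appear in the inversion and composition maps. First I would check that inversion is well defined, i.e.\ that $(\xi,\zeta)^{-1}=\hat{\mathfrak t}_{\chi_\zeta(1)}(\zeta,\xi)$ again lies in $\Gg_N$: the pattern is translated to $\Ll_\zeta-\chi_\zeta(1)$, which is still in $\Omega_{\Ll_0}$ since the hull is translation invariant, and the leading point of the first ordering becomes $\chi_\zeta(1)-\chi_\zeta(1)=0$, as required. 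Then I would verify axiom 1), $((\xi,\zeta)^{-1})^{-1}=(\xi,\zeta)$: applying inversion twice gives $\hat{\mathfrak t}_{\chi_{\hat{\mathfrak t}_{\chi_\zeta(1)}\xi}(1)}\hat{\mathfrak t}_{\chi_\zeta(1)}(\xi,\zeta)$, and by~\eqref{Eq:ID101} with the roles of $\xi$ and $\zeta$ set appropriately (recall $\chi_\xi(1)=0$) the two translations cancel, returning $(\xi,\zeta)$.

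Next I would treat composability and the composition map. Using Remark~\ref{Re:Pairs}, every composable pair can be written as $\big(\hat{\mathfrak t}_{\chi_\xi(1)}(\xi,\zeta),\hat{\mathfrak t}_{\chi_\zeta(1)}(\zeta,\eta)\big)$ with $(\xi,\zeta),(\zeta,\eta)\in\widehat\Omega_{\Ll_0}^{(N)}$ sharing a pattern; I would check that the right member indeed satisfies $\chi'=\hat{\mathfrak t}_{\chi_\zeta(1)}\zeta$ so that it is the source of the left member, and that the product~\eqref{Eq:V1} lands back in $\Gg_N$ (its pattern is $\Ll_\xi-\chi_\xi(1)\in\Omega_{\Ll_0}$ and its first ordering starts at $0$). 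Axiom 3), $(\alpha,\alpha^{-1})\in\Gg_N^{(2)}$, then follows by taking $\eta=\xi$ in~\eqref{Eq:V1}: $\mathfrak s(\alpha)=\hat{\mathfrak t}_{\chi_\zeta(1)}(\zeta,\zeta)=\mathfrak r(\alpha^{-1})$, which is exactly the composability condition. For axiom 4), the identities $(\alpha\beta)\beta^{-1}=\alpha$ and $\alpha^{-1}(\alpha\beta)=\beta$ reduce, after writing everything in the form $\hat{\mathfrak t}_{\chi_\cdot(1)}(\cdot,\cdot)$, to a bookkeeping exercise in which every occurring composite translation is collapsed by~\eqref{Eq:ID101}; the key point is that the anchor points cancel in exactly the pattern dictated by~\eqref{Eq:V1}. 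Axiom 2), the associativity-compatibility of the composable set, is a direct consequence of the same rewriting: if $(\alpha,\beta)$ and $(\beta,\gamma)$ are composable then $\mathfrak s(\alpha\beta)=\mathfrak s(\beta)=\mathfrak r(\gamma)$ and $\mathfrak r(\beta\gamma)=\mathfrak r(\beta)=\mathfrak s(\alpha)$, so both $(\alpha\beta,\gamma)$ and $(\alpha,\beta\gamma)$ are composable. Finally I would record that the range and source maps have the form~\eqref{Eq:RSFinal} and that their common image is precisely $\Xi_{\Ll_0}^{(N)}$ as in~\eqref{Eq:XiN}, identifying the unit space.

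The only genuinely delicate point is the careful manipulation of the nested translation operators: the inversion and composition maps are not the naive ``swap'' and ``concatenate'' operations of the matrix groupoid (Example~\ref{Ex:Matgroupoid}), but are twisted by the requirement that the first ordering always be anchored at the origin. Every verification therefore hinges on repeated, correctly-instantiated application of~\eqref{Eq:ID101} (equivalently~\eqref{Eq:V1}), and the main risk is simply a sign/label error in tracking which $\chi(1)$ gets translated by which amount. I expect no conceptual obstacle — the structure is, up to this anchoring twist, that of the pair groupoid on each orbit — so the proof is essentially a disciplined computation, which in the write-up I would organize by first establishing~\eqref{Eq:ID101} and~\eqref{Eq:V1} as standing identities (this is done in Remark~\ref{Re:Pairs}) and then invoking them line by line.
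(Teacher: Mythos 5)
Your proposal is correct and follows essentially the same route as the paper: a direct axiom-by-axiom verification of Definition~\ref{Def:groupoid}, with the composability observations of Remark~\ref{Re:Pairs} and repeated applications of the identity~\eqref{Eq:ID101} doing all the work of cancelling the anchoring translations. The only cosmetic difference is that you make the well-definedness of the inversion map (that it lands back in $\Gg_N$) and the identification of the unit space with $\Xi_{\Ll_0}^{(N)}$ slightly more explicit than the paper does.
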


\noindent{\it Proof.} We will go over the defining properties in the order stated in Definition~\ref{Def:groupoid}.

(1) First, in the light of Remark~\ref{Re:Pairs}, it is easily seen that the inversion indeed returns an element of $\Gg_N$. Furthermore,
$$
\begin{aligned}
\big ( (\xi,\zeta)^{-1} \big )^{-1} = \big ( (\hat{\mathfrak t}_{\chi_\zeta(1)} \zeta, \hat{\mathfrak t}_{\chi_\zeta(1)}\xi) \big)^{-1} = ( \hat{\mathfrak t}_{\chi_{\hat{\mathfrak t}_{\chi_\zeta(1)}(\xi)}(1) } \circ \hat{\mathfrak t}_{\chi_\zeta(1)}) ( \xi, \zeta)
\end{aligned}
$$
and, after using \eqref{Eq:ID101}, property (1) follows because $\mathfrak t_{\chi_\xi(1)} =\mathfrak t_0$, hence the identity.

(2) Consider now the pairs 
$$
\big ((\xi,\zeta),\hat{\mathfrak t}_{\chi_\zeta(1)}(\zeta,\zeta')\big ), \quad \big (\hat{\mathfrak t}_{\chi_\zeta(1)}(\zeta,\zeta'), \hat{\mathfrak t}_{\chi_{\zeta'}(1)}(\zeta',\zeta'')\big ),
$$ 
of composable elements (see Remark~\ref{Re:Pairs}). Then
\begin{equation*}
\mathfrak s \big ( (\xi,\zeta) \cdot \hat{\mathfrak t}_{\chi_\zeta(1)}(\zeta,\zeta') \big ) = \hat{\mathfrak t}_{\chi_{\zeta'}(1)}(\zeta',\zeta')=\mathfrak r \big ( \hat{\mathfrak t}_{\chi_{\zeta'}(1)}(\zeta',\zeta'') \big ),
\end{equation*}
hence property (2) holds. Furthermore, the composition is associative: computing a triple product one way gives
$$
\big ((\xi,\zeta) \cdot \hat{\mathfrak t}_{\chi_\zeta(1)}(\zeta,\zeta')\big ) \cdot \hat{\mathfrak t}_{\chi_{\zeta'}(1)} (\zeta',\zeta'') = (\xi,\zeta')\cdot \hat{\mathfrak t}_{\chi_{\zeta'}(1)} (\zeta',\zeta'')= (\xi, \zeta'')
$$
whereas the other way gives
$$
(\xi,\zeta) \cdot  \big (\hat{\mathfrak t}_{\chi_\zeta(1)}(\zeta,\zeta') \cdot \hat{\mathfrak t}_{\chi_{\zeta'}(1)} (\zeta',\zeta'') \big )  = (\xi,\zeta) \cdot  \hat{\mathfrak t}_{\chi_\zeta(1)}(\zeta, \zeta'') =(\xi, \zeta'').
$$

(3) Clearly, $(\xi,\zeta)$ and $(\xi,\zeta)^{-1} = \hat{\mathfrak t}_{\chi_\zeta(1)}(\zeta,\xi)$ satisfy the condition for composability, hence property (3) is satisfied.

(4) Using the associativity of the multiplication,
$$
(\xi,\zeta)^{-1} \cdot \big ( (\xi,\zeta)\cdot \hat{\mathfrak t}_{\chi_\zeta(1)}(\zeta,\zeta') \big ) = \mathfrak s (\xi,\zeta) \cdot \hat{\mathfrak t}_{\chi_\zeta(1)}(\zeta,\zeta') = \hat{\mathfrak t}_{\chi_\zeta(1)}(\zeta,\zeta'),
$$ 
and
$$
\big ( (\xi,\zeta)\cdot \hat{\mathfrak t}_{\chi_\zeta(1)}(\zeta,\zeta') \big ) \cdot \big ( \hat{\mathfrak t}_{\chi_\zeta(1)}(\zeta,\zeta') \big )^{-1}= (\xi,\zeta) \cdot \mathfrak r \big (\hat{\mathfrak t}_{\chi_\zeta(1)}(\zeta,\zeta') \big ) =(\xi,\zeta).
$$
Thus, property (4) follows. \qed

\vspace{0.2cm}

Recall from \eqref{Eq:BNeigh} the neighborhood base for the topology of $\widehat{{\rm Del}}_{(r,R)}^{(N)}(\RM^d)$:
\begin{equation}\label{Eq:USets}
U_M^\epsilon(\xi)= U_M^\epsilon(\Ll_\xi,V_\xi,\chi_{\xi}) = \Big \{(\Ll,U,g \circ \chi_U): \ (\Ll,U) \in U_M^\epsilon(\Ll_\xi,V_\xi)\Big \}.
\end{equation}
Since $\widehat{{\rm Del}}_{(r,R)}^{(N,N)}(\RM^d)$ carries the relative topology induced by the product topology, a base for the relative topology on  $\mathcal{G}_{N}$ is supplied by the sets
\begin{align*}
\mathcal{G}_{N}\cap \big (U_M^\epsilon(\xi)\times U_{M'}^{\epsilon'}(\zeta)\big )=\left\{ (\xi',\zeta')\in\mathcal{G}_{N}:\xi'\in U_M^\epsilon(\xi),\, \zeta'\in U_{M'}^{\epsilon'}(\zeta) \right\},
\end{align*}
where $(\xi,\zeta)\in\widehat{{\rm Del}}_{(r,R)}^{(N,N)}(\RM^d)$. We now provide a description of a neighbourhood basis for the points $(\xi,\zeta)$ of $\mathcal{G}_{N}$.
\begin{lemma} For $M>>0$ large enough and $\epsilon<r/2$, the collection
\[\big\{\mathcal{G}_{N}\cap(U^{\epsilon}_{M}(\xi)\times U^{\epsilon}_{M}(\zeta)):(\xi,\zeta)\in\mathcal{G}_{N} \big\}\]
form a base for the topology of $\mathcal{G}_{N}$.
\end{lemma}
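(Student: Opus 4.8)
The plan is to reduce the claim to the corresponding statement on the ambient space $\widehat{{\rm Del}}_{(r,R)}^{(N,N)}(\RM^d)$ and then patch it up using the two features that are special to $\mathcal{G}_N$: the constraint $\chi_\xi(1)=0$ and the fact that the transition maps between overlapping charts are the canonical bijections $g$ of Proposition~\ref{Pro:G}. Concretely, one already knows that a base for the relative topology on $\mathcal{G}_N$ consists of the sets $\mathcal{G}_N\cap\big(U_M^\epsilon(\xi)\times U_{M'}^{\epsilon'}(\zeta)\big)$ with $(\xi,\zeta)\in\widehat{{\rm Del}}_{(r,R)}^{(N,N)}(\RM^d)$ arbitrary and $M,M',\epsilon,\epsilon'>0$ arbitrary. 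So it suffices to show that, given an arbitrary such basic set $W$ and a point $(\xi_0,\zeta_0)\in W\cap\mathcal{G}_N$, there are $M\gg 0$ and $\epsilon<r/2$ with $(\xi_0,\zeta_0)\in\mathcal{G}_N\cap\big(U_M^\epsilon(\xi_0)\times U_M^\epsilon(\zeta_0)\big)\subseteq W$. The first thing I would do is invoke the trimming arguments already used in Proposition~\ref{Pro:Cover2} (and the proofs of the topology bases for ${\rm Del}_{(r,R)}^{(n)}$ and its order cover): since $U_M^{\epsilon}(\xi)\subseteq U_{M'}^{\epsilon'}(\xi)$ whenever $M'\le M$ and $\epsilon\le\epsilon'$, we may always shrink $\epsilon$ and enlarge $M$, so the restriction $\epsilon<r/2$, $M\gg 0$ is harmless at the level of generating the topology.

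The substantive point is that the center of the chart can be taken to be the point $(\xi_0,\zeta_0)$ itself rather than an arbitrary $(\xi,\zeta)$. Here I would argue exactly as in the proof of Proposition~\ref{Pro:Cover2}: given a basic neighborhood $U_{M'}^{\epsilon'}(\xi)\times U_{M''}^{\epsilon''}(\zeta)$ containing $(\xi_0,\zeta_0)$, pick $\epsilon<r/2$ and $M$ large enough that $U_M^\epsilon(\xi_0)\subseteq U_{M'}^{\epsilon'}(\xi)$ and $U_M^\epsilon(\zeta_0)\subseteq U_{M''}^{\epsilon''}(\zeta)$; such a choice exists because the $U_M^\epsilon$ form a neighborhood base. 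Applying Proposition~\ref{Pro:G} with the threshold $\epsilon<r/2$ ensures that for every $(\Ll,U)\in U_M^\epsilon(\Ll_{\xi_0},V_{\xi_0})$ there is a \emph{unique} canonical bijection $g:V_{\xi_0}\to U$, so the re-centered chart $U_M^\epsilon(\xi_0)$ is genuinely a well-defined element of the base for $\widehat{{\rm Del}}_{(r,R)}^{(N)}(\RM^d)$ from Eq.~\eqref{Eq:BNeigh}, and the composition-of-canonical-maps identity $\tilde g=\tilde g\circ g$ (used already inside the proof of Proposition~\ref{Pro:Cover2}) gives $U_M^\epsilon(\xi_0)\subseteq U_{M'}^{\epsilon'}(\xi)$; likewise on the $\zeta$-side. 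Intersecting with $\mathcal{G}_N$ yields $(\xi_0,\zeta_0)\in\mathcal{G}_N\cap\big(U_M^\epsilon(\xi_0)\times U_M^\epsilon(\zeta_0)\big)\subseteq W$, which is what is needed. The only thing to check in addition is that $(\xi_0,\zeta_0)$ really lies in the re-centered set, but that is immediate: the canonical bijection $V_{\xi_0}\to V_{\xi_0}$ is the identity, so $\xi_0\in U_M^\epsilon(\xi_0)$ and $\zeta_0\in U_M^\epsilon(\zeta_0)$, and $\chi_{\xi_0}(1)=0$ by hypothesis.

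I expect the main obstacle to be purely bookkeeping: making sure the constant $M$ can be chosen simultaneously large enough to (i) satisfy the ``$M\gg 0$'' convention so that $U_M^\epsilon(\xi)$ is a legitimate chart in the sense of Eq.~\eqref{Eq:BNeigh} (i.e. $V'\subset B(0,M)$ for all $V'$ in the ball $\mathring B(V,\epsilon)$), (ii) achieve $U_M^\epsilon(\xi_0)\subseteq U_{M'}^{\epsilon'}(\xi)$ in both factors, and (iii) do this uniformly in the two coordinates $\xi$ and $\zeta$, which is handled by taking the max of the two thresholds. None of these steps requires a new idea beyond what appears in the proof of Proposition~\ref{Pro:Cover2}; the lemma is really just the observation that the $\chi_\xi(1)=0$ slice inherits a neighborhood base of the stated restricted form because (a) the product charts restrict, and (b) re-centering a chart at an interior point stays within the original chart, with uniqueness of the canonical transition map guaranteed precisely by the $\epsilon<r/2$ Delone separation bound.
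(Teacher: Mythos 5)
Your proof is correct and follows essentially the same route as the paper's: starting from the product basic sets of the relative topology, one re-centers each chart at an interior point of $\mathcal{G}_N$ using the fact that the sets $U_M^\epsilon(\cdot)$ form a neighborhood base on the order cover, with $\epsilon<r/2$ and $M$ large available by trimming. The extra bookkeeping you supply (uniqueness of the canonical transition bijections from Proposition~\ref{Pro:G} and the composition identity from the proof of Proposition~\ref{Pro:Cover2}) is exactly what the paper's shorter argument implicitly relies on.
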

\begin{proof}
For $(\xi',\zeta')\in \mathcal{G}_{N}\cap(U_{M'}^{\epsilon'}(\xi)\times U_{M''}^{\epsilon''}(\zeta))$, we have $\mathcal{L}_{\xi'}=\mathcal{L}_{\zeta'}$ and, since the sets~\eqref{Eq:USets} form a neighborhood base for the topology on $\Xi^{(N)}_{0}$, there exist $M,\epsilon$ such that 
\[U^{\epsilon}_{M}(\xi')\subset U^{\epsilon'}_{M'}(\xi),\quad U^{\epsilon}_{M}(\zeta')\subset U^{\epsilon''}_{M''}(\zeta).\]
Therefore every open set $\mathcal{G}_{N}\cap(U_{M'}^{\epsilon'}(\xi)\times U_{M''}^{\epsilon''}(\zeta))$ is a union of open sets of the form
\[\mathcal{G}_{N}\cap(U^{\epsilon}_{M}(\xi')\times U^{\epsilon}_{M}(\zeta')),\]
with $(\xi',\zeta')\in\mathcal{G}_{N}$. This proves the lemma.
\end{proof}
\begin{lemma} 
\label{lem: topology}
Let $(\xi,\zeta)\in\mathcal{G}_{N}$. For $M>>0$ sufficiently large and $\epsilon<r/2$ the sets $\mathcal{G}_{N}\cap(U_M^\epsilon(\xi)\times U_M^{\epsilon}(\zeta))$ and 


\[\left\{(\xi',\zeta'): \xi'\in U_{M}^{\epsilon}(\xi),\, \mathcal{L}_{\xi'}=\mathcal{L}_{\zeta'},\,V_{\zeta'}\subset\mathring{B}(V_\zeta,\epsilon),\, \chi_{\zeta'}=g\circ\chi_{\zeta}\right\},\]
coincide. In particular $\mathfrak{r}:\mathcal{G}_{N}\cap(U_M^\epsilon(\xi)\times U_M^{\epsilon}(\zeta))\to U^{\epsilon}_{M}(\xi)$ is a bijection.
\end{lemma}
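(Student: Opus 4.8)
The plan is to unwind the definitions of the two sets and show they describe the same pairs $(\xi',\zeta')$, and then read off the claim about $\mathfrak{r}$. First I would observe that an element of $\mathcal{G}_{N}\cap(U_M^\epsilon(\xi)\times U_M^\epsilon(\zeta))$ is by definition a pair $(\xi',\zeta')\in\mathcal{G}_{N}$ with $\xi'\in U_M^\epsilon(\xi)$ and $\zeta'\in U_M^\epsilon(\zeta)$. Being in $\mathcal{G}_{N}$ forces $\mathcal{L}_{\xi'}=\mathcal{L}_{\zeta'}$ (and $\chi_{\xi'}(1)=0$, which is automatic once $\xi'\in U_M^\epsilon(\xi)$ since $\xi\in\mathcal{G}_N$ and for $\epsilon<r/2$ the canonical bijection $g$ of Proposition~\ref{Pro:G} sends $\chi_\xi(1)=0$ to $0$). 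So the only content is to match the membership conditions $\zeta'\in U_M^\epsilon(\zeta)$ with the pair of conditions $V_{\zeta'}\subset\mathring B(V_\zeta,\epsilon)$ and $\chi_{\zeta'}=g\circ\chi_\zeta$.

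The key step is the following: by the description \eqref{Eq:BNeigh}--\eqref{Eq:USets} of the neighborhood base, $\zeta'\in U_M^\epsilon(\zeta)$ means precisely that $(\mathcal{L}_{\zeta'},V_{\zeta'})\in U_M^\epsilon(\mathcal{L}_\zeta,V_\zeta)$ and $\chi_{\zeta'}=g\circ\chi_\zeta$ with $g:V_\zeta\to V_{\zeta'}$ the canonical bijection of Proposition~\ref{Pro:G}. Now $(\mathcal{L}_{\zeta'},V_{\zeta'})\in U_M^\epsilon(\mathcal{L}_\zeta,V_\zeta)$ unpacks, via \eqref{Eq:TopoBase3}, into $\mathcal{L}_{\zeta'}\in U_M^\epsilon(\mathcal{L}_\zeta)$ together with $V_{\zeta'}\in\mathring B(V_\zeta,\epsilon)$. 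But $\mathcal{L}_{\zeta'}=\mathcal{L}_{\xi'}$, and $\xi'\in U_M^\epsilon(\xi)$ already gives $\mathcal{L}_{\xi'}\in U_M^\epsilon(\mathcal{L}_\xi)$; for $M$ large enough relative to $\epsilon$ and the diameter of $V_\zeta$, the condition $\mathcal{L}_{\zeta'}\in U_M^\epsilon(\mathcal{L}_\zeta)$ is implied (the hull of $\mathcal{L}_0$ is uniformly discrete, so once $M$ exceeds the relevant scale, $U_M^\epsilon(\mathcal{L}_\xi)$ and $U_M^\epsilon(\mathcal{L}_\zeta)$ agree as neighborhoods of the common pattern being approximated — this is the point where I must be slightly careful about the choice of $M$). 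Hence the condition $\zeta'\in U_M^\epsilon(\zeta)$ reduces, in the presence of $\xi'\in U_M^\epsilon(\xi)$ and $\mathcal{L}_{\xi'}=\mathcal{L}_{\zeta'}$, to exactly $V_{\zeta'}\subset\mathring B(V_\zeta,\epsilon)$ and $\chi_{\zeta'}=g\circ\chi_\zeta$. This establishes the equality of the two sets.

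For the last sentence: once the two descriptions are identified, I would show $\mathfrak{r}$ restricted to $\mathcal{G}_{N}\cap(U_M^\epsilon(\xi)\times U_M^\epsilon(\zeta))$ is a bijection onto $U_M^\epsilon(\xi)$. By \eqref{Eq:RSFinal}, $\mathfrak{r}(\xi',\zeta')=(\xi',\xi')$, i.e. $\mathfrak{r}$ is essentially $(\xi',\zeta')\mapsto\xi'$, which manifestly lands in $U_M^\epsilon(\xi)$ and is surjective onto it: given any $\xi'\in U_M^\epsilon(\xi)$, its pattern $\mathcal{L}_{\xi'}$ lies in $U_M^\epsilon(\mathcal{L}_\zeta)$ (again using the large-$M$ reduction), so by Proposition~\ref{Pro:G} there is a unique $V_{\zeta'}\subset\mathring B(V_\zeta,\epsilon)$ with $V_{\zeta'}\subset\mathcal{L}_{\xi'}$ via the canonical bijection $g$, and setting $\chi_{\zeta'}=g\circ\chi_\zeta$ produces the unique $\zeta'$ with $(\xi',\zeta')$ in the set. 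Injectivity is clear since $\zeta'$ is uniquely determined by $\xi'$ through this recipe. The main obstacle I anticipate is purely bookkeeping: pinning down how large $M$ must be (in terms of $\epsilon$, the Delone constants $r,R$, and $\mathrm{diam}(V_\xi\cup V_\zeta)$) so that all the $U_M^\epsilon$-neighborhoods of $\mathcal{L}_\xi$ and $\mathcal{L}_\zeta$ can be compared on the common pattern $\mathcal{L}_{\xi'}=\mathcal{L}_{\zeta'}$; everything else is a direct unravelling of \eqref{Eq:TopoBase3}, \eqref{Eq:BNeigh} and Proposition~\ref{Pro:G}.
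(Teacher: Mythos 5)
Your proposal follows essentially the same route as the paper's proof: unpack the product neighborhood via \eqref{Eq:TopoBase3} and \eqref{Eq:BNeigh}, use the canonical bijection $g$ of Proposition~\ref{Pro:G} to see that $V_{\zeta'}$ and $\chi_{\zeta'}$ are uniquely determined by $\mathcal{L}_{\xi'}$ once $\epsilon<r/2$, and then read off that $\mathfrak{r}(\xi',\zeta')=\xi'$ is a bijection onto $U_M^\epsilon(\xi)$. The argument is correct in outline and reaches the right conclusion.

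The one place where your reasoning goes astray is precisely the step you flag as the main obstacle. You try to deduce $\mathcal{L}_{\zeta'}\in U_M^\epsilon(\mathcal{L}_\zeta)$ from $\mathcal{L}_{\xi'}\in U_M^\epsilon(\mathcal{L}_\xi)$ by arguing that, for $M$ large, the neighborhoods $U_M^\epsilon(\mathcal{L}_\xi)$ and $U_M^\epsilon(\mathcal{L}_\zeta)$ ``agree'' thanks to uniform discreteness. No such estimate is needed, and the one you sketch would not work: increasing $M$ makes $U_M^\epsilon$ \emph{smaller}, and the $U_M^\epsilon$-neighborhoods of two genuinely distinct patterns do not merge for large $M$. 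The actual resolution is a definition, not an estimate: $(\xi,\zeta)\in\mathcal{G}_N\subset\widehat{\Omega}_{\Ll_0}^{(N)}$ forces $\mathcal{L}_\xi=\mathcal{L}_\zeta$, so $U_M^\epsilon(\mathcal{L}_\xi)$ and $U_M^\epsilon(\mathcal{L}_\zeta)$ are literally the same set and the condition on $\mathcal{L}_{\zeta'}=\mathcal{L}_{\xi'}$ is automatic. The hypothesis $M>>0$ serves only to guarantee that $V_\xi,V_\zeta\subset B(0,M)$, so that the neighborhoods \eqref{Eq:TopoBase3} are defined and $V_{\zeta'}\subset\mathcal{L}_{\zeta'}[M]$ as in Remark~\ref{Re:LM}; it plays no role in comparing the pattern neighborhoods. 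With that correction, your bookkeeping closes and the proof coincides with the paper's.
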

\begin{proof}

Let $(\xi',\zeta')\in \mathcal{G}_{N}\cap\left(U_M^\epsilon(\xi)\times U_M^{\epsilon}(\zeta)\right)$, so that $\mathcal{L}_{\xi'}=\mathcal{L}_{\zeta'}$. Then, since $V_{\zeta'}\subset \mathcal{L}_{\zeta'}[M]$ (see Remark~\ref{Re:LM}), it follows that 
$$
{\rm d}_{\rm H}(V_\zeta,V_{\zeta'}) \leq {\rm d}_{\rm H}(\Ll_\zeta[M],\Ll_{\zeta'}[M]) \leq \epsilon
$$
hence $V_{\zeta'} \subset\mathring{B}(V_\zeta,\epsilon)$. Since $\epsilon \leq r/2$, there is a bijection $g:V_\zeta \to V_{\zeta'}$ and this property determines $V_{\zeta'}$ and $\chi_{\zeta'}$. Thus, we find that $\mathcal{G}_{N}\cap(U_M^\epsilon(\xi)\times U_M^{\epsilon}(\zeta))$ coincides with
\begin{align*}
\left\{(\xi',\zeta'): \xi'\in U_{M}^{\epsilon}(\xi),\,\Ll_{\xi'}=\Ll_{\zeta'},\, V_{\zeta'}\subset\mathring{B}(V_\zeta,\epsilon),\,\chi_{\zeta'}=g\circ \chi_\zeta \right\},
\end{align*}
which is the desired equality of sets. Now $\mathfrak{r}(\xi',\zeta')=\xi'$ and for every $\xi'\in U^{\epsilon}_{M}(\xi)$ there is a unique 
\[\zeta'=(\Ll_{\zeta'},V_{\zeta'},\chi_{\zeta'})=(\Ll_{\xi'},g(V_\zeta), g\circ \chi_\zeta) \in U^{\epsilon}_{M}(\zeta),\]
such that
$(\xi',\zeta')\in\mathcal{G}_{N}\cap (U^{\epsilon}_{M}(\xi)\times U^{\epsilon}_{M}(\zeta))$.
\end{proof}
\begin{proposition} The topological space $\mathcal{G}_{N}$ is an \'etale groupoid.
\end{proposition}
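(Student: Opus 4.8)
The plan is to verify the three conditions required of an \'etale groupoid: that $\Gg_N$ is a topological groupoid (continuity of inversion, source, range and composition), that it is locally compact and Hausdorff, and that the range map is a local homeomorphism. We have already established the groupoid axioms algebraically and produced, in the two lemmas above, a workable neighbourhood basis $\{\Gg_N \cap (U_M^\epsilon(\xi) \times U_M^\epsilon(\zeta))\}$ together with the key fact (Lemma~\ref{lem: topology}) that, for $M$ large and $\epsilon<r/2$, the restriction $\mathfrak r : \Gg_N \cap (U_M^\epsilon(\xi)\times U_M^\epsilon(\zeta)) \to U_M^\epsilon(\xi)$ is a bijection onto an open subset of $\Xi_{\Ll_0}^{(N)}$. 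So most of the work is already packaged; the task is to assemble it.

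First I would record that $\Gg_N$ is Hausdorff and locally compact: it is a subspace of $\widehat{{\rm Del}}_{(r,R)}^{(N,N)}(\RM^d)$, which carries the relative topology from a product of metrizable spaces and is therefore Hausdorff, and local compactness follows because the basic sets $\Gg_N \cap (U_M^\epsilon(\xi)\times U_M^\epsilon(\zeta))$ have compact closures --- one uses that $U_M^\epsilon(\Ll_\xi, V_\xi)$ sits inside the compact space $\Xi_{\Ll_0}^{(N)}$ patched over the compact transversal $\Xi_{\Ll_0}$ (Definition~\ref{Def:Transversal}), with the finitely many orderings contributing only a discrete factor. Next I would check continuity of the structure maps. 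Continuity of inversion $(\xi,\zeta)\mapsto \hat{\mathfrak t}_{\chi_\zeta(1)}(\zeta,\xi)$ follows from continuity of the $\RM^d$-action $\hat{\mathfrak t}$ and of $\zeta \mapsto \chi_\zeta(1)$ (the map $\mathfrak v_n$ of the earlier proposition), composed with the swap $\mathfrak u_N$; continuity of $\mathfrak s$ and $\mathfrak r$ is immediate from their explicit formulas \eqref{Eq:RSFinal} and the same ingredients. For the composition, one checks on the basic neighbourhoods using the identity \eqref{Eq:ID101} and Remark~\ref{Re:Pairs}, Eq.~\eqref{Eq:V1}: a pair close to a composable pair is again composable (this is where one needs $\epsilon<r/2$ so the canonical bijections $g$ of Proposition~\ref{Pro:G} are defined and match up), and the product depends continuously on the data.

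The heart of the proof is the \'etale condition, and here Lemma~\ref{lem: topology} does essentially all of it. For $(\xi,\zeta)\in\Gg_N$, pick $M\gg 0$ and $\epsilon<r/2$; the set $W:=\Gg_N \cap (U_M^\epsilon(\xi)\times U_M^\epsilon(\zeta))$ is an open neighbourhood of $(\xi,\zeta)$ and, by that lemma, $\mathfrak r|_W : W \to U_M^\epsilon(\xi)$ is a continuous bijection onto an open subset of the unit space $\Xi_{\Ll_0}^{(N)}$ (using the identification $\mathfrak r(\xi',\zeta')=\xi' \in \Gg_N^{(0)}\cong \Xi_{\Ll_0}^{(N)}$ via \eqref{Eq:RSFinal}, \eqref{Eq:XiN}). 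It remains to see that its inverse is continuous, i.e. $\xi' \mapsto (\xi', (\Ll_{\xi'}, g(V_\zeta), g\circ\chi_\zeta))$ is continuous: but the assignment $\xi' \mapsto g(V_\zeta)$ is exactly the canonical identification of nearby Delone subsets from Proposition~\ref{Pro:G}, which is continuous (indeed locally constant in the combinatorial data), and $\chi_{\zeta'}=g\circ\chi_\zeta$ then varies continuously as well. Hence $\mathfrak r|_W$ is a homeomorphism onto an open set, so $\mathfrak r$ is a local homeomorphism and $\Gg_N$ is \'etale.

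I expect the main obstacle to be bookkeeping rather than conceptual: making sure that, when comparing the two descriptions of a basic open set in Lemma~\ref{lem: topology} and patching the canonical bijections $g$ across overlapping charts, the orderings $\chi_{\xi'},\chi_{\zeta'}$ are forced uniquely (so the fibre over a unit really is carried homeomorphically, not merely bijectively), and that the anchoring condition $\chi_\xi(1)=0$ is preserved under all the operations --- this is what distinguishes $\Gg_N$ from the ambient $\widehat\Omega_{\Ll_0}^{(N)}$ and is the one place a careless argument could slip. A secondary technical point is second countability, which follows since $\widehat{{\rm Del}}_{(r,R)}^{(N,N)}(\RM^d)$ is second countable (it is built from the second-countable space of patterns by finite covers), and is worth stating explicitly because it is what guarantees separability of $C^\ast_r(\Gg_N)$ later.
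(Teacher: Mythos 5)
Your proof is correct and follows essentially the same route as the paper: the \'etale property is extracted from Lemma~\ref{lem: topology} exactly as in the text, inversion is handled via $\hat{\rm p}_N\circ\mathfrak u_N$, and continuity of the remaining structure maps comes from the continuity of $\hat{\mathfrak t}$ and $\mathfrak v_N$ (the paper packages the multiplication slightly more cleanly as the composition $\mathfrak c\circ(\textnormal{Id}\times\mathfrak t^{-1}_{\mathfrak v_N\circ\mathfrak u_N\circ\pi_1})$, but this is the same idea).

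One slip to correct: you justify local compactness by calling $\Xi_{\Ll_0}^{(N)}$ ``the compact space,'' but for $N>1$ the unit space $\Xi_{\Ll_0}^{(N)}$ is \emph{not} compact --- the paper states this explicitly after Eq.~\eqref{Eq:XiN}, and it matters later (it is precisely why $C^\ast_r(\Gg_N)$ is non-unital and the multiplier extension is non-trivial, cf.\ Remark~\ref{Re:CMor}). The conclusion you want still holds: the basic neighbourhoods $U_M^\epsilon(\Ll,V)$ have compact closure because they sit inside $\overline{U_M^\epsilon(\Ll)}\times B(V,\epsilon)$, a product of a closed subset of the compact pattern space with a compact subset of $\Kk(\RM^d)$, intersected with the closed set of admissible pairs; what fails is only the \emph{global} compactness, since the diameter of $V$ is unbounded over the whole unit space. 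Rephrase the local-compactness step accordingly and the argument is fine.
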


\begin{proof}
By Lemma \ref{lem: topology}, the restriction of $\mathfrak{r}:\Gg_N\to \Xi^{(N)}_{0}$ to the sets
\[\mathcal{W}=\mathcal{W}_{M}^{\epsilon}(\xi,\zeta):=\mathcal{G}_{N}\cap\left(U^{\epsilon}_{M}(\xi)\times U_{M}^{\epsilon}(\zeta)\right),\]  as above, induces a bijection
\[\mathfrak{r}:\mathcal{W}_{M}^{\epsilon}(\xi,\zeta)\to U^{\epsilon}_{M}(\xi,\zeta).\] 
Since the sets $U^{\epsilon}_{M}(\xi,\zeta)$ form a basis for the topology on $\Xi^{(N)}_{0}$, this proves that $\mathfrak{r}$ is continuous. Similarly, since $\mathcal{W}^{\epsilon}_{M}(\xi,\zeta)$ form a basis for the topology of $\mathcal{G}_{N}$, we conclude that $\mathfrak{r}$ is open, so that it is a local homeomorphism. 

Recall the homeomorphism $\mathfrak{u}_{N}$ and the bundle projection $\hat{\rm p}_N$ from Eqs.~\eqref{Eq:Un} and \eqref{Eq:Tn}, respectively. 
The inversion map
\[(\xi,\zeta)^{-1} = \hat{\mathfrak{t}}_{\chi_\zeta(1)}(\zeta,\xi)=(\hat{\rm p}_N \circ\mathfrak{u}_{N})(\xi,\zeta)\] is a homeomorphism:
Using identity~\eqref{Eq:ID101}, it follows that
\[(\hat{\rm p}_N \circ\mathfrak{u}_{N})^{2}(\xi,\zeta)=\hat{\rm p}_N\circ\mathfrak{u}_{N}(\hat{\mathfrak{t}}_{\chi_\zeta(1)}\zeta,\hat{\mathfrak{t}}_{\chi_\zeta(1)}\xi)=\hat{\mathfrak{t}}_{\chi_\xi(1)}(\xi,\zeta),\]
and thus \[(\hat{\rm p}_N\circ\mathfrak{u}_{N})(\mathcal{G}_{N})\subset\mathcal{G}_{N},\quad (\hat{\rm p}_N\circ\mathfrak{u}_{N})^{2}(\mathcal{G}_{N})=\mathcal{G}_{N},\]
from which we conclude $(\hat{\rm p}_N\circ\mathfrak{u}_{N})(\mathcal{G}_{N})=\mathcal{G}_{N}$. Hence the inversion map is the restriction to $\mathcal{G}_{N}$ of a continuous map of $\widehat{{\rm Del}}_{(r,R)}^{(N,N)}(\RM^d)$ to itself, and so it is a homeomorphism of $\mathcal{G}_{N}$. 

It now follows that the source map $\mathfrak{s}=\mathfrak{r}\circ \hat{\rm p}_N\circ\mathfrak{u}_{N}$ is a homeomorphism as well.
Lastly, we consider the groupoid multiplication
\begin{align*}
\mathfrak m:\mathcal{G}_{N}^{(2)}\to \mathcal{G}_{N},\quad
 ((\xi,\zeta),(\mathfrak{ t}_{\chi_\zeta(1)} \zeta ,\zeta'))\mapsto (\xi,\mathfrak{t}_{\chi_\zeta(1)} ^{-1}\zeta ').
\end{align*}
Using the coordinate projection \[\pi_{1}:\mathcal{G}_{N}^{(2)}\to \mathcal{G}_{N},\quad ((\xi,\zeta),(\xi',\zeta'))\mapsto (\xi,\zeta),\]
the contraction map \[\mathfrak c:\widehat{{\rm Del}}_{(r,R)}^{(N,N)}(\RM^d)\to \widehat{{\rm Del}}_{(r,R)}^{N}(\RM^d),\quad ((\xi,\zeta),(\xi',\zeta'))\mapsto (\xi,\zeta'),\] 
and the maps $\mathfrak{v}_{N},\mathfrak{u}_{N}$ from Proposition \ref{Eq:RdAction}, the map $\mathfrak m$
can be written as a composition
\begin{align*}
\mathfrak m=\mathfrak c\circ(\textnormal{Id}\times\mathfrak{t}_{\mathfrak{v}_{N}\circ \mathfrak{u}_{N}\circ\pi_{1}}^{-1}),
\end{align*}
of continuous maps and is therefore continuous.
\end{proof}



We end this section with a relation between the groupoids. 

\begin{proposition} Any $\xi \in \widehat{{\rm Del}}_{(r,R)}^{(N+M)}(\RM^d)$ can be uniquely decomposed as $\xi_1 \vee \xi_2$, where $\xi_1 \in \widehat{{\rm Del}}_{(r,R)}^{(N)}(\RM^d)$ and $\xi_2 \in \widehat{{\rm Del}}_{(r,R)}^{(M)}(\RM^d)$.
\end{proposition}

\proof Indeed, we can enumerate the first $N$ points of $V_\xi$ using the order function $\chi_\xi$ and generate $\xi_1$. The remaining points of $V_\xi$ together with their order supply $\xi_2$. This decomposition is clearly unique.\qed

\begin{proposition}\label{Pro:Mor10} The map $\mathfrak e: \Gg_{N+M} \to \Gg_N \times \Gg_M$,
\begin{equation}\label{Eq:Mor10}
\mathfrak e(\xi,\zeta) = \mathfrak e(\xi_1 \vee \xi_2,\zeta_1 \vee \zeta_2) = (\xi_1,\zeta_1) \times \hat{\mathfrak t}_{\chi_{\xi_2}(1)}(\xi_2,\zeta_2),
\end{equation}
is a continuous morphism of groupoids, which maps the fibers under the $\mathfrak r$ map injectively.
\end{proposition}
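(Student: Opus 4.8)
The plan is to verify, in order, that $\mathfrak e$ is well-defined (lands in $\Gg_N\times\Gg_M$), that it is a groupoid morphism (compatible with composition and inversion), that it is continuous, and finally that it is injective on each $\mathfrak r$-fiber. For well-definedness, recall that a point $(\xi,\zeta)\in\Gg_{N+M}$ satisfies $\Ll_\xi=\Ll_\zeta\in\Xi_{\Ll_0}$ and $\chi_\xi(1)=0$. Writing $\xi=\xi_1\vee\xi_2$ and $\zeta=\zeta_1\vee\zeta_2$ via the unique decomposition of the preceding Proposition, $\chi_{\xi_1}$ consists of the first $N$ entries of $\chi_\xi$, so $\chi_{\xi_1}(1)=\chi_\xi(1)=0$, confirming $(\xi_1,\zeta_1)\in\Gg_N$; and applying $\hat{\mathfrak t}_{\chi_{\xi_2}(1)}$ to $(\xi_2,\zeta_2)$ moves the first point of $V_{\xi_2}$ to the origin, so $\hat{\mathfrak t}_{\chi_{\xi_2}(1)}(\xi_2,\zeta_2)\in\Gg_M$. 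One also checks $\Ll_{\xi_2}=\Ll_{\zeta_2}=\Ll_\xi$ and, after the translation, that both components still project into $\Xi_{\Ll_0}$ (this uses that $\Omega_{\Ll_0}$ is translation-invariant, and $\chi_{\xi_2}(1)$ is a point of $\Ll_\xi$).

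For the groupoid-morphism property, I would use the explicit composition law from Definition~\ref{Def:GroupoidN}. Given a composable pair in $\Gg_{N+M}$, say $(\xi,\zeta)$ and $\hat{\mathfrak t}_{\chi_\zeta(1)}(\zeta,\eta)$, with product $(\xi,\eta)$, one decomposes $\eta=\eta_1\vee\eta_2$ and tracks how the decomposition interacts with the translation $\hat{\mathfrak t}_{\chi_\zeta(1)}$, using the basic identity~\eqref{Eq:ID101}. The key point is that $\chi_{\zeta}(1)=\chi_{\zeta_1}(1)$ since $\zeta_1$ carries the first $N$ points with their order, so the translation that makes the pair composable in $\Gg_{N+M}$ restricts correctly to the first factor; for the second factor one must reconcile $\hat{\mathfrak t}_{\chi_{\xi_2}(1)}$, $\hat{\mathfrak t}_{\chi_{\zeta_2}(1)}$ and $\hat{\mathfrak t}_{\chi_\zeta(1)}$, again via~\eqref{Eq:ID101}. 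Compatibility with inversion, $\mathfrak e((\xi,\zeta)^{-1})=\mathfrak e(\xi,\zeta)^{-1}$, follows from a similar bookkeeping using $(\xi,\zeta)^{-1}=\hat{\mathfrak t}_{\chi_\zeta(1)}(\zeta,\xi)$ and the analogous formula in $\Gg_N$ and $\Gg_M$. Continuity is routine: $\mathfrak e$ is built from the continuous decomposition map $\xi\mapsto(\xi_1,\xi_2)$ (continuous because the order $\chi_\xi$ varies continuously in the order cover, and it simply reads off the first $N$ versus the remaining $M$ entries), the continuous maps $\mathfrak v_n$ of the earlier proposition (extracting $\chi_{\xi_2}(1)$), and the continuous $\RM^d$-action $\hat{\mathfrak t}$; one assembles $\mathfrak e$ as a composition of these, exactly in the style of the continuity argument for $\mathfrak m$ just above.

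The injectivity on $\mathfrak r$-fibers is where I expect the real content. Fix a unit $\xi\in\Xi^{(N+M)}_{\Ll_0}$, i.e. $\chi_\xi(1)=0$; its $\mathfrak r$-fiber consists of all $(\xi,\zeta)\in\Gg_{N+M}$. Suppose $\mathfrak e(\xi,\zeta)=\mathfrak e(\xi,\zeta')$, so $(\xi_1,\zeta_1)=(\xi_1,\zeta_1')$ and $\hat{\mathfrak t}_{\chi_{\xi_2}(1)}(\xi_2,\zeta_2)=\hat{\mathfrak t}_{\chi_{\xi_2}(1)}(\xi_2,\zeta_2')$. The first equality gives $\zeta_1=\zeta_1'$ outright. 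The second, since $\hat{\mathfrak t}_{\chi_{\xi_2}(1)}$ is a bijection (it is a homeomorphism of the cover and $\chi_{\xi_2}(1)$ depends only on $\xi$), gives $\zeta_2=\zeta_2'$. Hence $\zeta=\zeta_1\vee\zeta_2=\zeta_1'\vee\zeta_2'=\zeta'$ by uniqueness of the $\vee$-decomposition. The subtlety to get right is that both the splitting $\zeta\mapsto(\zeta_1,\zeta_2)$ and the translation parameter $\chi_{\xi_2}(1)$ are determined by $\xi$ and $\zeta$ in a way compatible with being in the $\mathfrak r$-fiber of a fixed $\xi$ — in particular $\chi_{\xi_2}(1)$ does not depend on $\zeta$, which is exactly why the translation can be inverted uniformly across the fiber. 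The statement explicitly does \emph{not} claim $\mathfrak e$ preserves the Haar system or is even open, so no such claim needs proof; the reader is referred forward to Remark~\ref{Re:CMor} and Remark~\ref{Re:Pullback} for the consequences.
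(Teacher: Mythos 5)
Your proposal is correct and follows essentially the same route as the paper: decompose via $\xi=\xi_1\vee\xi_2$, use $\chi_{\zeta_1}(1)=\chi_\zeta(1)$ together with the identity~\eqref{Eq:ID101} to check that composable pairs map to composable pairs with the right product, and invert the translation $\hat{\mathfrak t}_{\chi_{\xi_2}(1)}$ (which depends only on $\xi$) to get injectivity on each $\mathfrak r$-fiber. You supply somewhat more detail than the paper does on well-definedness, inversion and continuity, but the substance is identical.
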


\proof Let $(\xi,\zeta)$ and $\hat{\mathfrak t}_{\chi_\zeta(1)}(\zeta,\eta)$ be a pair of composable elements from $\Gg_{N+M}$. Then 
$$
\mathfrak e \big ( \hat{\mathfrak t}_{\chi_\zeta(1)}(\zeta,\eta) \big )  =  \hat{\mathfrak t}_{\chi_\zeta(1)}(\zeta_1,\eta_1) \times \hat{\mathfrak t}_{\chi_{\zeta_2}(1)}(\zeta_2,\eta_2),
$$
hence $\mathfrak e(\xi,\zeta)$ and $\mathfrak e \big ( \hat{\mathfrak t}_{\chi_\zeta(1)}(\zeta,\eta) \big )$ are composable in $\Gg_N \times \Gg_M$ (see Remark~\ref{Re:Pairs}). Furthermore
$$
\mathfrak e \big ( (\xi,\zeta) \cdot \hat{\mathfrak t}_{\chi_\zeta(1)}(\zeta,\eta) \big ) = \mathfrak e (\xi,\zeta) \cdot \mathfrak e \big ( \hat{\mathfrak t}_{\chi_\zeta(1)}(\zeta,\eta) \big ),
$$
which can be also quickly derived from Remark~\ref{Re:Pairs}. The fiber at $(\xi,\xi)$ under the $\mathfrak r$ map consists of the pairs $(\xi,\zeta) \in \Gg_{N+M}$. It follows directly from Eq.~\eqref{Eq:Mor10} that $\mathfrak e$ restricted to such fiber is injective. \qed

\begin{remark}{\rm A pattern is called aperiodic if $\Ll_0-x \neq \Ll_0$ for all $x \in \RM^d$, $x \neq 0$. It follows directly from Eq.~\eqref{Eq:Mor10} that $\mathfrak e$ is injective if $\Ll_0$ is aperiodic. The morphism is, however, never surjective in this case, because $\mathfrak e$ cannot produce pairs of elements from $\Gg_N \times \Gg_M$ with identical lattices (note that necessarily $\chi_{\xi_2}(1) \neq 0$ in Eq.~\eqref{Eq:Mor10}). Nevertheless, by composing $\mathfrak e$ with the projection on the first entry of $\Gg_N \times \Gg_M$, we obtain a surjective groupoid morphism $\Gg_{N+M} \to \Gg_N$.
}$\Diamond$
\end{remark}

\begin{remark}\label{Re:Pullback}{\rm The pullback maps of proper morphisms preserving the Haar systems of groupoids induce morphisms between the corresponding reduced groupoid $C^\ast$-algebras \cite{AustinJFA2019}. Unfortunately, the morphisms introduced in Proposition~\ref{Pro:Mor10} do not preserve the Haar system of our groupoids. Indeed, for \'etale groupoids, the latter is true if and only if the morphism maps bijectively the fibers under $\mathfrak r$ \cite{AustinNYJM2021}, which we already know not be the case here, in general. The morphism $\mathfrak e$ is also not proper. In fact, the pre-image of any compact subset of $\Gg_N \times \Gg_M$ is never compact. Nevertheless, the pullback maps do supply $\CM$-module morphisms from $C^\ast_r(\Gg_N \times \Gg_M)$ ($\simeq C^\ast_r(\Gg_N) \otimes C^\ast_r(\Gg_M)$) to $\Mm\big (C^\ast_r(\Gg_{N+M})\big )$, whose ranges have trivial intersections with $C^\ast_r(\Gg_{N+M})$.
}$\Diamond$
\end{remark}

\subsection{The 2-action of the symmetric group and the bi-equivariant groupoid algebra}
\label{Sec:2Action}

We saw in Proposition~\ref{Pro:Deck} that $\mathcal{S}_{N}$ acts as the group of deck transformations on the order cover $\widehat{\mathrm{Del}}^{(N)}_{(r,R)}(\RM^d)$. We will prove here that this action can be combined with translations to generate a 2-action of $\mathcal{S}_{N}$ on $\mathcal{G}_{N}$. We will follow the notation introduced in section~\ref{Sec:EquiGg}, as adapted to our specific groupoid $\Gg_N$.

\begin{proposition}\label{Pro:2Action} Let $s \in\mathcal{S}_{N}$ be a permutation. The formula
\begin{equation}
\label{eq: 2actiongroupoid}
\tau_s(\xi):=\hat{\mathfrak{t}}_{\chi_{\xi}\circ s^{-1}(1)}\big(\Lambda_s(\xi),\xi\big)
\end{equation}
defines a homomorphism $\tau:\mathcal S_{N}\to \mathcal{S}(\mathcal{G}_{N})$ and thus a 2-action of $\mathcal{S}_{N}$ on $\mathcal{G}_{N}$.
\end{proposition}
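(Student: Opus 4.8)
The plan is to verify that each $\tau_s$ is a continuous bisection of $\Gg_N$ with $\mathfrak s\circ\tau_s=\mathrm{Id}$, that $\tau_e$ is the unit of $\Ss(\Gg_N)$, and that $s\mapsto\tau_s$ respects composition; the remaining bisection requirement, namely that $\mathfrak r\circ\tau_s$ be a homeomorphism, will then drop out formally, and by Definition~\ref{Def:2ActionGroupoid} this package is precisely a 2-action of $\mathcal{S}_N$ on $\Gg_N$. Throughout, the computations run on three elementary facts already recorded above: the deck transformations $\Lambda_s$ of Proposition~\ref{Pro:Deck} commute with the translations $\hat{\mathfrak t}_x$ (as in the proof of Proposition~\ref{Prop:leftright}) and satisfy $\Lambda_{s_1}\circ\Lambda_{s_2}=\Lambda_{s_1s_2}$, the position function obeys $\chi_{\hat{\mathfrak t}_x(\eta)}(1)=\chi_\eta(1)-x$, and the cocycle identity \eqref{Eq:ID101} holds.

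First I would show $\tau_s$ lands in $\Gg_N$ and sections $\mathfrak s$. Since $\Ll_{\Lambda_s(\xi)}=\Ll_\xi$, the pair $(\Lambda_s(\xi),\xi)$ lies in $\widehat\Omega_{\Ll_0}^{(N)}$, and with $y:=\chi_\xi\circ s^{-1}(1)=\chi_{\Lambda_s(\xi)}(1)$ one has $\tau_s(\xi)=\hat{\mathfrak t}_y(\Lambda_s(\xi),\xi)=\hat{\rm p}_N(\Lambda_s(\xi),\xi)$, so in particular the first component of $\tau_s(\xi)$ has distinguished position $\chi_{\Lambda_s(\xi)}(1)-y=0$ and $\tau_s(\xi)\in\Gg_N$. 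Continuity of $\tau_s$ follows from continuity of $\Lambda_s$, of the $\RM^d$-action $\hat{\mathfrak t}$, and of the position map $\xi\mapsto\chi_\xi(s^{-1}(1))$ (a variant of the continuous map $\mathfrak v_N$). Writing $\zeta':=\hat{\mathfrak t}_y(\xi)$, so that $\tau_s(\xi)=(\hat{\mathfrak t}_y\Lambda_s(\xi),\zeta')$, the source formula \eqref{Eq:RSFinal} gives $\mathfrak s(\tau_s(\xi))=\hat{\mathfrak t}_{\chi_{\zeta'}(1)}(\zeta',\zeta')$ with $\chi_{\zeta'}(1)=\chi_\xi(1)-y=-y$, whence $\mathfrak s(\tau_s(\xi))=\hat{\mathfrak t}_{-y}\hat{\mathfrak t}_y(\xi)=\xi$ under the identification $\Gg_N^{(0)}\cong\Xi_{\Ll_0}^{(N)}$. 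Finally $\tau_e(\xi)=\hat{\mathfrak t}_{\chi_\xi(1)}(\xi,\xi)=\hat{\mathfrak t}_0(\xi,\xi)=(\xi,\xi)$, so $\tau_e$ is the inclusion of units, the identity element of $\Ss(\Gg_N)$.

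The main step is the identity $\tau_{s_1}\cdot\tau_{s_2}=\tau_{s_1s_2}$, with the left-hand product computed through the rule \eqref{Eq:Rule1}. Put $y_2:=\chi_\xi(s_2^{-1}(1))$, so that $\xi':=\mathfrak r(\tau_{s_2}(\xi))=\hat{\mathfrak t}_{y_2}\Lambda_{s_2}(\xi)$ satisfies $\chi_{\xi'}(1)=0$, and then $y_1':=\chi_{\xi'}(s_1^{-1}(1))=\chi_\xi\big((s_1s_2)^{-1}(1)\big)-y_2$. One checks that $\tau_{s_1}(\xi')$ and $\tau_{s_2}(\xi)$ are composable in $\Gg_N$ — the requisite condition $\hat{\mathfrak t}_{\chi_{\zeta''}(1)}(\zeta'')=\xi'$ for the second component $\zeta''$ of $\tau_{s_1}(\xi')$ reduces, using $\chi_{\xi'}(1)=0$ and \eqref{Eq:ID101}, to a triviality — and then carries out the multiplication of Definition~\ref{Def:GroupoidN}. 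Using $\Lambda_{s_1}\hat{\mathfrak t}_{y_2}=\hat{\mathfrak t}_{y_2}\Lambda_{s_1}$ and $\Lambda_{s_1}\Lambda_{s_2}=\Lambda_{s_1s_2}$, the two accumulated translation parameters add up to $y_1'+y_2=\chi_\xi\big((s_1s_2)^{-1}(1)\big)$, and the product collapses to $\hat{\mathfrak t}_{\chi_\xi((s_1s_2)^{-1}(1))}\big(\Lambda_{s_1s_2}(\xi),\xi\big)=\tau_{s_1s_2}(\xi)$. Applying $\mathfrak r$ to this identity and using $\mathfrak r(\alpha\beta)=\mathfrak r(\alpha)$ gives $(\mathfrak r\circ\tau_{s_1})\circ(\mathfrak r\circ\tau_{s_2})=\mathfrak r\circ\tau_{s_1s_2}$ together with $\mathfrak r\circ\tau_e=\mathrm{Id}$, so $\mathfrak r\circ\tau_s$ has continuous two-sided inverse $\mathfrak r\circ\tau_{s^{-1}}$ and is a homeomorphism. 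Hence $\tau_s\in\Ss(\Gg_N)$ for every $s$ and $\tau$ is a group homomorphism.

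The main obstacle I anticipate is purely the bookkeeping in the composition step: one must keep track simultaneously of the translation parameter $y_2$ hidden inside $\xi'$, of the parameter $y_1'$ that depends on $\chi_{\xi'}$, and of the translations built into the composition law of $\Gg_N$, invoking $\chi_{\hat{\mathfrak t}_x(\eta)}(1)=\chi_\eta(1)-x$ and \eqref{Eq:ID101} at each junction to certify composability before multiplying. No genuinely new ingredient beyond the commutation of $\Lambda_s$ with $\hat{\mathfrak t}$ and the left-action property of $\Lambda$ should be needed.
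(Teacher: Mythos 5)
Your proposal is correct and follows essentially the same route as the paper: the heart of the argument in both is the computation of $\tau_{s_1}\cdot\tau_{s_2}$ via the bisection product rule \eqref{Eq:Rule1}, the commutation of the deck transformations $\Lambda_s$ with the translations $\hat{\mathfrak t}_x$, and the cocycle identity \eqref{Eq:ID101}, yielding $\tau_{s_1s_2}$. The only (harmless) deviation is that you deduce that $\mathfrak r\circ\tau_s$ is a homeomorphism a posteriori from the composition law and $\mathfrak r\circ\tau_e=\mathrm{Id}$, whereas the paper asserts it directly; your version is a slightly tidier way to certify the bisection property.
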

\begin{proof}
It is clear that $\tau_s$ defines a section of the source map $\mathfrak{s}(\xi,\zeta) =\hat{\mathfrak{t}}_{\chi_\zeta(1)}(\zeta,\zeta)$. Moreover
\[\mathfrak{r}\circ\tau_{s}(\xi)= \hat{\mathfrak{t}}_{\chi_\xi\circ s^{-1}(1)}\circ \Lambda_s (\xi)\]
is a homeomorphism of $\Xi_{\Ll_0}^{(N)}$. We compute the product $\tau_{s_1}\cdot \tau_{s_2}$ inside the group $\mathcal{S}(\mathcal{G}_{N})$, using Eqs.~\eqref{Eq:Rule1} and \eqref{Eq:ID101}  :
\begin{align*}
\tau_{s_1}\cdot\tau_{s_2}(\xi)&=\tau_{s_1} (\mathfrak{r}\circ\tau_{s_2}(\xi))\tau_{s_2}(\xi)\\
&=\tau_{s_1}\left(\hat{\mathfrak{t}}_{\chi_\xi\circ s_2^{-1}(1)} \circ \Lambda_{s_2}(\xi)\right)\tau_{s_2}(\xi)\\
&=\hat{\mathfrak{t}}_{\chi_\xi\circ(s_1 s_2)^{-1}(1)}\big(\Lambda_{s_1s_2}(\xi),\xi\big),
\end{align*}
which proves that we have a group homomorphism $\mathcal{S}_{N}\to \mathcal{S}(\mathcal{G}_{N})$.
\end{proof}

The induced left and right actions (see Eq.~\ref{Eq:2Action1}) of $\mathcal{S}_{N}$ on $\mathcal{G}_{N}$ will be denoted by $s_1 \cdot (\xi,\zeta) \cdot s_2$, for $s_i \in \mathcal{S}_N$. It should not be confused with the action from Proposition~\ref{Prop:leftright}, which acts on a different space. We will need the explicit expression of the actions. 

\begin{proposition}\label{Pro:LRActions} The commuting left and right actions of $\mathcal{S}_{N}$ on $\mathcal{G}_{N}$ induced by the 2-action \eqref{eq: 2actiongroupoid} are given by
\begin{equation}\label{Eq:2Action2}
s_1 \cdot (\xi,\zeta) \cdot s_2  = \hat{\mathfrak{t}}_{\chi_{\xi}\circ s_1^{-1}(1)}\big (\Lambda_{s_1}(\xi),\Lambda_{s_2^{-1}}(\zeta)\big ),
\end{equation}
for $s_i \in \mathcal{S}_N$.
\end{proposition}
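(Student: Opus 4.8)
The plan is to unwind the abstract definitions from Section~\ref{Sec:EquiGg} in the concrete setting of $\Gg_N$. Recall that for a general 2-action $\tau : \Gamma \to \Ss(\Gg)$, the induced left and right actions are given by Eq.~\eqref{Eq:2Action1}, namely $\gamma_1 \cdot \alpha \cdot \gamma_2 = \tau_{\gamma_1}\big(\mathfrak r(\alpha)\big)\,\alpha\,\tau_{\gamma_2^{-1}}\big(\mathfrak s(\alpha)\big)^{-1}$. So the proof is essentially a substitution: plug in $\alpha = (\xi,\zeta)$, the formula \eqref{eq: 2actiongroupoid} for $\tau_s$, and the expressions \eqref{Eq:RSFinal} for $\mathfrak r$ and $\mathfrak s$, then simplify the resulting three-fold groupoid product using the composition rule of Definition~\ref{Def:GroupoidN} (equivalently, the identity \eqref{Eq:V1} from Remark~\ref{Re:Pairs}) and the translation/permutation commutation identity \eqref{Eq:ID101}.

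Concretely, first I would compute the left factor $\tau_{s_1}\big(\mathfrak r(\xi,\zeta)\big) = \tau_{s_1}(\xi,\xi)$. Since $\mathfrak r(\xi,\zeta) = (\xi,\xi)$ corresponds to the unit $\xi \in \Xi_{\Ll_0}^{(N)}$, formula \eqref{eq: 2actiongroupoid} gives $\tau_{s_1}(\xi) = \hat{\mathfrak t}_{\chi_\xi\circ s_1^{-1}(1)}\big(\Lambda_{s_1}(\xi),\xi\big)$. Next I would compute the right factor: $\mathfrak s(\xi,\zeta) = \hat{\mathfrak t}_{\chi_\zeta(1)}(\zeta,\zeta)$ is the unit corresponding to $\hat{\mathfrak t}_{\chi_\zeta(1)}\zeta$, so $\tau_{s_2^{-1}}$ applied to this unit is $\hat{\mathfrak t}_{\chi_{\hat{\mathfrak t}_{\chi_\zeta(1)}\zeta}\circ s_2(1)}\big(\Lambda_{s_2^{-1}}(\hat{\mathfrak t}_{\chi_\zeta(1)}\zeta),\hat{\mathfrak t}_{\chi_\zeta(1)}\zeta\big)$, and then one must take its inverse in $\Gg_N$ using the inversion rule $(\xi,\zeta)^{-1} = \hat{\mathfrak t}_{\chi_\zeta(1)}(\zeta,\xi)$. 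Here identity \eqref{Eq:ID101} is used to rewrite $\chi_{\hat{\mathfrak t}_{\chi_\zeta(1)}\zeta}\circ s_2(1)$ in terms of $\chi_\zeta$, and one observes that $\Lambda_{s_2^{-1}}$ commutes with $\hat{\mathfrak t}$ (Proposition~\ref{Pro:Deck} shows $\Lambda_s$ acts fiberwise on the order cover, and the translation acts on the lattice/point data, so these commute). Finally I would multiply the three pieces $\tau_{s_1}(\xi)\cdot(\xi,\zeta)\cdot\big(\tau_{s_2^{-1}}(\text{unit})\big)^{-1}$ in $\Gg_N$; the composition telescopes because the middle and outer terms share the appropriate range/source data, collapsing to $\hat{\mathfrak t}_{\chi_\xi\circ s_1^{-1}(1)}\big(\Lambda_{s_1}(\xi),\Lambda_{s_2^{-1}}(\zeta)\big)$, which is \eqref{Eq:2Action2}.

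The verification that these are commuting actions by homeomorphisms is then nearly automatic: it follows from Definition~\ref{Def:2ActionGroupoid}, since any 2-action automatically induces commuting left/right actions (this is stated right after Eq.~\eqref{Eq:2Action1}), and Proposition~\ref{Pro:2Action} has already established that $\tau$ is a genuine 2-action. Alternatively, one can see commutativity directly from \eqref{Eq:2Action2}: the left action only touches the first coordinate $\xi$ (via $\Lambda_{s_1}$ and the translation amount), the right action only touches the second coordinate $\zeta$ (via $\Lambda_{s_2^{-1}}$), and $\Lambda_{s_1}$, $\Lambda_{s_2^{-1}}$, $\hat{\mathfrak t}_x$ all commute as maps on the order cover.

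The main obstacle I anticipate is bookkeeping of the translation parameters: the source unit $\mathfrak s(\xi,\zeta)$ is not $\zeta$ itself but the translate $\hat{\mathfrak t}_{\chi_\zeta(1)}\zeta$ (so that its first point sits at the origin, as required for membership in $\Xi_{\Ll_0}^{(N)}$), and applying $\tau_{s_2^{-1}}$ to it and then inverting introduces nested translations of the form $\hat{\mathfrak t}_{\chi_{\hat{\mathfrak t}_{\ast}\zeta}(\ast)}$ that must be collapsed via \eqref{Eq:ID101}. Getting every translation amount to cancel correctly so that the final answer has the clean form $\hat{\mathfrak t}_{\chi_\xi\circ s_1^{-1}(1)}(\cdots)$ — with no leftover shift on $\zeta$ beyond the deck transformation $\Lambda_{s_2^{-1}}$ — is the delicate point; everything else is formal substitution into structures already set up in the paper.
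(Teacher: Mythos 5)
Your proposal is correct and follows essentially the same route as the paper's proof: substitute $\tau_{s_1}(\mathfrak r(\xi,\zeta))$ and $\tau_{s_2^{-1}}(\mathfrak s(\xi,\zeta))^{-1}$ into Eq.~\eqref{Eq:2Action1}, use \eqref{Eq:ID101} and the commutation of $\Lambda_s$ with $\hat{\mathfrak t}_x$ to collapse the nested translations, and telescope the triple product via \eqref{Eq:V1}. You also correctly identify the only delicate point, namely the bookkeeping of the translation parameters coming from the source unit $\hat{\mathfrak t}_{\chi_\zeta(1)}\zeta$.
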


\proof From the definition~\ref{Eq:2Action1}), we have
\begin{equation*}
\begin{aligned}
s_1 \cdot (\xi,\zeta) \cdot s_2 & = \tau_{s_1}\big(\xi\big)\cdot (\xi,\zeta) \cdot\tau_{s_2^{-1}}\big(\hat{\mathfrak t}_{\chi_\zeta(1)}(\zeta,\zeta)\big)^{-1} \\
& = \hat{\mathfrak{t}}_{\chi_{\xi}\circ s_1^{-1}(1)}\big(\Lambda_{s_1}(\xi),\xi\big)\cdot (\xi,\zeta) \cdot \big(\hat{\mathfrak t}_{\chi_\zeta\circ s_2^{-1}(1)}\Lambda_{s_2^{-1}}\zeta,\hat{\mathfrak t}_{\chi_\zeta\circ s_2^{-1}(1)}\zeta\big)^{-1}.
\end{aligned}
\end{equation*}
After we apply Eq.~\eqref{Eq:V1} and compute the inverse,
\begin{equation*}
\begin{aligned}
s_1 \cdot (\xi,\zeta) \cdot s_2  = \hat{\mathfrak{t}}_{\chi_{\xi}\circ s_1^{-1}(1)}(\Lambda_{s_1}(\xi),\zeta) \cdot \hat{\mathfrak t}_{\chi_\zeta(1)} \big (\zeta,\Lambda_{s_2^{-1}}(\zeta) \big ).
\end{aligned}
\end{equation*}
Then one last application of Eq.~\eqref{Eq:V1} supplies the desired answer.\qed

\vspace{0.2cm}

We are now in the position where we can define the bi-equivariant groupoid algebra associated with the dynamics of the fermions:

\begin{definition} In the notation from section~\ref{Sec:EquiGg}, the bi-equivariant groupoid $C^\ast$-algebra associated with the dynamics of $N$-fermions is $C^{*}_{r,\mathcal{S}_N} (\Gg_N, \CM)$, where the 2-action of $\mathcal{S}_N$ on $\CM$ is simply $\Ss_N \ni s \mapsto (-1)^s \in UM(\CM)$.
\end{definition}

\subsection{Left regular representations and the dynamics of fermions} 
\label{Sec:LeftRegRep}

We now have all the pieces in place and can complete the characterization of the algebra of Hamiltonians stated in Theorem~\ref{Th:0}:

\begin{proposition} The algebras $\mathfrak G_N(\Ll)$, $\Ll \in \Xi_{\Ll_0}$, coincide with the left regular representations of the bi-equivariant groupoid algebra $C^{\ast}_{r,\mathcal{S}_N}(\Gg_N, \CM)$.
\end{proposition}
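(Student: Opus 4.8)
The strategy is to set up the left regular representation of the bi-equivariant algebra $C^{\ast}_{r,\mathcal{S}_N}(\Gg_N,\CM)$ explicitly on the Fock sector $\Ff^{(-)}_N(\Ll)$ and then match it, formula by formula, with the generators $\pi_\eta^N(Q_\Ll^N)$ described in Proposition~\ref{Pro:ExpRep}(2) that span the dense subalgebra $\dot{\mathfrak G}_N(\Ll)$ of $\mathfrak G_N(\Ll)$. The first step is to identify the carrier Hilbert space of the left regular representation $\pi_\Ll$ (in the notation of section~\ref{Sec:LReps}, $\pi_x$ with $x$ the unit $\Ll \in \Xi_{\Ll_0}^{(N)}$). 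By Eq.~\eqref{Eq:RSFinal}, $\mathfrak s^{-1}(\xi_0)$ for a unit $\xi_0 = (\xi,\xi)$ with $\Ll_\xi = \Ll$ consists of pairs $(\xi,\zeta)$ with $\Ll_\zeta = \Ll$, $\chi_\xi(1)=0$, and $\zeta$ ranging over all of $\widehat{{\rm Del}}^{(N)}_{(r,R)}(\RM^d)$-points with pattern $\Ll$; after the source-map identification this fiber is in canonical bijection with $\{(V_\zeta,\chi_\zeta): V_\zeta \in \Kk_N(\Ll),\ \chi_\zeta \in \mathcal{S}(\Ii_N,V_\zeta)\}$, i.e. exactly the (over-complete) frame $\{\,|V,\chi_V\rangle\,\}$ of $\Ff^{(-)}_N(\Ll)$ from Proposition~\ref{Pro:Basis}. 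So $\ell^2(\mathfrak s^{-1}(\xi_0))$ maps onto $\Ff^{(-)}_N(\Ll)$, and the bi-equivariance constraint on elements of $C_{c,\mathcal{S}_N}(\Gg_N,\CM)$ together with the relation $\langle U,\chi_U|V,\chi_V\rangle = (-1)^{\chi_U^{-1}\circ\chi_V}\delta_{U,V}$ is precisely what makes this identification an isometry up to the combinatorial factor $1/N!$ (this is where the factorials from Proposition~\ref{Pro:FockOp} reappear — I would record this as a lemma).

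The second step is the computation: take $f \in C_{c,\mathcal{S}_N}(\Gg_N,\CM)$ and, using Eq.~\eqref{Eq:PiX} for $\pi_{\xi_0}(f)$ together with the convolution formula~\eqref{Eq:Conv}, the composition law of $\Gg_N$ from Definition~\ref{Def:GroupoidN}, and the inversion $(\xi,\zeta)^{-1}=\hat{\mathfrak t}_{\chi_\zeta(1)}(\zeta,\xi)$, show that $\pi_{\xi_0}(f)$ acts on the frame by
\[
\pi_{\xi_0}(f)\,|V_\zeta,\chi_\zeta\rangle \;=\; \tfrac{1}{N!}\sum_{(\xi',\zeta')} f\big((\xi',\zeta')\big)\,\big|V_{\xi'},\chi_{\xi'}\big\rangle,
\]
the sum running over composable configurations with $\Ll_{\xi'}=\Ll$ and $\zeta'$ matching $\zeta$ after the anchoring translation. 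Comparing with Eq.~\eqref{Eq:GoodRep}, this is exactly $\pi_\eta^N(Q_\Ll^N)$ once one sets $q_N$ equal to $f$ under the Galilean-equivariant correspondence between compactly supported bi-equivariant functions on $\Gg_N$ and continuous bi-equivariant $N$-coefficients — a correspondence already implicit in Corollary~\ref{Co:Equiv}, which identifies $\Gg_N$ with $[\widehat\Omega^{(N)}_{\Ll_0}]$, the domain of the seeds. Thus $\pi_\Ll\big(C_{c,\mathcal{S}_N}(\Gg_N,\CM)\big) = \dot{\mathfrak G}_N(\Ll)$ as $\ast$-algebras of operators on $\Ff^{(-)}_N(\Ll)$, with the $\ast$-structures matching because $(-1)^s$-bi-equivariance of $f$ together with $f^\ast(\alpha)=\overline{f(\alpha^{-1})}$ reproduces Eq.~\eqref{Eq:SA} (use the covariance property~\eqref{Eq:CovP} and Proposition~\ref{Pro:LRActions} to check the signs).

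The third step is passing to closures. The reduced bi-equivariant $C^\ast$-norm on $C_{c,\mathcal{S}_N}(\Gg_N,\CM)$ is by definition the norm inherited from $C^\ast_r(\Gg_N,\CM)$, i.e. the supremum over all left regular representations $\pi_x$, $x \in \Gg_N^{(0)}$. Since every unit of $\Gg_N$ has the form $(\xi,\xi)$ with $\Ll_\xi \in \Xi_{\Ll_0}$, and the covariance~\eqref{Eq:CovP}-type relation together with minimality/transitivity of the hull shows the representations indexed by different units in the same orbit are unitarily equivalent, one argues that $\|\pi_\Ll(f)\| = \|f\|_{C^\ast_r}$ for each fixed $\Ll \in \Xi_{\Ll_0}$ (faithfulness of the left regular representation; for \'etale groupoids this is standard once one notes $\Gg_N$ restricted over the orbit closure has enough units). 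Taking norm closures then upgrades the $\ast$-algebra identity of step two to $\mathfrak G_N(\Ll) = \pi_\Ll\big(C^\ast_{r,\mathcal{S}_N}(\Gg_N,\CM)\big)$, which is the claim.

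\textbf{Main obstacle.} The delicate point is step three — verifying that $\pi_\Ll$ is isometric for the reduced bi-equivariant norm, equivalently that no norm is lost in passing from the full family of left regular representations of $C^\ast_r(\Gg_N,\CM)$ to the single one at a fixed $\Ll$. This requires the covariance relation~\eqref{Eq:CovP} at the level of $\Gg_N$ (so that representations over one $\RM^d$-orbit in $\Xi_{\Ll_0}^{(N)}$ agree in norm) plus a density/continuity argument using that the orbit of $\Ll$ meets every relevant piece of $\Xi_{\Ll_0}^{(N)}$; for non-minimal hulls one must be slightly careful, restricting to the sub-groupoid over $\overline{\mathrm{orbit}(\Ll)}$. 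The compatibility of the $\mathcal{S}_N$-bi-equivariance with the frame-overcompleteness (the recurring $1/N!$ bookkeeping) is the other place where sign errors could creep in, but it is mechanical given Propositions~\ref{Pro:FProd} and~\ref{Pro:LRActions}.
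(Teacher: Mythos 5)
Your first two steps are essentially the paper's proof: identify $\mathfrak s^{-1}(\xi_0)$ with the permutation-stable frame of $\Ff_N^{(-)}(\Ll)$ via Eq.~\eqref{Eq:RSFinal}, write out the left regular representation in the form \eqref{Eq:FinalRep}, and match it against Eq.~\eqref{Eq:GoodRep} through the correspondence $f \leftrightarrow \tfrac{1}{N!}\hat q_N$ coming from the homeomorphism $\Gg_N \simeq \big[\widehat\Omega^{(N)}_{\Ll_0}\big]$ of Corollary~\ref{Co:Equiv} (this is exactly Proposition~\ref{Pro:SeedG}). That part is sound and is where the content of the proposition lives.

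Where you diverge is step three, and there you have manufactured an obstacle that the statement does not require you to overcome. The proposition asserts only that $\mathfrak G_N(\Ll)$ coincides with the \emph{image} $\pi_{\xi_0}\big(C^\ast_{r,\Ss_N}(\Gg_N,\CM)\big)$, not that $\pi_{\xi_0}$ is isometric (i.e.\ faithful) on the reduced bi-equivariant algebra. Since the image of a $C^\ast$-algebra under a $\ast$-homomorphism is automatically closed, one has $\pi_{\xi_0}\big(C^\ast_{r,\Ss_N}(\Gg_N,\CM)\big) = \overline{\pi_{\xi_0}\big(C_{c,\Ss_N}(\Gg_N,\CM)\big)}$, and the latter equals $\overline{\dot{\mathfrak G}_N(\Ll)} = \mathfrak G_N(\Ll)$ by step two; no faithfulness is needed. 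This matters because the faithfulness claim you sketch is genuinely delicate and, as stated, not justified: for a non-minimal hull the orbit of a fixed $\Ll$ need not reach every unit of $\Gg_N^{(0)}$, the covariance relation \eqref{Eq:CovP} only controls representations along a single orbit, and ``enough units'' arguments for \'etale groupoids require an amenability or topological-freeness hypothesis that you have not verified. If you had actually needed $\|\pi_\Ll(f)\| = \|f\|_{C^\ast_r}$, your proof would have a gap; since you do not, replace step three by the one-line closedness argument. One further small point worth recording explicitly in step two: the identification of the dense subalgebras needs both directions, i.e.\ that every compactly supported bi-equivariant function on $\Gg_N$ extends to (the restriction to $\Omega_{\Ll_0}$ of) a seed in the sense of Definition~\ref{Def:Z2Coeff}, not only that every seed restricts to such a function; this is implicit in Corollary~\ref{Co:Equiv} but deserves a sentence.
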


\proof The left regular representations of $C^\ast_r(\Gg_N)$ are carried by the Hilbert spaces $\ell^2\big(\mathfrak s^{-1}(\xi_0)\big)$ with $\xi_0 \in \Xi_{\Ll_0}^{(N)}$. From Eq.~\eqref{Eq:RSFinal}, we find
$$
s^{-1}(\xi_0) = \big \{ (\xi_0,\xi)^{-1}, \ (\xi_0,\xi) \in \Gg_N \}.
$$
Then, the generic expression~\eqref{Eq:PiX} of a left regular representation translates into
\begin{equation*}
\begin{aligned}
[\pi_{\xi_0}(f)\psi]\big ((\xi_0,\xi)^{-1}\big) & = \sum_{(\xi_0,\zeta) \in \Gg_N} f\big ((\xi_0,\xi)^{-1} \cdot (\xi_0,\zeta) \big ) \psi\big ( (\xi_0,\zeta)^{-1}\big ) \\
& = \sum_{(\xi_0,\zeta) \in \Gg_N} f\big (\hat{\mathfrak t}_{\chi_\xi(1)}(\xi,\zeta) \big ) \psi\big ( (\xi_0,\zeta)^{-1}\big )
\end{aligned}
\end{equation*}
We now observe that, if $\xi_0=(\Ll,V_0,\chi_{V_0})$, then there is a canonical isomorphims between the Hilbert spaces $\ell^2\big(\mathfrak s^{-1}(\xi_0)\big)$ and $\Ff_N^{(-)}(\Ll)$, which sends $\psi \in \ell^2\big(\mathfrak s^{-1}(\xi_0)\big)$ to $\phi \in \Ff_N^{(-)}(\Ll)$ via the relation
\begin{equation*}
\psi\big ( (\xi_0,\zeta)^{-1}\big ) = \phi (\zeta) = \phi(\Ll,V_\zeta,\chi_\zeta), \quad \forall \ (\xi_0,\zeta) \in \Gg_N.
\end{equation*}
As such, the left regular representations occur naturally on the Fock space and they take the form
\begin{equation}\label{Eq:FinalRep}
\pi_{\xi_0}(f) = \sum_{(\xi,\zeta) \in \mathfrak b_N^{-1}(\Ll)}  f\big (\hat{\mathfrak t}_{\chi_\xi(1)} (\xi,\zeta )\big ) |\xi\rangle \langle \zeta |.
\end{equation}

Consider now an element from the algebra $\mathfrak G_N(\Ll)$. By definition, it can be approximated in norm by the representation of an element $Q_\Ll^{N}$ on the Fock space $\Ff_N^{(-)}(\Ll)$. According to Eq.~\eqref{Eq:GoodRep}, this representation takes the form
\begin{equation*}
\pi_\eta^N(Q_\Ll^N) = \tfrac{1}{N!}   \sum_{(\xi,\zeta)\in \mathfrak b^{-1}_{N}(\Ll)}
  q_{N} (\xi,\zeta) |\xi \rangle \langle \zeta |,
\end{equation*}
where $q_N$ is a continuous equivariant coefficient, as introduced in Definition~\ref{Def:Z2Coeff}. Due to the equivariance against the translations, this expression can be cast in a form very similar with the one in Eq.~\eqref{Eq:FinalRep},
\begin{equation*}
\pi_\eta^N(Q_\Ll^N) = \tfrac{1}{N!}   \sum_{(\xi,\zeta)\in \mathfrak b^{-1}_{N}(\Ll)}
  q_{N} \big(\hat{\mathfrak t}_{\chi_\xi(1)}(\xi,\zeta) \big ) |\xi \rangle \langle \zeta |,
\end{equation*}
and the two will be identical if we can establish a connection between the $f$'s and $q_N$'s in these two expressions. This is established below.\qed

\begin{proposition}\label{Pro:SeedG} The seed of an equivariant  coefficient $q_{N}$ generates an element of the bi-equivariant groupoid sub-algebra $C^{\ast}_{r,\mathcal{S}_N}(\Gg_N, \CM)$, via the relation
$$
f(\xi,\zeta) = \tfrac{1}{N!} \, \hat q_N \big ([(\xi,\zeta)] \big).
$$
\end{proposition}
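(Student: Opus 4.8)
The plan is to verify that the function $f(\xi,\zeta):=\tfrac{1}{N!}\,\hat q_N\big([(\xi,\zeta)]\big)$ lies in $C_{c,\mathcal{S}_N}(\Gg_N,\CM)$; the assertion then follows immediately, since $C^{\ast}_{r,\mathcal{S}_N}(\Gg_N,\CM)$ is by definition the closure inside $C^\ast_r(\Gg_N,\CM)$ of the image of $C_{c,\mathcal{S}_N}(\Gg_N,\CM)$. Throughout I will use Corollary~\ref{Co:Equiv}, which identifies $\Gg_N$ homeomorphically with the orbit space $[\widehat\Omega_{\Ll_0}^{(N)}]$ via the restriction of the quotient map, together with the commutative diagram preceding it, which realizes $[\widehat\Omega_{\Ll_0}^{(N)}]$ as a closed subspace of $\big[\widehat{{\rm Del}}_{(r,R)}^{(N,N)}(\RM^d)\big]$; under these identifications $f$ is just $\tfrac{1}{N!}$ times the restriction of the seed $\hat q_N$ of Definition~\ref{Def:Z2Coeff}.

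\emph{Step 1: $f\in C_c(\Gg_N,\CM)$.} Continuity is automatic, since $f$ is the composite of the continuous maps $\Gg_N\hookrightarrow\widehat\Omega_{\Ll_0}^{(N)}\xrightarrow{\hat{\rm q}_N}\big[\widehat{{\rm Del}}_{(r,R)}^{(N,N)}(\RM^d)\big]$ with the continuous function $\hat q_N$. For compact support, I observe that $\{f\ne 0\}$ is contained in the preimage in $\Gg_N$ of the set ${\rm supp}\,\hat q_N\cap[\widehat\Omega_{\Ll_0}^{(N)}]$, which is closed in the compact set ${\rm supp}\,\hat q_N$ and hence compact; its preimage under the homeomorphism $\Gg_N\xrightarrow{\sim}[\widehat\Omega_{\Ll_0}^{(N)}]$ of Corollary~\ref{Co:Equiv} is therefore compact in $\Gg_N$, and ${\rm supp}(f)$ is a closed subset of it (recall $\Gg_N$ is Hausdorff, being étale).

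\emph{Step 2: bi-equivariance.} Fix $s_1,s_2\in\mathcal{S}_N$ and $(\xi,\zeta)\in\Gg_N$. Proposition~\ref{Pro:LRActions} gives
\begin{equation*}
s_1\cdot(\xi,\zeta)\cdot s_2=\hat{\mathfrak t}_{\chi_\xi\circ s_1^{-1}(1)}\big(\Lambda_{s_1}(\xi),\Lambda_{s_2^{-1}}(\zeta)\big).
\end{equation*}
Since $[\cdot]$ denotes the class in the orbit space of the $\RM^d$-action $\hat{\mathfrak t}$ (Definition~\ref{Def:QuotientSp}), the translation in front disappears on passing to classes; using moreover that $s\mapsto\Lambda_s$ is a homomorphism (Proposition~\ref{Pro:Deck}), so $\Lambda_{s_2^{-1}}=\Lambda_{s_2}^{-1}$, the description of the left and right $\mathcal{S}_N$-actions on $\big[\widehat{{\rm Del}}_{(r,R)}^{(N,N)}(\RM^d)\big]$ in Proposition~\ref{Prop:leftright} yields $\big[\,s_1\cdot(\xi,\zeta)\cdot s_2\,\big]=s_1\cdot[(\xi,\zeta)]\cdot s_2$. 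Hence, by the bi-equivariance of $\hat q_N$ in Definition~\ref{Def:Z2Coeff},
\begin{equation*}
f\big(s_1\cdot(\xi,\zeta)\cdot s_2\big)=\tfrac{1}{N!}\,\hat q_N\big(s_1\cdot[(\xi,\zeta)]\cdot s_2\big)=(-1)^{s_1}\,f(\xi,\zeta)\,(-1)^{s_2},
\end{equation*}
which is precisely the defining relation for membership in $C_{c,\mathcal{S}_N}(\Gg_N,\CM)$.

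I expect the only point requiring care to be the bookkeeping in Step 2: one must keep straight the two a priori distinct $\mathcal{S}_N$-actions — the action on $\Gg_N$ induced by the 2-action of Proposition~\ref{Pro:2Action}, which mixes in translations $\hat{\mathfrak t}_{\chi_\xi\circ s^{-1}(1)}$, and the pure deck-transformation action of Proposition~\ref{Prop:leftright} on the orbit space — and observe that they become compatible exactly because translations act trivially on $\big[\widehat{{\rm Del}}_{(r,R)}^{(N,N)}(\RM^d)\big]$. Everything else (continuity, compact support, and the final passage to $C^\ast_{r,\mathcal{S}_N}(\Gg_N,\CM)$) is routine once Corollary~\ref{Co:Equiv} is in hand.
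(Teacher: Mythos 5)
Your proof is correct and follows the same route as the paper, which disposes of this proposition in one line by citing Corollary~\ref{Co:Equiv}; you have simply made explicit the three checks (continuity, compact support, and bi-equivariance under the identification $\Gg_N\simeq[\widehat\Omega_{\Ll_0}^{(N)}]$) that the paper leaves to the reader. The reconciliation in Step 2 of the translation-twisted 2-action on $\Gg_N$ with the pure deck action on the orbit space is exactly the point the paper's terse proof implicitly relies on.
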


\proof This is a direct consequence of Corollary~\ref{Co:Equiv}.\qed

\subsection{$\Gg_N$ as a blow-up groupoid}\label{Sec:BlowUp1}
We provide here an alternative description of the groupoids $\mathcal{G}_{N}$ through a topological procedure known as blowing up the unit space. This allows us to describe the structure of the $C^\ast$-algebras $C^\ast_r(\Gg_N)$.
\begin{definition}[\cite{WilliamsBook}~p.~45] Let $\Gg$ be a locally compact Hausdorff groupoid with open range map. Suppose that $Z$ is locally compact Hausdorff and that $f: Z \to \Gg^{(0)}$ is  a continuous open map. Then
$$
\Gg[Z]=\big \{ (z,\gamma,w)\in Z \times \Gg \times Z: 
 f(z)=r(\gamma) \ {\rm and} \ s(\gamma) = f(w) \big \}
 $$
 is a topological groupoid when considered with the natural operations $$(z,\gamma,w) (w,\eta,x)=(z,\gamma \eta,x) \ {\rm and} \ (z,\gamma,w)^{-1}= (w,\gamma^{-1},z),
 $$
 and the topology inherited from $Z \times \Gg \times Z$.
 \end{definition}
 
 \begin{remark}{\rm The space of units for $\Gg[Z]$ is
 $$
 \Gg[Z]^{(0)} = \big \{ (z,f(z),z), \ z \in Z\big \},
 $$
 hence it can be naturally identified with $Z$. For this reason, the map $f$ is called a blow-up of the unit space and the groupoid $\Gg[Z]$ is referred to as the blow-up of $\Gg$ through $f$.
 }$\Diamond$
 \end{remark}
 
 \begin{proposition} Recall the unit space of $\Gg_N$ from Eq.~\eqref{Eq:XiN}. The map 
 $$
 f: \Gg_{N}^{(0)} \to \Gg_{1}^{(0)}, \quad f(\Ll,V,\chi_V) : = (\Ll,\chi_V(1)) = (\Ll,0)
 $$ 
 is a blow-up of the unit space $\Gg_{1}^{(0)}$.
 \end{proposition}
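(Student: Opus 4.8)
The plan is to verify the two defining properties of a blow-up map as recalled from \cite{WilliamsBook}, namely that $\Gg_1^{(0)}=\Xi_{\Ll_0}$ is locally compact Hausdorff and that $f$ is continuous and open. The first is immediate: the transversal $\Xi_{\Ll_0}$ is compact Hausdorff by Definition~\ref{Def:Transversal}, and $\Gg_N^{(0)}=\Xi_{\Ll_0}^{(N)}$ is locally compact Hausdorff as a subspace of the étale groupoid $\Gg_N$. So the only real content is the claim that $f(\Ll,V,\chi_V)=(\Ll,0)$ is a continuous open surjection. Note first that $f$ is well defined and lands in $\Gg_1^{(0)}$: for $(\Ll,V,\chi_V)\in\Xi_{\Ll_0}^{(N)}$ we have $\chi_V(1)=0\in\Ll$ and $\Ll\in\Xi_{\Ll_0}$, so $(\Ll,0)\in\Gg_1$ is a unit, and $f$ is really just the forgetful map $(\Ll,V,\chi_V)\mapsto\Ll$ composed with the identification $\Xi_{\Ll_0}\cong\Gg_1^{(0)}$ from the remark following Definition~\ref{Def:Groupoid1}.

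\emph{Surjectivity.} Given any $\Ll\in\Xi_{\Ll_0}$ we must exhibit a point in the fiber. Since $\Ll$ is a Delone set containing $0$, it is nonempty and we may pick any $N$-element subset $V\ni 0$ (such a subset exists because every Delone set is infinite) together with an ordering $\chi_V$ with $\chi_V(1)=0$; then $(\Ll,V,\chi_V)\in\Xi_{\Ll_0}^{(N)}$ and $f(\Ll,V,\chi_V)=(\Ll,0)$.

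\emph{Continuity.} This follows by unwinding the topologies. On $\Xi_{\Ll_0}^{(N)}$ a neighborhood base is given by the sets $U_M^\epsilon(\xi)$ of Eq.~\eqref{Eq:USets}, and by construction the image $f\big(U_M^\epsilon(\Ll,V,\chi_V)\big)$ is contained in $U_M^\epsilon(\Ll)\cap\Xi_{\Ll_0}$, which is a basic neighborhood of $(\Ll,0)$ in $\Gg_1^{(0)}$ by the discussion in section~\ref{SubSec:Patterns} (Eq.~\eqref{Eq:TopoBase2}); hence $f$ is continuous.

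\emph{Openness.} This is the step I expect to require the most care. One must show that $f\big(U_M^\epsilon(\Ll,V,\chi_V)\big)$ contains a neighborhood of $(\Ll,0)$ in $\Xi_{\Ll_0}$. Fix $\epsilon<r/2$. By Proposition~\ref{Pro:G}, for any $\Ll'$ in a sufficiently small neighborhood $U_M^\epsilon(\Ll)$ of $\Ll$ there is a canonical bijection $g:V\to V'$ with $V'\subset\Ll'$, and one checks that $\chi_{V'}:=g\circ\chi_V$ still satisfies $\chi_{V'}(1)=g(0)$; the point $g(0)$ need not be $0$, but since $0\in\Ll'$ and $\epsilon<r/2$ the only point of $\Ll'$ within $\epsilon$ of $0$ is $0$ itself, so $g(0)=0$. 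Thus $(\Ll',V',\chi_{V'})\in U_M^\epsilon(\Ll,V,\chi_V)$ and maps to $(\Ll',0)$, showing $U_M^\epsilon(\Ll)\cap\Xi_{\Ll_0}\subseteq f\big(U_M^\epsilon(\Ll,V,\chi_V)\big)$. Hence $f$ carries the basic open sets of $\Xi_{\Ll_0}^{(N)}$ onto neighborhoods in $\Xi_{\Ll_0}$, i.e. $f$ is open. This completes the verification that $f$ is a blow-up of the unit space $\Gg_1^{(0)}$, and it is then routine (to be carried out in the sequel) to produce the identification $\Gg_N\cong\Gg_1[\Gg_N^{(0)}]$ and conclude Morita equivalence of the associated $C^\ast$-algebras via \cite[Th.~2.52]{WilliamsBook}.
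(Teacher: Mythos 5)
Your proposal is correct and follows essentially the same route as the paper, which simply notes that the covering-space arguments of Proposition~\ref{Pro:Cover2} (via the canonical bijections $g$ of Proposition~\ref{Pro:G} and the neighborhood bases $U_M^\epsilon$) show $f$ is a local homeomorphism, hence continuous and open. Your only genuinely additional step — checking that $g(0)=0$ because $0$ lies in both lattices and $\epsilon<r/2$ forces uniqueness of the point of $\Ll'$ in $\mathring B(0,\epsilon)$ — is exactly the detail needed to make those arguments restrict correctly to the transversal, and you get it right.
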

 
 \proof Indeed, the arguments from Proposition~\ref{Pro:Cover2} apply also here, which show that $f$ is a local homeomorphism.\qed
 
 \vspace{0.2cm}
 
The associated blow-up groupoid can be presented as
$$
\widetilde \Gg_{N} = \big \{ \big (\xi, (\Ll,x), \zeta\big), \ (\Ll,x) \in \Gg_1,\  \xi \in \mathfrak a_N^{-1}(\Ll), \ \zeta \in \mathfrak a_N^{-1}(\Ll-x), \ \chi_\xi(1) = \chi_\zeta(1)=0 \big \}.
$$

\begin{proposition} The map
\begin{equation}\label{Eq:LastIso}
\Gg_N \ni (\xi,\zeta) \mapsto \big (\xi, \big (\Ll_\xi,\chi_\zeta(1) \big ), \mathfrak t_{\chi_\zeta(1)}(\zeta) \big )\in \widetilde \Gg_N.
\end{equation}
is an isomorphism $\Gg_N \simeq \widetilde \Gg_N$ of topological groupoids.
\end{proposition}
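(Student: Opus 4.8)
The plan is to exhibit the map in \eqref{Eq:LastIso} as a bijection with inverse built from the obvious ``contraction'' and then to verify separately that it intertwines the groupoid operations and is a homeomorphism. First I would write down the candidate inverse
\[
\widetilde \Gg_N \ni \big (\xi, (\Ll_\xi,x), \zeta'\big ) \longmapsto \big (\xi, \hat{\mathfrak t}_{-x}(\zeta')\big ),
\]
and check the two composites. Starting from $(\xi,\zeta)\in\Gg_N$, applying the map gives $\chi_\zeta(1)$ as the middle $\RM^d$-coordinate and $\mathfrak t_{\chi_\zeta(1)}(\zeta)$ as the last entry; note that $\chi_{\mathfrak t_{\chi_\zeta(1)}(\zeta)}(1)=\chi_\zeta(1)-\chi_\zeta(1)=0$, so this triple indeed lies in $\widetilde\Gg_N$, and applying the inverse recovers $\hat{\mathfrak t}_{-\chi_\zeta(1)}\big(\mathfrak t_{\chi_\zeta(1)}(\zeta)\big)=\zeta$. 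Conversely, starting from a triple in $\widetilde\Gg_N$, the third coordinate $\zeta'$ satisfies $\chi_{\zeta'}(1)=0$ by definition of $\widetilde\Gg_N$, and after applying the map and its inverse one lands back on $x=\chi_{\hat{\mathfrak t}_{-x}(\zeta')}(1)$ (using $\chi_{\hat{\mathfrak t}_{-x}(\zeta')}(1)=\chi_{\zeta'}(1)+x=x$) and on $\mathfrak t_{x}\big(\hat{\mathfrak t}_{-x}(\zeta')\big)=\zeta'$. So the two maps are mutually inverse bijections.

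Next I would check compatibility with the algebraic structure. On the $\widetilde\Gg_N$ side the unit space is identified with $\Gg_N^{(0)}=\Xi^{(N)}_{\Ll_0}$ via $\xi\mapsto(\xi,f(\xi),\xi)$, and the range/source of $\big(\xi,(\Ll_\xi,x),\zeta'\big)$ are $\xi$ and $\hat{\mathfrak t}_x(\zeta')$ respectively; comparing with \eqref{Eq:RSFinal}, which gives $\mathfrak r(\xi,\zeta)=\xi$ and $\mathfrak s(\xi,\zeta)=\hat{\mathfrak t}_{\chi_\zeta(1)}(\zeta)$, one sees the map sends $\mathfrak r,\mathfrak s$ to $\mathfrak r,\mathfrak s$. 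For multiplication, take a composable pair in $\Gg_N$, written (per Remark~\ref{Re:Pairs}) as $(\xi,\zeta)$ and $\hat{\mathfrak t}_{\chi_\zeta(1)}(\zeta,\eta)$, whose product is $(\xi,\eta)$. Its image should equal the product of the images; the key computation is that the middle coordinate of the image of $\hat{\mathfrak t}_{\chi_\zeta(1)}(\zeta,\eta)$ is $\chi_{\hat{\mathfrak t}_{\chi_\zeta(1)}(\eta)}(1)=\chi_\eta(1)-\chi_\zeta(1)$, and the $\Gg_1$-composition law $(\Ll_\xi,\chi_\zeta(1))\cdot(\Ll_\xi-\chi_\zeta(1),\chi_\eta(1)-\chi_\zeta(1))=(\Ll_\xi,\chi_\eta(1))$ matches the middle coordinate of the image of $(\xi,\eta)$; the last coordinates agree because $\mathfrak t_{\chi_\eta(1)}(\eta)=\mathfrak t_{\chi_\eta(1)-\chi_\zeta(1)}\big(\mathfrak t_{\chi_\zeta(1)}(\eta)\big)$. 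Identity \eqref{Eq:ID101} is exactly what makes these exponents bookkeep correctly. The compatibility with inversion is the analogous, shorter calculation using $(\xi,\zeta)^{-1}=\hat{\mathfrak t}_{\chi_\zeta(1)}(\zeta,\xi)$.

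Finally I would address continuity. Both $\Gg_N$ and $\widetilde\Gg_N$ carry topologies inherited from ambient product spaces: $\Gg_N\subset\widehat{{\rm Del}}_{(r,R)}^{(N,N)}(\RM^d)$ and $\widetilde\Gg_N\subset \widehat{{\rm Del}}_{(r,R)}^{(N)}(\RM^d)\times\Gg_1\times\widehat{{\rm Del}}_{(r,R)}^{(N)}(\RM^d)$. The map \eqref{Eq:LastIso} is the restriction of the continuous map $(\xi,\zeta)\mapsto\big(\xi,(\Ll_\xi,\mathfrak v_N(\zeta,\xi)),\hat{\mathfrak t}_{\mathfrak v_N(\zeta,\xi)}(\zeta)\big)$, built from the continuous maps $\mathfrak v_N$ (Proposition in section~\ref{Sec:AFRGI}), the continuous $\RM^d$-action $\hat{\mathfrak t}$, and the continuity of $\Ll_\xi$ in $\xi$; its inverse is the restriction of $\big(\xi,(\Ll,x),\zeta'\big)\mapsto(\xi,\hat{\mathfrak t}_{-x}(\zeta'))$, again a composition of continuous maps. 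Hence the map is a homeomorphism, and together with the algebraic checks it is an isomorphism of topological groupoids. I expect the only mild subtlety — the ``main obstacle'' — to be the topological bookkeeping of the $\RM^d$-actions: verifying that the relative topology on $\widetilde\Gg_N$ really is the one for which these maps are continuous and open, which boils down to the same kind of local-homeomorphism argument already used in Proposition~\ref{Pro:Cover2} and in the proof that $\Gg_N$ is \'etale, so no genuinely new input is needed.
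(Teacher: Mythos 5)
Your proof is correct and follows essentially the same route as the paper's: the paper verifies that composable pairs map to composable pairs with the correct product and that inversion is respected, then simply asserts that the map is a homeomorphism, whereas you additionally write down the explicit inverse, verify bijectivity, and sketch continuity of both directions as restrictions of continuous maps on the ambient spaces. One small slip: the source of a triple $\big(\xi,(\Ll_\xi,x),\zeta'\big)\in\widetilde\Gg_N$ is the unit $\zeta'$ itself (equivalently $\hat{\mathfrak t}_x(\zeta)$ where $\zeta'=\hat{\mathfrak t}_x(\zeta)$), not $\hat{\mathfrak t}_x(\zeta')$ --- but this is harmless, since preservation of $\mathfrak r$ and $\mathfrak s$ already follows from your verification of multiplication and inversion together with bijectivity.
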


\proof 
Indeed, the composable elements of $\Gg_N$ come as pairs $(\xi,\zeta)$ and $\mathfrak t_{\chi_\zeta(1)}(\zeta,\eta)$. This pair is mapped to 
$$
\big (\xi, \big (\Ll_\xi,\chi_\zeta(1) \big ), \mathfrak t_{\chi_\zeta(1)}(\zeta) \big ) \quad {\rm and} \quad  \big (\mathfrak t_{\chi_\zeta(1)}(\zeta), \mathfrak t_{\chi_\zeta(1)}\big (\Ll_\zeta,\chi_\eta(1) \big ), \mathfrak t_{\chi_\eta(1)}(\eta) \big )
$$
and the latter are composable in $\widetilde \Gg_N$, with the composition given by
$$
\big (\xi, \big (\Ll_\xi,\chi_\eta(1) \big ), \mathfrak t_{\chi_\eta(1)}(\eta) \big ).
$$
Hence, the map~\eqref{Eq:LastIso} respects the composition. Furtermore, the inverse of $(\xi,\zeta) \in \Gg_N$ is $\mathfrak t_{\chi_\zeta(1)}(\zeta,\xi)$, and this element is mapped to
$$
\big (\mathfrak t_{\chi_\zeta(1)}(\zeta), \mathfrak t_{\chi_\zeta(1)}\big (\Ll_\zeta,\chi_\xi(1) \big ), \mathfrak t_{\chi_\xi(1)}(\xi) \big ) = \big (\mathfrak t_{\chi_\zeta(1)}(\zeta), \big (\Ll_\xi,\chi_\zeta(1) \big )^{-1}, \mathfrak t_{\chi_\xi(1)}(\xi) \big ),
$$
hence the map~\eqref{Eq:LastIso} respects the inversion, too. Lastly, it can be checked that the map~\eqref{Eq:LastIso} is a homeomorphism.\qed

\begin{corollary}[\cite{WilliamsBook},~Th.~2.52] The groupoid $C^{*}$-algebras $C^\ast_r(\Gg_1)$ and $C^\ast_r(\Gg_N)$ are Morita equivalent.
\end{corollary}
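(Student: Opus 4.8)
The statement in question is that the groupoid $C^{*}$-algebras $C^\ast_r(\Gg_1)$ and $C^\ast_r(\Gg_N)$ are Morita equivalent, presented as a corollary of the preceding isomorphism $\Gg_N \simeq \widetilde \Gg_N$ and Theorem~2.52 of \cite{WilliamsBook}.

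\medskip

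The plan is to invoke the general machinery of blow-up groupoids. The key structural fact established just above is that $\Gg_N$ is isomorphic, as a topological groupoid, to the blow-up $\widetilde \Gg_N = \Gg_1[\Gg_{N}^{(0)}]$ of $\Gg_1$ along the continuous open surjection $f:\Gg_{N}^{(0)} \to \Gg_{1}^{(0)}$, $(\Ll,V,\chi_V) \mapsto (\Ll,0)$. Since a $C^\ast$-algebra isomorphism of groupoids induces an isomorphism of the associated reduced $C^\ast$-algebras, it suffices to prove that $C^\ast_r(\Gg_1)$ and $C^\ast_r(\widetilde \Gg_N)$ are Morita equivalent. This is precisely the content of \cite[Th.~2.52]{WilliamsBook}: whenever $\Gg$ is a locally compact Hausdorff groupoid with open range map, $Z$ is locally compact Hausdorff, and $f:Z\to \Gg^{(0)}$ is continuous, open, and surjective (equivalently, the blow-up $\Gg[Z]$ and $\Gg$ are ``equivalent'' in the sense of groupoid equivalence), then $C^\ast_r(\Gg[Z])$ is Morita equivalent to $C^\ast_r(\Gg)$, with the imprimitivity bimodule built as a completion of $C_c(Z*_f \Gg)$ (functions on the pullback linking space) in the natural $C_c(\Gg[Z])$--$C_c(\Gg)$-valued inner products. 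So the proof reduces to checking the hypotheses of that theorem for our data.

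\medskip

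Concretely, the steps I would carry out are: (i) recall that $\Gg_1$ is \'etale (hence has open range map) and that $\Gg_{N}^{(0)} = \Xi_{\Ll_0}^{(N)}$ is locally compact and Hausdorff as a subspace of $\widehat{{\rm Del}}_{(r,R)}^{(N)}(\RM^d)$; (ii) verify that $f$ is continuous, open, and surjective --- continuity and openness follow from the fact, noted in the proof of the preceding proposition, that $f$ is a local homeomorphism (the same argument as in Proposition~\ref{Pro:Cover2}), while surjectivity holds because every $(\Ll,0)\in\Gg_1^{(0)}=\Xi_{\Ll_0}$ has $0\in\Ll$, so one may pick any $V\in\Kk_N(\Ll)$ containing $0$ and any order $\chi_V$ with $\chi_V(1)=0$; (iii) apply \cite[Th.~2.52]{WilliamsBook} to conclude $C^\ast_r(\Gg_1) \sim_{\mathrm{Morita}} C^\ast_r(\widetilde\Gg_N)$; (iv) transport this equivalence across the groupoid isomorphism \eqref{Eq:LastIso} to obtain $C^\ast_r(\Gg_1)\sim_{\mathrm{Morita}} C^\ast_r(\Gg_N)$. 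One should also remark that $\Gg_N$ is \'etale, second countable and Hausdorff (established earlier in this section), so the reduced and full $C^\ast$-algebras are well-behaved and the Morita equivalence is of separable $C^\ast$-algebras.

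\medskip

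I do not expect any serious obstacle here: the statement is essentially a citation of \cite[Th.~2.52]{WilliamsBook} once the groupoid isomorphism of the previous proposition is in hand. The only point requiring mild care is the surjectivity (or, more precisely, the ``$f(Z) \cdot \Gg = \Gg^{(0)}$'' type condition) and local compactness of $\Gg_{N}^{(0)}$, which are immediate from the Delone structure and from the fact --- already used repeatedly --- that $N>1$ forces $\Gg_{N}^{(0)}$ to be only locally compact rather than compact (Remark after \eqref{Eq:XiN}); this non-compactness is exactly why the resulting Morita equivalence is with a non-unital algebra for $N>1$, consistent with Remark~\ref{Re:CMor} and Remark~\ref{Re:BlowUp}. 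A one- or two-line proof suffices, and I would keep it at that level.
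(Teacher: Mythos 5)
Your proposal is correct and follows exactly the paper's route: the corollary is deduced from the immediately preceding proposition identifying $\Gg_N$ with the blow-up $\widetilde\Gg_N$ of $\Gg_1$ along $f$, combined with \cite[Th.~2.52]{WilliamsBook}. Your additional verification of the hypotheses (openness and surjectivity of $f$, local compactness of $\Xi_{\Ll_0}^{(N)}$) is sound but left implicit in the paper, which treats the statement as an immediate citation.
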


Note that this Morita equivalence does not apply to the bi-equivariant subalgebras $C^\ast_{r,\Ss_N}(\Gg_N)$. The equivalences between $\Gg_1$ and its blow-ups $\Gg_N$ and the associated imprimitivity bimodules can be written explicitly \cite[p.~45]{WilliamsBook}. For the present context, it will be extremely useful to also construct the isomorphisms $\KM \otimes C^\ast_r(\Gg_1) \simeq \KM \otimes C^\ast_r(\Gg_N)$, explicitly, and to investigate their relations with the conditional expectations onto $C^\ast_{r,\Ss_N}(\Gg_N)$. Indeed, there is a real possibility of using these isomorphisms to generate the inductive tower~\eqref{Eq:Diag1} from the tensor product of $C^\ast_r(\Gg_1)$ and an almost-finite $C^\ast$-algebra.

The Morita equivalence between $C^\ast_r(\Gg_1)$ and $C^\ast_r(\Gg_N)$ supplies a good start for the computation of the $K$-theory of $C^\ast_{r,\Ss_N}(\Gg_N)$. However, new tools need to be developed in order to address group 2-actions on groupoid algebras and understand their effect on $K$-theory.

\section{Analysis of special cases with disconnected order covers}
\label{Sec:SpecialCases}

The periodic lattices $\Ll_0 = \ZM^d$ and their small perturbations are very special because the order covers over the transversals of such patterns are disconnected. Indeed, as we shall see below, the subsets of these lattices and their translates accept canonical orderings that are compatible with the translations and the topology of the covers. As a consequence, the bi-equivariant groupoid $C^\ast$-algebra $C^\ast_{r,\Ss_N}(\Gg_N)$ reduces to the $C^\ast$-algebra of a blow-up of $\Gg_1$ and we can put forward a very strong statement saying that $K$-theory of $C^\ast_{r,\Ss_N}(\Gg_N)$ is identical to that of $C^\ast_{r}(\Gg_1)$.

\subsection{Canonical ordering for perturbed periodic patterns}

We start by introducing a large class of patterns displaying disconnected order covers. 

\begin{definition} Let $\Pp_\varepsilon = \bigcup_{n \in \ZM^d} B(n,\varepsilon)$, $\varepsilon <1/2$, be a pattern of disconnected closed balls centered at the points of the periodic lattice $\ZM^d$. We call $\Ll_0 \subset \RM^d$ a perturbation of the periodic lattice if, for any $\Ll \in \Xi_{\Ll_0}$, there exists $\varepsilon <1/2$ such that
\begin{enumerate}[{\ \ \rm 1.}]
\item $\Ll \subset \Pp_\varepsilon$;
\item $|\Ll \cap B(n,\varepsilon)| =1$, for all $n \in \ZM^d$.
\end{enumerate}
\end{definition} 

\begin{example}{\rm Let $\Ll_0$ be a point pattern such that $\Ll_0 \subset \Pp_{\varepsilon_0}$ for $\varepsilon_0 <1/4$ and $|\Ll \cap B(n,\varepsilon_0)| =1$ for all $n \in \ZM^d$. Then $\Ll_0$ is a perturbation of $\ZM^d$. Indeed, the points of a translate $\Ll_0-x$, for some arbitrary $x \in \Ll_0$, are all at a distance less than $2 \epsilon_0 <1/2$ from $\ZM^d$, hence the translates $\Ll_0-x$ fulfill the conditions of the above definition.
}$\Diamond$
\end{example}

The following statement highlights one of the main characteristics of a perturbed periodic lattice:

\begin{proposition} If $\Ll_0$ is a perturbation of $\ZM^d$, then, for each $\Ll \in \Xi_{\Ll_0}$, there exists a canonical bijection $ \bm l_\Ll : \Ll \to \ZM^d$ that labels a point $x\in \Ll$ by an element of $\ZM^d$,
$$
\bm l_\Ll(x) = \big (n_1(x),\ldots,n_d(x)\big ) \in \ZM^d.
$$
This bijection is compatible with the translations,
\begin{equation}\label{Eq:LCompat2}
\bm l_{\mathfrak t_{y}(\Ll)}(x-y) = \bm l_\Ll (x)-\bm l_\Ll (y), \quad \forall \ x,y \in \Ll.
\end{equation}
\end{proposition}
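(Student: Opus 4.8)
The plan is to construct the labeling bijection $\bm l_\Ll$ directly from the defining data of a perturbed periodic lattice, and then verify its compatibility with translations as a routine consequence of the construction. First I would fix $\Ll \in \Xi_{\Ll_0}$ and take the associated $\varepsilon < 1/2$ guaranteed by the definition, so that $\Ll \subset \Pp_\varepsilon = \bigcup_{n \in \ZM^d} B(n,\varepsilon)$ and each ball $B(n,\varepsilon)$ contains exactly one point of $\Ll$. Since the balls $B(n,\varepsilon)$, $n \in \ZM^d$, are pairwise disjoint (because $\varepsilon < 1/2$) and their union covers $\Ll$, every $x \in \Ll$ lies in a unique such ball; I define $\bm l_\Ll(x)$ to be the center $n$ of that unique ball. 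The assignment $x \mapsto \bm l_\Ll(x)$ is then well defined, and it is a bijection onto $\ZM^d$: injectivity follows from disjointness of the balls, and surjectivity from condition 2, which says each $B(n,\varepsilon)$ contains (exactly) one point of $\Ll$, so every $n \in \ZM^d$ is hit. Concretely, one may also write $\bm l_\Ll(x)$ as the unique lattice point $n \in \ZM^d$ with $|x - n| \le \varepsilon < 1/2$, equivalently the nearest point of $\ZM^d$ to $x$ (uniquely determined since $x$ is within $1/2$ of exactly one integer point).

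Next I would establish the translation compatibility~\eqref{Eq:LCompat2}. The key observation is that $\mathfrak t_y(\Ll) = \Ll - y$ is again a point of $\Xi_{\Ll_0}$ whenever $y \in \Ll$ (since $0 \in \Ll - y$ and $\Ll - y \in \Omega_{\Ll_0}$), so the canonical label $\bm l_{\Ll - y}$ is defined by the same recipe. Because $y \in \Ll \subset \Pp_\varepsilon$, the integer point $\bm l_\Ll(y)$ satisfies $|y - \bm l_\Ll(y)| \le \varepsilon$. Hence for $x \in \Ll$ we have $x - y$ at distance at most $2\varepsilon$ from $\bm l_\Ll(x) - \bm l_\Ll(y)$, which a priori is only $< 1$, not $< 1/2$; but one does not need the nearest-point characterization here. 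Instead, observe directly that $\Ll - y$ satisfies the defining conditions with some $\varepsilon' < 1/2$ (given by the definition applied to the point $\Ll - y \in \Xi_{\Ll_0}$), and that shifting by $-\bm l_\Ll(y) \in \ZM^d$ is a symmetry of $\Pp_{\varepsilon'}$: the ball of $\Pp_{\varepsilon'}$ containing $x - y$ is the translate by $-\bm l_\Ll(y)$ of the ball of $\Pp_{\varepsilon''}$ containing $x - y + \bm l_\Ll(y)$. Tracking through, the center of the ball containing $x - y$ equals $\bm l_\Ll(x) - \bm l_\Ll(y)$, which is precisely~\eqref{Eq:LCompat2}. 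To make this clean, I would phrase it as: the point of $\ZM^d$ within distance $1/2$ of $x - y$ is unique, and both $\bm l_{\Ll - y}(x - y)$ and $\bm l_\Ll(x) - \bm l_\Ll(y)$ lie within $1/2$ of $x - y$ — the former by definition of $\bm l_{\Ll-y}$ applied with $\varepsilon' < 1/2$, the latter because $|(x - y) - (\bm l_\Ll(x) - \bm l_\Ll(y))| = |(x - \bm l_\Ll(x)) - (y - \bm l_\Ll(y))| \le \varepsilon + \varepsilon$, which forces it to be $< 1$ and, being a lattice point, to coincide with the unique nearby one once one notes $x - \bm l_\Ll(x)$ and $y - \bm l_\Ll(y)$ both lie in $B(0,\varepsilon)$ so their difference lies in $B(0, 2\varepsilon) \subset \mathring B(0,1)$, and within an open ball of radius $1$ there is at most one lattice point.

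The main obstacle I anticipate is a subtle bookkeeping point: the $\varepsilon$ in the definition is allowed to depend on the point $\Ll \in \Xi_{\Ll_0}$, so when passing from $\Ll$ to $\Ll - y$ one must use possibly different parameters $\varepsilon$ and $\varepsilon'$, and the argument has to be robust to that. The resolution is to not rely on any particular $\varepsilon$ but on the intrinsic characterization "$\bm l_\Ll(x)$ is the unique point of $\ZM^d$ with $|x - \bm l_\Ll(x)| < 1/2$," which holds for any admissible $\varepsilon < 1/2$ and makes the labeling manifestly independent of the chosen $\varepsilon$. Once this intrinsic description is in hand, both the bijectivity and~\eqref{Eq:LCompat2} reduce to the elementary fact that an open ball of radius $1$ in $\RM^d$ contains at most one point of $\ZM^d$, and the proof is short. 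I would also remark (for later use in the section) that, by construction, $\bm l_\Ll$ and the canonical bijections $g : V \to V'$ from Proposition~\ref{Pro:G} are compatible, which is what ultimately yields the disconnectedness of the order covers and the promised canonical orderings of subsets.
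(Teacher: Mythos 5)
Your construction of $\bm l_\Ll$ --- send $x$ to the center of the unique ball $B(n,\varepsilon)$ containing it, equivalently the unique lattice point within distance $<1/2$ of $x$ --- is exactly the map the paper defines via its graph, and your bijectivity argument is fine; the observation that this characterization makes $\bm l_\Ll$ manifestly independent of the admissible $\varepsilon$ is a worthwhile addition.

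The translation-compatibility step, however, has a genuine gap. You compare the two lattice points $m_1=\bm l_{\Ll-y}(x-y)$, which satisfies $|(x-y)-m_1|\le\varepsilon'<1/2$, and $m_2=\bm l_\Ll(x)-\bm l_\Ll(y)$, which only satisfies $|(x-y)-m_2|\le 2\varepsilon<1$, and you conclude $m_1=m_2$ by invoking the claim that an open ball of radius $1$ contains at most one point of $\ZM^d$. That claim is false: the open ball of radius $1$ centered at $(1/2,0,\dots,0)$ contains both $0$ and $(1,0,\dots,0)$. The correct elementary fact is that a ball of radius $1/2$ contains at most one lattice point; your estimates only place $m_1$ and $m_2$ within distance $2\varepsilon+\varepsilon'<3/2$ of each other, which does not force equality once $\varepsilon>1/4$. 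Concretely, if $x-\bm l_\Ll(x)$ and $y-\bm l_\Ll(y)$ point in opposite directions with $\varepsilon$ close to $1/2$, then $x-y$ sits at distance close to $1$ from $m_2$, and the purely local estimates cannot exclude that the ball of the $\varepsilon'$-covering of $\Ll-y$ containing $x-y$ is centered at a neighboring lattice point $m_2+v$, $|v|=1$. Two ways to close the gap: either strengthen the hypothesis to $2\varepsilon+\varepsilon'<1$ (e.g.\ require $\varepsilon<1/4$ in the definition of a perturbed lattice), in which case your argument goes through verbatim; or give a genuinely global argument that uses the fact that \emph{every} ball $B(m,\varepsilon')$ contains exactly one point of $\Ll-y$, i.e.\ that both $w\mapsto \bm l_{\Ll-y}(w)$ and $w\mapsto \bm l_\Ll(w+y)-\bm l_\Ll(y)$ are bijections of $\Ll-y$ onto $\ZM^d$ agreeing at $w=0$, and that their lattice-valued discrepancy is constrained enough (each component has a fixed sign determined by $y-\bm l_\Ll(y)$) to vanish identically. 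The paper itself dismisses this step as ``obvious,'' so the omission is not yours alone, but as written your justification for it rests on a false statement.
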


\proof Indeed, we can define $\bm l_\Ll$ as the map corresponding to the graph
$$
\big \{(y_n,n) \in \Ll \times \ZM^d, \ \ \{y_n\} = \Ll \cap B(n,\varepsilon) \big \}.
$$
With the stated assumptions, this is the graph of a bijection. The compatibility with the translations is obvious.\qed

\begin{proposition} Let $\Ll_0$ be a perturbation of $\ZM^d$. Then each compact subset $V$ of $\Ll \in \Xi_{\Ll_0}$ accepts a canonical order $\bar \chi_V : \Ii_{|V|} \to V$ compatible with  the translations,
\begin{equation}\label{Eq:LCompat1}
\bar \chi_{\mathfrak t_{x}(V)} = \mathfrak t_x \circ \bar \chi_V, \quad \forall \ x \in \Ll.
\end{equation}
\end{proposition}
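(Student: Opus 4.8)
The goal is to produce a canonical order $\bar\chi_V$ on each finite subset $V\subset\Ll$, $\Ll\in\Xi_{\Ll_0}$, that is compatible with translations in the sense of Eq.~\eqref{Eq:LCompat1}. The natural strategy is to transport the standard lexicographic order of $\ZM^d$ back to $V$ via the canonical labelling bijection $\bm l_\Ll:\Ll\to\ZM^d$ just constructed. First I would fix, once and for all, the usual total order $\preceq$ on $\ZM^d$ --- e.g. the lexicographic order, where $(m_1,\ldots,m_d)\preceq(n_1,\ldots,n_d)$ iff at the first index $i$ where they differ one has $m_i<n_i$. Since $\bm l_\Ll$ restricted to $V$ is injective, the set $\bm l_\Ll(V)\subset\ZM^d$ has $|V|$ distinct elements, which $\preceq$ arranges in a unique increasing sequence $\bm n^{(1)}\prec\bm n^{(2)}\prec\cdots\prec\bm n^{(|V|)}$. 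Then I define $\bar\chi_V:\Ii_{|V|}\to V$ by $\bar\chi_V(k):=\bm l_\Ll^{-1}\big(\bm n^{(k)}\big)$, i.e. $\bar\chi_V(k)$ is the point of $V$ whose $\ZM^d$-label is the $k$-th smallest. This is manifestly a bijection, so it is a legitimate element of $\mathcal{S}(\Ii_{|V|},V)$.

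The compatibility with translations then reduces to the analogous property of $\bm l_\Ll$. Let $x\in\Ll$ and set $\Ll'=\mathfrak t_x(\Ll)=\Ll-x$, $V'=\mathfrak t_x(V)=V-x$. By Eq.~\eqref{Eq:LCompat2}, for every $v\in V$ we have $\bm l_{\Ll'}(v-x)=\bm l_\Ll(v)-\bm l_\Ll(x)$, so the labelling of $V'$ is obtained from that of $V$ by the global translation-by-$(-\bm l_\Ll(x))$ map on $\ZM^d$. The key elementary observation is that $\preceq$ is translation-invariant on $\ZM^d$: $\bm m\preceq\bm n$ iff $\bm m+\bm p\preceq\bm n+\bm p$ for any $\bm p\in\ZM^d$ (immediate from the definition of lexicographic order). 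Hence the increasing enumeration of $\bm l_{\Ll'}(V')=\bm l_\Ll(V)-\bm l_\Ll(x)$ is exactly $\bm n^{(1)}-\bm l_\Ll(x)\prec\cdots\prec\bm n^{(|V|)}-\bm l_\Ll(x)$, so the $k$-th smallest label of $V'$ corresponds, under $\bm l_{\Ll'}^{-1}$, to the point $\bar\chi_V(k)-x$. This gives $\bar\chi_{V'}(k)=\bar\chi_V(k)-x$ for all $k$, which is precisely Eq.~\eqref{Eq:LCompat1}.

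Finally one should note that this assignment is genuinely canonical in the relevant topological sense as well: because the labelling bijections $\bm l_\Ll$ are locally constant in $\Ll$ --- for $\Ll'$ close to $\Ll$ in the transversal, each point of $\Ll'\cap B(n,\varepsilon)$ is close to the corresponding point of $\Ll\cap B(n,\varepsilon)$, so it carries the same label --- the section $(\Ll,V)\mapsto(\Ll,V,\bar\chi_V)$ of the order cover is continuous. Consequently it singles out a distinguished sheet of the finite cover $\widehat{\rm Del}^{(n)}_{(r,R)}(\RM^d)\to{\rm Del}^{(n)}_{(r,R)}(\RM^d)$ over the portion lying above $\Xi_{\Ll_0}$, which is what ``disconnected order cover'' means and what is needed in the next subsection to reduce $C^\ast_{r,\Ss_N}(\Gg_N)$ to a blow-up of $\Gg_1$.

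\textbf{Main obstacle.} There is essentially no deep obstacle here; the only thing requiring care is verifying that $\bm l_\Ll$ --- and hence $\bar\chi_V$ --- is well defined simultaneously for \emph{all} $\Ll\in\Xi_{\Ll_0}$ with a \emph{uniform} choice of $\varepsilon$, and checking the local constancy claim rigorously using the metric~\eqref{Eq:PatternMetric} on the transversal (one must pick $M$ large enough to see the ball $B(n,\varepsilon)$ and $\epsilon$ small enough, in the notation of Eq.~\eqref{Eq:TopoBase2}, that the one point of $\Ll'$ in $B(n,\varepsilon)$ stays in $B(n,\varepsilon)$). Everything else is the routine translation-invariance of lexicographic order.
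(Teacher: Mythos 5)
Your proof is correct and is essentially the paper's own argument: the paper's nested minimization $\big(\cdots((V_1)_2)_3\cdots\big)_d$ followed by iteration is exactly the increasing lexicographic enumeration of $\bm l_\Ll(V)$ that you describe directly, and both proofs deduce Eq.~\eqref{Eq:LCompat1} from the translation-invariance of the order on $\ZM^d$ together with Eq.~\eqref{Eq:LCompat2}. Your closing remarks on local constancy go beyond what the proposition requires but correctly anticipate the clopenness statement proved next in the paper.
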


\proof Let $ \bm l_\Ll : \Ll \to \ZM^d$ be the canonical bijection defined above. Given a subset $W \subset \Ll$, we define
\begin{equation}\label{Eq:Wj}
W_j =\{x \in W \; |\;  n_j(x) \leq n_j(y) \ \forall \ y \in W\}.
\end{equation}
Then, for compact $V \subset \Ll$, we claim that the following set
\begin{equation}\label{Eq:Vij}
\big ( \cdots \big ( \big (V_1\big )_{2}\big )_{3} \cdots \big )_d
\end{equation}
contains one and only one point. Indeed, the set $W_j$ in Eq.~\eqref{Eq:Wj} is non-empty for any non-empty set $W$. As such, $V_1$, $\big (V_1\big)_2$, etc., are all non-empty. Assume that \eqref{Eq:Vij} contains two points, $x_1$ and $x_2$. Then, by construction,
$$
n_j(x_1) \leq n_j(x_2) \ {\rm and} \ n_j(x_2) \leq n_j(x_1), \quad j =1,\ldots,d,
$$
and, as such, $\bm l(x_1) = \bm l(x_2)$. Since $\bm l$ is a bijection, this is impossible, hence \eqref{Eq:Vij} must contain a single point. We denote this point by $v_1$ and declare $\bar \chi_V(1) := v_1$. By iteration, we devise the algorithm
\begin{equation}\label{Eq:CanonicalOrder}
\bar \chi_V(1) = v_1, \  \bar \chi_V(2) = \bar \chi_{V\setminus \{\bar \chi_V(1)\}}(1), \ \bar \chi_V(3) = \bar \chi_{V\setminus \{\bar \chi_V(1),\bar \chi_V(2)\}}(1), \ {\rm etc.},
\end{equation}
which supplies the desired canonical order. Indeed, since the algorithm uses the natural order of $\ZM$, which is compatible with its group structure, the statement from Eq.~\ref{Eq:LCompat1} follows from Eq.~\eqref{Eq:LCompat2}.\qed

\begin{remark}{\rm By permuting the indices in Eq.~\eqref{Eq:Vij}, one can generate $|V|!$ distinct orders over $V$. In the following, we adopt the convention that $\bar \chi_V$ is always generated by the algorithm with the order seen in Eq.~\eqref{Eq:CanonicalOrder}. Any other possible order is generated as $\bar \chi_V \circ s$, $s \in \Ss_N$.
}$\Diamond$
\end{remark}

\begin{remark}{\rm The pattern from Example~\ref{Ex:CircleXi} is markedly different from any perturbation of periodic lattices. Indeed, that is an example of a pattern with a connected order cover over its transversal.
}$\Diamond$
\end{remark}

\subsection{Reduction of the $C^\ast$-algebra and $K$-theoretic statements}

We now fix a lattice $\Ll_0$ and assume that $\Ll_0$ is a perturbation of $\ZM^d$.

\begin{proposition} The set
\begin{equation}
\bar \Xi_{\Ll_0}^{(N)} : = \Big \{ \xi \in \Xi_{\Ll_0}^{(N)}, \ \chi_\xi = \bar \chi_{V_\xi}\Big \}
\end{equation}
is a clopen subset of $\Xi_{\Ll_0}^{(N)}$.
\end{proposition}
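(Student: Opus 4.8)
The plan is to use the covering-space structure directly: I will show that the condition $\chi_\xi=\bar\chi_{V_\xi}$ defining $\bar\Xi_{\Ll_0}^{(N)}$ is \emph{local}, by proving that near any $\xi_0\in\Xi_{\Ll_0}^{(N)}$ the canonical ordering $\bar\chi$ transports along the canonical bijections of Proposition~\ref{Pro:G} in exactly the same way that $\chi_\xi$ transports along the basic neighborhoods~\eqref{Eq:BNeigh}. Once this is established, membership in $\bar\Xi_{\Ll_0}^{(N)}$ and membership in its complement both become open conditions, so $\bar\Xi_{\Ll_0}^{(N)}$ is clopen.

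First I would record that for a perturbation $\Ll_0$ of $\ZM^d$ the labeling bijection $\bm l_\Ll:\Ll\to\ZM^d$ of any $\Ll\in\Xi_{\Ll_0}$ is simply ``nearest lattice point'': $\bm l_\Ll(x)$ is the unique $n\in\ZM^d$ with $\|x-n\|<\tfrac12$. This follows from $\Ll\subset\Pp_\varepsilon$, $|\Ll\cap B(n,\varepsilon)|=1$ and $\varepsilon<\tfrac12$, and it does not depend on the particular $\varepsilon$ witnessing the perturbation condition; in particular $\bm l$ depends only on the location of the point and not on which pattern of $\Xi_{\Ll_0}$ contains it. Then I would establish the following local lemma. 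Fix $\xi_0=(\Ll_1,V_0,\chi_0)\in\Xi_{\Ll_0}^{(N)}$, put $\delta_0:=\min_{x\in V_0}\big(\tfrac12-\|x-\bm l_{\Ll_1}(x)\|\big)>0$, and choose $0<\epsilon<\min(\delta_0,r/2)$ and $M$ large enough that $U_M^\epsilon(\xi_0)$ is defined as in~\eqref{Eq:BNeigh}. If $\xi=(\Ll',V',\chi_\xi)\in U_M^\epsilon(\xi_0)$ with $\Ll'\in\Xi_{\Ll_0}$, then $\chi_\xi=g\circ\chi_0$ for the canonical bijection $g:V_0\to V'$ of Proposition~\ref{Pro:G}, and moreover $\chi_\xi(1)=g(0)=0$ because $0$ is the only point of the $r$-uniformly discrete set $\Ll'$ inside $\mathring B(0,\epsilon)\subset B(0,r)$, so automatically $\xi\in\Xi_{\Ll_0}^{(N)}$.

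The heart of the lemma is the claim $\bar\chi_{V'}=g\circ\bar\chi_{V_0}$. I would prove it by first checking that $g$ intertwines the labelings on $V_0$: for each $x\in V_0$, $\|g(x)-\bm l_{\Ll_1}(x)\|\le\|g(x)-x\|+\|x-\bm l_{\Ll_1}(x)\|<\epsilon+(\tfrac12-\delta_0)\le\tfrac12$, hence $\bm l_{\Ll'}(g(x))=\bm l_{\Ll_1}(x)$ by uniqueness of the nearest lattice point. Since the algorithm~\eqref{Eq:CanonicalOrder} producing $\bar\chi_V$ reads $V$ only through the coordinates $n_j(\cdot)$ of the labels, and since $g$ is a bijection $V_0\to V'$ with $n_j(g(x))=n_j(x)$, a straightforward induction on the steps of~\eqref{Eq:CanonicalOrder} — using $g(W_j)=(g(W))_j$ for the sets $W_j$ of~\eqref{Eq:Wj} — gives $\bar\chi_{V'}=g\circ\bar\chi_{V_0}$.

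With the lemma in hand the conclusion is formal. If $\xi_0\in\bar\Xi_{\Ll_0}^{(N)}$, i.e.\ $\chi_0=\bar\chi_{V_0}$, then for $\xi$ in the above neighborhood $\chi_\xi=g\circ\chi_0=g\circ\bar\chi_{V_0}=\bar\chi_{V'}$, so $\xi\in\bar\Xi_{\Ll_0}^{(N)}$, proving that $\bar\Xi_{\Ll_0}^{(N)}$ is open. If instead $\xi_0\in\Xi_{\Ll_0}^{(N)}\setminus\bar\Xi_{\Ll_0}^{(N)}$, i.e.\ $\chi_0\neq\bar\chi_{V_0}$, then injectivity of $g$ gives $\chi_\xi=g\circ\chi_0\neq g\circ\bar\chi_{V_0}=\bar\chi_{V'}$, so $\xi\notin\bar\Xi_{\Ll_0}^{(N)}$, proving that the complement is open as well; hence $\bar\Xi_{\Ll_0}^{(N)}$ is clopen. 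The only genuinely delicate point is the local lemma — pinning the neighborhood radius so that $g$ both exists (needing $\epsilon<r/2$) and preserves the nearest-lattice-point labels (needing $\epsilon<\delta_0$), and then verifying the equivariance of the ordering algorithm under such a label-preserving bijection; everything else is routine.
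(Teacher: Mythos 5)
Your proof is correct and follows essentially the same route as the paper, which simply asserts that the sets $\{\xi:\chi_\xi=\bar\chi_{V_\xi}\circ s\}$, $s\in\Ss_N$, are open, disjoint, and cover $\Xi_{\Ll_0}^{(N)}$. Your local lemma — that the canonical bijection $g$ of Proposition~\ref{Pro:G} preserves the nearest-lattice-point labels and hence intertwines the ordering algorithm~\eqref{Eq:CanonicalOrder} — is exactly the detail the paper leaves implicit in claiming those sets are open.
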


\proof Given the topology of $\Xi_{\Ll_0}^{(N)}$, which is inherited from the topology defined in Proposition~\ref{Pro:Cover2},  the sets
$$
\Big \{ \xi \in \Xi_{\Ll_0}^{(N)}, \ \chi_\xi = \bar \chi_{V_\xi} \circ s\Big \}, \quad s \in \Ss_N,
$$
are open and, furthermore, they are obviously disjoint. The statement then follows because their union is the full $\Xi_{\Ll_0}^{(N)}$. \qed 

\begin{definition} We define the reduced groupoid $\bar \Gg_N$ as the restriction of $\Gg_N$ to $\bar \Xi_{\Ll_0}^{(N)}$. Specifically,
\begin{equation}
\bar \Gg_N : = \mathfrak s^{-1}\Big (\bar \Xi_{\Ll_0}^{(N)}\Big ) \; \cap \; \mathfrak r^{-1}\Big ( \bar \Xi_{\Ll_0}^{(N)}\Big ),
\end{equation}
equipped with the algebraic and topological structure inherited from $\Gg_N$.
\end{definition}

\begin{remark}{\rm Both maps $\mathfrak s$ and $\mathfrak r$ are continuous, hence $\bar \Gg_N$ is a clopen subset of $\Gg_N$. This assures us that $\bar \Gg_N$ is indeed a topological groupoid. As for notation, we will use $\bar{\mathfrak s}$ and $\bar{\mathfrak r}$ for the source and range maps of $\bar \Gg_N$, respectively.
}$\Diamond$
\end{remark}

\begin{proposition} There exists an injective $C^\ast$-homomorphism 
$$
{\rm j} : C^\ast_r(\bar \Gg_N) \rightarrowtail C^\ast_r(\Gg_N).
$$
\end{proposition}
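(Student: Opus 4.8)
The plan is to show that the inclusion of the clopen subgroupoid $\bar\Gg_N \subset \Gg_N$ induces, at the level of the function algebras $C_c$, a $\ast$-homomorphism that extends to the reduced $C^\ast$-algebras, and then to verify injectivity via the left regular representations. First, I would record the basic structural fact that because $\bar\Xi^{(N)}_{\Ll_0}$ is clopen in $\Gg_N^{(0)}$ (the preceding Proposition), the set $\bar\Gg_N = \mathfrak s^{-1}(\bar\Xi^{(N)}_{\Ll_0}) \cap \mathfrak r^{-1}(\bar\Xi^{(N)}_{\Ll_0})$ is clopen in $\Gg_N$, and moreover it is a \emph{full} subgroupoid in the sense that $\bar\Gg_N^{(0)} = \bar\Xi^{(N)}_{\Ll_0}$ and it is closed under the groupoid operations of $\Gg_N$ (if $(\alpha,\beta)$ are composable in $\Gg_N$ with $\mathfrak r(\alpha),\mathfrak s(\beta) \in \bar\Xi^{(N)}_{\Ll_0}$, then $\mathfrak s(\alpha) = \mathfrak r(\beta)$ is automatically in $\bar\Xi^{(N)}_{\Ll_0}$, so $\alpha\beta \in \bar\Gg_N$; similarly for inverses). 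Since $\bar\Gg_N$ is clopen, the étale structure restricts: $\bar{\mathfrak r}$ is a local homeomorphism onto the open set $\bar\Xi^{(N)}_{\Ll_0}$, and a compact subset of $\bar\Gg_N$ is precisely a compact subset of $\Gg_N$ contained in $\bar\Gg_N$.

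Next, I would define the candidate map at the level of compactly supported functions. Because $\bar\Gg_N$ is open in $\Gg_N$, extension by zero gives a linear injection $\iota: C_c(\bar\Gg_N,\CM) \hookrightarrow C_c(\Gg_N,\CM)$, $f \mapsto \tilde f$ with $\tilde f|_{\bar\Gg_N} = f$ and $\tilde f = 0$ off $\bar\Gg_N$; that $\tilde f$ is again continuous with compact support uses exactly that $\bar\Gg_N$ is clopen. I would then check $\iota$ is a $\ast$-homomorphism for the convolution products \eqref{Eq:Conv} and involution \eqref{Eq:Inv}: for $\alpha \in \bar\Gg_N$ the sum $\sum_{\beta \in \mathfrak r^{-1}(\mathfrak r(\alpha))} \tilde f_1(\beta)\tilde f_2(\beta^{-1}\alpha)$ only picks up $\beta$ with $\beta \in \bar\Gg_N$ and $\beta^{-1}\alpha \in \bar\Gg_N$ (the rest contribute zero), and for such $\beta$ one has $\mathfrak r(\beta) = \mathfrak r(\alpha) \in \bar\Xi^{(N)}_{\Ll_0}$ and $\mathfrak s(\beta) = \mathfrak r(\beta^{-1}\alpha) \in \bar\Xi^{(N)}_{\Ll_0}$, i.e. $\beta$ ranges exactly over $\bar{\mathfrak r}^{-1}(\bar{\mathfrak r}(\alpha))$; hence $(\tilde f_1 \ast \tilde f_2)(\alpha) = \widetilde{(f_1 \ast_{\bar\Gg_N} f_2)}(\alpha)$, and off $\bar\Gg_N$ both sides vanish. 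Compatibility with $\ast$ is immediate since $\bar\Gg_N$ is inversion-invariant. Thus $\iota$ is a $\ast$-homomorphism $C_c(\bar\Gg_N) \to C_c(\Gg_N)$.

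To get the extension to $C^\ast_r(\bar\Gg_N)$ and its injectivity, I would use the left regular representations of section~\ref{Sec:LReps}. For a unit $\xi_0 \in \bar\Xi^{(N)}_{\Ll_0} \subset \Xi^{(N)}_{\Ll_0}$, the source fiber $\bar{\mathfrak s}^{-1}(\xi_0)$ inside $\bar\Gg_N$ is a subset of $\mathfrak s^{-1}(\xi_0)$ inside $\Gg_N$, giving an isometric inclusion $\ell^2(\bar{\mathfrak s}^{-1}(\xi_0)) \hookrightarrow \ell^2(\mathfrak s^{-1}(\xi_0))$ onto a reducing subspace for $\pi_{\xi_0}\circ\iota$; under this inclusion $\pi_{\xi_0}(\iota(f))$ restricts on $\ell^2(\bar{\mathfrak s}^{-1}(\xi_0))$ to the left regular representation $\bar\pi_{\xi_0}(f)$ of $C^\ast_r(\bar\Gg_N)$ and is zero on the orthogonal complement (again because $\tilde f$ vanishes off $\bar\Gg_N$ and the composable pairs contributing to $\pi_{\xi_0}(\iota(f))\psi$ at a point of $\bar{\mathfrak s}^{-1}(\xi_0)$ stay inside $\bar\Gg_N$). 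Consequently $\|\iota(f)\|_{C^\ast_r(\Gg_N)} = \sup_{\xi_0 \in \Xi^{(N)}_{\Ll_0}}\|\pi_{\xi_0}(\iota(f))\| \ge \sup_{\xi_0 \in \bar\Xi^{(N)}_{\Ll_0}}\|\bar\pi_{\xi_0}(f)\| = \|f\|_{C^\ast_r(\bar\Gg_N)}$; and the reverse inequality $\|\iota(f)\|_{C^\ast_r(\Gg_N)} \le \|f\|_{C^\ast_r(\bar\Gg_N)}$ follows because any $\pi_{\xi_0}(\iota(f))$ with $\xi_0 \notin \bar\Xi^{(N)}_{\Ll_0}$ is a compression of a direct sum of the $\bar\pi_{\xi}$'s over units $\xi$ in the $\bar\Gg_N$-orbit structure reachable from $\xi_0$ — more carefully, one observes that $\pi_{\xi_0}(\iota(f))$ is supported on the part of $\mathfrak s^{-1}(\xi_0)$ lying in $\bar{\mathfrak r}^{-1}(\bar\Xi^{(N)}_{\Ll_0})$, which decomposes into source-fibers of $\bar\Gg_N$, so its norm is bounded by $\sup_\xi \|\bar\pi_\xi(f)\|$. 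Hence $\iota$ is isometric on $C_c(\bar\Gg_N)$ for the reduced norms and extends to an injective $C^\ast$-homomorphism ${\rm j}: C^\ast_r(\bar\Gg_N) \rightarrowtail C^\ast_r(\Gg_N)$.

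The main obstacle I anticipate is the clean bookkeeping in the norm comparison for units $\xi_0$ \emph{outside} $\bar\Xi^{(N)}_{\Ll_0}$: one must argue that even though such $\pi_{\xi_0}$ "sees" $\iota(f)$, the relevant operator decomposes as a direct sum / compression of genuine $\bar\Gg_N$-regular representations, so that no new norm is created. This is a standard feature of reduced $C^\ast$-algebras of clopen full subgroupoids (it is the groupoid analogue of the fact that $C^\ast_r(\bar\Gg_N)$ is a corner, or more precisely a hereditary-type subalgebra, cut by the projection $1_{\bar\Xi^{(N)}_{\Ll_0}} \in \Mm(C_0(\Gg_N^{(0)})) \subset \Mm(C^\ast_r(\Gg_N))$), but it requires care since $\bar\Xi^{(N)}_{\Ll_0}$ need not be $\Gg_N$-invariant — it is only a clopen subset of the unit space, not an invariant one. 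An alternative route that sidesteps this is to note that $1_{\bar\Xi^{(N)}_{\Ll_0}}$, being the characteristic function of a clopen subset of the unit space, is a self-adjoint idempotent multiplier of $C^\ast_r(\Gg_N)$, and that $\iota$ identifies $C^\ast_r(\bar\Gg_N)$ with the hereditary subalgebra $1_{\bar\Xi^{(N)}_{\Ll_0}} C^\ast_r(\Gg_N) 1_{\bar\Xi^{(N)}_{\Ll_0}}$; injectivity and the $C^\ast$-norm identity are then automatic from the compression being completely isometric on this corner. I would most likely present the argument in this second, cleaner form.
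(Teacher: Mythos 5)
Your construction is the same as the paper's at its core: extension by zero from the clopen subgroupoid $\bar \Gg_N$ is a $\ast$-homomorphism on compactly supported functions, because a factorization of an arrow of $\bar \Gg_N$ contributes to the convolution only when both factors lie in $\bar \Gg_N$. The paper's printed proof in fact stops there: it verifies multiplicativity of ${\rm j}$ on functions and does not address the passage to the reduced completions, i.e.\ it neither shows that ${\rm j}$ is bounded for the reduced norms nor that its extension is injective. You correctly identify this as the genuinely nontrivial analytic content and supply it, so your proposal is more complete than the paper's argument. Of your two routes, the second is the cleaner one: since $\bar \Xi_{\Ll_0}^{(N)}$ is clopen, $p = 1_{\bar \Xi_{\Ll_0}^{(N)}}$ is a projection in $\Mm\big(C^\ast_r(\Gg_N)\big)$, one has ${\rm j}\big(C_c(\bar \Gg_N)\big) = p\, C_c(\Gg_N)\, p$, and the task reduces to matching the reduced norm of $pfp$ with the $C^\ast_r(\bar \Gg_N)$-norm of $f$.

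Two small repairs. First, in your ``fullness'' remark, the intermediate unit $\mathfrak s(\alpha)=\mathfrak r(\beta)$ of a composable pair with $\mathfrak r(\alpha),\mathfrak s(\beta)\in\bar \Xi_{\Ll_0}^{(N)}$ is \emph{not} automatically in $\bar \Xi_{\Ll_0}^{(N)}$; fortunately this is not needed ($\alpha\beta\in\bar\Gg_N$ already follows once $\alpha,\beta\in\bar\Gg_N$), and it is precisely the vanishing of the extended functions at factorizations through such intermediate units that makes the two convolutions agree. Second, the norm identity is not quite ``automatic,'' and your phrasing of the delicate step is off: for a unit $\xi_0\notin\bar \Xi_{\Ll_0}^{(N)}$, the set $\mathfrak s^{-1}(\xi_0)\cap\mathfrak r^{-1}\big(\bar \Xi_{\Ll_0}^{(N)}\big)$ is \emph{not} a union of source fibers of $\bar\Gg_N$ (those all have source in $\bar \Xi_{\Ll_0}^{(N)}$). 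The correct statement is that right translation by any fixed $\alpha_0\in\mathfrak s^{-1}(\xi_0)\cap\mathfrak r^{-1}\big(\bar \Xi_{\Ll_0}^{(N)}\big)$ is a range-preserving bijection of $\bar{\mathfrak s}^{-1}\big(\mathfrak r(\alpha_0)\big)$ onto this set, and the induced unitary intertwines $\bar\pi_{\mathfrak r(\alpha_0)}(f)$ with the compression of $\pi_{\xi_0}\big({\rm j}(f)\big)$ to $\ell^2\big(\mathfrak s^{-1}(\xi_0)\cap\mathfrak r^{-1}(\bar \Xi_{\Ll_0}^{(N)})\big)$, outside of which $\pi_{\xi_0}\big({\rm j}(f)\big)$ vanishes. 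Combined with the case $\xi_0\in\bar \Xi_{\Ll_0}^{(N)}$, where the compression is literally $\bar\pi_{\xi_0}(f)$, this gives $\|{\rm j}(f)\|_{C^\ast_r(\Gg_N)}=\sup_{\xi\in\bar \Xi_{\Ll_0}^{(N)}}\|\bar\pi_\xi(f)\|=\|f\|_{C^\ast_r(\bar\Gg_N)}$, hence isometry and injectivity. With that fix, both of your routes close, and either one fills the gap the paper leaves open.
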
 

\proof Any $\bar f, \bar g \in C_0(\bar \Gg_N)$ can be extended continuously over $\Gg_N$ by setting them to be zero on $\Gg_N \setminus \bar \Gg_N$. Then
$$
\big ({\rm j}(\bar f) \ast {\rm j}(\bar g) \big ) (\xi,\zeta) = \sum_{\eta \in \mathfrak a_n^{-1}(\Ll_\xi)} {\rm j}(\bar f)(\xi,\eta) {\rm j}(\bar g)\big(\mathfrak t_{\chi_\eta(1)}(\eta,\zeta)\big ).
$$
The product clearly returns a trivial value if $(\xi,\zeta) \notin \bar \Gg_N$. If the product is evaluated at $(\bar \xi,\bar \zeta) \in \bar \Gg_N$, note that 
both ${\rm j}(\bar f)$ and ${\rm j}(\bar g)$ return trivial values if $\chi_\eta \neq \bar \chi_{V_\eta}$. Hence, the summation can be restricted over $\mathfrak a_n^{-1}(\Ll_{\bar \xi}) \cap \bar \Gg_N$ and the latter contains all $\bar \eta$'s such that $(\bar \xi,\bar \eta) \in \bar{\mathfrak r}^{-1}(\bar \xi)$. The conclusion is that
$$
\big ({\rm j}(\bar f) \ast {\rm j}(\bar g)\big ) (\bar \xi,\bar \zeta) = \sum_{\bar \eta, (\bar \xi,\bar \eta) \in \bar{\mathfrak r}^{-1}(\bar \xi)} {\rm j}(\bar f)(\bar \xi,\bar \eta) {\rm j}(\bar g)\big(\mathfrak t_{\chi_{\bar \eta}(1)}(\bar \eta,\bar \zeta)\big ),
$$
which is the product in $C^\ast_r(\bar \Gg_N)$.\qed

\begin{proposition}\label{Pro:Mor2} $C^\ast_r(\bar \Gg_N) \simeq C^\ast_{r,\Ss_N}(\Gg_N)$.
\end{proposition}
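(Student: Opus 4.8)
The goal is to identify the reduced groupoid $C^\ast$-algebra $C^\ast_r(\bar\Gg_N)$ with the bi-equivariant subalgebra $C^\ast_{r,\Ss_N}(\Gg_N)$. The plan is to exhibit an explicit isomorphism at the level of the dense $\ast$-subalgebras $C_c(\bar\Gg_N)$ and $C_{c,\Ss_N}(\Gg_N,\CM)$ and then argue that it extends to the $C^\ast$-completions. The basic mechanism is averaging: since $\Ss_N$ acts by deck transformations on the order cover $\widehat{\rm Del}^{(N)}_{(r,R)}(\RM^d)$, and since $\bar\Xi^{(N)}_{\Ll_0}$ is a clopen subset meeting each $\Ss_N$-orbit in the fibers of the covering map in exactly one point (the canonical order $\bar\chi_{V}$ singled out by the perturbed-periodic structure), the restriction map and the averaging map should be mutually inverse.

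\textbf{Step 1: the restriction map.} First I would define $\Phi: C_{c,\Ss_N}(\Gg_N,\CM)\to C_c(\bar\Gg_N)$ by $\Phi(f) := f|_{\bar\Gg_N}$. This is well-defined and continuous. Using the explicit form of the left/right $\Ss_N$-actions from Proposition~\ref{Pro:LRActions}, Eq.~\eqref{Eq:2Action2}, together with the convolution formula~\eqref{Eq:Conv}, one checks that $\Phi$ is a $\ast$-homomorphism: the key point is that for $(\bar\xi,\bar\zeta)\in\bar\Gg_N$ the sum $\sum_{\eta}f(\bar\xi,\eta)g(\mathfrak t_{\chi_\eta(1)}(\eta,\bar\zeta))$ splits according to the clopen decomposition of $\mathfrak a_N^{-1}(\Ll_{\bar\xi})$ into the $\Ss_N$-orbits of canonically-ordered elements, and bi-equivariance of $f,g$ makes the contribution of each orbit equal to the value one gets by summing only over $\bar\eta$ with $\chi_{\bar\eta}=\bar\chi_{V_{\bar\eta}}$ — this is exactly the computation already carried out in the preceding Proposition for ${\rm j}$, read in reverse.

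\textbf{Step 2: the inverse via averaging.} Next I would define $\Psi: C_c(\bar\Gg_N)\to C_{c,\Ss_N}(\Gg_N,\CM)$ by
\[
\Psi(\bar f)(\xi,\zeta) := (-1)^{s_1}(-1)^{s_2}\,{\rm j}(\bar f)\big(s_1^{-1}\cdot(\xi,\zeta)\cdot s_2^{-1}\big),
\]
where $s_1,s_2\in\Ss_N$ are the unique permutations with $\Lambda_{s_1^{-1}}\xi\in\bar\Xi^{(N)}_{\Ll_0}$ and $\Lambda_{s_2}(\text{source-component of }\zeta)\in\bar\Xi^{(N)}_{\Ll_0}$; equivalently, using the conditional expectation $E$ of Eq.~\eqref{Eq:Exp} applied to ${\rm j}(\bar f)$ one gets $\Psi = E\circ{\rm j}$ up to the normalization, because ${\rm j}(\bar f)$ is supported on $\bar\Gg_N$ and each $\Ss_N\times\Ss_N$-translate of a point of $\bar\Gg_N$ meets $\bar\Gg_N$ in exactly one group element. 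Then I would check $\Phi\circ\Psi=\mathrm{id}$ and $\Psi\circ\Phi=\mathrm{id}$: the first is immediate from the uniqueness of canonical orderings, the second from bi-equivariance. That $\Psi$ is a $\ast$-homomorphism follows because $E$ is a conditional expectation (hence $\Psi$ respects $\ast$), and multiplicativity on the image of ${\rm j}$ follows from Step 1.

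\textbf{Step 3: passage to completions.} Finally, I would observe that both $\Phi$ and $\Psi$ are contractive for the reduced norms. For $\Phi$ this is because restriction of a left regular representation of $C^\ast_r(\Gg_N)$ on $\ell^2(\mathfrak s^{-1}(\xi_0))$ with $\xi_0\in\bar\Xi^{(N)}_{\Ll_0}$ to the subspace spanned by $\bar\eta$'s with canonical order is exactly the left regular representation of $C^\ast_r(\bar\Gg_N)$; this realizes $\Phi$ as a compression to a reducing subspace, hence contractive. For $\Psi$, contractivity follows from $\Psi = E\circ{\rm j}$ (up to constant), with ${\rm j}$ isometric (it is an injective $\ast$-homomorphism of $C^\ast$-algebras) and $E$ contractive. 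Since $\Phi$ and $\Psi$ are mutually inverse contractions, they extend to an isometric isomorphism $C^\ast_r(\bar\Gg_N)\xrightarrow{\sim} C^\ast_{r,\Ss_N}(\Gg_N)$, which also shows that $C^\ast_{r,\Ss_N}(\Gg_N)$ equals the image of ${\rm j}$ composed with $E$, i.e.\ $C^\ast_{r,\Ss_N}(\Gg_N)={\rm j}\big(C^\ast_r(\bar\Gg_N)\big)$ after identification.

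\textbf{Main obstacle.} The technical heart is bookkeeping the two commuting $\Ss_N$-actions in Step 1 — specifically, verifying that bi-equivariance plus the clopen orbit decomposition of $\mathfrak a_N^{-1}(\Ll)$ collapses the convolution over $\Gg_N$ to the convolution over $\bar\Gg_N$ without spurious sign factors. The sign characters $(-1)^{s_1},(-1)^{s_2}$ must cancel correctly, and one must be careful that the translations hidden inside $\Lambda_{s}$ composed with $\hat{\mathfrak t}$ (which is why the $\Ss_N$-action on $\Gg_N$ is a genuine 2-action and not just a deck action) do not interfere — here identity~\eqref{Eq:ID101} and Eq.~\eqref{Eq:V1} do the work. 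A secondary subtlety is checking that $\Psi$ indeed lands in $C_{c,\Ss_N}(\Gg_N,\CM)$ rather than merely in $C_c(\Gg_N,\CM)$, which is where the definition of $E$ as an average over $\Ss_N\times\Ss_N$ (rather than a single choice of $s_1,s_2$) is essential; restricting to perturbed-periodic $\Ll_0$ guarantees the averaged function is still compactly supported because the canonical-order clopen piece carries the whole support.
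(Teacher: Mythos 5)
Your overall strategy is the same as the paper's: identify $C^\ast_{r,\Ss_N}(\Gg_N)$ with $C^\ast_r(\bar \Gg_N)$ by restricting bi-equivariant functions to the clopen subgroupoid in one direction and extending equivariantly with signs in the other, the technical heart being the orbit-by-orbit collapse of the convolution sum. However, Step 1 as written fails: the plain restriction $f \mapsto f|_{\bar\Gg_N}$ is \emph{not} multiplicative. Within the $\Ss_N$-orbit of a canonically ordered $\bar\eta$ in $\mathfrak r^{-1}(\bar\xi)$, bi-equivariance gives
\[
f\big((\bar\xi,\eta)\cdot s^{-1}\big)\, g\big(s\cdot \beta\big) = (-1)^{s}(-1)^{s} f(\bar\xi,\eta)\, g(\beta) = f(\bar\xi,\eta)\, g(\beta),
\]
so all $N!$ terms of the orbit contribute the \emph{same} value rather than collapsing to a single one; hence
\[
(f\ast g)\big|_{\bar\Gg_N} \;=\; N!\; \big(f|_{\bar\Gg_N}\big)\ast\big(g|_{\bar\Gg_N}\big),
\]
and restriction is a homomorphism only for $N=1$. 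Your assertion that ``the contribution of each orbit equals the value one gets by summing only over $\bar\eta$ with $\chi_{\bar\eta}=\bar\chi_{V_{\bar\eta}}$'' is precisely where the factor $N!$ is lost. The paper's isomorphism is $\Phi(f)=N!\,f|_{\bar\Gg_N}$, with the compensating $1/N!$ built into the inverse. Relatedly, your $\Psi=E\circ {\rm j}$ picks up an extra $1/(N!)^2$ from the double average in \eqref{Eq:Exp} (only one pair $(\gamma_1,\gamma_2)$ survives for a function supported on $\bar\Gg_N$), so with your definitions $\Phi\circ\Psi$ is $\tfrac{1}{(N!)^2}$ times the identity, not the identity. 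These are fixable normalization errors, but the maps you actually define are neither homomorphisms nor mutually inverse.

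A secondary problem is the norm argument in Step 3: the subspace of $\ell^2\big(\mathfrak s^{-1}(\xi_0)\big)$ spanned by the canonically ordered elements is \emph{not} reducing for $\pi_{\xi_0}(f)$ when $f$ is bi-equivariant, since the matrix element joining a canonically ordered $\bar\xi$ to a non-canonically ordered $\zeta$ equals $\pm f\big(\hat{\mathfrak t}_{\chi_{\bar\xi}(1)}(\bar\xi,\bar\zeta)\big)$ and is generically nonzero. So the ``compression to a reducing subspace'' claim does not stand; what one actually gets, after the $N!$ bookkeeping is done correctly, is that the two left regular representations are intertwined up to an explicit rescaling, from which boundedness (and then isometry, since a $\ast$-isomorphism of $C^\ast$-algebras is automatically isometric) follows. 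The averaging/conditional-expectation picture you describe is a reasonable reorganization of the paper's computation, but as presented the normalizations do not close up.
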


\proof For $\xi \in \Xi_{\Ll_0}^{(N)}$, we simplify the notation by declaring that $\bar \chi_\xi = \bar \chi_{V_\xi}$, the canonical order of the subset $V_\xi \in \Ll_\xi$. We define
\begin{equation}
\Phi : C^\ast_{r,\Ss_N}(\Gg_N)  \to C^\ast_r(\bar \Gg_N), \quad \Phi(f)(\bar \xi,\bar \zeta) = N! \, f(\bar \xi,\bar \zeta),
\end{equation}
and
\begin{equation}
\begin{aligned}
&\qquad \qquad \quad \Phi^{-1} : C^\ast_r(\bar \Gg_N) \to C^\ast_{r,\Ss_N}(\Gg_N), \\
& \Phi^{-1}(\bar f)(\xi,\zeta) = \tfrac{1}{N!} \, (-1)^{\chi_\xi^{-1} \circ \bar \chi_\xi}  (-1)^{\bar \chi_\zeta^{-1} \circ \chi_\zeta} \; f\big (\bar \chi^{-1}_\xi \circ \chi_\xi \cdot (\xi,\zeta) \cdot \chi^{-1}_\zeta \circ \bar \chi_\zeta \big ),
 \end{aligned}
\end{equation}
which are obviously inverse to each other as maps between sets. We will show that they are algebra morphisms. Indeed, for $(\bar \xi,\bar \zeta) \in \bar \Gg_N \subset \Gg_N$ and $f,g \in C^\ast_{r,\Ss_N}(\Gg_N)$,
$$
\Phi (f \ast g)(\bar \xi,\bar \zeta) = N! \sum_{\eta, (\bar \xi,\eta) \in {\mathfrak r}^{-1}(\bar \xi)} f(\bar \xi,\eta) g(\mathfrak t_{\chi_\eta(1)}(\eta,\bar \zeta))
$$
and we can use the bi-equivariant property of $f$ and $g$ to continue
$$
\Phi (f \ast g)(\bar \xi,\bar \zeta) = N! \sum_{\eta, (\bar \xi,\eta) \in {\mathfrak r}^{-1}(\bar \xi)} f\big ((\bar \xi,\eta) \cdot s^{-1}\big ) g\Big(s \cdot \big(\mathfrak t_{\chi_\eta(1)}(\eta),\mathfrak t_{\chi_\eta(1)}(\bar \zeta)\big)\Big )
$$
where $s = \bar \chi_\eta^{-1} \circ \chi_\eta \in \Ss_N$. We have
$$
s \cdot \big (\mathfrak t_{\chi_\eta(1)}(\eta),\mathfrak t_{\chi_\eta(1)}(\bar \zeta) \big ) =\mathfrak t_{[\chi_{\mathfrak t_{\chi_\eta(1)}(\eta)} \circ s^{-1}](1)}\big (\Lambda_s(\mathfrak t_{\chi_\eta(1)}(\eta)),\mathfrak t_{\chi_\eta(1)}(\bar \zeta)\big )
$$
and, recalling the definition of $s$,
$$
\chi_{\mathfrak t_{\chi_\eta(1)}(\eta)} \circ s^{-1}= \chi_{\mathfrak t_{\chi_\eta(1)}(\eta)} \circ  \chi_\eta^{-1} \circ \bar \chi_\eta.
$$
At this step we use the identity $\chi_{\mathfrak t_y(\eta)}=\mathfrak t_y \circ \chi_\eta$, to write
$$
\chi_{\mathfrak t_{\chi_\eta(1)}(\eta)} \circ  \chi_\eta^{-1} \circ \bar \chi_\eta = \mathfrak t_{\chi_\eta(1)} \circ \bar \chi_\eta.
$$
Therefore, using the identity $\mathfrak t_{\mathfrak t_x (y)} = \mathfrak t_y \circ \mathfrak t_x^{-1}$,
$$
\mathfrak t_{[\chi_{\mathfrak t_{\chi_\eta(1)}(\eta)} \circ s^{-1}](1)} = \mathfrak t_{\mathfrak t_{\chi_\eta(1)} ( \bar \chi_\eta(1))}=\mathfrak t_{\bar \chi_\eta(1)}\circ \mathfrak t^{-1}_{\chi_\eta(1)} = \mathfrak t_{\chi_{\Lambda_s(\eta)}(1)} \circ \mathfrak t^{-1}_{\chi_\eta(1)}.
$$
The conclusion is 
$$
\Phi (f \ast g)(\bar \xi,\bar \zeta) = N! \sum_{\eta, (\bar \xi,\eta) \in {\mathfrak r}^{-1}(\bar \xi)} f\big (\bar \xi,\Lambda_s(\eta)\big ) g\big(\mathfrak t_{\chi_{\Lambda_s(\eta)}(1)}(\Lambda_s(\eta),\bar \zeta )\big ).
$$
Our last observations are that $\bar \eta=\Lambda_s(\eta)$ belongs to $\bar \Gg_N$ and, for each $\bar \eta \in \bar\Gg_N$, there are precisely $N!$ elements of $\Gg_N$ that are mapped in this way into $\bar \eta$. As such
 $$
\Phi (f \ast g)(\bar \xi,\bar \zeta) = \sum_{\bar \eta, (\bar \xi,\bar \eta) \in \bar {\mathfrak r}^{-1}(\bar \xi)} N!  f\big ((\bar \xi,\bar \eta \big ) \; N! g\big(\mathfrak t_{\chi_{\bar \eta}(1)}\big(\bar \eta,\bar \zeta)\big ),
$$
hence $\Phi$ is an algebra morphism.\qed 

\vspace{0.2cm}

We recall that $\bar \Xi_{\Ll_0}^{(N)}$ serves as the space of units for the restricted groupoid. Then, by the same arguments from subsection~\ref{Sec:BlowUp1}, we arrive at the following statement: 

\begin{proposition} The map 
 $$
 \bar f: \bar \Xi_{\Ll_0}^{(N)} \to \Xi_{\Ll_0}, \quad \bar f(\Ll,V,\bar \chi_V) : = (\Ll,\bar \chi_V(1)) = (\Ll,0)
 $$ 
 is a blow-up of the unit space of $\Gg_{1}$.
 \end{proposition}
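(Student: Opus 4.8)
The plan is to verify that the map $\bar f:\bar\Xi_{\Ll_0}^{(N)}\to\Xi_{\Ll_0}$, $(\Ll,V,\bar\chi_V)\mapsto(\Ll,0)$, is a continuous open local homeomorphism, since this is precisely the defining requirement for a blow-up of a unit space (following Williams' definition reproduced in the excerpt). We have already established in the previous proposition that $\bar\Xi_{\Ll_0}^{(N)}$ is a clopen subset of $\Xi_{\Ll_0}^{(N)}$, and from Proposition~\ref{Pro:Cover2} we know how the topology on $\widehat{\rm Del}^{(N)}_{(r,R)}(\RM^d)$ — and hence on $\Xi_{\Ll_0}^{(N)}$ — is described by the basic neighborhoods $U_M^\epsilon(\xi)$ of \eqref{Eq:USets}. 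The unit space $\Gg_1^{(0)}$ is canonically identified with $\Xi_{\Ll_0}$ via $(\Ll,0)\mapsto\Ll$, so it suffices to show that $\Ll\mapsto$ (restriction of data) behaves well.

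First I would observe that $\bar f$ is the restriction to $\bar\Xi_{\Ll_0}^{(N)}$ of the composite of the covering map $\mathfrak a_N:\widehat{\rm Del}^{(N)}_{(r,R)}(\RM^d)\to{\rm Del}_{(r,R)}(\RM^d)$, $(\Ll,V,\chi_V)\mapsto\Ll$, with the identification $\Xi_{\Ll_0}\hookrightarrow{\rm Del}_{(r,R)}(\RM^d)$. Continuity is then immediate. For the local homeomorphism property, the key input is exactly the argument of Proposition~\ref{Pro:G} together with Corollary~\ref{Cor:Cover1} and Proposition~\ref{Pro:Cover2}: given $\xi=(\Ll,V,\bar\chi_V)\in\bar\Xi_{\Ll_0}^{(N)}$, choose $0<\epsilon<r/2$ and $M\gg0$ so that $V\subset B(0,M)$. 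On the neighborhood $U_M^\epsilon(\xi)\cap\bar\Xi_{\Ll_0}^{(N)}$, the canonical bijection $g:V\to V'$ of Proposition~\ref{Pro:G} exists and is unique, so for each $\Ll'\in U_M^\epsilon(\Ll)\cap\Xi_{\Ll_0}$ there is exactly one $(\Ll',V',\chi_{V'})$ in $U_M^\epsilon(\xi)$; moreover, since $g$ is a local isometry on these small balls, $\chi_{V'}=g\circ\bar\chi_V$ is automatically the canonical order $\bar\chi_{V'}$ (this last point I would spell out: the canonical ordering algorithm of \eqref{Eq:CanonicalOrder} depends only on the labels $\bm l_{\Ll'}(x)\in\ZM^d$ of the points, and $g$ preserves these labels because each point stays within its own ball $B(n,\varepsilon)$ under a small deformation). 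Hence $\bar f$ restricted to $U_M^\epsilon(\xi)\cap\bar\Xi_{\Ll_0}^{(N)}$ is a bijection onto $U_M^\epsilon(\Ll)\cap\Xi_{\Ll_0}$, with continuous inverse $\Ll'\mapsto(\Ll',g(V),g\circ\bar\chi_V)$.

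Openness follows from the same description: $\bar f$ maps the basic open set $U_M^\epsilon(\xi)\cap\bar\Xi_{\Ll_0}^{(N)}$ onto the basic open set $U_M^\epsilon(\Ll)\cap\Xi_{\Ll_0}$ of $\Xi_{\Ll_0}$ (using the description of the topology on $\Xi_{\Ll_0}$ from \eqref{Eq:TopoBase2}), so it carries basic opens to opens, hence it is an open map. Combining continuity, local injectivity with continuous local inverse, and openness gives that $\bar f$ is a continuous open local homeomorphism, which is exactly what is needed to call it a blow-up of the unit space $\Gg_1^{(0)}=\Xi_{\Ll_0}$.

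I do not expect any genuine obstacle here; the proposition is essentially a bookkeeping statement and the proof is, as the excerpt's one-line "by the same arguments from subsection~\ref{Sec:BlowUp1}" suggests, a verbatim adaptation of the earlier proof that $f:\Gg_N^{(0)}\to\Gg_1^{(0)}$ is a blow-up. The only point requiring a sentence of care — and the closest thing to a subtlety — is confirming that the canonical order $\bar\chi_V$ varies \emph{continuously} in the sense that $g\circ\bar\chi_V=\bar\chi_{g(V)}$ on small neighborhoods, i.e.\ that the clopen set $\bar\Xi_{\Ll_0}^{(N)}$ is genuinely a union of sheets of the cover and not something more exotic; this is guaranteed by the translation-compatibility \eqref{Eq:LCompat1} of $\bar\chi$ and the fact that small deformations do not move points between the balls $B(n,\varepsilon)$.
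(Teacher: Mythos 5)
Your proof is correct and follows essentially the same route the paper takes (which simply invokes "the same arguments from subsection on blow-ups," i.e.\ the covering-space argument of Proposition~\ref{Pro:Cover2} and the canonical bijection $g$ of Proposition~\ref{Pro:G}). The one point you rightly flag as needing care --- that $g\circ\bar\chi_V=\bar\chi_{V'}$ on small neighborhoods, so that $\bar\Xi_{\Ll_0}^{(N)}$ is a union of sheets --- is precisely the content of the immediately preceding proposition establishing that $\bar\Xi_{\Ll_0}^{(N)}$ is clopen, so your argument is complete.
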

 
Theorem~2.52 from \cite{WilliamsBook} and Proposition~\ref{Pro:Mor2} then give: 
 
 \begin{corollary} Let $\mathcal{L}_0$ be a perturbation of $\mathbb{Z}^{d}$. Then the $C^\ast$-algebras $C^\ast_{r,\Ss_N}(\Gg_N)$ and $C^\ast_r(\Gg_1)$ are Morita equivalent. As a consequence, their $K$-theories coincide.
 \end{corollary}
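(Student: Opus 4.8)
The plan is to assemble the final Morita equivalence from two pieces that have already been put in place in the excerpt, namely Proposition~\ref{Pro:Mor2}, which identifies $C^\ast_{r,\Ss_N}(\Gg_N)$ with the reduced $C^\ast$-algebra $C^\ast_r(\bar \Gg_N)$ of the restricted groupoid, and the blow-up picture, which exhibits $\bar \Gg_N$ as a blow-up of $\Gg_1$ along the open continuous map $\bar f : \bar \Xi_{\Ll_0}^{(N)} \to \Xi_{\Ll_0}$ of the previous proposition. So the first step is simply to invoke $C^\ast_{r,\Ss_N}(\Gg_N) \simeq C^\ast_r(\bar \Gg_N)$. The second step is to observe that the groupoid $\bar \Gg_N$ is isomorphic to the blow-up $\Gg_1[\bar \Xi_{\Ll_0}^{(N)}]$; this is the exact analogue, for the clopen subgroupoid $\bar \Gg_N \subset \Gg_N$, of the isomorphism $\Gg_N \simeq \widetilde \Gg_N$ established earlier, so one may either point to that earlier argument verbatim (replacing arbitrary orders $\chi$ by the canonical orders $\bar \chi$ throughout) or just note that the map $(\bar \xi,\bar \zeta) \mapsto \big(\bar \xi,(\Ll_{\bar \xi},\bar\chi_{\bar\zeta}(1)),\mathfrak t_{\bar\chi_{\bar\zeta}(1)}(\bar\zeta)\big)$ is the relevant topological-groupoid isomorphism, the verification being identical to the proof of the $\Gg_N \simeq \widetilde \Gg_N$ statement.

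The third step is to apply the general blow-up theorem, Theorem~2.52 of \cite{WilliamsBook}, which says that for a locally compact Hausdorff groupoid $\Gg$ with open range map and a continuous open surjection $f : Z \to \Gg^{(0)}$ from a locally compact Hausdorff space $Z$, the groupoid $C^\ast$-algebra $C^\ast_r(\Gg[Z])$ is Morita equivalent to $C^\ast_r(\Gg)$. Here $\Gg = \Gg_1$, which is second-countable, Hausdorff and \'etale (hence has open range map), and $Z = \bar \Xi_{\Ll_0}^{(N)}$, which is locally compact Hausdorff; the map $\bar f$ is a local homeomorphism onto $\Gg_1^{(0)} = \Xi_{\Ll_0}$, in particular continuous and open, and it is surjective because every point of $\Xi_{\Ll_0}$ has at least one subset $V$ of cardinality $N$ (all elements of $\Xi_{\Ll_0}$ are Delone sets, hence infinite) equipped with its canonical order. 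Thus $C^\ast_r(\bar\Gg_N) \simeq C^\ast_r(\Gg_1[\bar\Xi_{\Ll_0}^{(N)}])$ is Morita equivalent to $C^\ast_r(\Gg_1)$. Chaining the three identifications gives the Morita equivalence $C^\ast_{r,\Ss_N}(\Gg_N) \sim_{\mathrm{Morita}} C^\ast_r(\Gg_1)$, and the statement about $K$-theory is then immediate from Morita invariance of $K$-theory for (separable, or more generally $\sigma$-unital) $C^\ast$-algebras.

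I do not expect a genuine obstacle here, since all the structural work was done earlier; the only points requiring care are bookkeeping ones. First, one should double-check that the hypotheses of Theorem~2.52 of \cite{WilliamsBook} are literally met — in particular that $\bar f$ is open and that $\bar\Xi_{\Ll_0}^{(N)}$ is locally compact Hausdorff; local compactness follows because $\bar\Xi_{\Ll_0}^{(N)}$ is a clopen subset of $\Xi_{\Ll_0}^{(N)}$, which is a cover of $\Xi_{\Ll_0}^{(N)}$'s base and is locally homeomorphic to pieces of the (locally compact, Hausdorff) space of Delone sets. Second, the surjectivity of $\bar f$ should be spelled out so that the imprimitivity bimodule of Theorem~2.52 is full on both sides. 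Third — and this is worth an explicit remark rather than a hidden assumption — the Morita equivalence is at the level of the full groupoid $C^\ast$-algebras $C^\ast_r(\bar\Gg_N)$ and $C^\ast_r(\Gg_1)$, \emph{not} between the bi-equivariant subalgebra of $\Gg_N$ and some subalgebra of $\Gg_1$; the point of Proposition~\ref{Pro:Mor2} is precisely that, for perturbed periodic lattices, the bi-equivariant subalgebra is already the \emph{whole} reduced algebra of $\bar\Gg_N$, which is what makes the blow-up theorem applicable. I would therefore present the proof as: $C^\ast_{r,\Ss_N}(\Gg_N) \overset{\text{Prop.~\ref{Pro:Mor2}}}{\simeq} C^\ast_r(\bar\Gg_N) \simeq C^\ast_r(\Gg_1[\bar\Xi_{\Ll_0}^{(N)}]) \overset{\text{Th.~2.52}}{\sim} C^\ast_r(\Gg_1)$, followed by one line invoking Morita invariance of $K$-theory.
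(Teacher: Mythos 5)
Your proposal is correct and follows essentially the same route as the paper, whose proof consists precisely of citing Proposition~\ref{Pro:Mor2} together with Theorem~2.52 of \cite{WilliamsBook} applied to the blow-up $\bar f:\bar\Xi_{\Ll_0}^{(N)}\to\Xi_{\Ll_0}$; you merely make explicit the intermediate isomorphism $\bar\Gg_N\simeq\Gg_1[\bar\Xi_{\Ll_0}^{(N)}]$ and the hypothesis checks that the paper leaves implicit. The one point worth tightening is surjectivity of $\bar f$: one needs, for each $\Ll\in\Xi_{\Ll_0}$, an $N$-element subset $V\ni 0$ whose \emph{canonical} order satisfies $\bar\chi_V(1)=0$, which follows by choosing the remaining $N-1$ points with strictly larger first label $n_1$, using relative density of $\Ll$.
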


\section{Discussion and Outlook}
\label{Sec:Discussion}

Our work proposes a framework for the dynamics of many fermions, but several other tools are yet to be developed in order to demonstrate its effectiveness. In this section, we point to several directions where, in our opinion, swift progress with such tools can be made.

\subsection{Self-binding versus scattered dynamics}
\label{Sec:SBvsS}

Depending on the energy infused by an external excitation and on the type of self-interaction, the dynamics of $N$-fermions can display qualitatively distinct behaviors. Indeed, the $N$ fermions can evolve together as one cluster or they can be split into several clusters. In the first scenario, one will say that the $N$-fermions are in a self-binding state, while for the second scenario that they are in a scattered state. For example, for the many-body quantum system simulated in Fig.~11 of \cite{LiuPRB2022}, the energy spectrum separates into several spectral islands and \cite{LiuPRB2022} found that the states associated with top spectral island display all the trades of self-binding dynamics. On the other hand, the states associated with the other spectral islands display all the trades of a scattered dynamics (see Fig.~23 in \cite{LiuPRB2022} for direct evidence of the self-binding/scattered characters). Furthermore, the spectral projection onto the top spectral island belongs to the groupoid $C^\ast$-algebra, while the spectral projections onto the remaining spectral islands belong to the corona algebra. 

We conjecture that any self-bound dynamics of $N$ fermions can be generated from the bi-equivariant groupoid algebra $C^\ast_{r,\Ss_N}(\Gg_N)$, while the scattering dynamics can be generated from the corona algebra $\Mm\big ( C^\ast_{r,\Ss_N}(\Gg_N) \big ) /C^\ast_{r,\Ss_N}(\Gg_N)$. Given the statement of Proposition~\ref{Pro:Mor10} and Remark~\ref{Re:Pullback}, one can focus entirely on the separable $C^\ast$-algebra $C^\ast_{r,\Ss_N}(\Gg_N)$. In particular, computing various $K$-theories of $C^\ast_{r,\Ss_N}(\Gg_N)$ will shed light on the possible dynamical features that can be observed for $N$ self-interacting fermions hopping on an aperiodic lattice. Given the statements from section~\ref{Sec:SpecialCases}, the interesting cases are represented by the patterns with connected order covers.

\subsection{Thermodynamic limit}

As we already indicated, a Hamiltonian with finite interaction range will contain a finite number of many-body potentials. Inherently, such Hamiltonians will fall into the corona algebra when sectors with large number of fermions are considered. At first sight, this seems to be an unavoidable phenomenon when considering the thermodynamic limit $N \to \infty$. However, a Hamiltonian $H_{\Ll_0}$ can be brought back into $C^{\ast}_{r,\mathcal{S}_N}(\Gg_N) $ by sandwiching with the approximate unit introduced in Proposition~\ref{Pro:UnitApprox}. According to our previous discussion, $\widetilde H_{\Ll_0}(\epsilon) = 1_N^\epsilon H_{\Ll_0} 1_N^\epsilon$ generates the dynamics of a ``self-binding'' cluster of $N$ fermions and the size of the cluster can be adjusted by tuning the profile of the approximate unit. As such, we have a very convenient tool to enforce a desired density of fermions for such clusters. We recall that, traditionally, the fermion density is enforced during the thermodynamic limiting process by restricting the $N$ fermions to a finite lattice of proper volume. The latter, however, destroys the Galilean invariance of the models and the motion of the center of mass.

The important conclusion is that we have put forward a new paradigm for taking the thermodynamic limit and the process can be set up entirely inside the separable $C^\ast$-algebras $C^{\ast}_{r,\mathcal{S}_N}(\Gg_N) $. An interesting possibility worth investigating  is generating thermodynamic limits using directed towers of $C^\ast$-algebras
\begin{equation}\label{Eq:Diag1}
\begin{diagram}
\cdots & \rightarrowtail & \Bb_n \subset C^\ast_{r,\Ss_{N_n}}(\Gg_{N_n})   & \rightarrowtail & \Bb_{n+1} \subset C^\ast_{r,\Ss_{N_{n+1}}}(\Gg_{N_{n+1}}) & \rightarrowtail & \cdots
\end{diagram}
\end{equation}
where $N_n$, $n \in \NM$, is a sequence of increasing fermion numbers. Examples of such towers are yet to be supplied, but, if that happens, then any self-adjoint element $H$ from the algebra defined by the direct limit can be used to generate a dynamics or an equilibrium state. Furthermore, if the algebras $\Bb_n$ are invariant against the $\RM^d$-action induced by the $U(1)$-twists of the ${\rm CAR}(\Ll_0)$ generators, $a_x \mapsto e^{\imath k \cdot x} a_x$, $x\in \Ll_0$, $k \in \RM^d$, then the generator $\nabla$ of this action supplies the current operator $\nabla (H)$, which also belongs to the direct limit. In such cases, the formalism supplies the means to investigate the transport coefficients for the many-fermions models.

\end{document}